\theoremstyle{plain}
\newtheorem{theorem}{\indent\bf Theorem}[section]
\newtheorem{lemma}[theorem]{\indent\bf Lemma}
\newtheorem{corollary}[theorem]{\indent\bf Corollary}
\newtheorem{proposition}[theorem]{\indent\bf Proposition}
\theoremstyle{definition}
\newtheorem{definition}[theorem]{\indent\bf Definition}
\newtheorem{remark}[theorem]{\indent\bf Remark}
\begin{document}
\title[Resultants and irreducibility criteria]{Irreducibility criteria for pairs of polynomials whose resultant is a prime number}
\author[N.C. Bonciocat]{Nicolae Ciprian Bonciocat}
\address{Simion Stoilow Institute of Mathematics of the Romanian Academy, Research Unit 7, P.O. Box 1-764, Bucharest 014700, Romania}
\email{Nicolae.Bonciocat@imar.ro}
\keywords{Resultant, irreducible polynomials, prime number.}
\subjclass[2020]{Primary 11C08, Secondary 11R09}

\dedicatory{Dedicated to the memory of Professors Lauren\c tiu Panaitopol and Doru \c Stef\u anescu}

\begin{abstract}
We obtain various irreducibility criteria for pairs of polynomials $(f(X),g(X))$ with integer coefficients whose resultant $Res(f,g)$ is a prime number, or is divisible by a sufficiently large prime number, and also for some of their linear combinations $Mf(X)+Ng(X)$ with integer scalars $M$ and $N$. In particular, we find irreducibility conditions for polynomials with coefficients obtained by representing primes by certain quadratic forms.  
The irreducibility criteria will appear as corollaries of more general results providing upper bounds for the number of irreducible factors of each one of $f$ and $g$, counting multiplicities, that depend on the prime factorization of $Res(f,g)$, and on the distances between the roots of $f$ and those of $g$. Similar results will be also obtained for pairs of bivariate polynomials $(f(X,Y),g(X,Y))$ over an arbitrary field $K$, using information on the canonical decomposition of their resultant $Res_Y(f,g)$, and on the location of their roots in an algebraic closure of $K(X)$, studied in a non-Archimedean setting.
\end{abstract}

\maketitle

\tableofcontents

\section{Introduction}

Many of the classical or more recent irreducibility criteria in the literature rely on the prime factorization of the value that a given polynomial assumes at a specified integer argument. For instance, one may find such results in works by St\"ackel \cite{Stackel}, Weisner \cite{Weisner}, Ore \cite{Ore}, P\'olya and G. Szeg\"o \cite{PolyaSzego}, Dorwart \cite{Dorwart}, Brillhart, Filaseta, Odlyzko, Gross, Cole, Dunn \cite{Brillhart}, \cite{Filaseta1}, \cite{Filaseta2}, \cite{FilasetaGross}, \cite{CDF}, Ram Murty \cite{RamMurty}, Girstmair \cite{Girstmair}, Guersenzvaig \cite{Guersenzvaig}, Singh, Kumar and Garg \cite{SinghKumar}, \cite{SinghGarg}, to name just a few. For other related results we refer the reader to Prasolov \cite{Prasolov}, and Mignotte and \c Stef\u  anescu \cite{MignotteStefanescu}, and for results on the prime values assumed by reducible polynomials to Chen, Kun, Pete and Ruzsa \cite{ChenKunPeteRuzsa}.

Given an integer $m$ and a polynomial $f\in\mathbb{Z}[X]$ we may view $f(m)$ as $Res(f(X),X-m)$, the resultant of $f(X)$ and $X-m$, so one natural way to obtain new irreducibility criteria is to replace the condition that $f$ takes a prime value at some integer argument $m$ by a more general one, that asks the resultant of $f$ and another polynomial $g\in\mathbb{Z}[X]$ to be a prime number. As we shall see, together with some extra conditions on the location of their roots, this will ensure the irreducibility of both polynomials $f$ and $g$. 

There are several irreducibility criteria in the literature that rely on the properties of resultants. One such result, that uses a resultant condition to generalize the irreducibility criteria of Sch\"onemann \cite{Schonemann} and Akira \cite{Akira}, was obtained by Panaitopol and \c Stef\u anescu in \cite{PanStef}. 
\medskip

\emph{Let $(K,v)$ be a discrete valued field, $A$ the valuation ring of $v$, $\overline{K}$ its residue field, $p\in K$ such that $v(p)=1$ and $X$ an indeterminate over $K$. Let $F\in A[X]$ be a monic polynomial such that $F=f_1^{m_1}\cdots f_n^{m_n}+pg$, where $f_1,\dots ,f_n,g\in A[X]$, $f_1,\dots ,f_n$ are monic, $\overline{f}_1,\dots ,\overline{f}_n$ are nonconstant and irreducible in $\overline{K}[X]$ and $\overline{f}_i$ does not divide $\overline{g}$ if $m_i\geq 2$. Let $h=f_1^{m_1}\cdots f_n^{m_n}$. If, for every nontrivial factorization $h=h_1h_2$, there is $j\in\{1,2\}$ and $i\in\{1,\dots,n\}$ such that $f_i$ divides $h_j$ and, for all divisors $d$ of $Res(f_i,g)$, $Res(\overline{f}_i,\overline{h/h_j})\neq \overline{d}$, then the polynomial $F$ is irreducible in $K[X]$.}
\medskip

In a series of papers, Cavachi, M. V\^aj\^aitu and Zaharescu \cite{Cavachi}, \cite{CVZ1}, \cite{CVZ2} used some properties of the resultant to obtain irreducibility criteria for linear combinations of relatively prime polynomials. Their method was later adapted by various authors to find irreducibility conditions for compositions and multiplicative convolutions of polynomials, in both univariate and multivariate cases \cite{BB1}, \cite{BBC}, \cite{AZ1}, \cite{AZ2} and \cite{BBCM}. Some recent results on sums of relatively prime polynomials that use similar techniques relying on resultant properties  have been obtained by Zhang, Yuan and Zhou \cite{Zhang}.
For a standard reference on resultants and their properties, we refer the reader to \cite{GKZ}, for instance.

In this paper we will first obtain irreducibility conditions for pairs of polynomials with integer coefficients whose resultant is a prime number, or is divisible by a suitable prime number, and also for some of their linear combinations. We will then prove similar results for pairs of multivariate polynomials over a field $K$, whose resultant with respect to one of the indeterminate is an irreducible polynomial, or has a suitable irreducible factor. To get a glimpse on the irreducibility conditions that we will obtain, we will only mention here the simplest ones, as follows.
\medskip

{\bf Theorem A} {\em Two polynomials $f,g\in\mathbb{Z}[X]$ with $f(0)g(0)\neq 0$ and $|Res(f,g)|$ a prime number are both irreducible in each one of the following cases:

i) \thinspace \thinspace $|\theta-\xi|>1$ for every root $\theta $ of $f$ and every root $\xi $ of $g$;

ii) $|\frac{1}{\theta}-\frac{1}{\xi}|>1$ for every root $\theta $ of $f$ and every root $\xi $ of $g$.
}
\medskip

In particular, when $g$ is an irreducible quadratic polynomial, the irreducibility conditions for $f$ reduce to finding a suitable prime number represented by a certain quadratic form, as in the following two results:
%In particular, when $g$ is an irreducible quadratic polynomial, the irreducibility conditions for $f$ reduce to expressing prime numbers by a certain quadratic form on its coefficients, as in the following two results:
\medskip

{\bf Theorem B} {\em If we write a prime number as $(a_{0}-a_{2}+a_{4}-\dots
)^{2}+(a_{1}-a_{3}+a_{5}-\dots )^{2}$ with $a_{i}\in \mathbb{Z}$ and $|a_{0}|>\sum_{i> 0}|a_{i}|\sqrt{2^i}$, then the polynomial $\sum_{i}a_{i}X^{i}$ is irreducible over $\mathbb{Q}$.}
\medskip

{\bf Theorem C} {\em Let $a_{0},a_{1},\dots,a_{n}$ be integers with $a_0a_n\neq 0$ and $|a_0|>\sum_{i=1}^{n}|a_{i}|2^i$, and let
$S_{j}=\sum _{i\equiv j\thinspace ({\rm mod}\thinspace 3)}a_{i}$ for $j\in\{0,1,2\}$.
If $S_0^2+S_1^2+S_2^2-S_0S_1-S_0S_2-S_1S_2$ is a prime number, then the polynomial $\sum_{i=0}^{n}a_{i}X^{i}$ is irreducible over $\mathbb{Q}$.
}
\medskip

A bivariate analogue of Theorem A that we will prove is the following result.
\medskip

{\bf Theorem D} {\em Let $K$ be a field, $f(X,Y)=\sum\nolimits_{i=0}^{n}a_{i}Y^{i},%
\ g(X,Y)=\sum\nolimits_{i=0}^{m}b_{i}Y^{i}\in K[X,Y]$, with $a_{0},\dots
,a_{n},b_{0},\dots ,b_{m}\in K[X]$, $a_{0}a_{n}b_{0}b_{m}\neq 0$, and assume that $f$ and $g$ have no nonconstant factors in $K[X]$, and that $Res_{Y}(f,g)$ is irreducible over $K$. If 
\[
\deg a_{0}>\max \{\deg a_{1},\ldots ,\deg a_{n}\}\quad and\quad \deg
b_{m}\geq \max \{\deg b_{0},\ldots ,\deg b_{m-1}\},
\]
then both $f$ and $g$ are irreducible in $K[X,Y]$. The same conclusion will also hold if we interchange the signs $>$ and $\geq$ in these two inequalities.
}
\medskip

The reader may naturally wonder how sharp the above results are. For instance, the condition that $|\theta-\xi|>1$ for every root $\theta $ of $f$ and every root $\xi $ of $g$ in the statement of Theorem A is best possible, in the sense that there exist polynomials $f,g\in \mathbb{Z}[X]$ whose roots $\theta _{i}$ and 
$\xi _{j}$ satisfy $\min_{i,j}|\theta _{i}-\xi _{j}|=1$, $|Res(f,g)|$ is a prime number, while both $f$ and $g$ are reducible over $\mathbb{Q}$. In this respect, let us
consider the polynomials $f(X)=X^{2}-8X+15$ and $g(X)=X^{2}-10X+24$, with
roots $\theta _{1}=3$, $\theta _{2}=5$, $\xi _{1}=4$, $\xi _{2}=6$. Here $|Res(f,g)|=3$, $\min_{i,j}|\theta _{i}-\xi _{j}|=1$, while $f$ and $g$ are obviously reducible over $\mathbb{Q}$. The condition that $|\frac{1}{\theta}-\frac{1}{\xi}|>1$ for every root $\theta $ of $f$ and every root $\xi $ of $g$ is also best possible, and to see this, one may consider the reciprocals of $f$ and $g$, that is $\tilde{f}(X)=15X^{2}-8X+1$ and $\tilde{g}(X)=24X^{2}-10X+1$.

The irreducibility criteria that we will obtain will be deduced from some more general results that provide upper bounds for the number of irreducible factors of our polynomials, multiplicities counted, and which will be stated and proved in the following section. In Section \ref{magnitudeoffandg} we will prove some irreducibility criteria that depend on the magnitude of the coefficients of $f$ and $g$. Some irreducibility criteria for the case when $g$ is linear and for the case when $g$ is quadratic will be given in Section \ref{glinear} and Section \ref{gquadratic}, respectively. 
In Section \ref{LinearCombinations} we will show that in some cases the irreducibility of a pair $(f(X),g(X))$ extends locally to linear combinations $Mf(X)+Ng(X)$ with small integer scalars $M,N$. Some analogous results for multivariate polynomials over arbitrary fields will be provided in Section \ref{multivariate}. The paper will end with a series of examples of polynomials that are proved to be irreducible by using some of these irreducibility criteria. 

\section{General irreducibility criteria for pairs $(f,g)$ of polynomials}\label{ggeneral}
We will first fix some definitions and notations. 

\begin{definition}\label{PrimaDefinitie}
i) For a nonzero integer $n$ we will denote by $\Omega(n)$ the total number of prime factors of $n$, counting multiplicities ($\Omega(\pm 1)=0$). 

ii) For an integer $n\geq 1$, and each positive integer $k$ we define
\[
d_k(n):= \max\{d:d\mid n\ {\rm and}\ d\leq \sqrt[k+1]{n}\thinspace \}.
\]

We may view $d_k(n)$ as {\em the best approximation from below with divisors of $n$, of the $(k+1)$th root of $n$}.
We note that $d_k(n)$ is a decreasing, eventually constant sequence. More precisely, $d_k(n)$ will eventually become equal to $1$.
Throughout this paper, we will only use this definition in the case that $n=|Res(f,g)|$, with $(f,g)$ a pair of nonconstant, relatively prime polynomials with integer coefficients, and instead of $d_k(|Res(f,g)|)$, we will simply write $d_k$.
\end{definition}

We will first use the properties of $d_k$ to study the irreducible factors of $f$ and $g$ in the case when the distances between the roots of $f$ and those of $g$ are sufficiently large. Later on we will also consider the case when the roots of $f$ are close to those of $g$, and the corresponding results will rely on the properties of $\frac{Res(f,g)}{d_k}$. 

The irreducibility criteria that we will first prove will be derived from two more general results that provide upper bounds for the total number of irreducible factors simultaneously for $f$ and for $g$. The statements of these results are symmetric with respect to $f$ and $g$, and require no information on their coefficients.

\begin{theorem}
\label{thm0}Let the roots of two relatively prime polynomials $f,g\in \mathbb{Z}[X]$ be $\theta _{1},\dots ,\theta _{n}$ and $\xi _{1},\dots ,\xi _{m}$, respectively, and assume that $\theta _{1}\cdots \theta _{n}\xi _{1}\cdots \xi _{m}\neq 0$. If for some positive divisor $d$ of $Res(f,g)$ we have 
\[
\min_{i,j}|\theta _{i}-\xi _{j}|>d^{\smallskip 1/\min (m,n)}\quad or \quad \min_{i,j}\left|\frac{1}{\theta _{i}}-\frac{1}{\xi _{j}}\right|>d^{1/\min (m,n)},
\]
then each one of $f$ and $g $ is a product of at most $\Omega (\frac{Res(f,g)}{d})$ irreducible factors over $\mathbb{Q}$. 
\end{theorem}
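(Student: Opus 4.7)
The plan is to factor $f$ into irreducibles in $\mathbb{Z}[X]$, apply the multiplicativity of the resultant to decompose $Res(f,g)$ accordingly, convert the root-separation hypothesis into a uniform lower bound on each factor, and then count prime factors.

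By Gauss's lemma, write $f = f_1 \cdots f_s$ with $f_1, \ldots, f_s$ irreducible in $\mathbb{Z}[X]$, so that $Res(f,g) = \prod_{k=1}^{s} Res(f_k, g)$, and each $Res(f_k, g)$ is a nonzero integer since $f$ and $g$ are coprime. Writing $n_k = \deg f_k$ and denoting by $\theta_1^{(k)}, \ldots, \theta_{n_k}^{(k)}$ the roots of $f_k$, the standard product-formula expansion gives
\[
|Res(f_k, g)| \;=\; |\mathrm{lc}(f_k)|^{m}\, |\mathrm{lc}(g)|^{n_k} \prod_{i=1}^{n_k} \prod_{j=1}^{m} \bigl|\theta_i^{(k)} - \xi_j\bigr| \;\geq\; \prod_{i,j} \bigl|\theta_i^{(k)} - \xi_j\bigr|,
\]
the leading coefficients being nonzero integers of absolute value at least $1$. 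Invoking $|\theta_i^{(k)} - \xi_j| > d^{1/\min(m,n)}$ and using $n_k \geq 1$ together with $m/\min(m,n) \geq 1$, one obtains the key strict bound $|Res(f_k, g)| > d^{\,n_k m/\min(m,n)} \geq d$.

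The next step is a multiplicative distribution of $d$ across the factors. Since $d \mid \prod_{k} Res(f_k, g)$, a prime-by-prime allocation of $p$-adic valuations produces positive integers $d_1, \ldots, d_s$ with $d_k \mid Res(f_k,g)$ and $\prod_k d_k = d$. For each $k$, the quotient $|Res(f_k, g)|/d_k$ is a positive integer strictly greater than $d/d_k \geq 1$, hence at least $2$, which gives $\Omega(|Res(f_k, g)|) \geq \Omega(d_k) + 1$. Summing over $k$,
\[
\Omega(|Res(f,g)|) \;=\; \sum_{k=1}^{s} \Omega(|Res(f_k,g)|) \;\geq\; \Omega(d) + s,
\]
or equivalently $s \leq \Omega(Res(f,g)/d)$. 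The same argument applied to an irreducible factorization of $g$ delivers the symmetric bound on the number of irreducible factors of $g$.

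The reciprocal case can be reduced to the first via the reciprocal polynomials $\tilde{f}(X) = X^n f(1/X)$ and $\tilde{g}(X) = X^m g(1/X)$, which are well-defined of full degrees $n$ and $m$ thanks to $\theta_1 \cdots \theta_n \xi_1 \cdots \xi_m \neq 0$; reciprocation preserves both the number of irreducible factors (for polynomials with nonzero constant term) and, up to sign, the resultant, while converting each $|\theta_i - \xi_j|$ into $|1/\theta_i - 1/\xi_j|$, so applying the first case to $\tilde f, \tilde g$ gives the claim. The subtle point throughout is the strictness: it is the strict hypothesis $|\theta_i - \xi_j| > d^{1/\min(m,n)}$ that propagates to $|Res(f_k, g)| > d$ (rather than $\geq d$), and this is precisely what forces each quotient $|Res(f_k,g)|/d_k$ past $1$ and supplies the extra prime factor required to offset the $-\Omega(d)$ shift in the final bound.
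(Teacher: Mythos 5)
Your proof is correct and its skeleton agrees with the paper's: in both cases everything hinges on showing, via the product formula for the resultant, that every irreducible factor $f_k$ of $f$ satisfies $\lvert Res(f_k,g)\rvert > d$, because the exponent $n_k m/\min(m,n)$ is at least $1$ and the leading coefficients contribute at worst a factor of $1$; the reciprocal case is handled in both by passing to $\tilde f, \tilde g$. Where you genuinely diverge is in the final counting step. The paper proves an intermediate lemma (Lemma~\ref{thm-1}) and runs a pigeonhole contradiction: if $f$ had more than $\Omega(Res(f,g)/d)$ nonconstant factors, then in the factorization $d\cdot\frac{Res(f,g)}{d}=\prod_i Res(f_i,g)$ one of the integers $Res(f_i,g)$ would survive the removal of the $\Omega(Res(f,g)/d)$ primes of $Res(f,g)/d$ and hence would have to divide $d$ — contradicting the lower bound $\lvert Res(f_i,g)\rvert>d$. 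You instead argue forward, without contradiction: since $d\mid\prod_k Res(f_k,g)$, allocate $d$ multiplicatively as $d=\prod_k d_k$ with $d_k\mid Res(f_k,g)$, note that $\lvert Res(f_k,g)\rvert/d_k > d/d_k\ge 1$ forces each quotient to contribute at least one prime, and sum $\Omega$ across factors to get $\Omega(Res(f,g))\ge\Omega(d)+s$. Both routes are legitimate and of comparable length; yours has the advantage that the multiplicative allocation of $d$ is immediately justified by $p$-adic valuations, whereas the paper's claim that ``some $Res(f_i,g)$ must divide $d$'' is stated without proof and, while true, requires a slightly subtler valuation-counting argument to make rigorous. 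The paper's version, on the other hand, is formulated at the level of an arbitrary factorization into factors of degree $\ge r$ (its lemma takes $r$ as a parameter), which gives it more reach for the later results where $f$ is assumed to have no factors of small degree; your argument is pinned to the full irreducible factorization, which is exactly what Theorem~\ref{thm0} needs but would have to be reworked to recover the stronger lemma.
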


\begin{theorem}
\label{thm0dk}Let the roots of two relatively prime polynomials $f,g\in \mathbb{Z}[X]$ be $\theta _{1},\dots ,\theta _{n}$ and $\xi _{1},\dots ,\xi _{m}$, respectively, and assume that $\theta _{1}\cdots \theta _{n}\xi _{1}\cdots \xi _{m}\neq 0$. If for some integer $k>0$ we have 
\[
\min_{i,j}|\theta _{i}-\xi _{j}|>d_k^{\smallskip 1/\min (m,n)}\quad or \quad \min_{i,j}\left|\frac{1}{\theta _{i}}-\frac{1}{\xi _{j}}\right|>d_k^{1/\min (m,n)},
\]
then each one of $f$ and $g $ is a product of at most $k$ irreducible factors over $\mathbb{Q}$. 
\end{theorem}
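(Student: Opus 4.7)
The plan is to mimic the resultant-multiplicativity strategy that underlies Theorem \ref{thm0}, but to replace its $\Omega$-counting step by a sharper combinatorial observation about $d_k$. First I would factor $f = f_1\cdots f_s$ in $\mathbb{Z}[X]$ into irreducibles (so $s$ is exactly the quantity to be bounded by $k$), and use the multiplicativity of the resultant in its first argument to write
\[
|Res(f,g)| \;=\; \prod_{i=1}^{s} N_i,\qquad N_i := |Res(f_i,g)|,
\]
with each $N_i$ a positive integer that divides $n := |Res(f,g)|$.

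The key lower bound $N_i>d_k$ comes from the root-distance hypothesis. Writing $f_i(X)=c_i\prod_\ell(X-\alpha_\ell)$ and $g(X)=b_m\prod_j(X-\xi_j)$, the product formula $N_i = |c_i|^m|b_m|^{\deg f_i}\prod_{\ell,j}|\alpha_\ell-\xi_j|$, the hypothesis $|\theta_\ell-\xi_j|>d_k^{1/\min(m,n)}$, and the trivial estimates $|c_i|,|b_m|\geq 1$ and $m\geq\min(m,n)$ combine to give $N_i> d_k^{m/\min(m,n)}\geq d_k$. The symmetric reciprocal hypothesis is handled by running the same argument on the reciprocal polynomials $\tilde f(X)=X^n f(1/X)$ and $\tilde g(X)=X^m g(1/X)$, whose roots are the $1/\theta_\ell$ and $1/\xi_j$, whose resultant has the same absolute value as $Res(f,g)$ (so the same $d_k$ applies), and whose irreducibility over $\mathbb{Q}$ is equivalent to that of $f$ and $g$ thanks to the assumption $\theta_1\cdots\theta_n\xi_1\cdots\xi_m\neq 0$.

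The decisive step is then a purely arithmetic observation about divisors. Since $d_k$ is by definition the largest divisor of $n$ that does not exceed $n^{1/(k+1)}$, every divisor of $n$ lying strictly above $d_k$ must in fact exceed $n^{1/(k+1)}$; otherwise it would contradict the maximality of $d_k$. Applying this to each of the divisors $N_i$ and multiplying gives
\[
n \;=\; \prod_{i=1}^{s} N_i \;>\; \bigl(n^{1/(k+1)}\bigr)^{s} \;=\; n^{s/(k+1)},
\]
which forces $s<k+1$, i.e. $s\leq k$. The same reasoning, with the roles of $f$ and $g$ interchanged, bounds the number of irreducible factors of $g$ by $k$ as well.

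The main subtlety lies not in any single calculation but in spotting the right way to exploit the definition of $d_k$: the fact that its maximality forces the divisors of $n$ into two disjoint regions $[1,d_k]$ and $(n^{1/(k+1)},n]$ with no divisor in between is precisely what lets a one-line product-counting argument replace the prime-factor bookkeeping behind Theorem \ref{thm0}. All remaining ingredients — the leading-coefficient estimate, the integrality and divisibility of $N_i$, and the passage to reciprocal polynomials — are standard and routine.
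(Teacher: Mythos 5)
Your proof is correct, and it shares with the paper the same skeleton: factor $f$ into irreducibles, split $|Res(f,g)|$ multiplicatively as $\prod N_i$ with $N_i = |Res(f_i,g)|$, and use the root-separation hypothesis together with the integrality of leading coefficients to obtain $N_i > d_k$ for every $i$. The divergence is in the final counting step. The paper routes the argument through Lemma~\ref{thm-1dk}: assuming $\ell > k$ irreducible factors, it pigeonholes to locate \emph{one} factor with $N_i \leq \sqrt[\ell]{|Res(f,g)|}$, identifies this bound with $d_{\ell-1}$, and then invokes the monotonicity $d_{\ell-1}\leq d_k$ to reach the contradiction $N_i\leq d_k < N_i$. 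You instead keep \emph{all} the $N_i$'s in play, observe that the maximality in the definition of $d_k$ creates a gap in the divisor set of $n=|Res(f,g)|$ (no divisor strictly between $d_k$ and $n^{1/(k+1)}$), so each $N_i > n^{1/(k+1)}$, and multiply to get $n > n^{s/(k+1)}$ and hence $s\leq k$. Your version is a touch more streamlined for this particular theorem — it never needs the auxiliary quantity $d_{\ell-1}$ or the monotonicity of the sequence $(d_i)$ — and your treatment of the reciprocal-root hypothesis via the reciprocal polynomials $\tilde f,\tilde g$ is also cleaner than the paper's direct rewriting of the resultant product. The trade-off is that the paper's lemma is stated in greater generality (with a degree floor $r$ on the irreducible factors), which is reused elsewhere; your direct proof handles only the $r=1$ case relevant here. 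One small point worth spelling out: the final inequality $n>n^{s/(k+1)}$ only forces $s\leq k$ once one knows $n>1$, but this is automatic under your hypotheses since $n=|a_n|^m|b_m|^n\prod_{i,j}|\theta_i-\xi_j| > d_k^{\max(m,n)}\geq 1$.
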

As we shall see in Proposition \ref{EquivalentConditions}, for $d=d_k$, the conclusion in Theorem \ref{thm0} coincides with that in Theorem \ref{thm0dk} for $k\leq \frac{\max (m,n)-1}{2}$. Two immediate consequences of Theorem \ref{thm0} and Theorem \ref{thm0dk} are the following irreducibility criteria for pairs of polynomials, respectively. 
\begin{corollary}
\label{coro1thm0}Let the roots of $f,g\in \mathbb{Z}[X]$ be $\theta
_{1},\dots ,\theta _{n}$ and $\xi _{1},\dots ,\xi _{m}$, respectively, and
assume that $\theta _{1}\cdots \theta _{n}\xi _{1}\cdots \xi _{m}\neq 0$, and that $|Res(f,g)|=p\cdot q$, where $p$ is a prime number and $q$ is a
positive integer. If 
\[
\min_{i,j}|\theta _{i}-\xi _{j}|>q^{1/\min(m,n)}\quad or \quad \min_{i,j}\left|\frac{1}{\theta _{i}}-\frac{1}{\xi _{j}}\right|>q^{1/\min (m,n)},
\]
then both $f$ and $g$ are irreducible over $\mathbb{Q}$.
\end{corollary}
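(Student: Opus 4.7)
The plan is to derive this corollary as an immediate specialization of Theorem \ref{thm0} with the divisor chosen to be $d := q$. First I would verify that the hypotheses of Theorem \ref{thm0} are satisfied: because $|Res(f,g)| = pq > 0$, the resultant is nonzero, which forces $f$ and $g$ to be relatively prime in $\mathbb{Z}[X]$; the nonvanishing of the product $\theta_1\cdots\theta_n\xi_1\cdots\xi_m$ is assumed verbatim; and $q$ is a positive divisor of $Res(f,g)$ since $Res(f,g) = \pm pq$.

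Next, I would observe that the root-separation condition of the corollary, namely $\min_{i,j}|\theta_i-\xi_j| > q^{1/\min(m,n)}$ (or its reciprocal counterpart), is exactly the hypothesis of Theorem \ref{thm0} when specialized to $d=q$. Invoking Theorem \ref{thm0}, each of $f$ and $g$ factors over $\mathbb{Q}$ into at most
\[
\Omega\!\left(\frac{Res(f,g)}{q}\right) \;=\; \Omega(\pm p) \;=\; 1
\]
irreducible factors, counted with multiplicities. Since $f$ and $g$ both have a nonzero root by hypothesis, they are nonconstant and therefore admit at least one irreducible factor each. Combined with the upper bound of one, this forces each of $f$ and $g$ to be irreducible over $\mathbb{Q}$, which is the desired conclusion.

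There is no genuine obstacle to overcome; the entire substance is already packaged inside Theorem \ref{thm0}. The only insight required is the elementary one of writing $|Res(f,g)|$ as a prime times a cofactor and taking the cofactor $q$ as the divisor $d$ in Theorem \ref{thm0}, so that the upper bound $\Omega(Res(f,g)/d)$ collapses to $\Omega(\pm p) = 1$ and the factor count becomes exactly one.
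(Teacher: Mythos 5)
Your proof is correct and follows exactly the route the paper intends: Corollary~\ref{coro1thm0} is presented as an immediate consequence of Theorem~\ref{thm0}, obtained by taking the divisor $d=q$, whereupon $\Omega(Res(f,g)/q)=\Omega(\pm p)=1$ and the upper bound forces each of $f$ and $g$ (nonconstant, since they have nonzero roots) to have exactly one irreducible factor.
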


\begin{corollary}
\label{corothm0d1}Let the roots of $f,g\in \mathbb{Z}[X]$ be $\theta
_{1},\dots ,\theta _{n}$ and $\xi _{1},\dots ,\xi _{m}$, respectively, and
assume that $\theta _{1}\cdots \theta _{n}\xi _{1}\cdots \xi _{m}\neq 0$. If 
\[
\min_{i,j}|\theta _{i}-\xi _{j}|>d_1^{1/\min(m,n)}\quad or \quad \min_{i,j}\left|\frac{1}{\theta _{i}}-\frac{1}{\xi _{j}}\right|>d_1^{1/\min (m,n)},
\]
then both $f$ and $g$ are irreducible over $\mathbb{Q}$.
\end{corollary}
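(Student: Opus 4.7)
The plan is to obtain Corollary \ref{corothm0d1} as the special case $k=1$ of Theorem \ref{thm0dk}. Substituting $k=1$ into that theorem, the conclusion becomes that each of $f$ and $g$ is a product of at most one irreducible factor over $\mathbb{Q}$, counting multiplicities. Both polynomials are nonconstant, since each possesses at least one nonzero root by the standing hypothesis $\theta_1\cdots \theta_n \xi_1 \cdots \xi_m \neq 0$, so ``a product of at most one irreducible factor'' forces each polynomial to consist of exactly one irreducible factor, i.e., to be irreducible.

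The hypotheses of Theorem \ref{thm0dk} for $k=1$ match those of the corollary directly: the nonvanishing-of-roots assumption is identical, and the distance condition is precisely what is being assumed. The one item that calls for a moment's thought is relative primality, which Theorem \ref{thm0dk} requires in its statement. This is already implicit in the corollary, because $d_1$ is defined only when $|Res(f,g)|$ is a positive integer, so the very appearance of $d_1$ in the hypothesis presupposes $Res(f,g) \neq 0$, i.e., that $f$ and $g$ share no root in $\overline{\mathbb{Q}}$, i.e., that $\gcd(f,g)=1$ in $\mathbb{Q}[X]$. One may also read this directly off the distance inequality: since $d_1 \geq 1$ we have $d_1^{1/\min(m,n)} \geq 1$, so the strict inequality forces every $|\theta_i - \xi_j|$ (respectively every $|\frac{1}{\theta_i} - \frac{1}{\xi_j}|$, where the nonvanishing of all roots ensures that equality of reciprocals is equivalent to equality of the roots themselves) to be strictly positive.

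Hence Theorem \ref{thm0dk} applies verbatim and delivers the irreducibility of both $f$ and $g$. I do not anticipate any genuine obstacle here: the statement is exactly the $k=1$ instance of the previous theorem, and the text introducing it explicitly labels it an immediate consequence. The substantive work is already carried out in the proof of Theorem \ref{thm0dk}; this corollary is just the cleanest reading of that theorem at its smallest nontrivial parameter.
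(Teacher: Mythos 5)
Your proof is correct and follows exactly the paper's intended route: the paper itself introduces Corollary~\ref{corothm0d1} as an ``immediate consequence'' of Theorem~\ref{thm0dk}, which is precisely the $k=1$ specialization you carry out, and your observation that the relative primality hypothesis is already implicit (both via the definition of $d_1$ and via the strict distance inequality) is a reasonable fleshing out of what the paper leaves unstated.
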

We will also prove that the conditions in these two irreducibility criteria are equivalent if at least one of $f$ and $g$ has degree at least $3$.
\begin{proposition}\label{EquivalentConditions} i) For $d=d_k$, the conclusions in Theorem \ref{thm0} and Theorem \ref{thm0dk} coincide for $k\leq \frac{\max (m,n)-1}{2}$;\hspace{1mm}  
ii) Corollary \ref{coro1thm0} and Corollary \ref{corothm0d1} are equivalent if $\max (m,n)\geq 3$.
\end{proposition}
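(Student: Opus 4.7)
The plan is to reduce both parts of the proposition to an arithmetic analysis of $d_k$ inside $N=|Res(f,g)|$, combined with a sharper bound on $d_k$ coming from the distance hypothesis. Writing $\delta=\min_{i,j}|\theta_i-\xi_j|$, the product formula $|Res(f,g)|=|a_n|^{m}|b_m|^{n}\prod_{i,j}|\theta_i-\xi_j|$ together with the hypothesis $\delta>d_k^{1/\min(m,n)}$ gives
\[
\prod_{i,j}|\theta_i-\xi_j|\;>\;d_k^{\,mn/\min(m,n)}\;=\;d_k^{\,\max(m,n)},
\]
whence $d_k<N^{1/\max(m,n)}\leq N^{1/(2k+1)}$, the last inequality being a restatement of $k\leq(\max(m,n)-1)/2$. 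The central claim to establish is the arithmetic identity $\Omega(N/d_k)=k$ under these hypotheses; this makes the bound $\Omega(N/d_k)$ of Theorem \ref{thm0} (with $d=d_k$) equal to the bound $k$ of Theorem \ref{thm0dk}, so their conclusions coincide.

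For $k=1$, which underlies part (ii), the arithmetic claim reduces to the lemma: \emph{if $d_1<N^{1/3}$, then $N/d_1$ is prime.} I would prove it by contradiction. Suppose $N/d_1=ab$ with $1<a\leq b$; then $d_1\cdot a$ is a divisor of $N$ strictly larger than $d_1$, so the maximality of $d_1$ among divisors of $N$ not exceeding $\sqrt{N}$ forces $d_1\cdot a>\sqrt{N}$, giving $a>\sqrt{N}/d_1$. Consequently $b=(N/d_1)/a<\sqrt{N}$, and since $b\mid N$ we conclude $b\leq d_1$; combining, $ab\leq d_1^2$, so $N\leq d_1^3$, contradicting $d_1<N^{1/3}$.

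Part (ii) then follows on setting $p=N/d_1$: the hypothesis $\delta^{\min(m,n)}>d_1$ of Corollary \ref{corothm0d1} rewrites as $\delta^{\min(m,n)}>N/p$, which is precisely the hypothesis of Corollary \ref{coro1thm0} with this particular prime $p$. The converse implication holds unconditionally via a short case analysis: if $p\leq\sqrt{N}$, then $N/p\geq\sqrt{N}\geq d_1$, so $\delta^{\min(m,n)}>N/p\geq d_1$; if instead $p>\sqrt{N}$, then $p^2\nmid N$ and every divisor of $N$ exceeding $\sqrt{N}$ has the form $p\cdot e$ with $e\mid N/p$, so $p$ is the smallest divisor of $N$ above $\sqrt{N}$ and $d_1=N/p$.

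For general $k$ in part (i), the strategy is analogous: assuming $\Omega(N/d_k)\neq k$, I would construct a divisor of $N$ in the forbidden interval $(d_k,N^{1/(k+1)}]$ by multiplying $d_k$ by a well-chosen sub-product of prime factors of $N/d_k$, contradicting maximality. The hard part will be the combinatorics for $k\geq 2$: $d_k$ and $N/d_k$ may share primes, and one must balance candidate sub-products of several sizes against the threshold $N^{1/(k+1)}$. A careful divisor-counting argument, most naturally organised by induction on $k$ or on $\Omega(N)$, should deliver the required equality $\Omega(N/d_k)=k$ in the stated range.
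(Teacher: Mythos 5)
Your part (ii) is correct and essentially self-contained. The lemma you isolate (\emph{if $d_1<N^{1/3}$ then $N/d_1$ is prime}) is a clean encapsulation of the arithmetic that the paper carries out inline via inequalities on the least prime factor, and your converse direction (given a prime $p$ with $N=pq$ and $\delta^{\min(m,n)}>q$, deduce $\delta^{\min(m,n)}>d_1$) via the dichotomy $p\le\sqrt N$ versus $p>\sqrt N$ is a bit more elementary than the paper's, which re-derives $p>q$ from the root-product formula $pq>q^{\max(m,n)}$. Both directions and the required range $\max(m,n)\ge 3$ are handled correctly.

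For part (i), however, there is a genuine gap. The identity you set as the central claim, $\Omega(N/d_k)=k$, is too strong and false in general: if $N$ is prime and $k\ge 2$ then $d_k=1$ and $\Omega(N/d_k)=1<k$, and such $N$ can arise as $|Res(f,g)|$ with the distance hypothesis and $\max(m,n)\ge 2k+1$ all satisfied (take $g$ linear and $f$ of degree $2k+1$ with all roots far from the single root of $g$). What is actually needed --- and what the paper proves --- is the one-sided inequality $\Omega(N/d_k)\le k$, which is exactly what makes the bound of Theorem \ref{thm0} with $d=d_k$ at least as strong as the bound $k$ of Theorem \ref{thm0dk}. Your $k=1$ lemma does deliver this (and more, since $N/d_1>1$ forces $\Omega\ge 1$), but for $k\ge 2$ you supply no argument: you acknowledge that ``the hard part will be the combinatorics'' and defer to an unspecified divisor-counting argument that ``should deliver'' the claim. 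The paper's proof instead takes the least prime factor $p$ of $N/d_k$, uses $p^{k+1}\le N/d_k$ together with the maximality of $d_k$ to extract an upper bound on $p$ in terms of $N^{1/\max(m,n)}$, uses $pd_k>N^{1/(k+1)}$ and the same bound $d_k<N^{1/\max(m,n)}$ to extract a lower bound, and shows the two are incompatible precisely when $k\le(\max(m,n)-1)/2$. That is the piece you still need to reproduce or replace, and until you do, part (i) is not proved.
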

The simplest instance of both Corollary \ref{coro1thm0} and Corollary \ref{corothm0d1} is our Theorem A, which may be obtained by letting $q=1$ (implying that $d_1=1$ as well):

\begin{corollary}
\label{coro2thm0} 
Two polynomials $f,g\in\mathbb{Z}[X]$ with $f(0)g(0)\neq 0$ and $|Res(f,g)|$ a prime number are both irreducible in each one of the following cases:

i) \thinspace \thinspace $|\theta-\xi|>1$ for every root $\theta $ of $f$ and every root $\xi $ of $g$;

ii) $|\frac{1}{\theta}-\frac{1}{\xi}|>1$ for every root $\theta $ of $f$ and every root $\xi $ of $g$.
\end{corollary}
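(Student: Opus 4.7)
The plan is to recognize this as a direct specialization of Corollary \ref{coro1thm0} (equivalently, Corollary \ref{corothm0d1}). First I would verify the standing hypotheses of that corollary. Since $|Res(f,g)|=p$ is a prime, in particular $Res(f,g)\neq 0$, so $f$ and $g$ share no common root in $\overline{\mathbb{Q}}$ and hence are relatively prime in $\mathbb{Z}[X]$. The assumption $f(0)g(0)\neq 0$ says exactly that $0$ is not a root of $f$ or $g$, which is the condition $\theta_1\cdots\theta_n\xi_1\cdots\xi_m\neq 0$ required in Corollary \ref{coro1thm0}. Finally, writing $|Res(f,g)|=p\cdot 1$ puts us in the form $|Res(f,g)|=p\cdot q$ with $q=1$.

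Now with $q=1$, the threshold $q^{1/\min(m,n)}$ equals $1$ regardless of $m$ and $n$. In case (i), the hypothesis that $|\theta-\xi|>1$ for every root $\theta$ of $f$ and every root $\xi$ of $g$ is precisely
\[
\min_{i,j}|\theta_i-\xi_j|>1=q^{1/\min(m,n)}.
\]
Similarly, in case (ii) the hypothesis $|1/\theta-1/\xi|>1$ for all pairs of roots reads as $\min_{i,j}|1/\theta_i-1/\xi_j|>q^{1/\min(m,n)}$. In either case, Corollary \ref{coro1thm0} applies and yields that both $f$ and $g$ are irreducible over $\mathbb{Q}$.

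One may equivalently argue via Corollary \ref{corothm0d1}: when $|Res(f,g)|=p$ is prime, its only positive divisors are $1$ and $p$, and since $\sqrt{p}<p$ while $1\leq\sqrt{p}$, the definition of $d_1$ forces $d_1=1$. Thus the threshold $d_1^{1/\min(m,n)}$ also equals $1$, and the same specialization goes through. There is no genuine obstacle here: all the analytic content (the use of the resultant identity and the bounds on root separations) is packaged into the proofs of Theorems \ref{thm0} and \ref{thm0dk}. The present corollary is essentially a bookkeeping step, noting that the prime-resultant case is the degenerate situation $q=1$ (or $d_1=1$) of the general statement.
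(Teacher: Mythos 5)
Your proof is correct and matches the paper's intended derivation exactly: the paper notes immediately before the statement that it is "the simplest instance of both Corollary \ref{coro1thm0} and Corollary \ref{corothm0d1}... obtained by letting $q=1$ (implying that $d_1=1$ as well)." Your verification of the standing hypotheses (relative primality from $Res(f,g)\neq 0$, and $\theta_1\cdots\theta_n\xi_1\cdots\xi_m\neq 0$ from $f(0)g(0)\neq 0$) is also sound.
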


For the proof of Theorem \ref{thm0} and Theorem \ref{thm0dk} we will need the following two lemmas, that are of independent interest in studying the irreducible factors of a polynomial. In both lemmas we will consider the general situation in which $f$ is assumed to have no nonconstant factors of degrees less than some positive integer constant $r$.

\begin{lemma}
\label{thm-1}Let $f,g\in \mathbb{Z}[X]$ with $f(0)\neq 0$ be relatively prime polynomials, let $d$ be a positive divisor of $Res(f,g)$, and assume that $f$ has no nonconstant factors of degree less than a positive integer $r<n$. If $|g(\theta )|>\sqrt[r]{d}$ for each root $\theta $ of $f$, or if $|\frac{g(\theta )}{\theta ^{\deg g}}|>\sqrt[r]{d}$ for each root $\theta $ of $f$, then $f$ is a product of at most $\Omega (\frac{Res(f,g)}{d})$ irreducible factors over $\mathbb{Q}$.
\end{lemma}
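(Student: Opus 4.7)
The plan is to factor $f$ into its irreducible factors over $\mathbb{Z}$, say $f = f_1 \cdots f_s$ (each listed with its multiplicity); by Gauss's lemma each $f_i$ may be taken in $\mathbb{Z}[X]$, and the assumption on $f$ forces $\deg f_i \geq r$ for every $i$. Writing $f_i(X) = a_i \prod_{j} (X-\theta_{i,j})$ over $\mathbb{C}$, I would use the Poisson formula
\[
Res(f_i,g) = a_i^{\deg g}\prod_{j} g(\theta_{i,j}),
\]
together with the multiplicativity $Res(f,g) = \prod_i Res(f_i,g)$.

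For the first hypothesis, since $|a_i|\geq 1$ and $|g(\theta_{i,j})| > d^{1/r}$ at each root of $f_i$, I would conclude at once
\[
|Res(f_i,g)| \geq \prod_{j} |g(\theta_{i,j})| > d^{\deg f_i / r} \geq d.
\]
For the second hypothesis I would first rewrite, using Vieta's formula $|\prod_j \theta_{i,j}| = |f_i(0)|/|a_i|$,
\[
|Res(f_i,g)| = |f_i(0)|^{\deg g}\prod_{j}\left|\frac{g(\theta_{i,j})}{\theta_{i,j}^{\deg g}}\right|,
\]
and then apply $|f_i(0)| \geq 1$ (valid because $f(0)\neq 0$ implies $f_i(0)\neq 0$) to reach the same lower bound $|Res(f_i,g)| > d$. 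This reformulation is what lets me handle the second case without needing $g(0)\neq 0$.

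With $|Res(f_i,g)| > d$ in hand, the final step is to distribute the prime factorisation of $d$ across the resultants. Since $d \mid \prod_{i} Res(f_i,g)$, for each prime $p$ one can partition the exponent $v_p(d)$ among the exponents $v_p(Res(f_i,g))$; doing this for every prime yields positive integer divisors $d_i \mid Res(f_i,g)$ with $d_1 \cdots d_s = d$. Because $d_i \leq d < |Res(f_i,g)|$ and $d_i \mid Res(f_i,g)$, the quotient $|Res(f_i,g)|/d_i$ is a positive integer at least $2$, so $\Omega(Res(f_i,g)/d_i) \geq 1$. Summing over $i$ gives
\[
\Omega\bigl(Res(f,g)/d\bigr) = \sum_{i} \Omega\bigl(Res(f_i,g)/d_i\bigr) \geq s,
\]
which is the desired bound. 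The main obstacle is recognising that the analytic inequality $|Res(f_i,g)| > d$ has to be converted into a statement about prime factorisations through this distribution step; once this is in place, the two cases of the lemma become parallel thanks to the Vieta reformulation.
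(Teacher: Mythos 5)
Your proof is correct and follows essentially the same route as the paper: multiplicativity of the resultant, the lower bound $|Res(f_i,g)|>d$ for each irreducible factor $f_i$ (using $\deg f_i\geq r$ and, for the second hypothesis, the Vieta reformulation via $|f_i(0)|\geq 1$), and a divisor-distribution argument that converts these bounds into the stated $\Omega$-estimate. The only difference is presentational — the paper argues by contradiction, assuming more than $\Omega(Res(f,g)/d)$ factors and pigeonholing a factor whose resultant with $g$ divides $d$, whereas you give the direct contrapositive by distributing $d=\prod_i d_i$ with $d_i\mid Res(f_i,g)$ and counting at least one prime from each quotient $Res(f_i,g)/d_i$.
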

\begin{proof}
First of all we should note that our assumption that $f$ has no nonconstant factor of degree smaller than $r$ already provides us with the naive upper bound $\lfloor\frac{n}{r}\rfloor$ for the total number of irreducible factors of $f$ counted with multiplicities. Assume now that $f$ and $g$ factorize as 
\begin{eqnarray*}
f(X) &=&a_{n}(X-\theta _{1})\cdots (X-\theta _{n}), \\
g(X) &=&b_{m}(X-\xi _{1})\cdots (X-\xi _{m}),
\end{eqnarray*}
with $a_{n},b_{m}\in \mathbb{Z}$, $a_{n}b_{m}\neq 0$ and $\theta _{1},\ldots
,\theta _{n},\xi _{1},\ldots ,\xi _{m}\in \mathbb{C}$. Suppose that our polynomial $f$
decomposes as $f(X)=f_{1}(X)\cdots f_{k}(X)$, with $f_{1},\ldots ,f_{k}\in 
\mathbb{Z}[X]$, $\deg f_{1}\geq r,\ldots ,\deg f_{k}\geq r$ and $k>\Omega
(Res(f,g)/d)$. Then, since 
\begin{equation}\label{descompunere}
d\cdot \frac{Res(f,g)}{d}=Res(f_{1},g)\cdots Res(f_{k},g),
\end{equation}
and in the right side of this equality we have more terms than the total number of prime factors of $\frac{Res(f,g)}{d}$ counted with their multiplicities, we deduce that at least one of the integers $Res(f_{1},g),\ldots
,Res(f_{k},g)$, say $Res(f_{1},g)$, will not be affected after divison by $\frac{Res(f,g)}{d}$ in both sides of (\ref{descompunere}). This shows that $Res(f_{1},g)$ must divide $d$, so in particular we must have $|Res(f_{1},g)|\leq d$. Now, since $f_{1}$ is a factor of $f$, it will
factorize over $\mathbb{C}$ as $f_{1}(X)=c_{t}(X-\theta _{1})\cdots
(X-\theta _{t})$, say, with $t\geq r$ and $|c_{t}|\geq 1$, so \vspace{-1mm} 
\begin{equation}\label{InegRez}
|Res(f_{1},g)|=|c_{t}|^{m}|b_{m}|^{t}\prod\limits_{i=1}^{t}\prod%
\limits_{j=1}^{m}|\theta _{i}-\xi _{j}|=|c_{t}|^{m}|g(\theta _{1})\cdots
g(\theta _{t})|\leq d.
\end{equation}
Assume first that each one of $|g(\theta _1)|,\dots ,|g(\theta _t)|$ exceeds $\sqrt[r]{d}$. Then, since $|c_{t}|\geq 1$ we must actually have $|c_{t}|^{m}|g(\theta _{1})\cdots g(\theta _{t})|>(\sqrt[r]{d})^t\geq d$, as $t\geq r$, which contradicts (\ref{InegRez}) and completes the proof in this case.

Note that since $f(0)\neq 0$, we may also write $|c_{t}|^{m}|g(\theta _{1})\cdots
g(\theta _{t})|=|c_{0}|^{m}|\frac{g(\theta _{1})}{\theta _1^m}|\cdots |\frac{g(\theta _{t})}{\theta _t^m}|$, with $c_0\neq 0$ the free term of $f_1$, so (\ref{InegRez}) may be also written as
\begin{equation}\label{InegRezReciprocal}
|Res(f_{1},g)|=|c_{0}|^{m}\left|\frac{g(\theta _{1})}{\theta _1^m}\right|\cdots \left|\frac{g(\theta _{t})}{\theta _t^m}\right|\leq d.
\end{equation}
Thus, if we assume that each one of $|\frac{g(\theta _{1})}{\theta _1^m}|,\dots ,|\frac{g(\theta _{n})}{\theta _n^m}|$ exceeds $\sqrt[r]{d}$ and use the fact that
$|c_{0}|\geq 1$, we deduce again that $|Res(f_{1},g)|>(\sqrt[r]{d})^t\geq d$, which contradicts (\ref{InegRezReciprocal}) and completes the proof. 

We mention here that if we know that $|\theta _i|\geq 1$ for each $i$, we should use condition $|g(\theta _i)|>\sqrt[r]{d}$, $i=1,\dots ,n$, while if we know that $|\theta _i|<1$ for each $i$, we should instead choose condition $|\frac{g(\theta _{i})}{\theta _i^m}|>\sqrt[r]{d}$, $i=1,\dots ,n$.
\end{proof}

\begin{lemma}
\label{thm-1dk}Let $f,g\in \mathbb{Z}[X]$ with $f(0)\neq 0$ be relatively prime polynomials, and assume that $f$ has no nonconstant factors of degree less than a positive integer $r<n$. If for some positive integer $k$ we have $|g(\theta )|>\sqrt[r]{d_k}$ for each root $\theta $ of $f$, or we have $|\frac{g(\theta )}{\theta ^{\deg g}}|>\sqrt[r]{d_k}$ for each root $\theta $ of $f$, then $f$ is a product of at most $k$ irreducible factors over $\mathbb{Q}$.
\end{lemma}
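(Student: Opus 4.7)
The plan is to follow the blueprint of Lemma~\ref{thm-1} but to replace the $\Omega$-based counting of prime factors by a size-based pigeonhole on divisors of $|Res(f,g)|$ that exploits the extremal property defining $d_k$.

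I would argue by contradiction, supposing that $f$ admits a factorization $f=f_1\cdots f_K$ in $\mathbb{Z}[X]$ with $K\geq k+1$ and each $f_i$ nonconstant, hence $\deg f_i\geq r$ by the hypothesis on $f$. Repeating verbatim the computation leading to (\ref{InegRez}) (respectively (\ref{InegRezReciprocal})) with $\sqrt[r]{d_k}$ in place of $\sqrt[r]{d}$, the assumption $|g(\theta)|>\sqrt[r]{d_k}$ at every root $\theta$ of $f$ (or the reciprocal variant) yields, for each $i$,
\[
|Res(f_i,g)| \;>\; \left(\sqrt[r]{d_k}\right)^{\deg f_i} \;\geq\; d_k,
\]
using $|c_{\deg f_i}|\geq 1$ (or $|c_0^{(i)}|\geq 1$) together with $\deg f_i\geq r$.

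Next I would observe that each $|Res(f_i,g)|$ is a positive divisor of $|Res(f,g)|$ and that $\prod_{i=1}^{K}|Res(f_i,g)|=|Res(f,g)|$. If every $|Res(f_i,g)|$ were strictly larger than $\sqrt[k+1]{|Res(f,g)|}$, the product would already exceed $|Res(f,g)|^{K/(k+1)}\geq |Res(f,g)|$, which is absurd. Hence some $|Res(f_j,g)|\leq \sqrt[k+1]{|Res(f,g)|}$, while the previous step guarantees $|Res(f_j,g)|>d_k$. This exhibits a divisor of $|Res(f,g)|$ lying strictly above $d_k$ yet still at most $\sqrt[k+1]{|Res(f,g)|}$, contradicting the maximality in Definition~\ref{PrimaDefinitie}~(ii).

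The main obstacle is conceptual rather than computational: one cannot simply invoke Lemma~\ref{thm-1} with $d=d_k$, since $\Omega(|Res(f,g)|/d_k)$ can vastly exceed $k$ (for instance when $|Res(f,g)|$ is a prime power of large exponent). The correct move is to notice that the hypothesis forces every $|Res(f_i,g)|$ to strictly exceed $d_k$, and then to combine this with the multiplicative identity $\prod_{i}|Res(f_i,g)|=|Res(f,g)|$ via a size-based pigeonhole---precisely the property characterizing $d_k$ as the largest divisor of $|Res(f,g)|$ not exceeding $\sqrt[k+1]{|Res(f,g)|}$.
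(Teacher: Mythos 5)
Your proof is correct and uses essentially the same idea as the paper's: both rely on the multiplicativity of resultants, the lower bound $|Res(f_i,g)| > d_k$ coming from the root hypothesis together with $\deg f_i \geq r$, and a size-based pigeonhole on the divisors of $|Res(f,g)|$. The only difference is organizational: the paper isolates the smallest factor via the bound $|Res(f_1,g)| \leq \sqrt[\ell]{|Res(f,g)|}$, recognizes this as $|Res(f_1,g)| \leq d_{\ell-1}$, and then uses the monotonicity $d_k \geq d_{\ell-1}$ to close the contradiction; you instead pigeonhole directly against the $(k+1)$-th root and contradict the maximality in the definition of $d_k$, which avoids introducing $d_{\ell-1}$ and the monotonicity step. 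This is a mild streamlining of the same argument rather than a different route.
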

\begin{proof}
We will use the same notations as in the proof of Lemma \ref{thm-1}. Let us assume that $f$ is a product of more than $k$ irreducible factors over $\mathbb{Q}$, say
\[
f(X)=f_1(X)\cdots f_\ell(X)
\]
with $\ell>k$ and $f_i\in\mathbb{Z}[X]$, $f_i$ irreducible over $\mathbb{Q}$ for $i=1,\dots ,\ell$. Then we have
\[
Res(f,g)=Res(f_1,g)\cdots Res(f_\ell,g),
\]
with $|Res(f_1,g)|,\dots ,|Res(f_\ell,g)|$ divisors of $|Res(f,g)|$, of which at least one must not exceed $\sqrt[\ell]{|Res(f,g)|}$, say $|Res(f_1,g)|\leq\sqrt[\ell]{|Res(f,g)|}$. In particular, we have
$|Res(f_1,g)|\leq d_{\ell-1}$, so this time instead of (\ref{InegRez}) we obtain
\begin{equation}\label{InegRezNou}
|Res(f_{1},g)|=|c_{t}|^{m}|g(\theta _{1})\cdots
g(\theta _{t})|\leq d_{\ell-1}.
\end{equation}
On the other hand, since $|c_{t}|\geq 1$, if we assume that each one of $|g(\theta _1)|,\dots ,|g(\theta _t)|$ exceeds $\sqrt[r]{d_k}$, we must actually have 
\[
|c_{t}|^{m}|g(\theta _{1})\cdots
g(\theta _{t})|>(\sqrt[r]{d_k})^t\geq d_k\geq d_{\ell-1},
\]
as $t\geq r$, $\ell-1\geq k$, and $d_i$ is a decreasing sequence.
This contradicts (\ref{InegRezNou}) and completes the proof in this case. 

Next, as $f(0)\neq 0$, we may use again the inverses of $\theta _i$, to deduce this time that
\begin{equation}\label{InegRezReciprocaldk}
|Res(f_{1},g)|=|c_{0}|^{m}\left|\frac{g(\theta _{1})}{\theta _1^m}\right|\cdots \left|\frac{g(\theta _{t})}{\theta _t^m}\right|\leq d_{\ell-1},
\end{equation}
so if we assume that each one of $|\frac{g(\theta _{1})}{\theta _1^m}|,\dots ,|\frac{g(\theta _{n})}{\theta _n^m}|$ exceeds $\sqrt[r]{d_k}$ and use the fact that
$|c_{0}|\geq 1$, we deduce again that $|Res(f_{1},g)|>(\sqrt[r]{d_k})^t\geq d_{\ell-1}$, which contradicts (\ref{InegRezReciprocaldk}). This completes the proof of the lemma. 
\end{proof}

When no information on the degrees of the factors of $f$ is available, one may still consider the cases $r=1$ and $r=2$, and use instead the following simpler instances of Lemma \ref{thm-1} and Lemma \ref{thm-1dk}:

\begin{lemma}
\label{corobanalthm-1}Let $f,g\in \mathbb{Z}[X]$ with $f(0)\neq 0$ be relatively prime polynomials, and let $d>0$ be a divisor of $Res(f,g)$. If $|g(\theta )|>d$ for each root $\theta$ of $f$, or if $|\frac{g(\theta )}{\theta ^{\deg g}}|>d$ for each root $\theta$ of $f$, then $f$ is a product of at most $\Omega (\frac{Res(f,g)}{d})$ irreducible factors over $\mathbb{Q}$.
Moreover, if $f$ has no rational roots, the same conclusion holds if $|g(\theta )|>\sqrt{d}$ for each root $\theta$ of $f$, or if $|\frac{g(\theta )}{\theta ^{\deg g}}|>\sqrt{d}$ for each root $\theta$ of $f$.
\end{lemma}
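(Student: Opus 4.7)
The plan is to derive Lemma \ref{corobanalthm-1} directly as the two simplest specializations of Lemma \ref{thm-1}, namely $r=1$ and $r=2$. This is the natural route because every nonconstant polynomial factor has degree at least $1$, and, under the additional assumption of having no rational roots, every such factor has degree at least $2$.

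First I would handle the first assertion. Since $f(0)\neq 0$ and $f,g$ are relatively prime, $f$ trivially has no nonconstant factors of degree less than $1$, so the hypothesis of Lemma \ref{thm-1} with $r=1$ is satisfied whenever $\deg f \geq 2$. Observe that $\sqrt[1]{d}=d$, so the conditions $|g(\theta)|>d$ for every root $\theta$ of $f$, or $|g(\theta)/\theta^{\deg g}|>d$ for every root $\theta$ of $f$, are exactly the hypotheses of Lemma \ref{thm-1} with $r=1$. The conclusion, that $f$ has at most $\Omega(Res(f,g)/d)$ irreducible factors over $\mathbb{Q}$, follows. In the degenerate case $n=1$, $f$ is already irreducible; one then needs only to remark that a computation of $|Res(f,g)|=|a_1|^m|g(\theta_1)|>d$ forces $\Omega(Res(f,g)/d)\geq 1$, so the stated bound still holds.

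For the second assertion, assume in addition that $f$ has no rational roots. Then any nonconstant factor of $f$ in $\mathbb{Z}[X]$ must have degree at least $2$ (a factor of degree $1$ would produce a rational root). Thus the hypothesis of Lemma \ref{thm-1} is satisfied with $r=2$ provided $\deg f \geq 3$. The conditions $|g(\theta)|>\sqrt{d}=\sqrt[2]{d}$ or $|g(\theta)/\theta^{\deg g}|>\sqrt[2]{d}$ for each root $\theta$ of $f$ are exactly what Lemma \ref{thm-1} requires for $r=2$, yielding the same upper bound $\Omega(Res(f,g)/d)$ on the number of irreducible factors of $f$. The only remaining case is $\deg f = 2$ with $f$ having no rational roots, in which $f$ is automatically irreducible over $\mathbb{Q}$; a quick check of $|Res(f,g)|=|a_2|^m|g(\theta_1)g(\theta_2)|>d$ again shows $\Omega(Res(f,g)/d)\geq 1$ under the hypothesis, so the bound is trivially satisfied.

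I do not expect any real obstacle, since Lemma \ref{thm-1} already does all the work; the only point requiring a moment of care is verifying the small-degree corner cases ($n=1$ for the first part, $n=2$ for the second) where the strict inequality $r<n$ in Lemma \ref{thm-1} fails, but in each of those cases $f$ is irreducible and the resultant inequality is easily seen to force $\Omega(Res(f,g)/d)\geq 1$.
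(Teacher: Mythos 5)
Your proof is correct and follows exactly the route the paper intends, namely specializing Lemma~\ref{thm-1} to $r=1$ and (when $f$ has no rational roots) $r=2$. The only thing you add beyond the paper's implicit one-line derivation is the explicit check of the boundary degrees $n=1$ and $n=2$ where the hypothesis $r<n$ of Lemma~\ref{thm-1} fails, and your verification that $|Res(f,g)|>d$ forces $\Omega(Res(f,g)/d)\geq 1$ in those cases is sound.
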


\begin{lemma}
\label{corobanalthm-1dk} Let $f,g\in \mathbb{Z}[X]$ with $f(0)\neq 0$ be relatively prime polynomials. If for some positive integer $k$ we have $|g(\theta )|>d_k$ for each root $\theta $ of $f$, or we have $|\frac{g(\theta )}{\theta ^{\deg g}}|>d_k$ for each root $\theta $ of $f$, then $f$ is a product of at most $k$ irreducible factors over $\mathbb{Q}$. Moreover, if $f$ has no rational roots, the same conclusion holds if $|g(\theta )|>\sqrt{d_k}$ for each root $\theta $ of $f$, or if $|\frac{g(\theta )}{\theta ^{\deg g}}|>\sqrt{d_k}$ for each root $\theta $ of $f$.
\end{lemma}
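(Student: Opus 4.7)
The plan is to obtain Lemma \ref{corobanalthm-1dk} as a direct corollary of Lemma \ref{thm-1dk} by choosing the free parameter $r$ appropriately in each of the two assertions.

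For the first assertion, I would apply Lemma \ref{thm-1dk} with $r=1$. Every nonconstant polynomial has degree at least $1$, so the hypothesis that $f$ has no nonconstant factors of degree less than $r$ is vacuous in this case. Since $\sqrt[1]{d_k}=d_k$, the two alternative conditions $|g(\theta)|>d_k$ and $|g(\theta)/\theta^{\deg g}|>d_k$ translate directly into the hypotheses of Lemma \ref{thm-1dk}, which then yields that $f$ has at most $k$ irreducible factors over $\mathbb{Q}$. The only technicality is the side condition $r<n$ built into Lemma \ref{thm-1dk}: here this requires $n\geq 2$. I would handle the case $n=1$ separately, by noting that a linear polynomial over $\mathbb{Q}$ is automatically irreducible and hence a product of $1\leq k$ irreducible factors.

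For the moreover part, I would take $r=2$. The hypothesis that $f$ has no rational roots is exactly the statement that $f$ has no linear (i.e.\ degree $<2$) factors in $\mathbb{Z}[X]$, so the degree hypothesis of Lemma \ref{thm-1dk} is met. Since $\sqrt[2]{d_k}=\sqrt{d_k}$, the conditions $|g(\theta)|>\sqrt{d_k}$ and $|g(\theta)/\theta^{\deg g}|>\sqrt{d_k}$ again match the hypotheses of Lemma \ref{thm-1dk}, delivering the conclusion. The side condition $r<n$ now demands $n\geq 3$, and the remaining cases are again trivial: if $n=1$ then $f$ would have a rational root, contradicting the additional hypothesis, while if $n=2$ then $f$ is quadratic with no rational root, hence irreducible over $\mathbb{Q}$, and again a product of $1\leq k$ irreducible factors.

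I do not anticipate any real obstacle: the statement is essentially a repackaging of Lemma \ref{thm-1dk} in the two cases where one has no information on the degrees of the factors of $f$ beyond the trivial bound, or beyond the absence of rational roots. The only care required is in the degenerate situations $n=1$ and $n=2$, where Lemma \ref{thm-1dk} is not directly applicable because its hypothesis $r<n$ fails, but in each of these cases $f$ is already irreducible for trivial reasons and the conclusion holds outright.
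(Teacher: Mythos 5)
Your proposal is correct and follows exactly the route the paper intends: the paper presents Lemma \ref{corobanalthm-1dk} explicitly as the ``simpler instance'' of Lemma \ref{thm-1dk} obtained by taking $r=1$ for the main assertion and $r=2$ for the ``moreover'' part, with no further proof given. You have in fact been more careful than the paper by explicitly handling the degenerate cases ($n=1$ for $r=1$; $n\in\{1,2\}$ for $r=2$) where the technical hypothesis $r<n$ of Lemma \ref{thm-1dk} fails, and your disposal of these cases (linear polynomials are automatically irreducible; a quadratic with no rational root is irreducible; $n=1$ is incompatible with the no-rational-roots hypothesis) is exactly right.
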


Lemma \ref{corobanalthm-1} and Lemma \ref{corobanalthm-1dk} have various corollaries, of which we will only state the following two irreducibility criteria. 

\begin{corollary}
\label{corothm-1}Let $f,g\in \mathbb{Z}[X]$ with $f(0)\neq 0$, and suppose that $|Res(f,g)|=p\cdot q$, where $p$ is a prime number and $q$ is a positive integer. 

i) If $|g(\theta )|>q$ for each root $\theta $ of $f$, or if $|\frac{g(\theta )}{\theta ^{\deg g}}|>q$ for each root $\theta $ of $f$, then $f$ is irreducible over $\mathbb{Q}$. 

ii) The same conclusion holds if $|g(\theta )|>\sqrt{q}$ for each root $\theta $ of $f$, or if $|\frac{g(\theta )}{\theta ^{\deg g}}|>\sqrt{q}$ for each root $\theta $ of $f$, provided that $f$ has no rational roots.
\end{corollary}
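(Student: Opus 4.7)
The plan is to derive this corollary as an immediate specialization of Lemma \ref{corobanalthm-1} with the choice $d=q$. First I would note that the hypothesis $|Res(f,g)|=pq$ with $p$ prime forces $Res(f,g)\neq 0$, which in turn makes $f$ and $g$ relatively prime in $\mathbb{Z}[X]$, and that $q$ is a positive divisor of $|Res(f,g)|$ with $\frac{|Res(f,g)|}{q}=p$, so $\Omega\!\bigl(\frac{|Res(f,g)|}{q}\bigr)=\Omega(p)=1$. Consequently, any conclusion of the form ``$f$ is a product of at most $\Omega(\frac{Res(f,g)}{d})$ irreducible factors'' obtained from the lemma will collapse to ``$f$ has at most one irreducible factor over $\mathbb{Q}$'', i.e.\ $f$ is irreducible (it is visibly nonconstant, since the inequalities are imposed at roots of $f$).

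For part i), I would apply the first assertion of Lemma \ref{corobanalthm-1} verbatim with $d=q$: either of the hypotheses $|g(\theta)|>q$ or $\bigl|\frac{g(\theta)}{\theta^{\deg g}}\bigr|>q$ at every root $\theta$ of $f$ is exactly what that lemma requires, so the upper bound $\Omega(p)=1$ for the number of irreducible factors of $f$ drops out immediately.

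For part ii), I would use the extra assumption that $f$ has no rational roots to observe that $f$ has no linear factor in $\mathbb{Z}[X]$, hence no nonconstant factor of degree less than $r=2$. This is precisely the situation in which the refined (``$\sqrt{d}$'') part of Lemma \ref{corobanalthm-1} applies, i.e.\ the case $r=2$ of Lemma \ref{thm-1}. Taking $d=q$ once again, the hypothesis $|g(\theta)|>\sqrt{q}$ (respectively $\bigl|\frac{g(\theta)}{\theta^{\deg g}}\bigr|>\sqrt{q}$) at every root $\theta$ of $f$ yields the same bound $\Omega(p)=1$, and hence the irreducibility of $f$. Since the technical work has already been done in Lemmas \ref{thm-1} and \ref{corobanalthm-1}, there is no genuine obstacle; the only points to check are the validity of taking $d=q$ (immediate) and the reduction in part ii) of ``no rational roots'' to ``no nonconstant factor of degree $<2$'', which is what unlocks the square-root improvement.
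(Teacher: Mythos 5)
Your proof is correct and matches the paper's intended derivation exactly: the paper states Corollary \ref{corothm-1} as an immediate consequence of Lemma \ref{corobanalthm-1}, and your argument — set $d=q$, note $q$ divides $|Res(f,g)|$ with $\Omega(|Res(f,g)|/q)=\Omega(p)=1$, and for part ii) translate ``no rational roots'' into ``no nonconstant factor of degree less than $2$'' to invoke the $\sqrt{d}$ refinement — is precisely that specialization.
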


\begin{corollary}
\label{corothm-1d1} Let $f,g\in \mathbb{Z}[X]$ be two relatively prime polynomials, with $f(0)\neq 0$. 

i) If $|g(\theta )|>d_1$ for each root $\theta $ of $f$, or if $|\frac{g(\theta )}{\theta ^{\deg g}}|>d_1$ for each root $\theta $ of $f$, then $f$ is irreducible over $\mathbb{Q}$.

ii) The same conclusion holds if $|g(\theta )|>\sqrt{d_1}$ for each root $\theta $ of $f$, or if $|\frac{g(\theta )}{\theta ^{\deg g}}|>\sqrt{d_1}$ for each root $\theta $ of $f$, provided that $f$ has no rational roots.
\end{corollary}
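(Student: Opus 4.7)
The plan is to obtain both parts as immediate specializations of Lemma \ref{corobanalthm-1dk} with $k=1$. That lemma concludes that $f$ is a product of at most $k$ irreducible factors over $\mathbb{Q}$; setting $k=1$ reduces this conclusion to the statement that $f$ itself is irreducible, which is exactly what we want.

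For part (i), either of the hypotheses $|g(\theta)|>d_1$ or $|g(\theta)/\theta^{\deg g}|>d_1$ for every root $\theta$ of $f$ matches the first assertion of Lemma \ref{corobanalthm-1dk} verbatim with $k=1$, so the conclusion is immediate. For part (ii), the extra assumption that $f$ has no rational roots lets us replace $d_1$ by $\sqrt{d_1}$: this is exactly the ``moreover'' clause of Lemma \ref{corobanalthm-1dk}, which itself comes from Lemma \ref{thm-1dk} applied with $r=2$, since a polynomial with no rational roots has no nonconstant factor of degree less than $2$.

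There is no real obstacle to overcome: the substantive work has already been packaged into Lemma \ref{thm-1dk}, where the key estimate $|Res(f_1,g)|=|c_t|^m|g(\theta_1)\cdots g(\theta_t)|$ (together with its reciprocal version) is combined with the monotonicity of the sequence $(d_k)$ to force a contradiction whenever $f$ would have more than $k$ irreducible factors. One may view Corollary \ref{corothm-1d1} as the $d_1$-analogue of Corollary \ref{corothm-1}, with the auxiliary cofactor $q$ appearing in the factorization $|Res(f,g)|=p\cdot q$ replaced by the universal quantity $d_1$, namely the largest divisor of $|Res(f,g)|$ not exceeding $\sqrt{|Res(f,g)|}$.
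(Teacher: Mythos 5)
Your proposal is correct and matches the paper's intended approach exactly: the paper states Corollary \ref{corothm-1d1} as an immediate consequence of Lemma \ref{corobanalthm-1dk}, and specializing that lemma to $k=1$ (so that ``at most $k$ irreducible factors'' becomes ``irreducible'') gives both parts, with part (ii) coming from the ``moreover'' clause (i.e., the $r=2$ case of Lemma \ref{thm-1dk}). Your explanation of why the no-rational-roots hypothesis buys the square root, and of the role of $d_1$ as a universal substitute for the cofactor $q$, are both accurate.
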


\begin{proposition}\label{ADouaEchivalenta}
Corollary \ref{corothm-1} i) and Corollary \ref{corothm-1d1} i) are equivalent if $n\geq 3$, and Corollary \ref{corothm-1} ii) and Corollary \ref{corothm-1d1} ii) are equivalent if $n\geq 6$.
\end{proposition}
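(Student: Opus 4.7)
The plan is to prove each part of the proposition by splitting the claimed equivalence into its two implications: an ``easy'' direction, where the $d_1$-version of the corollary implies the prime-factorization version, valid without any degree hypothesis; and a ``hard'' direction where the reverse implication is obtained using the assumption $n\geq 3$ (resp.\ $n\geq 6$). Throughout I set $N=|Res(f,g)|$.

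For the easy direction, the key arithmetic fact is that whenever $N=pq$ with $p$ prime and $q$ a positive integer, one has $q\geq d_1$. Indeed, if $p\leq\sqrt{N}$ then $q=N/p\geq\sqrt{N}\geq d_1$, while if $p>\sqrt{N}$ then no positive multiple of $p$ is $\leq\sqrt{N}$, so $p\nmid d_1$; hence $\gcd(p,d_1)=1$ and $d_1\mid N=pq$ forces $d_1\mid q$, so again $d_1\leq q$. Combined with the hypothesis of Corollary \ref{corothm-1}, this yields $|g(\theta)|>q\geq d_1$ in part i) and $|g(\theta)|>\sqrt{q}\geq \sqrt{d_1}$ in part ii), so Corollary \ref{corothm-1d1} immediately applies.

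For the hard direction, I first convert the hypothesis of the $d_1$-version into a lower bound on $N$: writing $f(X)=a_n\prod_{i=1}^{n}(X-\theta_i)$, the identity $N=|a_n|^m\prod_{i=1}^{n}|g(\theta_i)|$ together with $|g(\theta_i)|>d_1$ (resp.\ $|g(\theta_i)|>\sqrt{d_1}$) gives $N>d_1^{n}$ (resp.\ $N>d_1^{n/2}$), and hence $N>d_1^{3}$ whenever $n\geq 3$ (resp.\ $n\geq 6$). It then suffices to produce a prime divisor $p$ of $N$ with $p\geq N/d_1$: taking such a $p$ and setting $q=N/p\leq d_1$ reduces the hypothesis of the $d_1$-version to that of Corollary \ref{corothm-1}, and the desired conclusion follows.

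The step I expect to be the main obstacle is showing that $N>d_1^{3}$ forces the existence of such a prime. Setting $d'=N/d_1$, which by the divisor pairing $d\leftrightarrow N/d$ is the smallest divisor of $N$ that is $\geq\sqrt{N}$, assume for contradiction that every prime divisor of $N$ is strictly smaller than $d'$; then $d'$ is composite and factors as $d'=uv$ with $1<u\leq v<d'$. Since $u\leq\sqrt{d'}\leq \sqrt{N}$ and $u\mid N$, we obtain $u\leq d_1$; and if $v$ exceeded $\sqrt{N}$, it would be a divisor of $N$ strictly between $\sqrt{N}$ and $d'$, violating the minimality of $d'$, so $v\leq d_1$ as well. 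Consequently $d'=uv\leq d_1^{2}$, giving $N=d_1d'\leq d_1^{3}$, a contradiction. Thus the required prime exists, and the proof is complete.
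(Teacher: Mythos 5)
Your proof is correct, and while it has the same two-directional skeleton as the paper's, both implications are argued differently — and, in the hard direction, more transparently. For the easy direction (hypotheses of Corollary \ref{corothm-1} imply those of Corollary \ref{corothm-1d1}), the paper uses the inequality $|g(\theta)|>q$ to derive $p>q^{n-1}|a_n|^m$, hence $p>q$ and $d_1=q$ exactly; you instead observe that $q\geq d_1$ holds purely from $|Res(f,g)|=pq$ with $p$ prime, which is a cleaner arithmetic fact that does not even consume the root-inequality hypothesis. For the hard direction, both proofs reduce matters (via $N=|a_n|^m\prod|g(\theta_i)|$, resp.\ $N=|a_0|^m\prod|g(\theta_i)/\theta_i^m|$ for the reciprocal condition) to $N>d_1^3$ when $n\geq 3$ (resp.\ $n\geq 6$), and both ultimately show that $N/d_1$ must then be prime. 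But the paper invokes the argument from Proposition \ref{EquivalentConditions} — comparing upper and lower bounds on the smallest prime factor of $N/d_1$ in terms of fractional powers of $N$ — whereas you give a short, self-contained argument: if $d'=N/d_1$ is composite, write $d'=uv$ with $1<u\le v<d'$, observe that both $u$ and $v$ are divisors of $N$ not exceeding $\sqrt{N}$ (using that $d'$ is the smallest divisor of $N$ that is $\geq\sqrt{N}$), so $u,v\le d_1$ and $N=d_1d'\le d_1^3$, a contradiction. This is an elementary divisor-pairing argument that avoids the exponent bookkeeping of the paper's proof. One small remark: you explicitly treat only the condition $|g(\theta)|>d_1$, not the parallel reciprocal condition $|g(\theta)/\theta^{\deg g}|>d_1$; the latter goes through identically with $|a_0|$ in place of $|a_n|$, so this is a presentational omission rather than a gap.
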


We may now proceed with the proofs of Theorem \ref{thm0} and Theorem \ref{thm0dk}.
\medskip

{\it Proof of Theorem \ref{thm0}.} With the notations used so far, we have
\[
g(\theta _i)=b_{m}(\theta _i-\xi _{1})\cdots (\theta _i-\xi _{m}),\qquad i=1,\dots ,n,
\]
so if we first assume that $\min_{i,j}|\theta _i-\xi _j|>d^{1/\min (m,n)}$, we obtain
\begin{equation}\label{gmaimarecad1}
|g(\theta _i)|\geq |(\theta _i-\xi _{1})\cdots (\theta _i-\xi _{m})|>(d^{1/\min (m,n)})^m\geq d,\qquad i=1,\dots ,n.
\end{equation}
Since $f(0)g(0)\neq 0$, we may also write $g(\theta _i)=b_{0}\frac{(\theta _i-\xi _{1})}{\xi _1}\cdots \frac{(\theta _i-\xi _{m})}{\xi _m}$, and thus
\[
\frac{g(\theta _i)}{\theta _i^{m}}=b_{0}\left(\frac{1}{\xi _1}-\frac{1}{\theta _i}\right)\cdots \left(\frac{1}{\xi _m}-\frac{1}{\theta _i}\right),\qquad i=1,\dots ,n,
\]
so if we assume this time that $\min_{i,j}|\theta _i^{-1}-\xi _j^{-1}|>d^{1/\min (m,n)}$, we obtain 
\begin{equation}\label{gmaimarecad2}
\left|\frac{g(\theta _i)}{\theta _i^{m}}\right|\geq \left|\frac{1}{\theta _i}-\frac{1}{\xi _{1}}\right|\cdots \left|\frac{1}{\theta _i}-\frac{1}{\xi _{m}}\right|>(d^{1/\min (m,n)})^m\geq d,\qquad i=1,\dots ,n.
\end{equation}
In view of (\ref{gmaimarecad1}) and (\ref{gmaimarecad2}) we conclude by Lemma \ref{thm-1} with $r=1$ that $f$ can be expressed as a product of at most $\Omega (Res(f,g)/d)$ irreducible factors over $\mathbb{Q}$. Now, since we have $|Res(f,g)|=|Res(g,f)|$, the hypotheses in the statement of our theorem are symmetric with respect to $f$ and $g$, so $g$ too can be expressed as a product of at most $\Omega (Res(f,g)/d)$ irreducible factors over $\mathbb{Q}$. This completes the proof. \hfill $\square $
\medskip 

{\it Proof of Theorem \ref{thm0dk}.} Using the same notations, we have
\[
g(\theta _i)=b_{m}(\theta _i-\xi _{1})\cdots (\theta _i-\xi _{m}),\qquad i=1,\dots ,n,
\]
so if $\min_{i,j}|\theta _{i}-\xi _{j}|>d_k^{1/\min (m,n)}$, we deduce this time that
\begin{equation}\label{gmaimarecadk1}
|g(\theta _i)|\geq |(\theta _i-\xi _{1})\cdots (\theta _i-\xi _{m})|>(d_k^{1/\min (m,n)})^m\geq d_k,\qquad i=1,\dots ,n.
\end{equation}
As $f(0)\neq 0$, $g(0)\neq 0$, and $\min_{i,j}|\frac{1}{\theta _{i}}-\frac{1}{\xi _{j}}|>d_k^{1/\min (m,n)}$, instead of (\ref{gmaimarecad2}) we obtain here
\begin{equation}\label{gmaimarecadk2}
\left|\frac{g(\theta _i)}{\theta _i^{m}}\right|\geq \left|\frac{1}{\theta _i}-\frac{1}{\xi _{1}}\right|\cdots \left|\frac{1}{\theta _i}-\frac{1}{\xi _{m}}\right|>(d_k^{1/\min (m,n)})^m\geq d_k,\qquad i=1,\dots ,n.
\end{equation}
By Lemma \ref{thm-1dk} with $r=1$ we conclude in view of (\ref{gmaimarecadk1}) and (\ref{gmaimarecadk2}) that $f$ can be expressed as a product of at most $k$ irreducible factors over $\mathbb{Q}$. Observe now that due to the fact that $|Res(f,g)|=|Res(g,f)|$, the sequence $d_i$ remains the same if we interchange the roles of $f$ and $g$, so the hypotheses in the statement of our theorem are symmetric with respect to $f$ and $g$. Therefore $g$ too can be expressed as a product of at most $k$ irreducible factors over $\mathbb{Q}$, and this completes the proof of the theorem. \hfill $\square $

\medskip

{\it Proof of Proposition \ref{EquivalentConditions}.}\ i) Let us denote by $a_n$ and $b_m$ the leading coefficients of $f$ and $g$, respectively, and by $a_0$ and $b_0$ their constant terms, respectively. 
Assume that $f$ and $g$ satisfy the hypotheses in Theorem \ref{thm0} with $d=d_k$. The condition that $|\theta-\xi|>d_k^{1/\min (m,n)}$ for every root $\theta $ of $f$ and every root $\xi $ of $g$ shows that
\[
|Res(f,g)|=|a_n|^m|b_m|^n\prod\limits_{i=1}^n\prod\limits _{j=1}^m|\theta _i-\xi_j|>|a_n|^m|b_m|^nd_k^{mn/\min (m,n)}\geq d_k^{\max (m,n)},
\]
while condition that $|\frac{1}{\theta}-\frac{1}{\xi}|>d_k^{1/\min (m,n)}$ for every root $\theta $ of $f$ and every root $\xi $ of $g$ together with Vieta's formulas implies that
\[
|Res(f,g)|=|a_0|^m|b_0|^n\prod\limits_{i=1}^n\prod\limits _{j=1}^m\Big|\frac{1}{\theta _i}-\frac{1}{\xi_j}\Big|>|a_0|^m|b_0|^nd_k^{mn/\min (m,n)}\geq d_k^{\max (m,n)},
\]
so each one of these conditions implies the inequality
\begin{equation}\label{dkmic}
d_k<|Res(f,g)|^{\frac{1}{\max (m,n)}}.
\end{equation} 
Note that by (\ref{dkmic}), $|Res(f,g)|$ must be greater than $1$. We will prove now that $\Omega(\frac{|Res(f,g)|}{d_k})\leq k$. Let us assume to the contrary that $\frac{|Res(f,g)|}{d_k}$ has at least $k+1$ prime factors, counting multiplicities, and let $p$ be its least prime factor. Then $p^{k+1}\leq \frac{|Res(f,g)|}{d_k}$, so $p\sqrt[k+1]{d_k}\leq \sqrt[k+1]{|Res(f,g)|}$, which by the definition of $d_k$ shows that $p\sqrt[k+1]{d_k}\leq d_k$, that is $p\leq d_k^{\frac{k}{k+1}}$. In view of (\ref{dkmic}) this further implies that
\begin{equation}\label{pmic}
p<|Res(f,g)|^{\frac{k}{(k+1)\max (m,n)}}.
\end{equation} 
On the other hand, using again the definition of $d_k$, since $pd_k$ is a divisor of $|Res(f,g)|$, it must be larger than $\sqrt[k+1]{|Res(f,g)|}$, which by (\ref{dkmic}) leads to $\sqrt[k+1]{|Res(f,g)|}<pd_k\leq p|Res(f,g)|^{1/\max (m,n)}$, so we must have
\begin{equation}\label{pmare}
p>|Res(f,g)|^{\frac{1}{k+1}-\frac{1}{\max (m,n)}}.
\end{equation}
Combining now (\ref{pmic}) and (\ref{pmare}) we conclude that $\frac{1}{k+1}-\frac{1}{\max (m,n)}<\frac{k}{(k+1)\max (m,n)}$, which is equivalent to $\max (m,n)<2k+1$. We therefore reach a contradiction for $k\leq \frac{\max (m,n)-1}{2}$, which will force $\Omega(\frac{|Res(f,g)|}{d_k})$ to be at most $k$, as claimed. Thus each one of $f$ and $g$ is a product of at most $k$ irreducible factors over $\mathbb{Q}$.

ii) The hypotheses in Corollary \ref{coro1thm0} that $|Res(f,g)|=pq$ and that $|\theta-\xi|>q^{1/\min (m,n)}$ for every root $\theta $ of $f$ and every root $\xi $ of $g$, together imply that 
\[
pq=|a_n|^m|b_m|^n\prod\limits_{i=1}^n\prod\limits _{j=1}^m|\theta _i-\xi_j|>|a_n|^m|b_m|^nq^{mn/\min (m,n)}\geq q^{\max (m,n)},
\]
that is $p>|a_n|^m|b_m|^nq^{\max (m,n)-1}$. On the other hand, if we assume that $|\frac{1}{\theta}-\frac{1}{\xi}|>q^{1/\min (m,n)}$ for every root $\theta $ of $f$ and every root $\xi $ of $g$ and use Vieta's formulas, we see that 
\[
pq=|a_0|^m|b_0|^n\prod\limits_{i=1}^n\prod\limits _{j=1}^m\Big|\frac{1}{\theta _i}-\frac{1}{\xi_j}\Big|>|a_0|^m|b_0|^nq^{mn/\min (m,n)}\geq q^{\max (m,n)},
\]
implying that $p>|a_0|^m|b_0|^nq^{\max (m,n)-1}$. Thus, unless $m=n=1$, in both cases we may conclude that $p>q$, so $p\nmid q$, which implies that
\[
d_1:= \max\{d:d\ {\rm divides}\ |Res(f,g)|\ {\rm and}\ d\leq \sqrt{|Res(f,g)|}\thinspace \}=q
\]
and shows that $f$ and $g$ satisfy the hypotheses in Corollary \ref{corothm0d1} with $d_1=q$.

Conversely, assume that $f$ and $g$ satisfy the hypotheses in Corollary \ref{corothm0d1}. Reasoning as above with $d_1$ instead of $d_k$, we deduce that $\Omega(\frac{|Res(f,g)|}{d_1})=1$ if $1\leq \frac{\max (m,n)-1}{2}$, that is $\frac{|Res(f,g)|}{d_1}$ is a prime number, say $p$, if $\max (m,n)\geq 3$. Denoting $d_1$ by $q$, we see that $|Res(f,g)|=pq$ with $p$ prime, so $f$ and $g$ satisfy the hypotheses in Corollary \ref{coro1thm0}. This completes the proof.
\hfill $\square $
\medskip

{\it Proof of Proposition \ref{ADouaEchivalenta}:}\ We will use the same notations as in the proof of Proposition \ref{EquivalentConditions}.
The condition in Corollary \ref{corothm-1} i) that $|g(\theta )|>q$ for each root $\theta $ of $f$ implies that $p>q^{n-1}|a_n|^m$, while the condition that $|\frac{g(\theta )}{\theta ^{\deg g}}|>q$ for each root $\theta $ of $f$ implies that $p>q^{n-1}|a_0|^m$, so for $n\geq 2$ we must have $p>q$, which in particular implies that $d_1=q$.

Conversely, if we assume that $|g(\theta )|>d_1$ for each root $\theta $ of $f$, or that $|\frac{g(\theta )}{\theta ^{\deg g}}|>d_1$ for each root $\theta $ of $f$, we deduce that $|Res(f,g)|=|a_n|^m\prod_{i=1}^n|g(\theta _i)|=|a_0|^m\prod_{i=1}^n|\frac{g(\theta_i)}{\theta_i^{m}}|>d_1^n$, so instead of (\ref{dkmic}) we obtain 
$d_1<|Res(f,g)|^{\frac{1}{n}}$. The fact that $\frac{|Res(f,g)|}{d_1}$ is a prime number follows in the same way as in the proof of Proposition \ref{EquivalentConditions} with $k=1$ and $n$ instead of $\max (m,n)$, so the desired equivalence holds for $n\geq 3$.

If we know that $f$ has no rational roots, the hypotheses in Corollary \ref{corothm-1} ii) that $|g(\theta )|>\sqrt{q}$ for each root $\theta $ of $f$ implies that $p>q^{\frac{n}{2}-1}|a_n|^m$, while the condition that $|\frac{g(\theta )}{\theta ^{m}}|>\sqrt{q}$ for each root $\theta $ of $f$ implies that $p>q^{\frac{n}{2}-1}|a_0|^m$, so for $n\geq 4$ we have $p>q$, so that $d_1=q$.

Conversely, if $|g(\theta )|>\sqrt{d_1}$ for each root $\theta $ of $f$, or if $|\frac{g(\theta )}{\theta ^{m}}|>\sqrt{d_1}$ for each root $\theta $ of $f$, instead of (\ref{dkmic}) with $k=1$ we obtain $d_1<|Res(f,g)|^{\frac{2}{n}}$. The fact that $\frac{|Res(f,g)|}{d_1}$ is a prime number follows as in the proof of Proposition \ref{EquivalentConditions} with $k=1$ and $\frac{n}{2}$ instead of $\max (m,n)$, which shows that in this case the equivalence holds for $n\geq 6$.
\hfill $\square $
\medskip

One way to be sure that the resultant of two polynomials $f$ and $g$ is divisible by a prime number $p$, without explicitly computing the resultant, is to find an integer $d$ such that both $f(d)$ and $g(d)$ are divisible by $p$. If so, by using Hadamard's inequality to bound the resultant of $f$ and $g$, for instance, one may find
some suitable conditions on their roots that guarantee the irreducibility of both $f$ and $g$, as in the following result. For other bounds on resultants we refer the interested reader to \cite{BisLif}, where bounds for formal resultants are also derived.

\begin{theorem}
\label{thm10,5} Let $f(X)=\sum_{i=0}^{n}a_{i}X^{i},\ g(X)=\sum_{i=0}^{m}b_{i}X^{i}\in \mathbb{Z}[X]$ with $a_{0}a_{n}b_{0}b_{m}\neq 0$, and assume that for an integer $d$, $f(d)$ and $g(d)$ are divisible by a prime number $p$. Let
\[
k=\left( \frac{||f||^m||g||^n}{p}  \right) ^\frac{1}{\rm{min}(m,n)},
\]
with $||f||=\sqrt{a_{0}^{2}+\cdots +a_{n}^{2}}$ and $||g||=\sqrt{b_{0}^{2}+\cdots +b_{m}^{2}}$.
If $|\theta-\xi|>k$ for
every root $\theta $ of $f$ and every root $\xi $ of $g$, or if $|\frac{1}{\theta}-\frac{1}{\xi }|>k$ for every root $\theta $ of $f$ and every root $\xi $ of $g$, then $f$ and $g$ are both irreducible over $\mathbb{Q}$.
\end{theorem}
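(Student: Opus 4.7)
The plan is to reduce Theorem \ref{thm10,5} to Theorem \ref{thm0} by producing a suitable divisor $d$ of $Res(f,g)$ for which the root hypothesis automatically holds.

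First I would verify that $p$ divides $Res(f,g)$. This follows from the classical Bezout-type identity for resultants: there exist $u,v\in\mathbb{Z}[X]$ with $\deg u<m$ and $\deg v<n$ such that $Res(f,g)=u(X)f(X)+v(X)g(X)$. Evaluating at $X=d$ and using the hypothesis $p\mid f(d)$ and $p\mid g(d)$ yields $p\mid Res(f,g)$. (Equivalently, the Sylvester matrix of $f$ and $g$, reduced modulo $p$, has $d\bmod p$ as a common root and thus singular determinant.) Next I would check that the root hypothesis forces $Res(f,g)\neq 0$: indeed, either form of the inequality with $k>0$ ensures $\theta_i\neq\xi_j$ for all $i,j$, so $f$ and $g$ are relatively prime.

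Now I would choose the divisor
\[
d^{\ast}:=\frac{|Res(f,g)|}{p},
\]
which is a positive divisor of $Res(f,g)$ by the previous step. Hadamard's inequality applied to the Sylvester matrix gives the standard bound $|Res(f,g)|\leq \|f\|^{m}\|g\|^{n}$, hence
\[
(d^{\ast})^{1/\min(m,n)}\leq \left(\frac{\|f\|^{m}\|g\|^{n}}{p}\right)^{1/\min(m,n)}=k.
\]
Combined with the hypothesis $|\theta-\xi|>k$ (resp.\ $|1/\theta-1/\xi|>k$) for all roots, this gives $\min_{i,j}|\theta_{i}-\xi_{j}|>(d^{\ast})^{1/\min(m,n)}$ (resp.\ the analogous inequality with inverses). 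The assumption $a_{0}a_{n}b_{0}b_{m}\neq 0$ supplies the remaining hypothesis $\theta_{1}\cdots\theta_{n}\xi_{1}\cdots\xi_{m}\neq 0$ of Theorem \ref{thm0}.

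Applying Theorem \ref{thm0} with $d=d^{\ast}$ then shows that each of $f$ and $g$ factors into at most $\Omega(Res(f,g)/d^{\ast})=\Omega(p)=1$ irreducible factors over $\mathbb{Q}$, which is the desired conclusion. No step looks genuinely difficult; the only place one must be careful is the divisibility $p\mid Res(f,g)$ and the invocation of Hadamard's bound, both of which are standard, and the mild check that the root-separation hypothesis guarantees $Res(f,g)\neq 0$ so that $d^{\ast}$ is well defined.
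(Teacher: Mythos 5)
Your proposal is correct and follows the paper's proof essentially step by step: both establish $p\mid Res(f,g)$ via the Bezout identity for resultants, both invoke Hadamard's bound $|Res(f,g)|\leq\|f\|^m\|g\|^n$, and both deduce the root-separation inequality needed to conclude. The only cosmetic difference is that you apply Theorem~\ref{thm0} directly with $d^{\ast}=|Res(f,g)|/p$, while the paper applies Corollary~\ref{coro1thm0}, which is exactly that specialization.
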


\begin{proof}
\ Given two polynomials with integer coefficients $f$ and $g$, there exist two polynomials $A, B$ with integer coefficients too, $\deg A<\deg g$, $\deg B <\deg f$, and such that 
\[
A(X)f(X)+B(X)g(X)=Res(f,g).
\]
Taking $X=d$ in this equality, and using the fact that $f(d)$ and $g(d)$ are divisible by the same prime number $p$, we deduce that $Res(f,g)$ too must be a multiple of $p$, say $|Res(f,g)|=pq$ for some integer $q$. Our assumption that 
\begin{equation}\label{differ}
|\theta _{i}-\xi_{j}|>\left( \frac{||f||^m||g||^n}{p}  \right) ^\frac{1}{\rm{min}(m,n)}
\end{equation}
for each root $\theta_i $ of $f$ and each root $\xi_j $ of $g$, shows in particular  that the polynomials $f$ and $g$ can have no roots in common, so they must be relatively prime, and hence 
$q\neq 0$. On the other hand, since by Hadamard's inequality we have $|Res(f,g)|\leq ||f||^m||g||^n$, inequality (\ref{differ}) yields
\[
|\theta _{i}-\xi_{j}|>\left( \frac{|Res(f,g)|}{p}  \right) ^\frac{1}{\rm{min}(m,n)}=q^\frac{1}{\rm{min}(m,n)}.
\]
One may apply now Corollary \ref{coro1thm0} to conclude that the polynomials $f$ and $g$ are both irreducible. The case when one considers the inverses of the roots of $f$ and $g$ follows in a similar way.  
\end{proof}

The remaining results in this section will consider the case when the roots of $f$ are close to those of $g$. Recall that for a fixed pair of relatively prime polynomials $f,g\in \mathbb{Z}[X]$, and each positive integer $k\geq 2$ we defined
\[
d_k:=\max\{d: d\ \text{divides}\ |Res(f,g)|\ \text{and} \  d\leq \sqrt[k+1]{|Res(f,g)|}\}.
\]

Our first result in this respect is:

\begin{lemma}
\label{LemaGeneralaCuRez/dk}Let $f,g\in \mathbb{Z}[X]$ be relatively prime polynomials of degrees $n$ and $m$, respectively, and suppose that $f$ factorizes as $f(X)=a_n(X-\theta _{1})\cdots(X-\theta _{n})$ with $\theta_1\cdots \theta _n\neq 0$, and that $f$ has no nonconstant factors of degree less than a positive integer $r<n$. If for some positive integer $k<\frac{n}{r}$ we have 
\[
\max\limits _{1\leq i\leq n}|g(\theta _i)|<\left(\frac{|Res(f,g)|}{d_k|a_n|^m}\right)^{\frac{1}{n-kr}}\ \ or\ \ \ \max\limits _{1\leq i\leq n}
\left|\frac{g(\theta _i)}{\theta_i^m}\right|<\left(\frac{|Res(f,g)|}{d_k|a_0|^m}\right)^{\frac{1}{n-kr}},  
\]
then $f$ is a product of at most $k$ irreducible factors over $\mathbb{Q}$.
\end{lemma}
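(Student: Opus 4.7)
The plan is to argue by contradiction, following the template of Lemma~\ref{thm-1dk} but swapping the lower bound on $|g(\theta)|$ for an upper bound. I would assume that $f=f_1\cdots f_\ell$ factors into $\ell\geq k+1$ irreducible polynomials in $\mathbb{Z}[X]$; the hypothesis that $f$ has no factor of degree below $r$ forces each $\deg f_i\geq r$, so in particular $(k+1)r\leq n$, which is compatible with the assumption $k<n/r$.

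From $|Res(f,g)|=\prod_{i=1}^{\ell}|Res(f_i,g)|$, the same geometric-mean/divisor step used in Lemma~\ref{thm-1dk} produces, after relabelling, an index $1$ with $|Res(f_1,g)|\leq|Res(f,g)|^{1/\ell}\leq|Res(f,g)|^{1/(k+1)}$; being a positive divisor of $|Res(f,g)|$ bounded by $|Res(f,g)|^{1/(k+1)}$, the definition of $d_k$ and the monotonicity of $(d_j)$ give $|Res(f_1,g)|\leq d_{\ell-1}\leq d_k$. Setting $h=f/f_1$ then yields $|Res(h,g)|\geq|Res(f,g)|/d_k$, while $h$ contains $\ell-1\geq k$ irreducible factors each of degree at least $r$, so $\deg h\geq kr$. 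Its leading coefficient $\tilde c$ satisfies $|\tilde c|\leq|a_n|$, since the leading coefficient of $f_1$ has absolute value at least $1$ and their product equals $a_n$. Using the hypothesis $|g(\theta)|<M$, with $M=\bigl(|Res(f,g)|/(d_k|a_n|^m)\bigr)^{1/(n-kr)}$, at each root $\theta$ of $h$, I would obtain
\[
|Res(h,g)|=|\tilde c|^{\,m}\!\!\prod_{\theta\text{ root of }h}\!\!|g(\theta)|<|a_n|^m M^{\deg h}.
\]
Chaining this with the lower bound and substituting for $M$ transforms the resulting inequality into $A<A^{\deg h/(n-kr)}$, where $A:=|Res(f,g)|/(d_k|a_n|^m)>1$ (the strict positivity $A>1$ being forced by the hypothesis together with the trivial estimate $|Res(f,g)|\leq|a_n|^m M^n$, which combine to give $M^{kr}>d_k\geq 1$). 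This chain would deliver the contradiction once one has the matching degree inequality $\deg h\leq n-kr$.

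The main obstacle is precisely this last degree bookkeeping: one needs $\deg f_1\geq kr$, equivalently $\deg h\leq n-kr$, in order to turn $A<A^{\deg h/(n-kr)}$ into a contradiction with $A>1$. For $k=1$ this is immediate because $\deg f_1\geq r=kr$; but for $k\geq 2$ a single irreducible factor of smallest resultant need not satisfy $\deg f_1\geq kr$, and the plan would be to take $f_1$ instead as a composite obtained by grouping several of the $f_i$ so that both $|Res(f_1,g)|\leq d_k$ and $\deg f_1\geq kr$ hold simultaneously. Pinning down such a grouping via a careful examination of the divisor structure of $|Res(f,g)|$ together with the degree distribution of the irreducible factors is the delicate step of the argument. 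The reciprocal case, involving $|g(\theta)/\theta^m|$ and $|a_0|^m$, follows by the symmetric argument on the inverses $1/\theta_i$ using $f(0)\neq 0$, exactly as in the corresponding part of the proof of Lemma~\ref{thm-1dk}.
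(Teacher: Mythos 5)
You correctly pinpointed the crux. Following the paper's scheme --- select the irreducible factor $f_1$ with $|Res(f_1,g)|\leq |Res(f,g)|^{1/\ell}\leq d_{\ell-1}\leq d_k$, set $h=f/f_1$ so that $|Res(h,g)|\geq|Res(f,g)|/d_{\ell-1}$, and then bound $|Res(h,g)|<|a_n|^m M^{\deg h}$ from the hypothesis --- the chain closes only if $\deg h\leq n-kr$, i.e.\ $\deg f_1\geq kr$, which fails in general for $k\geq 2$ since one only knows $\deg f_1\geq r$. What you may not have noticed is that the paper's own proof has exactly this gap, concealed by a misapplication of (\ref{tmax}): right after (\ref{InegRez(f1,g)Corectata}) the paper asserts $t\leq n-r(\ell-1)\leq n-kr$ ``in view of (\ref{tmax})'', but (\ref{tmax}) bounds each individual $\deg f_i$, whereas $t=\deg h=n-\deg f_1$ is the degree of the \emph{complement} of a single irreducible factor, which is only bounded above by $n-r$.

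The regrouping repair you sketch cannot be made to work, because the lemma is in fact false for $k\geq 2$. Take $f(X)=(X-2)(X-3)(X-5)$ and $g(X)=X$, so that $n=3$, $m=1$, $a_n=1$, $\theta_1\theta_2\theta_3=30\neq 0$, $r=1$ and $k=2<n/r$. Then $|Res(f,g)|=30$ and $d_2=3$ (the largest divisor of $30$ not exceeding $30^{1/3}\approx 3.11$), so the first hypothesis becomes $\max_i|g(\theta_i)|=5<(30/(3\cdot 1))^{1/(3-2)}=10$, which holds; yet $f$ is a product of three, not two, irreducible factors over $\mathbb{Q}$. For $k=1$ the needed inequality $\deg f_1\geq r=kr$ is automatic, so both the statement and the paper's argument are sound in that case, which is the case that actually feeds the paper's later irreducibility corollaries.
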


\begin{proof}
\ First of all we will prove that our assumption on the magnitude of $|g(\theta _i)|$ implies that $|Res(f,g)|>d_k|a_n|^m$. Indeed, since $|g(\theta _i)|<(\frac{|Res(f,g)|}{d_k|a_n|^m})^{\frac{1}{n-kr}}$ for each $i=1,\dots ,n$, we must have
\[
|Res(f,g)|<|a_n|^{m}\left(\frac{|Res(f,g)|}{d_k|a_n|^m}\right)^{\frac{n}{n-kr}}=\frac{|Res(f,g)|}{d_k}\left(\frac{|Res(f,g)|}{d_k|a_n|^m}\right)^{\frac{kr}{n-kr}},
\]
which yields
\begin{equation}\label{geqanm}
\frac{|Res(f,g)|}{d_k|a_n|^m}>d_k^{\frac{n-kr}{kr}}\geq 1,
\end{equation}
as claimed. Let us assume now that one may write $f$ as a product of more than $k$ irreducible factors over $\mathbb{Q}$, say
\[
f(X)=f_1(X)\cdots f_\ell(X)
\]
with $\ell>k$ and $f_i\in\mathbb{Z}[X]$, $f_i$ irreducible over $\mathbb{Q}$ for $i=1,\dots ,\ell$. Note that this implies in particular that $\ell\geq 2$, and since $\deg f_i\geq r$ for $i=1,\dots ,\ell$ and $\deg f_1+\cdots +\deg f_{\ell}=n$, we must have 
\begin{equation}\label{tmax}
\max\{\deg f_1,\dots ,\deg f_{\ell} \}\leq n-r(\ell-1).
\end{equation}
We also have $Res(f,g)=Res(f_1,g)\cdots Res(f_\ell,g)$, and since $|Res(f_1,g)|,\dots ,|Res(f_\ell,g)|$ are divisors of $|Res(f,g)|$, at least one of them, say $|Res(f_1,g)|$, must satisfy the inequality $|Res(f_1,g)|\leq\sqrt[\ell]{|Res(f,g)|}$.  This shows that we actually have $|Res(f_1,g)|\leq d_{\ell-1}$, which, if we denote $f_2(X)\cdots f_{\ell}(X)$ by $h(X)$, implies that
\[
|Res(h,g)|\geq\frac{|Res(f,g)|}{d_{\ell-1}},
\]
or, equivalently, if we assume that $h$ factorizes as $h(X)=c_t(X-\theta _1)\cdots (X-\theta _t)$,
\begin{equation}\label{InegRez(f1,g)Corectata}
|c_{t}|^{m}|g(\theta _{1})\cdots
g(\theta _{t})|\geq \frac{|Res(f,g)|}{d_{\ell-1}}.
\end{equation}
On the other hand, since $c_{t}\mid a_n$ and by our hypothesis each one of $|g(\theta _1)|,\dots ,|g(\theta _n)|$ is smaller than $(\frac{|Res(f,g)|}{d_k|a_n|^m})^{\frac{1}{n-kr}}$, we must actually have 
\begin{eqnarray*}
|c_{t}|^{m}|g(\theta _{1})\cdots
g(\theta _{t})| & < & |a_n|^m\biggl(\frac{|Res(f,g)|}{d_k|a_n|^m}\biggr)^{\frac{t}{n-kr}}\leq |a_n|^m\biggl(\frac{|Res(f,g)|}{d_k|a_n|^m}\biggr)^{\frac{n-r(\ell-1)}{n-kr}}\\
& \leq & \frac{|Res(f,g)|}{d_k}\leq \frac{|Res(f,g)|}{d_{\ell-1}},
\end{eqnarray*}
as $t\leq n-r(\ell-1)\leq n-kr$ (in view of (\ref{tmax})), $|Res(f,g)|>d_k|a_n|^m$ (by (\ref{geqanm})) and $k\leq \ell -1$ (implying that $d_k\geq d_{\ell -1}$, as $d_i$ is a decreasing sequence).
This contradicts (\ref{InegRez(f1,g)Corectata}) and completes the proof in the first case. 

Now let us prove that if $|g(\theta _i)|<|\theta _i^m|(\frac{|Res(f,g)|}{d_k|a_0|^m})^{\frac{1}{n-kr}}$ for each $i=1,\dots ,n$, then we must have $|Res(f,g)|>d_k|a_0|^m$. Indeed, our assumption implies that
\[
|Res(f,g)|<|a_n|^{m}|\theta _1^m|\cdots |\theta _n^m|\left(\frac{|Res(f,g)|}{d_k|a_0|^m}\right)^{\frac{n}{n-kr}}=\frac{|Res(f,g)|}{d_k}\left(\frac{|Res(f,g)|}{d_k|a_0|^m}\right)^{\frac{kr}{n-kr}},
\]
which leads to
\begin{equation}\label{geqa0m}
\frac{|Res(f,g)|}{d_k|a_0|^m}>d_k^{\frac{n-kr}{kr}}\geq 1,
\end{equation}
as claimed. 

Note that since $f(0)\neq 0$, we may also write $|c_{t}|^{m}|g(\theta _{1})\cdots
g(\theta _{t})|=|c_{0}|^{m}|\frac{g(\theta _{1})}{\theta _1^m}|\cdots |\frac{g(\theta _{t})}{\theta _t^m}|$, with $c_0\neq 0$ the free term of $h$, so if we assume that each one of $|\frac{g(\theta _{1})}{\theta _1^m}|,\dots ,|\frac{g(\theta _{n})}{\theta _n^m}|$ is smaller than $(\frac{|Res(f,g)|}{d_k|a_0|^m})^{\frac{1}{n-kr}}$, using (\ref{tmax}) and (\ref{geqa0m}) we obtain this time
\begin{eqnarray*}
|c_{t}|^{m}|g(\theta _{1})\cdots
g(\theta _{t})| & = & |c_{0}|^{m}\left|\frac{g(\theta _{1})}{\theta _1^m}\right|\cdots \left|\frac{g(\theta _{t})}{\theta _t^m}\right|  <  |a_0|^m\biggl(\frac{|Res(f,g)|}{d_k|a_0|^m}\biggr)^{\frac{t}{n-kr}}\\ 
& \leq & |a_0|^m\biggl(\frac{|Res(f,g)|}{d_k|a_0|^m}\biggr)^{\frac{n-r(\ell-1)}{n-kr}}
 \leq  \frac{|Res(f,g)|}{d_k}\leq \frac{|Res(f,g)|}{d_{\ell-1}},
\end{eqnarray*}
which contradicts (\ref{InegRez(f1,g)Corectata}) and completes the proof.
\end{proof}

When no information on the degrees of the irreducible factors of $f$ is available, one may use instead the following result for the cases that $r=1$ and $r=2$:

\begin{lemma}
\label{LemaParticularaCuRez/dk}Let $f,g\in \mathbb{Z}[X]$ be relatively prime polynomials of degrees $n$ and $m$, respectively, and suppose that $f$ factorizes as $f(X)=a_n(X-\theta _{1})\cdots(X-\theta _{n})$ with $\theta_1\cdots \theta _n\neq 0$. If for some positive integer $k<n$ we have 
\[
\max\limits _{1\leq i\leq n}|g(\theta _i)|<\left(\frac{|Res(f,g)|}{d_k|a_n|^m}\right)^{\frac{1}{n-k}}\ \ or\ \ \ \max\limits _{1\leq i\leq n}
\left|\frac{g(\theta _i)}{\theta_i^m}\right|<\left(\frac{|Res(f,g)|}{d_k|a_0|^m}\right)^{\frac{1}{n-k}},  
\]
then $f$ is a product of at most $k$ irreducible factors over $\mathbb{Q}$. 
The same conclusion holds with $k<\frac{n}{2}$ and exponents $\frac{1}{n-2k}$ instead of $\frac{1}{n-k}$, provided that $f$ has no rational roots.
\end{lemma}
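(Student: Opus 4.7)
My plan is to derive Lemma \ref{LemaParticularaCuRez/dk} as an immediate specialization of Lemma \ref{LemaGeneralaCuRez/dk}, by choosing $r=1$ for the first part and $r=2$ for the second part, and then checking that the hypotheses align correctly.

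For the first statement, I would observe that every nonconstant integer polynomial factors over $\mathbb{Z}$ into factors of degree at least $1$, so the assumption ``$f$ has no nonconstant factors of degree less than $r$'' in Lemma \ref{LemaGeneralaCuRez/dk} is automatically satisfied with $r=1$ and requires no further condition on $f$. With this choice, the bound $k<\frac{n}{r}$ becomes $k<n$, and the two magnitude hypotheses with exponent $\frac{1}{n-kr}$ become the stated ones with exponent $\frac{1}{n-k}$. Thus Lemma \ref{LemaGeneralaCuRez/dk} applied with $r=1$ gives the conclusion that $f$ is a product of at most $k$ irreducible factors over $\mathbb{Q}$.

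For the second statement, I would use the fact that a polynomial $f\in \mathbb{Z}[X]$ has a linear factor over $\mathbb{Z}$ (equivalently, over $\mathbb{Q}$, by Gauss's lemma) if and only if it has a rational root. Therefore, assuming $f$ has no rational roots is equivalent to saying that $f$ has no nonconstant factors of degree less than $r=2$. With this choice of $r$, the constraint $k<\frac{n}{r}$ becomes $k<\frac{n}{2}$, and the exponent $\frac{1}{n-kr}$ becomes $\frac{1}{n-2k}$, so once again the hypotheses of Lemma \ref{LemaGeneralaCuRez/dk} are met and its conclusion is precisely the desired one.

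There is no real obstacle here, since both assertions are just instantiations of a more general result that has already been proved. The only point that deserves a line of verification is the equivalence between ``$f$ has no linear factor in $\mathbb{Z}[X]$'' and ``$f$ has no rational root'', which is standard via Gauss's lemma; everything else is a transparent substitution $r\mapsto 1$ or $r\mapsto 2$ in the statement of Lemma \ref{LemaGeneralaCuRez/dk}.
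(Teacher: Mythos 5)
Your proposal is correct and takes essentially the same approach as the paper, which explicitly introduces Lemma~\ref{LemaParticularaCuRez/dk} as the specialization of Lemma~\ref{LemaGeneralaCuRez/dk} to the cases $r=1$ and $r=2$. Your verification of the Gauss's-lemma equivalence between having no rational roots and having no nonconstant factors of degree less than $2$ is the right point to check, and the substitutions of $r$ into the exponent $\frac{1}{n-kr}$ and the constraint $k<\frac{n}{r}$ are exactly as you describe.
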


In particular, from Lemma \ref{LemaGeneralaCuRez/dk} and Lemma \ref{LemaParticularaCuRez/dk} we obtain the following irreducibility criteria.
\begin{corollary}
\label{LemaGeneralaIreductibilitateCuRez/dk}Let $f,g\in \mathbb{Z}[X]$ be relatively prime polynomials of degrees $n$ and $m$, respectively, and suppose that $f$ factorizes as $f(X)=a_n(X-\theta _{1})\cdots(X-\theta _{n})$ with $\theta_1\cdots \theta _n\neq 0$, and that $f$ has no nonconstant factors of degree less than a positive integer $r<n$.  If  
$\max\limits _{1\leq i\leq n}|g(\theta _i)|<(\frac{|Res(f,g)|}{d_1|a_n|^m})^{\frac{1}{n-r}}$ or $\max\limits _{1\leq i\leq n}|\frac{g(\theta _i)}{\theta_i^m}|<(\frac{|Res(f,g)|}{d_1|a_0|^m})^{\frac{1}{n-r}}$, then $f$ is irreducible over $\mathbb{Q}$. 
\end{corollary}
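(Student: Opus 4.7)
The plan is to deduce the corollary as the $k=1$ special case of Lemma \ref{LemaGeneralaCuRez/dk}. In that lemma, the allowed range for the parameter is $k$ a positive integer with $k<\frac{n}{r}$, and with $k=1$ this condition reads $1<\frac{n}{r}$, equivalently $r<n$, which is precisely the hypothesis we are given. Similarly, the exponent $\frac{1}{n-kr}$ appearing in Lemma \ref{LemaGeneralaCuRez/dk} specializes to $\frac{1}{n-r}$ when $k=1$, and $d_k$ specializes to $d_1$; both match the inequalities stated in the corollary.

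So the first step is to verify these parameter substitutions and check that each of the two alternative hypotheses in the corollary,
\[
\max_{1\leq i\leq n}|g(\theta _i)|<\left(\frac{|Res(f,g)|}{d_1|a_n|^m}\right)^{\!\frac{1}{n-r}}\quad\text{or}\quad \max_{1\leq i\leq n}\left|\frac{g(\theta _i)}{\theta_i^m}\right|<\left(\frac{|Res(f,g)|}{d_1|a_0|^m}\right)^{\!\frac{1}{n-r}},
\]
is literally the $k=1$ instance of the corresponding alternative hypothesis of Lemma \ref{LemaGeneralaCuRez/dk}. No new analytical work is required; the hypotheses of that lemma are met verbatim.

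Invoking Lemma \ref{LemaGeneralaCuRez/dk} with $k=1$ then yields that $f$ can be written as a product of at most one irreducible factor over $\mathbb{Q}$. The last thing to remark is that, since we assume $f$ has no nonconstant factors of degree less than $r$ but has degree $n>r\geq 1$, in particular $\deg f\geq 2$, so $f$ is nonconstant and therefore not a unit; being a product of at most one irreducible factor then forces $f$ itself to be irreducible over $\mathbb{Q}$.

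There is no real obstacle in this argument; the only thing one has to be slightly careful about is observing that the phrase ``product of at most $k$ irreducible factors'' from Lemma \ref{LemaGeneralaCuRez/dk} includes the possibility of a unit, which is ruled out here by $\deg f\geq 2$. All the heavy lifting — the use of $|Res(f,g)|=|Res(f_1,g)|\cdot|Res(h,g)|$ together with the definition of $d_k$ and the inversion trick via $\theta_i^{-1}$ when $f(0)\neq 0$ — has already been carried out in the proof of the lemma.
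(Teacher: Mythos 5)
Your proposal is correct and is exactly the route the paper takes: the corollary is stated as an immediate consequence of Lemma \ref{LemaGeneralaCuRez/dk}, obtained by the specialization $k=1$ (so $k<n/r$ becomes $r<n$, $d_k$ becomes $d_1$, and $\frac{1}{n-kr}$ becomes $\frac{1}{n-r}$), and ``product of at most one irreducible factor'' together with $\deg f=n>r\geq 1$ gives irreducibility.
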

\begin{corollary}
\label{LemaParticularaIreductibilitateCuRez/dk}Let $f,g\in \mathbb{Z}[X]$ be relatively prime polynomials of degrees $n$ and $m$, respectively, and suppose that $f$ factorizes as $f(X)=a_n(X-\theta _{1})\cdots(X-\theta _{n})$ with $\theta_1\cdots \theta _n\neq 0$. If  
$\max\limits _{1\leq i\leq n}|g(\theta _i)|<(\frac{|Res(f,g)|}{d_1|a_n|^m})^{\frac{1}{n-1}}$ or $\max\limits _{1\leq i\leq n}|\frac{g(\theta _i)}{\theta_i^m}|<(\frac{|Res(f,g)|}{d_1|a_0|^m})^{\frac{1}{n-1}}$, then $f$ is irreducible over $\mathbb{Q}$. The same conclusion holds with exponents $\frac{1}{n-2}$ instead of $\frac{1}{n-1}$, provided that $f$ has no rational roots.
\end{corollary}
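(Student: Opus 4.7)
The plan is to derive Corollary \ref{LemaParticularaIreductibilitateCuRez/dk} as the direct specialization $k=1$ of Lemma \ref{LemaParticularaCuRez/dk}. The lemma's conclusion that ``$f$ is a product of at most $k$ irreducible factors over $\mathbb{Q}$'' becomes, for $k=1$, the statement that $f$ is either a nonzero constant or an associate of an irreducible polynomial in $\mathbb{Q}[X]$; since $\deg f = n \geq 1$, this is precisely irreducibility of $f$ over $\mathbb{Q}$.

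For the first assertion, with exponent $\frac{1}{n-1}$, I would invoke the first half of Lemma \ref{LemaParticularaCuRez/dk} with $k=1$, which requires only $k<n$, i.e.\ $n\geq 2$. Under either hypothesis
\[
\max_{1\leq i\leq n}|g(\theta_i)|<\Bigl(\tfrac{|Res(f,g)|}{d_1|a_n|^m}\Bigr)^{\frac{1}{n-1}}\quad\text{or}\quad \max_{1\leq i\leq n}\Bigl|\tfrac{g(\theta_i)}{\theta_i^m}\Bigr|<\Bigl(\tfrac{|Res(f,g)|}{d_1|a_0|^m}\Bigr)^{\frac{1}{n-1}},
\]
the lemma returns that $f$ has at most one irreducible factor, hence is irreducible. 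The boundary case $n=1$ can be disposed of separately: any polynomial of degree $1$ in $\mathbb{Z}[X]$ is automatically irreducible over $\mathbb{Q}$, so the statement is vacuous there.

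For the second assertion, with exponent $\frac{1}{n-2}$ and $f$ having no rational roots, I would invoke the second half of Lemma \ref{LemaParticularaCuRez/dk} with $k=1$; the constraint $k<n/2$ now requires $n\geq 3$. The hypotheses match verbatim with the exponent $\frac{1}{n-2k}=\frac{1}{n-2}$, and the same reduction from ``at most one irreducible factor'' to ``irreducible'' applies. The low-degree case $n=2$ is again trivial, since a quadratic in $\mathbb{Z}[X]$ with no rational roots is automatically irreducible over $\mathbb{Q}$.

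There is no substantive obstacle: all of the real work, namely the bookkeeping on $d_k$, $d_{\ell-1}$, the decomposition $Res(f,g)=\prod_i Res(f_i,g)$, and the simultaneous use of $|c_t|\geq 1$ and $|c_0|\geq 1$, has already been carried out in the proof of Lemma \ref{LemaParticularaCuRez/dk}. The only care needed in this deduction is to verify that the specialization $k=1$ is admissible in each variant (which forces $n\geq 2$ and $n\geq 3$ respectively) and to handle the excluded low-degree cases by a one-line remark.
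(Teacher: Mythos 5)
Your proof is correct and follows exactly the route the paper intends: the corollary is the case $k=1$ of Lemma \ref{LemaParticularaCuRez/dk}, and your handling of the low-degree boundary cases ($n=1$ for the first part, $n=2$ with no rational roots for the second) is the only additional remark needed.
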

The conditions on the magnitude of $g(\theta_i)$ in Corollary \ref{LemaGeneralaIreductibilitateCuRez/dk} and Corollary \ref{LemaParticularaIreductibilitateCuRez/dk} become more tractable if we know, for instance, that $Res(f,g)$ is divisible by a sufficiently large prime number, as one can see in the proof of the following result:
\begin{corollary}
\label{Coro1CuRez/dk}Let $f,g\in \mathbb{Z}[X]$ be relatively prime polynomials of degrees $n$ and $m$, respectively, and suppose that $f$ factorizes as $f(X)=a_n(X-\theta _{1})\cdots(X-\theta _{n})$ with $\theta_1\cdots \theta _n\neq 0$, and that $f$ has no nonconstant factors of degree less than a positive integer $r<n$. Assume that $|Res(f,g)|=pq$ with $p$ a prime number and $q$ a positive integer. If  
$\max\limits _{1\leq i\leq n}|g(\theta _i)|<(\frac{p}{|a_n|^m})^{\frac{1}{n-r}}$ or $\max\limits _{1\leq i\leq n}|\frac{g(\theta _i)}{\theta_i^m}|<(\frac{p}{|a_0|^m})^{\frac{1}{n-r}}$, then $f$ is irreducible over $\mathbb{Q}$. 
\end{corollary}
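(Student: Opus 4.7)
The plan is to reduce Corollary \ref{Coro1CuRez/dk} directly to Corollary \ref{LemaGeneralaIreductibilitateCuRez/dk}, whose hypothesis reads $\max_{1\leq i\leq n}|g(\theta_i)|<\bigl(|Res(f,g)|/(d_1|a_n|^m)\bigr)^{1/(n-r)}$. Since here $|Res(f,g)|=pq$, the bound we are assuming, namely $(p/|a_n|^m)^{1/(n-r)}$, will be smaller than or equal to that bound precisely when $p\leq pq/d_1$, i.e.\ when $d_1\leq q$. Thus the whole proof reduces to the single elementary inequality $d_1\leq q$.

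To verify $d_1\leq q$, I would inspect the divisors of $pq$ using the fact that $p$ is prime. If $p\leq q$, then $\sqrt{pq}\leq q$, so every divisor of $pq$ that is $\leq \sqrt{pq}$ is automatically $\leq q$, giving $d_1\leq\sqrt{pq}\leq q$. If instead $p>q$, then $p\nmid q$ (otherwise $p\leq q$), so the divisors of $pq$ are exactly $\{d:d\mid q\}\cup\{pd:d\mid q\}$; since $p>\sqrt{pq}$ in this regime, every divisor of the form $pd$ is strictly greater than $\sqrt{pq}$, so the divisors of $pq$ that are $\leq\sqrt{pq}$ are exactly the divisors of $q$, the largest of which is $q$ itself. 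Either way, $d_1\leq q$.

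With $d_1\leq q$ in hand, the assumed bound yields
\[
\max_{1\leq i\leq n}|g(\theta_i)|<\Bigl(\tfrac{p}{|a_n|^m}\Bigr)^{1/(n-r)}\leq \Bigl(\tfrac{pq}{d_1|a_n|^m}\Bigr)^{1/(n-r)}=\Bigl(\tfrac{|Res(f,g)|}{d_1|a_n|^m}\Bigr)^{1/(n-r)},
\]
so Corollary \ref{LemaGeneralaIreductibilitateCuRez/dk} applies and forces the irreducibility of $f$. The alternative condition $\max_{1\leq i\leq n}|g(\theta_i)/\theta_i^m|<(p/|a_0|^m)^{1/(n-r)}$ is handled identically, with $|a_0|$ in place of $|a_n|$, via the second half of Corollary \ref{LemaGeneralaIreductibilitateCuRez/dk}. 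There is no real obstacle: the content of the statement is purely that the presence of a prime factor $p$ in $|Res(f,g)|$ allows one to replace the somewhat opaque quantity $|Res(f,g)|/d_1$ by the more tractable $p$, at the cost of only the trivial inequality $d_1\leq q$.
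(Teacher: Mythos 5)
Your proof is correct, and it takes a genuinely cleaner route than the paper's. Both arguments reduce the statement to Corollary \ref{LemaGeneralaIreductibilitateCuRez/dk} (equivalently, Lemma \ref{LemaGeneralaCuRez/dk} with $k=1$), but they verify its hypothesis differently. The paper first disposes of the case $r>\frac{n}{2}$ by the naive degree count, then in the complementary case $r\leq\frac{n}{2}$ it uses the hypothesis $\max_i|g(\theta_i)|<(p/|a_n|^m)^{1/(n-r)}$ together with the product formula for $|Res(f,g)|$ to derive $p>|a_n|^m q^{(n-r)/r}\geq q$, concluding $d_1=q$ and then reading off the hypothesis of the lemma verbatim. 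You instead observe that the purely arithmetic inequality $d_1\leq q$ holds unconditionally whenever $|Res(f,g)|=pq$ with $p$ prime and $q\geq 1$, by splitting on $p\leq q$ versus $p>q$; this immediately gives $(p/|a_n|^m)^{1/(n-r)}\leq(|Res(f,g)|/(d_1|a_n|^m))^{1/(n-r)}$ and hence the hypothesis of the corollary, with no appeal to the size of $r$ and no use of the main hypothesis beyond what is fed directly into the lemma. The trade-off is small but real: the paper's route in fact shows $d_1=q$ (a stronger conclusion, which they exploit nowhere), at the cost of an extra case distinction and of re-deriving a bound on $p$; your route establishes only $d_1\leq q$, but unconditionally and with less work, and this is exactly what is needed. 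One tiny stylistic remark: in your case $p>q$ you assert that the divisors of $pq$ not exceeding $\sqrt{pq}$ are exactly the divisors of $q$, ``the largest of which is $q$ itself''; this is correct because $p>q$ forces $q<\sqrt{pq}$, but it would be worth saying so explicitly — although, as you note, only the inequality $d_1\leq q$ (not the equality $d_1=q$) is actually used.
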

\begin{proof}
\ The irreducibility of $f$ follows trivially if $r>\frac{n}{2}$ without any assumption on the magnitude of $|g(\theta _i)|$, so we may assume that $r\leq\frac{n}{2}$. In the first case our assumptions that $|Res(f,g)|=pq$ and $|g(\theta _i)|<(\frac{p}{|a_n|^m})^{\frac{1}{n-r}}$ for $i=1,\dots ,n$ together imply that
\[
pq<|a_n|^{m}\left(\frac{p}{|a_n|^m}\right)^{\frac{n}{n-r}}=p\left(\frac{p}{|a_n|^m}\right)^{\frac{r}{n-r}},
\]
from which we deduce that $p>|a_n|^{m}q^{\frac{n-r}{r}}\geq q$,
as $|a_n|\geq 1$ and $r\leq\frac{n}{2}$. Thus $p>q$, so in this case $d_1=q$, and the irreducibility of $f$ follows by Lemma \ref{LemaGeneralaCuRez/dk} with $k=1$.

In the second case our assumptions that $|Res(f,g)|=pq$ and $|g(\theta _i)|<|\theta _i^m|(\frac{p}{|a_0|^m})^{\frac{1}{n-r}}$ for $i=1,\dots ,n$ will imply that
\[
pq<|a_n|^{m}|\theta _1^m|\cdots |\theta _n^m|\left(\frac{p}{|a_0|^m}\right)^{\frac{n}{n-r}}=|a_0|^m\left(\frac{p}{|a_0|^m}\right)^{\frac{n}{n-r}}
=p\left(\frac{p}{|a_0|^m}\right)^{\frac{r}{n-r}},
\]
from which we deduce that $p>|a_0|^{m}q^{\frac{n-r}{r}}\geq q$, as $|a_0|\geq 1$ and $r\leq\frac{n}{2}$. Thus $d_1=q$ in this case too, and the irreducibility of $f$ follows again by Lemma \ref{LemaGeneralaCuRez/dk} with $k=1$. 
\end{proof}

When no information on $r$ is known, instead of Corollary \ref{Coro1CuRez/dk} one may use the following simpler result:

\begin{corollary}
\label{Coro2CuRez/dk}Let $f,g\in \mathbb{Z}[X]$ be relatively prime polynomials of degrees $n$ and $m$, respectively, and suppose that $f$ factorizes as $f(X)=a_n(X-\theta _{1})\cdots(X-\theta _{n})$ with $\theta_1\cdots \theta _n\neq 0$, and that $|Res(f,g)|=pq$ with $p$ a prime number and $q$ a positive integer. If  
$\max\limits _{1\leq i\leq n}|g(\theta _i)|<(\frac{p}{|a_n|^m})^{\frac{1}{n-1}}$ or $\max\limits _{1\leq i\leq n}|\frac{g(\theta _i)}{\theta_i^m}|<(\frac{p}{|a_0|^m})^{\frac{1}{n-1}}$, then $f$ is irreducible over $\mathbb{Q}$. If $f$ has no rational roots, the same conclusion holds with exponents $\frac{1}{n-2}$ instead of $\frac{1}{n-1}$.
\end{corollary}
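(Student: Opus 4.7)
The plan is to deduce Corollary \ref{Coro2CuRez/dk} directly from Corollary \ref{Coro1CuRez/dk} by specializing the parameter $r$ to the two smallest values, $r=1$ and $r=2$, so that the hypothesis "$f$ has no nonconstant factors of degree less than $r$" becomes either automatic or equivalent to the no-rational-roots assumption.

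First I would handle the main assertion. When no information on the degrees of the irreducible factors of $f$ is available, one may always take $r=1$ in Corollary \ref{Coro1CuRez/dk}, since the requirement "$f$ has no nonconstant factors of degree less than $1$" holds vacuously (there are no nonconstant polynomials of degree $0$). The constraint $r<n$ then reduces to $n\geq 2$; the degenerate case $n=1$ is trivial because $f$ is linear, hence automatically irreducible. Substituting $r=1$ in the two bounds of Corollary \ref{Coro1CuRez/dk} replaces the exponent $\tfrac{1}{n-r}$ by $\tfrac{1}{n-1}$, yielding exactly the inequalities $\max_i|g(\theta_i)|<(p/|a_n|^m)^{1/(n-1)}$ and $\max_i|g(\theta_i)/\theta_i^m|<(p/|a_0|^m)^{1/(n-1)}$ of the statement.

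For the second assertion, I would observe that the hypothesis "$f$ has no rational roots" is equivalent, for a polynomial in $\mathbb{Z}[X]$, to "$f$ has no linear factor over $\mathbb{Q}$", which is precisely the condition "$f$ has no nonconstant factors of degree less than $r$" with $r=2$. One may therefore invoke Corollary \ref{Coro1CuRez/dk} with $r=2$, provided $n\geq 3$; the case $n=2$ is again trivial since the absence of rational roots forces the quadratic $f$ to be irreducible. Substituting $r=2$ turns $\tfrac{1}{n-r}$ into $\tfrac{1}{n-2}$, giving the second claim.

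The main conceptual point is simply recognizing that Corollary \ref{Coro2CuRez/dk} is the $r\in\{1,2\}$ specialization of Corollary \ref{Coro1CuRez/dk}; once this is seen, no further computation is needed, only the verification that the two hypotheses on factor degrees are, respectively, vacuous and equivalent to having no rational roots, together with a brief check of the low-degree edge cases $n=1$ and $n=2$.
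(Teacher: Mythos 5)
Your proof is correct and takes exactly the approach the paper intends: Corollary \ref{Coro2CuRez/dk} is the $r\in\{1,2\}$ specialization of Corollary \ref{Coro1CuRez/dk}, with $r=1$ making the degree condition vacuous and $r=2$ matching (via Gauss's lemma) the no-rational-roots hypothesis, together with the routine low-degree edge cases. The paper states this corollary without proof precisely because this specialization is immediate.
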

We mention that the particular case of Corollary \ref{Coro2CuRez/dk} when $g(X)=X-t$ for some integer $t$ was proved by Weisner \cite{Weisner} (see also Guersenzvaig \cite{Guersenzvaig}). We will end this section with the following irreducibility criterion for pairs of polynomials, whose proof relies on Corollary \ref{Coro2CuRez/dk}:
\begin{theorem}\label{TheoremPairCloseRoots}
Let $f,g\in \mathbb{Z}[X]$ be relatively prime polynomials, which factorize as $f(X)=a_n(X-\theta _{1})\cdots(X-\theta _{n})$ and $g(X)=b_m(X-\xi _{1})\cdots(X-\xi _{m})$, with $n\geq 2$, $m\geq 2$ and $\theta _{1}\cdots \theta _{n}\xi _{1}\cdots \xi _{m}\neq 0$. If $|Res(f,g)|=p\cdot q$, with $p$ a prime number and $q$ a positive integer, and 
\[
\max\limits_{i,j}|\theta_i-\xi_j|<\frac{p^{\frac{1}{mn-\min\{m,n\}}}}{|a_n|^{\frac{1}{n-1}}|b_m|^{\frac{1}{m-1}}}\quad or\quad \max\limits_{i,j}\left|\frac{1}{\theta_i}-\frac{1}{\xi_j}\right|<\frac{p^{\frac{1}{mn-\min\{m,n\}}}}{|a_0|^{\frac{1}{n-1}}|b_0|^{\frac{1}{m-1}}},
\]
then $f$ and $g$ are irreducible over $\mathbb{Q}$. If $n\geq 3$ and $m\geq 3$, the same conclusion holds if
\[
\max\limits_{i,j}|\theta_i-\xi_j|<\frac{p^{\frac{1}{mn-2\min\{m,n\}}}}{|a_n|^{\frac{1}{n-2}}|b_m|^{\frac{1}{m-2}}}\quad or\quad \max\limits_{i,j}\left|\frac{1}{\theta_i}-\frac{1}{\xi_j}\right|<\frac{p^{\frac{1}{mn-2\min\{m,n\}}}}{|a_0|^{\frac{1}{n-2}}|b_0|^{\frac{1}{m-2}}},
\]
provided that none of $f$ and $g$ has rational roots.
\end{theorem}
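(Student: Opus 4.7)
The plan is to deduce the theorem from two applications of Corollary \ref{Coro2CuRez/dk}: one directly to $f$, and one to $g$ after interchanging the roles of the two polynomials (which is legitimate since $|Res(g,f)| = |Res(f,g)| = pq$). Without loss of generality I assume $m \leq n$, so that the exponent $\frac{1}{mn-\min\{m,n\}}$ in the hypothesis becomes $\frac{1}{m(n-1)}$.

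The first step is to translate the bound on $\max_{i,j}|\theta_i-\xi_j|$ into the estimate on $\max_i|g(\theta_i)|$ that Corollary \ref{Coro2CuRez/dk} requires. From the factorization $g(\theta_i) = b_m\prod_{j=1}^{m}(\theta_i-\xi_j)$ one has $|g(\theta_i)| \leq |b_m|\cdot\max_{i,j}|\theta_i-\xi_j|^m$; raising the hypothesis to the $m$-th power, multiplying by $|b_m|$, and using $|b_m|\geq 1$ to absorb the leftover factor $|b_m|^{-1/(m-1)}$, yields $|g(\theta_i)| < (p/|a_n|^m)^{1/(n-1)}$, which is exactly the bound needed to conclude that $f$ is irreducible. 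For irreducibility of $g$ one needs the symmetric bound $|f(\xi_j)| < (p/|b_m|^n)^{1/(m-1)}$; this is also implied by the theorem's hypothesis, the decisive comparison being $\frac{n}{m(n-1)} - \frac{1}{m-1} = \frac{m-n}{m(n-1)(m-1)} \leq 0$ (using $m \leq n$), so that the excess power of $p$ produces a factor $\leq 1$ that is comfortably dominated by $|a_n|^{1/(n-1)}\geq 1$. Corollary \ref{Coro2CuRez/dk} applied with $g$ in the role of $f$ then gives the irreducibility of $g$ as well.

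The reciprocal-distance version runs verbatim after the standard rewriting
\[
\frac{g(\theta_i)}{\theta_i^{m}} = (-1)^{m}\, b_0 \prod_{j=1}^{m}\left(\frac{1}{\xi_j}-\frac{1}{\theta_i}\right),
\]
with $|a_0|,|b_0|\geq 1$ taking over the role of $|a_n|,|b_m|$. The sharper statement under $n,m\geq 3$ and the absence of rational roots follows from the same two-step strategy, now invoking the second (``rational-root-free'') conclusion of Corollary \ref{Coro2CuRez/dk}; one notes that under $m\leq n$ the theorem's exponent $\frac{1}{mn-2\min\{m,n\}}$ reduces to $\frac{1}{m(n-2)}$, which matches precisely the improved exponent $\frac{1}{n-2}$ produced by the corollary.

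The only delicate point — and essentially the main obstacle — is the bookkeeping observation that the single displayed bound in the theorem is strong enough to feed Corollary \ref{Coro2CuRez/dk} simultaneously for both $f$ and $g$; all remaining estimates are routine manipulations of the inequalities $|a_n|,|b_m|,|a_0|,|b_0|\geq 1$ together with the monotonicity comparison between the relevant powers of $p$ under the WLOG assumption $m\leq n$.
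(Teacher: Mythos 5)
Your proposal is correct and follows essentially the same route as the paper: both arguments reduce the theorem to two applications of Corollary \ref{Coro2CuRez/dk} (to $f$, and then to $g$ with the roles swapped) by bounding $|g(\theta_i)|\le |b_m|\delta^m$ and $|f(\xi_j)|\le |a_n|\delta^n$ and using $|a_n|,|b_m|\geq 1$. The only difference is presentational: the paper first derives the weaker, non-symmetric sufficient condition (its inequality (\ref{deltaslaba})) and then invokes the symmetry of the theorem's hypothesis to dispense with the second calculation, whereas you fix $m\leq n$ and carry out both estimates directly, with the $g$-side estimate requiring the additional monotonicity comparison of the exponents of $p$ that you correctly identify.
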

\begin{proof}
\ Let $\delta =\max\limits_{i,j}|\theta_i-\xi_j|$. For the condition $|g(\theta _i)|<(\frac{p}{|a_n|^m})^{\frac{1}{n-1}}$ to hold for $i=1,\dots ,n$, it suffices to have $|b_m|\delta ^m<(\frac{p}{|a_n|^m})^{\frac{1}{n-1}}$, or equivalently, to have
\begin{equation}\label{deltaslaba}
\delta<\frac{p^{\frac{1}{mn-m}}}{|a_n|^{\frac{1}{n-1}}|b_m|^{\frac{1}{m}}}.
\end{equation}
Thus, since $|Res(f,g)|=|Res(g,f)|$, if instead of (\ref{deltaslaba}) we ask
\[
\delta<\frac{p^{\frac{1}{mn-\min\{m,n\}}}}{|a_n|^{\frac{1}{n-1}}|b_m|^{\frac{1}{m-1}}},
\]
which is symmetric with respect to $f$ and $g$, we deduce by Corollary \ref{Coro2CuRez/dk} that both
$f$ and $g$ must be irreducible over $\mathbb{Q}$. 

Let now $\delta' =\max\limits_{i,j}|\frac{1}{\theta_i}-\frac{1}{\xi_j}|$, and observe that $|\frac{g(\theta _i)}{\theta _i^m}|=|b_0|\cdot |\frac{1}{\theta_i}-\frac{1}{\xi_1}|\cdots |\frac{1}{\theta_i}-\frac{1}{\xi_m}|$, so for the inequality $\max\limits_{1\leq i\leq n}|\frac{g(\theta _i)}{\theta _i^m}|<(\frac{p}{|a_0|^m})^{\frac{1}{n-1}}$ to hold, it suffices to have
$|b_0|\delta '^m<(\frac{p}{|a_0|^m})^{\frac{1}{n-1}}$, or equivalently, to have
\begin{equation}\label{delta'slaba}
\delta'<\frac{p^{\frac{1}{mn-m}}}{|a_0|^{\frac{1}{n-1}}|b_0|^{\frac{1}{m}}}.
\end{equation}
As before, since $|Res(f,g)|=|Res(g,f)|$, if instead of (\ref{delta'slaba}) we ask
\[
\delta'<\frac{p^{\frac{1}{mn-\min\{m,n\}}}}{|a_0|^{\frac{1}{n-1}}|b_0|^{\frac{1}{m-1}}},
\]
which is symmetric with respect to $f$ and $g$, we deduce by Corollary \ref{Coro2CuRez/dk} that both $f$ and $g$ must be irreducible. 
The case that none of $f$ and $g$ has rational roots follows in a similar way. 
\end{proof}

\section{Criteria that rely on the magnitude of the coefficients of $f$ and $g$}\label{magnitudeoffandg}

For the results in previous section to become effective, we have to consider pairs $f,g$ for which some information on their coefficients allows one to find information on the location of their roots. Such effective results are the following instances of Corollary \ref{coro1thm0}, that consider the case when the roots of $f$ are separated by the roots of $g$ by an annular region in the complex plane.

\begin{theorem}
\label{thm6Generala} Let $f(X)=\sum_{i=0}^{n}a_{i}X^{i},\
g(X)=\sum_{i=0}^{m}b_{i}X^{i}\in \mathbb{Z}[X]$, $a_{0}a_{n}b_{0}b_{m}\neq 0$. If $|Res(f,g)|=pq$ with $p$ a prime number and $q$ a positive integer, and for two positive real numbers $A$ and $B$ with $B\geq A+q^{1/\min\{m,n\}}$ we
have $|a_{n}|>\sum_{i=0}^{n-1}|a_{i}|A^{i-n}$ and $|b_{0}|>%
\sum_{i=1}^{m}|b_{i}|B^{i}$, then $f$ and $g$ are both
irreducible over $\mathbb{Q}$.
\end{theorem}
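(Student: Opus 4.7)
The plan is to reduce this to Corollary \ref{coro1thm0} by translating the coefficient hypotheses into root–location information. The first step is to show that the inequality $|a_n|>\sum_{i=0}^{n-1}|a_i|A^{i-n}$ forces every root $\theta$ of $f$ to satisfy $|\theta|<A$. Since $a_0\neq 0$, no root of $f$ is $0$, so I can divide $f(\theta)=0$ by $a_n\theta^n$ to get $1=-\sum_{i=0}^{n-1}(a_i/a_n)\theta^{i-n}$; if one had $|\theta|\geq A$, then $|\theta|^{i-n}\leq A^{i-n}$ (because $i-n<0$), and the triangle inequality would yield $1\leq\sum_{i=0}^{n-1}(|a_i|/|a_n|)A^{i-n}$, contradicting the hypothesis. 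A directly analogous (and even simpler) computation shows that $|b_0|>\sum_{i=1}^{m}|b_i|B^i$ forces every root $\xi$ of $g$ to satisfy $|\xi|>B$: if $|\xi|\leq B$, then $|b_0|=|\sum_{i=1}^{m}b_i\xi^i|\leq\sum_{i=1}^{m}|b_i|B^i$, contradicting the assumption.

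Next, I would combine these two root bounds with the separation $B\geq A+q^{1/\min(m,n)}$. For any root $\theta$ of $f$ and any root $\xi$ of $g$, the reverse triangle inequality gives
\[
|\theta-\xi|\geq |\xi|-|\theta|>B-A\geq q^{1/\min(m,n)}.
\]
In particular $\theta\neq\xi$ for every such pair, so $f$ and $g$ share no root in $\mathbb{C}$ and are therefore relatively prime in $\mathbb{Z}[X]$. Moreover $\theta_1\cdots\theta_n\neq 0$ because $a_0\neq 0$, and $\xi_1\cdots\xi_m\neq 0$ because $|\xi_j|>B>0$, so the nonvanishing hypothesis of Corollary \ref{coro1thm0} is satisfied.

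Finally, with $|Res(f,g)|=pq$ where $p$ is prime, and with $\min_{i,j}|\theta_i-\xi_j|>q^{1/\min(m,n)}$ established, Corollary \ref{coro1thm0} (its first alternative) applies directly and yields the irreducibility of both $f$ and $g$ over $\mathbb{Q}$. No step presents a real obstacle; the only point requiring a bit of care is the derivation of $|\theta|<A$, where one must pay attention to the sign of the exponent $i-n$ when inverting the inequality $|\theta|\geq A$. Everything else is a routine application of the triangle inequality and an invocation of Corollary \ref{coro1thm0}.
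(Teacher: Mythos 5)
Your proof is correct and follows essentially the same route as the paper's: the paper invokes Lemma \ref{lemamica} to place the roots of $f$ in the open disk of radius $A$ and the roots of $g$ outside the closed disk of radius $B$, then cites Corollary \ref{coro1thm0}. You rederive those two root-location facts from scratch (which amounts to reproving the relevant parts of Lemma \ref{lemamica}), and you are somewhat more explicit than the paper in checking the remaining hypotheses of Corollary \ref{coro1thm0} — that the roots are nonzero and that $f,g$ are relatively prime — but the argument is the same.
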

In particular, for $q=1$ and $B=A+1$ we have the following irreducibility criterion.
\begin{corollary}
\label{thm6} Let $f(X)=\sum_{i=0}^{n}a_{i}X^{i},\
g(X)=\sum_{i=0}^{m}b_{i}X^{i}\in \mathbb{Z}[X]$, $a_{0}a_{n}b_{0}b_{m}\neq 0$%
. If $|Res(f,g)|$ is a prime number and for a positive real $A$ we
have $|a_{n}|>\sum_{i=0}^{n-1}|a_{i}|A^{i-n}$ and $|b_{0}|>%
\sum_{i=1}^{m}|b_{i}|(A+1)^{i}$, then $f$ and $g$ are both
irreducible over $\mathbb{Q}$.
\end{corollary}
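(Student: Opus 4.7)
The plan is to reduce directly to Corollary~\ref{coro2thm0} (Theorem A). Since $|Res(f,g)|$ is a nonzero prime, $f$ and $g$ have no common root and are therefore relatively prime, and the assumption $a_0b_0 \neq 0$ gives $f(0)g(0) \neq 0$. Thus the standing hypotheses of Corollary~\ref{coro2thm0} are satisfied, and the only thing left to verify is the root-separation condition $|\theta - \xi| > 1$ for every root $\theta$ of $f$ and every root $\xi$ of $g$. I will obtain this by sandwiching the two sets of roots between concentric circles of radii $A$ and $A+1$.

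First I would localize the roots of $f$ inside the disc $|z| < A$. The hypothesis $|a_n| > \sum_{i=0}^{n-1}|a_i|A^{i-n}$ is equivalent to $|a_n|A^n > \sum_{i=0}^{n-1}|a_i|A^i$. If some root $\theta$ of $f$ satisfied $|\theta| \geq A$, then from $a_n\theta^n = -\sum_{i=0}^{n-1}a_i\theta^i$ and the triangle inequality I get
\[
|a_n| \;\leq\; \sum_{i=0}^{n-1}|a_i|\,|\theta|^{i-n} \;\leq\; \sum_{i=0}^{n-1}|a_i|\,A^{i-n},
\]
using $i - n < 0$ and $|\theta|\geq A$ in the second inequality. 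This contradicts the hypothesis, so $|\theta| < A$ for every root $\theta$ of $f$.

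Next I would show that every root $\xi$ of $g$ satisfies $|\xi| > A+1$. From $g(\xi) = 0$ we have $b_0 = -\sum_{i=1}^{m}b_i\xi^i$, hence $|b_0| \leq \sum_{i=1}^{m}|b_i|\,|\xi|^i$. If $|\xi|\leq A+1$, then $|\xi|^i \leq (A+1)^i$ yields $|b_0| \leq \sum_{i=1}^{m}|b_i|(A+1)^i$, contradicting the second hypothesis. Combining the two bounds, for any root $\theta$ of $f$ and any root $\xi$ of $g$,
\[
|\theta - \xi| \;\geq\; |\xi| - |\theta| \;>\; (A+1) - A \;=\; 1,
\]
and Corollary~\ref{coro2thm0}(i) delivers the irreducibility of both $f$ and $g$ over $\mathbb{Q}$. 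There is no real obstacle in this argument; the whole statement is just the specialization $q=1$, $B=A+1$ of Theorem~\ref{thm6Generala}, and the computation above is essentially the only substance beyond that specialization.
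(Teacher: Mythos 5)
Your proof is correct and follows the same route as the paper: localize the roots of $f$ inside $|z|<A$ and the roots of $g$ outside $|z|\leq A+1$ (this is exactly Lemma~\ref{lemamica}(ii) and (i), which you rederive in two lines), conclude $|\theta-\xi|>1$, and invoke Corollary~\ref{coro2thm0}. The paper phrases the same argument as the $q=1$, $B=A+1$ case of Theorem~\ref{thm6Generala} feeding into Corollary~\ref{coro1thm0}, which you yourself note at the end.
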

The following result considers the case when $f$ has a coefficient $a_j$ other then $a_n$ and $a_0$ with sufficiently large absolute value:
\begin{theorem}
\label{thm10Generala} Let $f(X)=\sum_{i=0}^{n}a_{i}X^{i},g(X)=%
\sum_{i=0}^{m}b_{i}X^{i}\in \mathbb{Z}[X]$, with $a_{0}a_{n}b_{0}b_{m}\neq 0$. If $|Res(f,g)|=pq$ with $p$ a prime number, $q$ a positive integer, and
\begin{eqnarray*}
|a_{j}| & > & \biggl(q^{\frac{1}{\min\{m,n\}}}+\sum\limits_{i=0}^{m-1}\Bigl|\frac{b_{i}}{b_{m}}\Bigr|\biggr)%
^{n-j}\cdot \sum_{i\neq j}|a_{i}|\quad \quad \text{and}  \\
|b_{0}| & > & \biggl(q^{\frac{1}{\min\{m,n\}}}+\Bigl(\sum\limits_{i\neq j}\Bigl|\frac{a_{i}}{a_{j}}\Bigr|%
\Bigr)^{\frac{1}{j}}\biggr)^{m}\cdot \sum\limits_{i=1}^{m}|b_{i}|
\end{eqnarray*}
for some index $j\in \{1,\dots ,n-1\}$, then $f$ and $g$\ are both irreducible over $\mathbb{Q}$.
\end{theorem}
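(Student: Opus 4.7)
The plan is to apply Corollary \ref{coro1thm0} by establishing the separation $\min_{i,j}|\theta_i - \xi_j| > q^{1/\min\{m,n\}}$. Set $R_g := \sum_{i=0}^{m-1}|b_i/b_m|$, $A := \bigl(\sum_{i\neq j}|a_i/a_j|\bigr)^{1/j}$, $S := q^{1/\min\{m,n\}}+R_g$, and $T := q^{1/\min\{m,n\}}+A$, so that the hypotheses read $|a_j| > S^{n-j}\sum_{i\neq j}|a_i|$ and $|b_0| > T^m \sum_{i=1}^{m}|b_i|$. Since $q\geq 1$, both $S,T\geq 1$, and dividing the first inequality by $|a_j|$ yields $A^j < 1/S^{n-j}\leq 1$, so $A<1\leq S$.

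To localize the roots of $g$, apply Rouch\'e's theorem to $g$ and the constant $b_0$ on $|X|\leq T$: since $T\geq 1$, $|g(X)-b_0|\leq \sum_{i\geq 1}|b_i|T^i \leq T^m\sum_{i\geq 1}|b_i| < |b_0|$, so $g$ has no zeros in that disc and every root $\xi$ of $g$ satisfies $|\xi|>T$. Combined with the Lagrange upper bound $|\xi|\leq\max\{1,R_g\}$ and $|\xi|>T\geq 1$, this forces $R_g>1$, so in fact $|\xi|\leq R_g$.

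For the roots of $f$, run two Rouch\'e comparisons with the monomial $a_j X^j$. On $|X|=A$, the identity $|a_j|A^j=\sum_{i\neq j}|a_i|$ together with $A<1$ and $|a_n|\geq 1$ (using $n\neq j$) gives $|a_j|A^j > \sum_{i\neq j}|a_i|A^i$; on $|X|=S$, the hypothesis combined with $S\geq 1$ gives $|a_j|S^j > S^n\sum_{i\neq j}|a_i|\geq \sum_{i\neq j}|a_i|S^i$. Rouch\'e then shows that $f$ has exactly $j$ zeros in $|X|<A$ and exactly $j$ zeros in $|X|<S$; since $A<S$ these two sets coincide, so the remaining $n-j$ roots of $f$ satisfy $|X|>S$.

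The separation bound now follows case by case. For a root $\theta$ of $f$ with $|\theta|<A$ and any root $\xi$ of $g$, $|\theta-\xi|\geq |\xi|-|\theta| > T-A = q^{1/\min\{m,n\}}$; for a root $\theta$ with $|\theta|>S$, the chain $|\theta|>S = q^{1/\min\{m,n\}}+R_g\geq |\xi|$ yields $|\theta-\xi|\geq |\theta|-|\xi| > S-R_g = q^{1/\min\{m,n\}}$. Hence $\min_{i,j}|\theta_i-\xi_j|>q^{1/\min\{m,n\}}$, and Corollary \ref{coro1thm0} delivers the simultaneous irreducibility of $f$ and $g$ over $\mathbb{Q}$. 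The main obstacle is the Pellet-style splitting of the roots of $f$ across the two Rouch\'e circles, together with the automatic derivation $R_g>1$, which together ensure that even the exterior roots of $f$ remain sufficiently far from the roots of $g$.
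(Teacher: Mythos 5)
Your proof is correct. Let me verify the key points: from the first hypothesis you get $A^j<1/S^{n-j}\leq 1$ (since $q\ge 1$ forces $S\ge 1$), so $A<1\leq S$; the Rouch\'e comparison with $b_0$ on $|X|\leq T$ uses $T\ge 1$ to bound $\sum_{i\ge 1}|b_i|T^i\leq T^m\sum_{i\ge 1}|b_i|<|b_0|$, giving $|\xi|>T$, and since $|\xi|>1$ the Lagrange bound yields $|\xi|\leq R_g$; the two Rouch\'e comparisons of $f$ with $a_jX^j$ on $|X|=A$ (where $|a_j|A^j=\sum_{i\neq j}|a_i|$, with strictness coming from $a_n\neq 0$ and $A<1$) and on $|X|=S$ both count $j$ interior roots, so $A<S$ gives the dichotomy $|\theta|<A$ or $|\theta|>S$; and in each case the separation $T-A=S-R_g=q^{1/\min\{m,n\}}$ is achieved. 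All of this, together with $a_0b_0\neq 0$ ensuring no zero roots, puts you squarely in the hypotheses of Corollary \ref{coro1thm0}.

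Your route differs from the paper's in a concrete way. The paper proves this theorem by plugging the specific parameter choices $\mu_k=|a_k|/\sum_{i\neq j}|a_i|$, $\lambda_k=|b_k|/\sum_{i\neq 0}|b_i|$, $\gamma_k=|b_k|/\sum_{i\neq m}|b_i|$, and $\delta=q^{1/\min\{m,n\}}$ into Lemma \ref{lemamare} (a Fujiwara-style root-separation lemma with tunable weights), and then checking that the resulting inequalities reduce to the two stated hypotheses; the root localization is packaged entirely inside that lemma. You instead localize the roots directly: Rouch\'e's theorem against the monomial $a_jX^j$ at the two radii $A$ and $S$ gives the annular gap for $f$ (essentially re-deriving the relevant case of Lemma \ref{LemaALaRouche}), and Rouch\'e against the constant $b_0$ together with the Lagrange bound confines the roots of $g$ to the annulus $T<|\xi|\leq R_g$. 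The net effect is the same set of radii (the paper's $A,B,C,D$ with those weights coincide with your $A$, a number $>S$, a number $>T$, and $R_g$ respectively), so both proofs feed the same separation into Corollary \ref{coro1thm0}. What your version buys is self-containedness — no appeal to Lemma \ref{lemamare} or Fujiwara's theorem — and the extra information of an exact root count ($j$ small roots, $n-j$ large roots); what the paper's version buys is the flexibility of the weight parameters, which it exploits in Remark \ref{remarca2univariate} to generate variants of the criterion.
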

In particular, for $q=1$ one obtains the following result.
\begin{corollary}
\label{thm10} Let $f(X)=\sum_{i=0}^{n}a_{i}X^{i},g(X)=%
\sum_{i=0}^{m}b_{i}X^{i}\in \mathbb{Z}[X]$, $a_{0}a_{n}b_{0}b_{m}\neq 0$. If $|Res(f,g)|$ is a prime number and for some index $j\in \{1,\dots
,n-1\}$ we have 
\[
|a_{j}|>\biggl(\sum\limits_{i=0}^{m}\Bigl|\frac{b_{i}}{b_{m}}\Bigr|\biggr)%
^{n-j}\sum_{i\neq j}|a_{i}|\ \text{ \ and \ } \ |b_{0}|>\biggl(1+\Bigl(\sum\limits_{i\neq j}\Bigl|\frac{a_{i}}{a_{j}}\Bigr|%
\Bigr)^{\frac{1}{j}}\biggr)^{m}\sum\limits_{i=1}^{m}|b_{i}|,
\]
then $f$ and $g$\ are both irreducible over $\mathbb{Q}$.
\end{corollary}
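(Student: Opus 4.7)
The plan is to observe that Corollary \ref{thm10} is precisely the specialization $q=1$ of Theorem \ref{thm10Generala}, so the proof will consist entirely of a line-by-line algebraic reconciliation of the two pairs of hypotheses. I would first note that since $|Res(f,g)|$ is a prime number $p$, one may write $|Res(f,g)|=p\cdot q$ with $q=1$, whence $q^{1/\min\{m,n\}}=1$, and the auxiliary condition $a_{0}a_{n}b_{0}b_{m}\neq 0$ carries over verbatim.

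Next I would substitute $q=1$ into the first hypothesis of Theorem \ref{thm10Generala}, obtaining
\[
|a_{j}| > \biggl(1+\sum_{i=0}^{m-1}\Bigl|\frac{b_{i}}{b_{m}}\Bigr|\biggr)^{n-j}\sum_{i\neq j}|a_{i}|.
\]
Since $1 = |b_{m}/b_{m}|$, the factor in parentheses equals $\sum_{i=0}^{m}|b_{i}/b_{m}|$, which identifies this inequality with the first hypothesis of Corollary \ref{thm10}. A direct substitution of $q=1$ into the second hypothesis of Theorem \ref{thm10Generala} recovers the second hypothesis of Corollary \ref{thm10} verbatim, with no further simplification needed.

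Having verified that the hypotheses of Theorem \ref{thm10Generala} hold with $p$ equal to the given prime value of $|Res(f,g)|$ and $q=1$, I would invoke Theorem \ref{thm10Generala} to conclude that both $f$ and $g$ are irreducible over $\mathbb{Q}$. The only step requiring any attention is the elementary rewriting identifying the two inner constants, and this presents no genuine obstacle; the substantive content lies entirely in Theorem \ref{thm10Generala} itself, which handles the general case $q\geq 1$ by combining bounds on the roots of $f$ and $g$ coming from the dominance of $a_j$ and $b_0$ with the root-separation version of Corollary \ref{coro1thm0}.
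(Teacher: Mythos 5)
Your proof is correct and matches the paper's intended derivation exactly: the paper introduces Corollary \ref{thm10} with the phrase ``In particular, for $q=1$ one obtains the following result'' immediately after Theorem \ref{thm10Generala}, and your line-by-line reconciliation (including the observation that $1=|b_m/b_m|$ absorbs into the sum) is precisely the required check.
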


For the proof of our results we will need some lemmas on the location of polynomial roots. The first one, Lemma \ref{lemamica}, is quite elementary and is also well known as being an immediate consequence of Rouch\'e' s Theorem. Recall that by Rouch\'e's Theorem, if $f$ has a coefficient $a_j$ with $|a_{j}|>\sum_{i\neq j}|a_{i}|\delta ^{i-j}$ for some positive real $\delta $, then $f$ has $j$ roots with $|z|<\delta$ and $n-j$ roots with $|z|>\delta$. Lemma \ref{LemaALaRouche} will improve this result by providing sharper estimates on the location of the $j$ roots with $|z|<\delta$, and on the location of the $n-j$ remaining ones, and will also improve the estimates in Lemma \ref{lemamica} for polynomials $f$ that have at least three nonzero coefficients. Roughly speaking, Lemma \ref{LemaALaRouche} says that if a polynomial has one coefficient of sufficiently large absolute value, then its roots are forced to lie outside a certain annular region in the complex plane (which may degenerate to a disk or its complement in the complex plane). 
Our proofs will require a minimal use of Rouch\'e's Theorem.

\begin{lemma}
\label{lemamica} Let $f(X)=\sum_{i=0}^{n}a_{i}X^{i}\in \mathbb{C}[X]$, $%
a_{0}a_{n}\neq 0$ and let $\delta >0$ be an arbitrary real number.

i) If $|a_{0}|>\sum_{i=1}^{n}|a_{i}|\delta ^{i}$, then $f$ has no roots in
the disk $\{z:|z|\leq \delta \} $. Moreover, if $|a_{0}|\geq\sum_{i=1}^{n}|a_{i}|\delta ^{i}$, then $f$ has no roots in
the disk $\{z:|z|<\delta \} $;

ii) If $|a_{n}|>\sum_{i=0}^{n-1}|a_{i}|\delta ^{i-n}$, then all the roots of 
$f$ lie in the disk $\{ z:|z|<\delta \} $. Moreover, if $|a_{n}|\geq\sum_{i=0}^{n-1}|a_{i}|\delta ^{i-n}$, then all the roots of $f$ lie in the disk $\{ z:|z|\leq\delta \} $. 
\end{lemma}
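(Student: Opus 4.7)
The plan is to prove both parts directly from the triangle inequality (bypassing Rouché entirely, since the statement is really just a strict-dominance estimate on $|f(z)|$), reserving Rouché for the sharper Lemma \ref{LemaALaRouche} that is announced but not yet proved.

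For part i), I would pick an arbitrary $z$ with $|z|\leq\delta$ and estimate
\[
|f(z)| \;\geq\; |a_0| - \sum_{i=1}^{n}|a_i|\,|z|^i \;\geq\; |a_0| - \sum_{i=1}^{n}|a_i|\,\delta^i \;>\;0,
\]
so $f$ cannot vanish on the closed disk. For the ``moreover'' clause, under the weak hypothesis $|a_0|\geq\sum_{i=1}^{n}|a_i|\delta^i$, I would note that strict inequality is recovered as soon as $|z|<\delta$, because $a_n\neq 0$ forces at least one term $|a_n||z|^n$ in the sum to be \emph{strictly} less than $|a_n|\delta^n$; hence $f$ has no zero in the open disk $\{|z|<\delta\}$.

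For part ii), the symmetric approach is to pull out $z^n$: for $z\neq 0$,
\[
\frac{|f(z)|}{|z|^n} \;\geq\; |a_n| - \sum_{i=0}^{n-1}|a_i|\,|z|^{i-n}.
\]
Since $i-n<0$, the function $|z|\mapsto|z|^{i-n}$ is decreasing, so for $|z|\geq\delta$ every term $|a_i||z|^{i-n}$ is at most $|a_i|\delta^{i-n}$. Combined with the strict hypothesis on $|a_n|$, this gives $|f(z)|>0$ on $\{|z|\geq\delta\}$, and hence every root satisfies $|z|<\delta$. For the weak ``moreover'' version, the same computation yields $|f(z)|\geq 0$ on $\{|z|>\delta\}$ with strict inequality for $|z|>\delta$ (because then $|z|^{i-n}<\delta^{i-n}$ for the nonzero terms among $a_0,\dots,a_{n-1}$; if all of these vanish the claim is trivial since the only root is $0$).

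There is essentially no obstacle: the lemma is a direct consequence of the triangle inequality applied twice, once with $a_0$ as the dominant term and once with $a_n z^n$ as the dominant term. The only small subtlety worth flagging in the write-up is the case distinction in the ``moreover'' clauses, where one has to verify that strict inequality propagates from the hypothesis on a closed disk to the conclusion on the open disk (or vice versa), which is handled by observing that at least one of the coefficients $a_1,\dots,a_n$ in i), respectively $a_0,\dots,a_{n-1}$ in ii), contributes a strictly smaller term when $|z|$ is strictly less than (resp. strictly greater than) $\delta$.
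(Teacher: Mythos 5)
Your proof is correct and is essentially the same as the paper's: both parts are proved by the triangle inequality with $a_0$ (resp.\ $a_n z^n$) as the dominant term, and the ``moreover'' clauses follow by observing that $a_n\neq 0$ (resp.\ $a_0\neq 0$) makes the inequality strict when $|z|$ strictly crosses $\delta$. The paper phrases it as a contradiction from a hypothetical root $\theta$ while you phrase it as a positivity estimate on $|f(z)|$, but these are the same argument.
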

\begin{proof}\  i) If $f$ would have a root $\theta $ with $|\theta |\leq
\delta $, then since $a_{0}=-\sum_{i=1}^{n}a_{i}\theta ^{i}$ we would
obtain $|a_{0}|\leq \sum_{i=1}^{n}|a_{i}|\cdot |\theta |^{i}\leq
\sum_{i=1}^{n}|a_{i}|\delta ^{i}$, a contradiction. Assume now that $|a_{0}|\geq\sum_{i=1}^{n}|a_{i}|\delta ^{i}$. If $f$ would have a root $\theta $ with $|\theta |<\delta $, then we would obtain $|a_{0}|\leq \sum_{i=1}^{n}|a_{i}|\cdot |\theta |^{i}<\sum_{i=1}^{n}|a_{i}|\delta ^{i}$, again a contradiction.

ii) If $f$ would have a root $\theta $ with $|\theta |\geq \delta $, then we
would obtain 
\[
0=\biggl| \sum\limits_{i=0}^{n}a_{i}\theta ^{i-n}\biggr| \geq
|a_{n}|-\sum\limits_{i=0}^{n-1}|a_{i}|\cdot |\theta |^{i-n}\geq
|a_{n}|-\sum\limits_{i=0}^{n-1}|a_{i}|\cdot \delta ^{i-n},
\]
a contradiction. Assume now that $|a_{n}|\geq\sum_{i=0}^{n-1}|a_{i}|\delta ^{i-n}$. If $f$ would have a root $\theta $ with $|\theta |>\delta $, then we
would obtain 
\[
0=\biggl| \sum\limits_{i=0}^{n}a_{i}\theta ^{i-n}\biggr| \geq
|a_{n}|-\sum\limits_{i=0}^{n-1}|a_{i}|\cdot |\theta |^{i-n}>
|a_{n}|-\sum\limits_{i=0}^{n-1}|a_{i}|\cdot \delta ^{i-n},
\]
again a contradiction. 
\end{proof}
\begin{lemma}
\label{LemaALaRouche} Let $f(X)=\sum_{i=0}^{n}a_{i}X^{i}\in \mathbb{C}[X]$, $a_{0}a_{n}\neq 0$, and assume that $f$ is not a binomial. If $|a_{j}|>\sum_{i\neq j}|a_{i}|\delta ^{i-j}$ for an index $j\in\{0,\dots ,n\}$ and a positive real number $\delta $, then $f$ has no roots in the annular region $\{ z:A\leq|z|\leq B\}$, with
\[
A=\delta \biggl( \frac{\sum_{i\neq j}|a_{i}|\delta ^{i}}{|a_{j}|\delta ^{j}}\biggr) ^{\frac{1}{j}}\text{\quad and \quad }B=\delta \biggl( \frac{|a_{j}|\delta ^{j}}{\sum_{i\neq j}|a_{i}|\delta ^{i}}\biggr) ^{\frac{1}{n-j}},
\]
and moreover, $f$ has $j$ roots with $|z|<A$ and $n-j$ roots with $|z|>B$. Here we use the convention that $A=0$ for $j=0$ and $B=\infty$ for $j=n$.
\end{lemma}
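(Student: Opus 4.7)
The plan is to reduce the problem to a single real-variable inequality and then count roots via Rouch\'e's theorem. Writing $c_i := |a_i|\delta^i$ and $S := \sum_{i\neq j} c_i$, the hypothesis becomes $c_j > S$, and with $u := |z|/\delta$ the endpoints read $u_1 := A/\delta = (S/c_j)^{1/j} < 1$ and $u_2 := B/\delta = (c_j/S)^{1/(n-j)} > 1$. The triangle inequality gives
\[
|a_j z^j| - \Bigl|\sum_{i\neq j} a_i z^i\Bigr| \;\geq\; h(u) \;:=\; c_j u^j - \sum_{i\neq j} c_i u^i,
\]
so the game is to prove $h(u) > 0$ for $u \in [u_1, u_2]$. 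Once this is in hand, on any circle $|z| = r$ with $A \leq r \leq B$ we have $|a_j z^j| > |f(z) - a_j z^j|$, and Rouch\'e's theorem concludes that $f$ and $a_j z^j$ have the same number of zeros in $|z| < r$, namely $j$; as the annulus then contains no zeros of $f$, the remaining $n-j$ must lie in $|z| > B$.

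The central step is a concavity observation: $h(u)/u^j = c_j - \sum_{i\neq j} c_i u^{i-j}$ is concave on $(0,\infty)$, since each term $u^{i-j}$ with $i \neq j$ is strictly convex, whether $i-j \geq 1$ (a monomial of degree $\geq 1$) or $i-j \leq -1$ (giving $u^{-k}$ with $k \geq 1$). A concave function that is strictly positive at both endpoints of a closed interval is strictly positive throughout, so the task reduces to checking $h(u_1) > 0$ and $h(u_2) > 0$. At $u_1$ the identity $c_j u_1^j = S$ holds by construction, and since $u_1 < 1$, every $u_1^i \leq 1$ with strict inequality for $i \geq 1$; in the non-degenerate case $0 < j < n$, the term $c_n u_1^n < c_n$ (afforded by $a_n \neq 0$ and $n \neq j$) is enough to yield $\sum_{i\neq j} c_i u_1^i < S$. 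The check at $u_2$ is symmetric, using $S u_2^n = c_j u_2^j$, the inequality $u_2 > 1$, and the nonzero term at $i = 0$ furnished by $a_0 \neq 0$ with $0 \neq j$.

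The degenerate cases $j = 0$ (so $A = 0$, and the annulus becomes a disk) and $j = n$ (so $B = \infty$, and the annulus becomes the exterior of a disk) require only a mild adjustment: on the relevant half-line, $h(u)/u^j$ is monotone (decreasing for $j = 0$, increasing for $j = n$), so one only needs strict positivity at the surviving finite endpoint. This is precisely where the ``not a binomial'' hypothesis enters in an essential way, supplying an index $i$ with $1 \leq i \leq n-1$ and $c_i > 0$ that upgrades the triangle-inequality bound to a strict one. I expect this endpoint analysis to be the main obstacle, since the triangle inequality alone yields only $h \geq 0$ at $u_1$ and $u_2$; upgrading to strict inequality is what makes Rouch\'e applicable on the boundary circles and distributes the $j$ and $n-j$ roots on either side of the annulus as claimed.
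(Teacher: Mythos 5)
Your proof is correct and takes a genuinely different organizational route from the paper's. You normalize via $u=|z|/\delta$, $c_i=|a_i|\delta^i$, reduce the whole annulus claim to showing $h(u)=c_ju^j-\sum_{i\neq j}c_iu^i>0$ on $[u_1,u_2]$, and handle the interior of the interval by the concavity of $g(u)=h(u)/u^j$, leaving only the two endpoint checks. The paper instead runs a pointwise argument: for each $|\theta|$ in the annulus it rewrites $A$ and $B$ as the max/min of the one-term expressions $\delta(S/c_j)^{1/(j-k)}$ and $\delta(c_j/S)^{1/(k-j)}$, deduces $c_ju^j\geq Su^k$ for every $k\neq j$ (strict on one side), multiplies these by $|a_k|$, sums, and cancels $S$ to get $h(u)>0$ directly -- a weighted-sum argument that needs no calculus but does need the nontrivial reformulation of $A$ and $B$ as extrema over $k$. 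Your route trades that reformulation for a single convexity observation, which makes the structure cleaner (the only hard work lives at $u_1$ and $u_2$), and it also makes visible, exactly as you say, where the ``not a binomial'' hypothesis is load-bearing: at $u_1$ for $j=n$ and at $u_2$ for $j=0$, where $a_n$ (resp.\ $a_0$) contributes a vanishing term to $\sum_{i\neq j}c_i(1-u_1^i)$ (resp.\ $\sum_{i\neq j}c_i(u_2^n-u_2^i)$) and some interior coefficient must pick up the slack; for $1\le j\le n-1$ the pair $a_0a_n\neq 0$ already does the job and the hypothesis is vacuous. One small imprecision worth flagging: $u^{i-j}$ is only \emph{weakly} convex when $i=j+1$ (it is linear), so the summands are convex rather than strictly convex; this does not affect anything, since plain concavity of $g$ is all you invoke. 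Otherwise the endpoint computations, the Rouch\'e count on circles $|z|=r$ for $A\le r\le B$, and the treatment of the degenerate rays for $j=0$ and $j=n$ (monotone $g$, one finite endpoint) are all sound.
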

\begin{proof}\  First of all, note that as mentioned before, by Rouch\'e's Theorem the inequality $|a_{j}|>\sum_{i\neq j}|a_{i}|\delta ^{i-j}$ implies that $f$ has $j$ roots with $|z|<\delta$ and $n-j$ roots with $|z|>\delta$, so Lemma \ref{LemaALaRouche} gives sharper estimates on the location of the $j$ roots with absolute values less than $\delta$, and also on the location of the remaining $n-j$ ones. Indeed, observe that since $|a_{j}|>\sum_{i\neq j}|a_{i}|\delta ^{i-j}$, we have $A<\delta <B$, so $A<B$, and moreover, the ratio $\frac{\delta}{A}$ for $j\neq 0$ and the ratio $\frac{B}{\delta}$ for $j\neq n$ can be arbitrarily large.
\smallskip

{\bf Case 1:}\ $j=0$\quad In this case we have to prove that that all the roots of $f$ have absolute values exceeding $B=\delta \bigl(\frac{|a_{0}|}{\sum_{i\neq 0}|a_{i}|\delta ^{i}}\bigr) ^{1/n}$. Assume to the contrary that $f$ has a root $\theta $ with $|\theta |\leq B$. Then $|\theta |^n\sum_{i\neq 0}|a_{i}|\delta ^{i}\leq \delta ^n|a_{0}|$, which after multiplication by $|a_n|$ yields
\begin{equation}\label{Cuan}
|a_0|\cdot |a_n|\delta ^n\geq |a_n|\cdot |\theta|^n\sum\limits _{i=1}^{n}|a_i|\delta ^i.
\end{equation}
Since $B<\delta \bigl(\frac{|a_{0}|}{\sum_{i\neq 0}|a_{i}|\delta ^{i}}\bigr) ^{1/k}$ for each $k\in \{1,\dots ,n-1\}$, we also have $|\theta |^k\sum_{i\neq 0}|a_{i}|\delta ^{i}< \delta ^k|a_{0}|$ for each $k\in \{1,\dots ,n-1\}$. Adding together these inequalities multiplied by $|a_k|$, we obtain
\begin{equation}\label{Cuak}
|a_0|\cdot \sum\limits _{k=1}^{n-1}|a_k|\delta ^k> \sum\limits _{k=1}^{n-1}|a_k|\cdot |\theta|^k\sum\limits _{i=1}^{n}|a_i|\delta ^i,
\end{equation}
with the strict inequality guaranteed by our hypothesis that at least one of the coefficients $a_1,\dots ,a_{n-1}$ is nonzero, as $f$ is not a binomial. By (\ref{Cuan}) and (\ref{Cuak}) we then obtain
\[
|a_0|\cdot \sum\limits _{k=1}^{n}|a_k|\delta ^k> \sum\limits _{k=1}^{n}|a_k|\cdot |\theta|^k\sum\limits _{i=1}^{n}|a_i|\delta ^i,
\]
which after division by $\sum\limits _{i=1}^{n}|a_i|\delta ^i$ leads to $|a_0|> \sum\limits _{k=1}^{n}|a_k|\cdot |\theta|^k$. This contradicts the fact that $\theta $ is a root of $f$, and finishes the proof in this case.
\smallskip

{\bf Case 2:}\ $j\in\{1,\dots ,n-1\}$\quad Here we notice that
\begin{equation}\label{A<B}
A=\delta \cdot \max\limits _{k<j}\biggl( \frac{\sum_{i\neq j}|a_{i}|\delta ^{i}}{|a_{j}|\delta ^{j}}\biggr) ^{\frac{1}{j-k}}\text{\quad and
\quad }B=\delta \cdot \min\limits _{k>j}\biggl( \frac{|a_{j}|\delta ^{j}}{\sum_{i\neq j}|a_{i}|\delta ^{i}}\biggr) ^{\frac{1}{k-j}}.
\end{equation}

Let us assume to the contrary that $f$ has a root $\theta $ with $A\leq |\theta |\leq B$. Since $A<B$, we will either have $A<|\theta |\leq B$, or $A\leq |\theta |<B$.

In the first case, from $A<|\theta |$ we deduce by (\ref{A<B}) that
$\delta ^{k}|a_{j}\theta ^{j}|>|\theta |^{k}\sum_{i\neq j}|a_{i}|\delta ^{i}$ for each $k<j$, while from $|\theta |\leq B$ we obtain the inequality
$\delta ^{k}|a_{j}\theta ^{j}|\geq |\theta |^{k}\sum_{i\neq j}|a_{i}|\delta ^{i}$ for each $k>j$.
We observe now that since $a_{0}\neq 0$, the set of indices $k<j$ with $a_{k}\neq 0$ is not empty, and for all such indices we must have the strict inequality
$|a_{k}|\delta ^{k}|a_{j}\theta ^{j}|>|a_{k}\theta ^{k}|\sum_{i\neq j}|a_{i}|\delta ^{i}$, while for each index $k>j$ we have
$|a_{k}|\delta ^{k}|a_{j}\theta ^{j}|\geq |a_{k}\theta ^{k}|\sum_{i\neq j}|a_{i}|\delta ^{i}$.
Adding together these inequalities 
we deduce that
\[
 |a_{j}\theta ^{j}|\cdot \sum_{k\neq j}|a_{k}|\delta ^{k} >\sum_{k\neq j}|a_{k}\theta ^{k}|\cdot \sum_{i\neq j}|a_{i}|\delta ^{i}=\sum_{k\neq j}|a_{k}\theta ^{k}|\cdot \sum_{k\neq j}|a_{k}|\delta ^{k},
\]
which after cancellation of $\sum_{k\neq j}|a_{k}|\delta ^{k}$ on both sides yields
\begin{equation}\label{ecdelema}
|a_{j}\theta ^{j}|>\sum_{k\neq j}|a_{k}\theta ^{k}|.
\end{equation}

Let us assume now that $A\leq |\theta |<B$. Reasoning in a similar way, we deduce this time that since $a_{n}\neq 0$, we have at least one index $k>j$ with $|a_{k}|\delta ^{k}|a_{j}\theta ^{j}|>|a_{k}\theta ^{k}|\sum_{i\neq j}|a_{i}|\delta ^{i}$,
so (\ref{ecdelema}) will hold in this case too. 
On the other hand, since $\theta $ is a root of $f$, we must have
\[
|a_{j}\theta ^{j}|=\biggl| \sum_{k\neq j}a_{k}\theta ^{k}\biggr| \leq \sum_{k\neq j}|a_{k}\theta ^{k}|,
\]
which contradicts (\ref{ecdelema}) and completes the proof in this case.
\smallskip

{\bf Case 3:}\ $j=n$\quad In this case we have to prove that that all the roots of $f$ have absolute values less than $A=\delta \bigl(\frac{\sum_{i\neq n}|a_{i}|\delta ^{i}}{|a_{n}|\delta ^{n}}\bigr) ^{1/n}$. Assume towards a contradiction that $f$ has a root $\theta $ with $|\theta |\geq A$. Then $|a_n|\cdot|\theta |^n\geq\sum_{i\neq n}|a_{i}|\delta ^{i}$, which after multiplication by $|a_0|$ yields
\begin{equation}\label{CuanCaz3}
|a_n|\cdot |\theta|^n|a_0|\geq |a_0|\sum\limits _{i\neq n}|a_i|\delta ^i.
\end{equation}
Observe now that $|\theta|\geq A>\delta \bigl(\frac{\sum_{i\neq n}|a_{i}|\delta ^{i}}{|a_{n}|\delta ^{n}}\bigr) ^{1/(n-k)}$ for each $k\in\{1,\dots,n-1\}$, which implies that $|a_n|\cdot |\theta|^n\cdot \delta ^k> |\theta|^k\sum_{i\neq n}|a_i|\delta ^i$ for each $k\in\{1,\dots,n-1\}$. Adding together these inequalities multiplied by $|a_k|$, we obtain
\begin{equation}\label{CuakCaz3}
|a_n|\cdot |\theta|^n\sum\limits _{k=1}^{n-1}|a_k|\delta ^k> \sum\limits _{k=1}^{n-1}|a_k|\cdot |\theta|^k\sum\limits _{i\neq n}|a_i|\delta ^i,
\end{equation}
with the strict inequality guaranteed, as in (\ref{Cuak}), by our hypothesis that at least one of the coefficients $a_1,\dots ,a_{n-1}$ is nonzero, as $f$ is not a binomial. By (\ref{CuanCaz3}) and (\ref{CuakCaz3}) we deduce that
\[
|a_n|\cdot |\theta|^n\sum\limits _{k\neq n}|a_k|\delta ^k> \sum\limits _{k=0}^{n-1}|a_k|\cdot |\theta|^k\sum\limits _{i\neq n}|a_i|\delta ^i,
\]
which after divison by $\sum_{i\neq n}|a_i|\delta ^i$ finally leads us to $|a_n|\cdot |\theta|^n>\sum _{k=0}^{n-1}|a_k|\cdot |\theta|^k$, and this contradicts the fact that $\theta$ is a root of $f$.
\end{proof}
We mention that some variants of Lemma \ref{LemaALaRouche} have been used in \cite{Bonciocat1} and \cite{BBZ} to derive estimates for the location of the roots when some inequalities on the coefficients are known. Here we will use it in the following section in the proof of Theorem \ref{thmUnita}, which provides irreducibility conditions for polynomials having one large coefficient.

For the proof of Theorem \ref{thm10Generala} we will need the following technical lemma, which might be of independent interest and useful in other applications.

\begin{lemma}
\label{lemamare} Let $f(X)=\sum_{i=0}^{n}a_{i}X^{i}$ and $g(X)=\sum_{i=0}^{m}b_{i}X^{i}$ be complex polynomials and let $\delta $ be a positive real number.
If there exist three sequences of positive real numbers 
$\mu _{0},\dots ,\mu _{n}$, $\lambda _{1},\ldots ,\lambda _{m}$, $\gamma
_{0},\ldots ,\gamma _{m-1}$ and an index $j\in \{1,\dots ,n-1\}$ such that $a_j\neq 0$, $%
\sum_{k\neq j}\mu _{k}\leq 1$, $\sum_{k=1}^{m}\lambda _{k}\leq 1$, $%
\sum_{k=0}^{m-1}\gamma _{k}\leq 1$ and 
\begin{eqnarray}
\delta +\max\limits_{k<j}\biggl(\frac{|a_{k}|}{\mu _{k}|a_{j}|}\biggr)^{\frac{1}{%
j-k}}\  &<&\min\limits_{k>0}\biggl(\frac{\lambda _{k}|b_{0}|}{|b_{k}|}\biggr)^{%
\frac{1}{k}},  \label{urit1} \\
\delta +\max\limits_{k<m}\biggl(\frac{|b_{k}|}{\gamma _{k}|b_{m}|}\biggr)^{\frac{1%
}{m-k}} &<&\min\limits_{k>j}\biggl(\frac{\mu _{k}|a_{j}|}{|a_{k}|}\biggr)^{%
\frac{1}{k-j}},  \label{urit2}
\end{eqnarray}
then $|\theta -\xi |>\delta $ for every root $\theta $ of $f$ and every root $\xi $ of $g$.
\end{lemma}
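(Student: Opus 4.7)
My plan is to use the three weight systems to confine the roots of $f$ and $g$ into disjoint regions of the complex plane, and then extract the separation $|\theta - \xi| > \delta$ from the triangle inequality together with (\ref{urit1}) and (\ref{urit2}). Set
\[
R_1 = \min_{k > 0}\left(\frac{\lambda_k |b_0|}{|b_k|}\right)^{1/k}, \qquad R_2 = \max_{k < m}\left(\frac{|b_k|}{\gamma_k |b_m|}\right)^{1/(m-k)},
\]
\[
A = \max_{k < j}\left(\frac{|a_k|}{\mu_k |a_j|}\right)^{1/(j-k)}, \qquad B = \min_{k > j}\left(\frac{\mu_k |a_j|}{|a_k|}\right)^{1/(k-j)},
\]
with the convention that a term with $a_k = 0$ or $b_k = 0$ contributes $0$ to a $\max$ and $+\infty$ to a $\min$. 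With this notation, (\ref{urit1}) and (\ref{urit2}) read $\delta + A < R_1$ and $\delta + R_2 < B$, respectively.

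First I confine the roots of $g$ to the annulus $R_1 \leq |\xi| \leq R_2$. By the definition of $R_1$, one has $|b_k| R_1^k \leq \lambda_k |b_0|$ for each $k \geq 1$; summing over $k$ and using $\sum_{k \geq 1}\lambda_k \leq 1$ yields $\sum_{k=1}^m |b_k| R_1^k \leq |b_0|$, so Lemma \ref{lemamica}.i) forces $|\xi| \geq R_1$ for every root $\xi$ of $g$. The bound $|\xi| \leq R_2$ follows analogously from the definition of $R_2$, the condition $\sum_{k < m}\gamma_k \leq 1$, and Lemma \ref{lemamica}.ii).

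Next comes the crux: showing that $f$ has no root $\theta$ with $A < |\theta| < B$. This is a weighted variant of Case 2 in the proof of Lemma \ref{LemaALaRouche}. If such a root existed, then for each $k < j$ with $a_k \neq 0$ the strict inequality $|\theta| > (|a_k|/(\mu_k |a_j|))^{1/(j-k)}$ rearranges to $\mu_k |a_j \theta^j| > |a_k \theta^k|$, while for each $k > j$ with $a_k \neq 0$ the inequality $|\theta| < (\mu_k |a_j|/|a_k|)^{1/(k-j)}$ rearranges to $|a_k \theta^k| < \mu_k |a_j \theta^j|$. Summing over all $k \neq j$ (zero terms contributing nothing) and using $\sum_{k \neq j}\mu_k \leq 1$ produces the strict inequality $\sum_{k \neq j}|a_k \theta^k| < |a_j \theta^j|$, which contradicts $a_j \theta^j = -\sum_{k \neq j} a_k \theta^k$. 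The degenerate case where $f = a_j X^j$ is a monomial is trivial, since then $f$ has only the root $0$.

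Combining the two localizations, an arbitrary root $\theta$ of $f$ and an arbitrary root $\xi$ of $g$ satisfy either $|\theta| \leq A$ or $|\theta| \geq B$, while $R_1 \leq |\xi| \leq R_2$. In the first case (\ref{urit1}) gives $|\theta - \xi| \geq |\xi| - |\theta| \geq R_1 - A > \delta$, and in the second case (\ref{urit2}) gives $|\theta - \xi| \geq |\theta| - |\xi| \geq B - R_2 > \delta$. The main obstacle is keeping the middle inequality strict through the summation; this relies on $f$ having at least one nonzero coefficient besides $a_j$, which is fine since $f$ being a monomial has already been handled.
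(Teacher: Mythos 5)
Your proof is correct and follows essentially the same route as the paper's: confine the roots of $g$ to the annulus $\{R_1 \leq |z| \leq R_2\}$ via the weights $\lambda_k, \gamma_k$, show $f$ has no roots in the open annulus $\{A < |z| < B\}$ via the weights $\mu_k$, and extract the separation $|\theta - \xi| > \delta$ from $\delta + A < R_1$ and $\delta + R_2 < B$. The only cosmetic differences are that you route the $g$-confinement through Lemma \ref{lemamica} instead of rederiving it inline, and you explicitly dispose of the degenerate monomial case of $f$, which the paper leaves implicit.
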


Note that (\ref{urit1}) and (\ref{urit2}) are both satisfied if, for instance, $|a_{j}|$ and $|b_{0}|$ are sufficiently large. \medskip

\textit{Proof of Lemma \ref{lemamare}:} Assume that $f$ and $g$ factorize as $%
f(X)=a_{n}(X-\theta _{1})\cdots (X-\theta _{n})$ and $g(X)=b_{m}(X-\xi
_{1})\cdots (X-\xi _{m})$ with $\theta _{1},\ldots ,\theta _{n}$, $\xi
_{1},\ldots ,\xi _{m}\in \mathbb{C}$, let 
\begin{eqnarray*}
A &=&\max\limits_{k<j}\left( \frac{|a_{k}|}{\mu _{k}|a_{j}|}\right) ^{\frac{1%
}{j-k}},\quad B=\min\limits_{k>j}\left( \frac{\mu _{k}|a_{j}|}{|a_{k}|}%
\right) ^{\frac{1}{k-j}}, \\
C &=&\min\limits_{k>0}\left( \frac{\lambda _{k}|b_{0}|}{|b_{k}|}\right) ^{%
\frac{1}{k}},\quad \quad D=\max\limits_{k<m}\left( \frac{|b_{k}|}{\gamma
_{k}|b_{m}|}\right) ^{\frac{1}{m-k}},
\end{eqnarray*}
and note that according to our hypotheses $A+\delta <C$ and $D<B-\delta $. 

To find information on the location of the roots of our polynomials $f$ and $g$, we will adapt the method used in the proof of the following classical result of M. Fujiwara \cite{Fujiwara}:
\medskip

{\bf Theorem} (Fujiwara) \emph{Let $P(z)=\sum_{i=0}^{n}a_{i}z^{d_{i}}\in \mathbb{C}[z]$, with $%
0=d_{0}<d_{1}<\dots <d_{n}$ and $a_{0}a_{1}\dots a_{n}\neq 0$. Let also $\mu
_{0},\dots ,\mu _{n-1}\in (0,\infty )$ such that $\frac{1}{\mu _{0}}+\dots +%
\frac{1}{\mu _{n-1}}\leq 1$. Then all the roots of $P$ are contained in the
disk 
\[
|z|\leq\max_{0\leq j\leq n-1}\left( \mu _{j}\frac{|a_{j}|}{|a_{n}|}\right) ^{%
\frac{1}{d_{n}-d_{j}}}.
\]
}

We will first prove that all the roots of $g$ lie in the annular region $\{z:C\leq |z|\leq D\}$. To see this, assume first that $|\xi _{i}|<C$ for some
index $i\in \{1,\dots ,m\}$. Then we must have $\lambda
_{k}|b_{0}|>|b_{k}|\cdot |\xi _{i}|^{k}$ for $k=1,\ldots ,m$. Adding term by
term these inequalities and recalling the fact that $\sum_{k=1}^{m}\lambda
_{k}\leq 1$, we obtain $|b_{0}|>\sum_{k=1}^{m}|b_{k}|\cdot |\xi _{i}|^{k}$,
which can not hold, since $f(\xi _{i})=0$. If we assume now that $|\xi
_{i}|>D$ for some index $i\in \{1,\dots ,m\}$, we must have $\gamma
_{k}|b_{m}|\cdot |\xi _{i}|^{m}>|b_{k}|\cdot |\xi _{i}|^{k}$ for $k=0,\ldots
,m-1$. Using the fact that $\sum_{k=0}^{m-1}\gamma _{k}\leq 1$, we obtain
after summation $|b_{m}|\cdot |\xi _{i}|^{m}>\sum_{k=0}^{m-1}|b_{k}|\cdot
|\xi _{i}|^{k}$, again a contradiction.

Next, we will prove that $f$ has no roots in the annular region $\{z:A<|z|<B\}$. 
Assume that $A<|\theta _{i}|<B$ for some index $i\in \{1,\dots ,n\}$. From $A<|\theta _{i}|$ we then deduce that $\mu _{k}|a_{j}|\cdot |\theta
_{i}|^{j}>|a_{k}|\cdot |\theta _{i}|^{k}$ for each $k<j$, while from $%
|\theta _{i}|<B$ we find that $\mu _{k}|a_{j}|\cdot |\theta
_{i}|^{j}>|a_{k}|\cdot |\theta _{i}|^{k}$ for each $k>j$. Adding term by
term these inequalities and using the fact that $\sum_{k\neq j}\mu _{k}\leq
1 $, we obtain 
\begin{equation}
|a_{j}|\cdot |\theta _{i}|^{j}>{\textstyle\sum\limits_{k\neq j}}|a_{k}|\cdot
|\theta _{i}|^{k}.  \label{cinci}
\end{equation}
On the other hand, since $f(\theta _{i})=0$ we must have 
\[
0\geq |a_{j}|\cdot |\theta _{i}|^{j}-|{\textstyle\sum\limits_{k\neq j}}%
a_{k}\theta _{i}^{k}|\geq |a_{j}|\cdot |\theta _{i}|^{j}-{\textstyle%
\sum\limits_{k\neq j}}|a_{k}|\cdot |\theta _{i}|^{k},
\]
which contradicts (\ref{cinci}).
Finally, since $C-A>\delta $ and $B-D>\delta $, we must have $\min_{i,j}|\theta _{i}-\xi
_{j}|>\delta $, and this completes the proof of the lemma. \hfill $\square $
\medskip

{\it Proof of Theorem \ref{thm6Generala}.} By Lemma \ref{lemamica} we deduce that all the roots
of $f$ are situated in the disk $\{z:|z|<A\}$, while all the roots
of $g$ lie outside the disk $\{z:|z|\leq B\}$, so by Corollary \ref{coro1thm0} both 
$f$ and $g $ must be irreducible over $\mathbb{Q}$. \hfill $\square $

\medskip

{\it Proof of Theorem \ref{thm10Generala}.} \ For the proof we choose the parameters $\mu _{k}$, $\lambda _{k}$ and $\gamma _{k}$ in Lemma \ref{lemamare} to depend on the coefficients of $f$ and $g$. More precisely we take $\mu _{k}=|a_{k}|/\sum_{i\neq j}|a_{i}|$ for $k\neq j$, $\lambda _{k}=|b_{k}|/\sum_{i\neq 0}|b_{i}|$ for $k>0$, $\gamma
_{k}=|b_{k}|/\sum_{i\neq m}|b_{i}|$ for $k<m$, so for $\delta =q^{1/\min\{m,n\}}$ the inequalities (\ref{urit1}) and (\ref{urit2}) become 
\begin{eqnarray}
q^{\frac{1}{\min\{m,n\}}}+\max\limits_{k<j}\left( \frac{\sum_{i\neq j}|a_{i}|}{|a_{j}|}\right) ^{%
\frac{1}{j-k}}\ \thinspace &<&\min\limits_{k>0}\left( \frac{|b_{0}|}{\sum_{i\neq
0}|b_{i}|}\right) ^{\frac{1}{k}},  \label{ineg1} \\
q^{\frac{1}{\min\{m,n\}}}+\max\limits_{k<m}\left( \frac{\sum_{i\neq m}|b_{i}|}{|b_{m}|}\right) ^{%
\frac{1}{m-k}} &<&\min\limits_{k>j}\left( \frac{|a_{j}|}{\sum_{i\neq
j}|a_{i}|}\right) ^{\frac{1}{k-j}}.  \label{ineg2}
\end{eqnarray}
We have to check that our assumptions on $|a_j|$ and $|b_0|$ ensure us that inequalities (\ref{ineg1}) and (\ref{ineg2}) are satisfied. Since $|b_{0}|>\sum_{i\neq
0}|b_{i}|$, the minimum in the right member of (\ref{ineg1}) is greater than $%
1$, so it will be attained for $k=m$. On the other hand, since $|a_{j}|>\sum_{i\neq j}|a_{i}|$, the minimum in the right member of (\ref{ineg2}) is also greater than $1$,  so it will be attained for $k=n$. Besides, as $|a_{j}|>\sum_{i\neq j}|a_{i}|$, the maximum in the left member of (\ref{ineg1}) is attained for $k=0$. Now, since $|b_{0}|>\sum_{i\neq
0}|b_{i}|$ we also have $\sum_{i\neq m}|b_{i}|>|b_{m}|$, so the maximum in
the left member of (\ref{ineg2}) is attained for $k=m-1$. The inequalities (%
\ref{ineg1}) and (\ref{ineg2}) then read 
\begin{eqnarray*}
q^{\frac{1}{\min\{m,n\}}}+\left( \frac{\sum_{i\neq j}|a_{i}|}{|a_{j}|}\right) ^{%
\frac{1}{j}} &<& \left( \frac{|b_{0}|}{\sum_{i\neq
0}|b_{i}|}\right) ^{\frac{1}{m}},  \\
q^{\frac{1}{\min\{m,n\}}}+\frac{\sum_{i\neq m}|b_{i}|}{|b_{m}|}\ \ \   &<& \left( \frac{|a_{j}|}{\sum_{i\neq
j}|a_{i}|}\right) ^{\frac{1}{n-j}},
\end{eqnarray*}
which are easily seen to be equivalent to the inequalities on $|a_j|$ and $|b_0|$ in the statement of the theorem.
The proof follows now by Corollary \ref{coro1thm0}. \hfill $\square $

\begin{remark}\label{remarca2univariate}We note that one may obtain other similar irreducibility conditions by choosing in Lemma \ref{lemamare}
different sequences of positive real numbers $\mu _{0},\dots ,\mu _{n}$
satisfying $\sum_{k\neq j}\mu _{k}\leq 1$. For instance, one may simply
choose $\mu _{k}=1/n$ for $k\neq j$, or $\mu _{k}=2^{-n}{\binom{n}{k}} $ for 
$k\neq j$, or more generally $\mu _{k}=\beta ^{k}(1-\beta )^{n-k}{\binom{n}{k%
}}$ (with $0<\beta <1$) for $k\neq j$, and a similar choice may be obviously
done for $\lambda _{k}$ and $\gamma _{k}$ too. 
\end{remark}

\section{Irreducibility criteria in the case when $g$ is linear}\label{glinear}

The simplest irreducibility conditions that one may obtain using the results in previous sections appear for $g$ linear, and there are many irreducibility conditions in the literature that rely on the fact that 
$f$ takes a prime value $p$ at a suitable integral argument $m$. As already mentioned, in our framework this condition may be rephrased as $Res(f,g)=p$ with $g(X)=X-m$. For the sake of completeness, we will consider in this section the general case that $g$ is linear of the form $g(X)=bX-c$, even if this case too may be also studied without using resultants. Our main result in this respect is the following.

\begin{theorem}
\label{thmUnita} Let $f(X)=\sum_{i=0}^{n}a_{i}X^{i}\in \mathbb{Z}[X]$,
$a_{0}a_{n}\neq 0$, and suppose that for two nonzero integers $b,c$ we have
$|b^{n}f(c/b)|=p\cdot q$, where $p$ is a prime number and $q<|c|$ is a positive integer.
If for an index $j\in \{1,\dots ,n\} $ we have
\begin{equation}\label{conditiaa_j}
|a_{j}|>\sum\limits_{i\neq j}|a_{i}|\cdot |b|^{j-i}\cdot \sqrt[n]{\frac{(|c|+q)^{i(n-j)}}{(|c|-q)^{j(n-i)}}},
\end{equation}
then $f$ is irreducible over $\mathbb{Q}$. The same conclusion holds if we remove the condition $q<|c|$, and instead of (\ref{conditiaa_j}) we ask $a_0$ to satisfy $|a_0|>\sum_{i>0}|a_{i}|\cdot |b|^{-i}(|c|+q)^i$.
\end{theorem}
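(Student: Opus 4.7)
The plan is to rephrase the hypothesis $|b^{n}f(c/b)|=pq$ as a resultant condition and then invoke Corollary \ref{corothm-1}. Setting $g(X)=bX-c$, we have $|Res(f,g)|=|b^{n}f(c/b)|=pq$. Since $pq\neq 0$, $f(c/b)\neq 0$, so $f$ and $g$ are relatively prime; and $f(0)\neq 0$ because $a_{0}\neq 0$. Corollary \ref{corothm-1} (with $m=1$) then reduces the irreducibility of $f$ to proving $|g(\theta)|=|b\theta-c|>q$ for every root $\theta$ of $f$, which is equivalent to saying that $f$ has no roots in the closed disk $\overline{D}(c/b,q/|b|)\subset\mathbb{C}$.

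Because $q<|c|$, this disk lies inside the closed annulus $\{z\in\mathbb{C}:(|c|-q)/|b|\leq|z|\leq(|c|+q)/|b|\}$, so it is enough to choose $\delta>0$ for which the root-free annulus $\{z:A\leq|z|\leq B\}$ supplied by Lemma \ref{LemaALaRouche}, applied to $f$ with the given index $j\in\{1,\dots,n-1\}$, satisfies $A\leq(|c|-q)/|b|$ and $B\geq(|c|+q)/|b|$. Each of these two inequalities can be rewritten as a lower bound on $|a_{j}|$ in terms of $\sum_{i\neq j}|a_{i}|\delta^{i}$, and the binding combined bound is weakest precisely at the unique $\delta$ that makes the two conditions equally tight, namely
\[
\delta=\frac{1}{|b|}\bigl((|c|-q)^{j}(|c|+q)^{n-j}\bigr)^{1/n}.
\]
A direct computation, using the identity $ij/n-j=-j(n-i)/n$ in the exponents, shows that the resulting single requirement on $|a_{j}|$ reduces to exactly (\ref{conditiaa_j}). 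The boundary case $j=n$ is handled in the same spirit but with Lemma \ref{lemamica} ii) and $\delta=(|c|-q)/|b|$ in place of Lemma \ref{LemaALaRouche}: the hypothesis on $|a_{n}|$ then forces every root $\theta$ of $f$ into $|\theta|<(|c|-q)/|b|$, so $|b\theta-c|\geq|c|-|b\theta|>q$, and the condition on $|a_{n}|$ coming from Lemma \ref{lemamica} ii) matches (\ref{conditiaa_j}) at $j=n$.

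For the alternative statement (effectively the $j=0$ case) we drop the assumption $q<|c|$ and apply instead Lemma \ref{lemamica} i) with $\delta=(|c|+q)/|b|$. The hypothesis $|a_{0}|>\sum_{i>0}|a_{i}|((|c|+q)/|b|)^{i}=\sum_{i>0}|a_{i}||b|^{-i}(|c|+q)^{i}$ is exactly the stated bound, and it forces every root $\theta$ of $f$ to satisfy $|\theta|>(|c|+q)/|b|$, so that $|b\theta-c|\geq|b\theta|-|c|>q$; Corollary \ref{corothm-1} then gives the irreducibility of $f$. The only delicate step in the whole argument is the algebraic verification that with the optimal $\delta$ above the inequality $A\leq(|c|-q)/|b|$ does simplify to $|a_{j}|>\sum_{i\neq j}|a_{i}||b|^{j-i}\sqrt[n]{(|c|+q)^{i(n-j)}/(|c|-q)^{j(n-i)}}$; this is a careful but entirely routine piece of exponent bookkeeping.
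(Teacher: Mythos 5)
Your proposal is correct and follows essentially the same route as the paper's proof: rewrite the hypothesis as $|Res(f,bX-c)|=pq$, invoke Corollary \ref{corothm-1}, and use Lemma \ref{lemamica} (for $j\in\{0,n\}$) or Lemma \ref{LemaALaRouche} (for $j\in\{1,\dots,n-1\}$) with the optimizing radius $\delta=\frac{1}{|b|}\bigl((|c|-q)^{j}(|c|+q)^{n-j}\bigr)^{1/n}$ to keep the roots of $f$ out of the disk of center $c/b$ and radius $q/|b|$. The only cosmetic difference is that you work directly with $f$ and $g(X)=bX-c$, whereas the paper normalizes to $f_b(X)=b^{n}f(X/b)$ and $g(X)=X-c$; the two pictures are related by the scaling $z\mapsto z/b$ and produce identical conditions. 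One step you leave implicit that the paper verifies explicitly: before reading off the annulus of Lemma \ref{LemaALaRouche} one must check that (\ref{conditiaa_j}) already guarantees the lemma's hypothesis $|a_{j}|>\sum_{i\neq j}|a_{i}|\delta^{i-j}$; this holds because your optimal $\delta$ exceeds $(|c|-q)/|b|$ (using $q<|c|$), so the bound coming from $A\leq(|c|-q)/|b|$ is strictly stronger than the lemma's input inequality, exactly as the paper observes via $E_1(\delta_0)<E_2(\delta_0)$.
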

\begin{remark}\label{remarca1}
For $q<|c|$, condition $|a_0|>\sum_{i>0}|a_{i}|\cdot |b|^{-i}(|c|+q)^i$ may be seen as a particular case of (\ref{conditiaa_j}) for $j=0$.
\end{remark}

We will present here only three corollaries of Theorem \ref{thmUnita}, as follows.

\begin{corollary}
\label{corothm1} Let $f(X)=\sum_{i=0}^{n}a_{i}X^{i}\in \mathbb{Z}[X]$ with $a_i\in \{ -1,0,1\} $ for $i=0,\dots ,n$ and $a_0a_n\neq 0$.
If $|b^{n}f(c/b)|$ is a prime number for two nonzero integers $b,c$ with $|c|\geq 2|b|+1$ or $|b|\geq 2|c|+1$,
then $f$ is irreducible over $\mathbb{Q}$.
\end{corollary}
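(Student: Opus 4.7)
The plan is to derive Corollary \ref{corothm1} directly from Theorem \ref{thmUnita}, taking $q=1$ (since $|b^n f(c/b)|$ is prime means $p\cdot q=p$ with $q=1$) and exploiting the fact that $|a_i|\leq 1$ for every $i$, while $|a_0|=|a_n|=1$. The two hypotheses on $(b,c)$ are symmetric, and I expect to handle them by treating the first directly and reducing the second to the first via the reciprocal polynomial.

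For the case $|c|\geq 2|b|+1$, I would apply Theorem \ref{thmUnita} with $j=n$. With this choice of $j$, the factor $\sqrt[n]{(|c|+q)^{i(n-j)}/(|c|-q)^{j(n-i)}}$ simplifies to $1/(|c|-1)^{n-i}$, so condition (\ref{conditiaa_j}) becomes
\[
1=|a_n|>\sum_{i=0}^{n-1}|a_i|\left(\frac{|b|}{|c|-1}\right)^{n-i}.
\]
Since $|c|\geq 2|b|+1$ forces $|c|-1\geq 2|b|$, and $|a_i|\leq 1$ for every $i$, the right-hand side is bounded above by $\sum_{k=1}^{n}(1/2)^k=1-2^{-n}<1$. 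The side condition $q<|c|$ of Theorem \ref{thmUnita} is automatic since $|c|\geq 2|b|+1\geq 3$.

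For the case $|b|\geq 2|c|+1$, I would reduce to the previous case by passing to the reciprocal polynomial $\widetilde{f}(X):=X^n f(1/X)=\sum_{i=0}^{n}a_{n-i}X^i$, whose coefficients again lie in $\{-1,0,1\}$ with leading and constant terms $a_0$ and $a_n$, both nonzero. Using $\widetilde{f}(b/c)=(b/c)^n f(c/b)$ one has $|c^n\widetilde{f}(b/c)|=|b^n f(c/b)|$, still prime, and the hypothesis $|b|\geq 2|c|+1$ becomes $|c'|\geq 2|b'|+1$ with scalars $(b',c')=(c,b)$. So Case 1 applies to $\widetilde{f}$ and shows that $\widetilde{f}$ is irreducible over $\mathbb{Q}$; since $f(0)=a_0\neq 0$, this is equivalent to the irreducibility of $f$.

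The only real technical point, which I expect to be the main step to verify carefully, is the simplification and estimate of the factor $\sqrt[n]{(|c|+q)^{i(n-j)}/(|c|-q)^{j(n-i)}}$ appearing in (\ref{conditiaa_j}) for $j=n$, and the observation that $|c|-1\geq 2|b|$ yields a geometric-series bound strictly below $1$. Everything else is bookkeeping, and the symmetry between the two hypotheses is taken care of by the reciprocal trick.
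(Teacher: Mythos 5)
Your proposal is correct and follows essentially the same approach as the paper: apply Theorem \ref{thmUnita} with $q=1$ and $j=n$, observe that $|c|\geq 2|b|+1$ gives $|b|/(|c|-1)\leq 1/2$ so that the geometric series is strictly below $1$, and handle the case $|b|\geq 2|c|+1$ by passing to the reciprocal polynomial and swapping $(b,c)$. The simplification of the $n$th-root factor to $(|b|/(|c|-1))^{n-i}$ and the reciprocal bookkeeping are verified exactly as you anticipated.
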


\begin{corollary}
\label{coro2thm1} Let $f(X)=\sum_{i=0}^{n}a_{i}X^{i}\in \mathbb{Z}[X]$ with $a_0a_n\neq 0$.
If $|b^{n}f(c/b)|$ is a prime number for two nonzero integers $b,c$ with $|c|\geq 2|b|+1$, and we have 
\[
|a_n|\geq \max \{ |a_{0}|,|a_{1}|,\cdots ,|a_{n-1}|\} \quad {\rm or}\quad |a_n|>\frac{|a_{0}|}{2^n}+\frac{|a_{1}|}{2^{n-1}}+\cdots +\frac{|a_{n-1}|}{2} ,
\]
then $f$ is irreducible over $\mathbb{Q}$.
\end{corollary}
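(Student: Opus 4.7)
The plan is to deduce Corollary \ref{coro2thm1} as a direct specialization of Theorem \ref{thmUnita} with the choice $j=n$. Since $|b^{n}f(c/b)|$ is prime, write $|b^{n}f(c/b)|=pq$ with $p$ equal to this prime and $q=1$; the constraint $q<|c|$ then holds automatically, because $|c|\geq 2|b|+1\geq 3$, and the assumption $a_{0}a_{n}\neq 0$ is granted outright. Thus everything comes down to verifying that (\ref{conditiaa_j}) holds for $j=n$ and $q=1$.

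With these parameters the factor $(|c|+q)^{i(n-j)}$ in (\ref{conditiaa_j}) collapses to $1$, and the condition simplifies to
\[
|a_{n}|>\sum_{i=0}^{n-1}|a_{i}|\cdot\frac{|b|^{n-i}}{(|c|-1)^{n-i}}.
\]
The hypothesis $|c|\geq 2|b|+1$ is equivalent to $|c|-1\geq 2|b|$, so $|b|/(|c|-1)\leq 1/2$, and the right-hand side above is majorized by
\[
\frac{|a_{0}|}{2^{n}}+\frac{|a_{1}|}{2^{n-1}}+\cdots+\frac{|a_{n-1}|}{2}.
\]
Hence it suffices to show that $|a_{n}|$ strictly exceeds this geometric sum.

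Under the second hypothesis of the corollary this is immediate. Under the first hypothesis, setting $M:=\max\{|a_{0}|,\ldots,|a_{n-1}|\}$ and using that $a_{0}\neq 0$ forces $M>0$, one has $|a_{i}|\leq M$ for each $i<n$, whence
\[
\sum_{i=0}^{n-1}\frac{|a_{i}|}{2^{n-i}}\leq M\sum_{k=1}^{n}\frac{1}{2^{k}}=M\bigl(1-2^{-n}\bigr)<M\leq |a_{n}|,
\]
again yielding the strict inequality required. An application of Theorem \ref{thmUnita} then gives the irreducibility of $f$ over $\mathbb{Q}$. The only mildly delicate point, and the one place where care is needed, is recognizing the correct specialization ($j=n$, $q=1$) and noticing that the arithmetic hypothesis $|c|\geq 2|b|+1$ is precisely calibrated so that the bound $|b|/(|c|-1)\leq 1/2$ produces a geometric series that sums strictly below $1$, preserving the strict inequality throughout.
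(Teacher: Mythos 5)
Your proof is correct and follows essentially the same route as the paper: specialize Theorem \ref{thmUnita} to $j=n$, $q=1$, then use $|c|\geq 2|b|+1$ to bound $|b|/(|c|-1)\leq 1/2$ and reduce the verification of (\ref{conditiaa_j}) to the stated dyadic sum. The paper compresses this to a one-line ``one may check''; you fill in the arithmetic, including the necessary observation that $a_0\neq 0$ forces $\max_i|a_i|>0$ so that $M(1-2^{-n})<M$ is strict.
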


\begin{corollary}
\label{corothm1,5} Let $f(X)=\sum_{i=0}^{n}a_{i}X^{i}\in \mathbb{Z}[X]$ with
$a_0a_n\neq 0$. If $|f(c)|$ is a prime number for an integer $c$ with $|c|\geq 2$, and 
\[
|a_{j}|\geq \sum_{i\neq j}|a_{i}|\cdot |c|^{1.6i}
\]
for some index $j\in \{1,\dots ,n-1\} $, then $f$ is irreducible over $\mathbb{Q}$.
\end{corollary}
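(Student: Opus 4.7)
The plan is to deduce the corollary directly from Theorem \ref{thmUnita}. With $b=1$ and $|f(c)|$ a prime, I would take $p=|f(c)|$ and $q=1$, so that $|b^{n}f(c/b)|=p\cdot q$ and the side condition $q<|c|$ is automatic from $|c|\geq 2$. The task then reduces to showing that the weak hypothesis
\[
|a_{j}|\geq\sum_{i\neq j}|a_{i}|\cdot|c|^{1.6i}
\]
strictly implies inequality (\ref{conditiaa_j}) specialised to $b=1$, $q=1$, namely
\[
|a_{j}|>\sum_{i\neq j}|a_{i}|\sqrt[n]{\frac{(|c|+1)^{i(n-j)}}{(|c|-1)^{j(n-i)}}}.
\]

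The key term-by-term comparison I would establish is
\[
\sqrt[n]{\frac{(|c|+1)^{i(n-j)}}{(|c|-1)^{j(n-i)}}}\leq|c|^{1.6i}\qquad(|c|\geq 2,\ 1\leq j\leq n-1,\ i\neq j),
\]
with the inequality strict whenever $i\geq 1$. Since $|c|-1\geq 1$, the denominator is at least $1$, and the numerator is dominated by $|c|^{1.6\,i(n-j)}$ via the elementary numerical estimate $|c|+1<|c|^{1.6}$ valid for all integers $|c|\geq 2$ (easily verified at $|c|=2$, where $3<\sqrt[5]{256}=2^{8/5}$, and propagated to larger $|c|$ by monotonicity of $|c|^{1.6}-|c|-1$). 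Combined with $(n-j)/n\leq 1$, this yields the displayed bound.

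To upgrade the inequality from $\leq$ to $<$ across the whole sum, I would invoke the hypothesis $a_{n}\neq 0$ together with $n\neq j$ (since $j\leq n-1$): the $i=n$ term then appears in the sum with positive weight $|a_{n}|$, and for it the strict version of the comparison applies. Consequently
\[
\sum_{i\neq j}|a_{i}|\sqrt[n]{\frac{(|c|+1)^{i(n-j)}}{(|c|-1)^{j(n-i)}}}<\sum_{i\neq j}|a_{i}|\cdot|c|^{1.6i}\leq|a_{j}|,
\]
so condition (\ref{conditiaa_j}) of Theorem \ref{thmUnita} is satisfied and $f$ is irreducible over $\mathbb{Q}$. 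The only subtle point is the numerical estimate $|c|+1<|c|^{1.6}$, whose validity is ensured by the slight excess of $1.6$ over $\log_{2}3\approx 1.585$; this is precisely the reason why the constant $1.6$, rather than a smaller one, appears in the statement.
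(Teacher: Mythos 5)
Your proof is correct and follows essentially the same route as the paper: apply Theorem \ref{thmUnita} with $b=q=1$ and reduce to the observation that $x^{1.6}>x^{\log_2 3}\geq x+1$ for $x\geq 2$. You are somewhat more careful than the paper's one-line argument in noting that the term-by-term comparison is only an equality when $i=0$ and $|c|=2$, and in extracting the required strict inequality from the $i=n$ term via $a_n\neq 0$.
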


{\it Proof of Theorem \ref{thmUnita}}. In our proof it will be more convenient to use the fact that 
\[
|Res(f(X),bX-c)|=|Res(f_b(X),g(X))|=|b^{n}f(c/b)|,
\]
with $f_{b}(X)=b^{n}f(X/b)=\sum_{i=0}^{n}b^{n-i}a_{i}X^{i}$ and $g(X)=X-c$.
\smallskip

{\bf Case 1: $j=0$}. Since our assumption on $|a_{0}|$ reads
$|b^{n}a_{0}|>\sum _{i=1}^{n}|b^{n-i}a_{i}|(q+|c|)^{i}$, we deduce by Lemma \ref{lemamica} that all the roots of $f_{b}(X)$ must be situated outside the disk $\{z:|z|\leq q+|c|\}$. Therefore, if $\theta $ is a root of $f_{b}(X)$, we have $|g(\theta )|=|\theta -c|\geq |\theta |-|c|>q$, so by Corollary \ref{corothm-1}, the polynomial
$f_{b}$ must be irreducible over $\mathbb{Q}$. One obtains the desired conclusion using the fact
that if $f_{b}$ is irreducible over $\mathbb{Q}$, then so is $f$.
\smallskip

{\bf Case 2: $j\in \{1,\dots ,n-1\} $}. Assume again that $\theta $ is a root of $f_{b}(X)$.
If we apply Lemma \ref{LemaALaRouche} to $f_b$, we deduce that if $|b^{n-j}a_{j}|>\sum_{i\neq j}|b^{n-i}a_{i}|\delta ^{i-j}$
for an index $j\in\{1,\dots ,n-1\}$ and a positive real number $\delta $, then $f_b$ will have no roots in the annular region
\[
\delta \biggl( \frac{\sum_{i\neq j}|b^{n-i}a_{i}|\delta ^{i}}{|b^{n-j}a_{j}|\delta ^{j}}\biggr) ^{\frac{1}{j}}=A_{\delta }\leq |z|\leq B_{\delta }=\delta \biggl( \frac{b^{n-j}|a_{j}|\delta ^{j}}{\sum_{i\neq j}|b^{n-i}a_{i}|\delta ^{i}}\biggr) ^{\frac{1}{n-j}},
\]
so we will either have $|\theta |<A_{\delta }$, or $|\theta |>B_{\delta }$, hence we will either have
\[
|g(\theta )|=|\theta -c|\geq |c|-|\theta |>|c|-A_{\delta },\qquad \text{if}\ |\theta |<A_{\delta },
\]
or
\[
|g(\theta )|=|\theta -c|\geq |\theta |-|c|>B_{\delta }-|c|,\qquad \text{if}\ |\theta |>B_{\delta }.
\]

In order to obtain $|g(\theta )|>q$ and to apply Corollary \ref{corothm-1} to our polynomial $f_b$, we will search for a suitable
$\delta $ such that $|b^{n-j}a_{j}|>\sum_{i\neq j}|b^{n-i}a_{i}|\delta ^{i-j}$ on the one
hand, and $|c|-A_{\delta }\geq q$ along with $B_{\delta }-|c|\geq q$, on the other hand. These three conditions are easily seen to be equivalent to
\begin{eqnarray*}
|b^{n-j}a_{j}|& >& E_1(\delta )=\frac{1}{\delta ^{j}}\sum_{i\neq j}|b^{n-i}a_{i}|\delta ^{i},\\
|b^{n-j}a_{j}|& \geq & E_2(\delta )=\frac{1}{(|c|-q)^{j}}\sum_{i\neq j}|b^{n-i}a_{i}|\delta ^{i},\quad q<|c|,\\
|b^{n-j}a_{j}|& \geq & E_3(\delta )=\frac{(|c|+q)^{n-j}}{\delta^{n}}\sum_{i\neq j}|b^{n-i}a_{i}|\delta ^{i},
\end{eqnarray*}
respectively. This shows that in order to obtain a sufficiently sharp condition on $|b^{n-j}a_{j}|$, we must search for a suitable $\delta $ that minimizes $\max\{E_{1}(\delta ),E_{2}(\delta ),E_{3}(\delta )\}$.
We notice now that for $\delta >0$, $E_2(\delta )$ is an increasing function, while $E_3(\delta )$ is a decreasing one, and that $E_{2}(\delta _{0})=E_{3}(\delta _{0})$
for $\delta _{0}=\sqrt[n]{(|c|+q)^{n-j}(|c|-q)^{j}}$, so a suitable candidate for our $\delta $ will be $\delta _{0}$, provided we check that $E_{1}(\delta _{0})<E_{2}(\delta _{0})$. Fortunately, this inequality holds, since $\delta _{0}>|c|-q$, so
if we ask $b^{n-j}a_j$ to satisfy $|b^{n-j}a_{j}|\geq E_{2}(\delta _{0})$, the polynomial $f_{b}$, and hence $f$ too, must be irreducible over $\mathbb{Q}$. Note that in general $E_{2}(\delta _{0})$ is not an integer, so we will actually use the strict inequality $|b^{n-j}a_{j}|>E_{2}(\delta _{0})$, which is also used in the cases $j=0$ and $j=n$.
The proof in this case finishes by observing that the inequality $|b^{n-j}a_{j}|>E_{2}(\delta _{0})$ is equivalent to the inequality on $|a_{j}|$ in the statement of the theorem.

We note here that one might obtain sharper conditions on $|a_j|$ by searching for estimates on the location of the roots of $f_{b}$, that are sharper than those given by Lemma \ref{lemamica}. 
\smallskip

{\bf Case 3: $j=n$}. Our assumption on $|a_{n}|$ reads
$|a_{n}|>\sum\nolimits_{i=0}^{n-1}|b^{n-i}a_{i}|(|c|-q)^{i-n}$, so by Lemma \ref{lemamica}, all the roots of $f_{b}(X)$ must be situated in the open disk $|z|<|c|-q$.
Assume now that $\theta $ is a root of $f_{b}$. Then $|g(\theta )|=|\theta -c|\geq |c|-|\theta |>q$, so by Corollary \ref{corothm-1},
$f_{b}(X)$, and hence $f(X)$ too, must be irreducible over $\mathbb{Q}$. This completes the proof. \hfill $\square $

\medskip 

{\it Proof of Corollary \ref{corothm1}.} Here we only need to observe
that if all the coefficients of a polynomial $f$ belong to $\{-1,0,1\}$, the condition
$|a_{n}|>\sum\nolimits_{i=0}^{n-1}|a_{i}|(|c/b|-1/|b|)^{i-n}$, corresponding to $q=1$ and $j=n$ in the statement
of Theorem \ref{thmUnita}, will hold for any two nonzero integers $b,c$ with $|c|\geq 2|b|+1$. Indeed, if $|c|\geq 2|b|+1$, we deduce that
\[
\sum\limits_{i=0}^{n-1}|a_{i}|\biggl( \frac{|b|}{|c|-1}\biggr) ^{n-i}\leq \sum\limits_{i=0}^{n-1}\biggl( \frac{|b|}{|c|-1}\biggr) ^{n-i}\leq \sum\limits_{i=0}^{n-1}\frac{1}{2^{n-i}}<\sum\limits_{i=1}^{\infty }\frac{1}{2^{i}}=1=|a_n|.
\]

Next, we notice that if we denote by $h$ the reciprocal of $f$, that is $h(X)=X^{\deg f}\cdot f(1/X)$, we obviously have
$b^{\deg f}f(c/b)=c^{\deg h}h(b/c)$, so the same conclusion
will hold if we replace the condition that $|c|\geq 2|b|+1$ by $|b|\geq 2|c|+1$. \hfill $\square $
\medskip

{\it Proof of Corollary \ref{coro2thm1}.} One may check that if we assume that $|a_n|\geq \max\limits _{0\leq i\leq n-1} |a_i| $, or that $|a_{n}|>\sum\nolimits_{i=0}^{n-1}|a_{i}|\cdot 2^{i-n}$, the condition
$|a_{n}|>\sum\nolimits_{i=0}^{n-1}|a_{i}|(|c/b|-1/|b|)^{i-n}$ corresponding to $q=1$ and $j=n$ in the statement
of Theorem \ref{thmUnita}, will hold for any two nonzero integers $b,c$ with $|c|\geq 2|b|+1$. \hfill $\square $
\medskip

{\it Proof of Corollary \ref{corothm1,5}.} We apply Theorem \ref{thmUnita} with $b=q=1$ and $j\in \{1,\dots ,n-1\} $, and observe that $x^{1.6}>x^{\log _23}\geq x+1$ for $x\geq 2$, so
\[
|c|^{1.6i}>(|c|+1)^i>\frac{(|c|+1)^{\frac{i(n-j)}{n}}}{(|c|-1)^{\frac{j(n-i)}{n}}} 
\]
for each $i\neq j$ and every integer $c$ with $|c|\geq 2$.\hfill $\square $

\section{Irreducibility criteria in the case when $g$ is quadratic}\label{gquadratic}

To find an explicit formula for the resultant of two arbitrary polynomials $f$ and $g$ of large degree, or to find information on its canonical decomposition, is in general quite a difficult task. We will content ourselves to present here formulas for the resultant only for polynomials $g$ of degree $2$. In the following two lemmas the resultant will be computed by two different methods,
the first one based on its invariance under a change of indeterminate, and the second one requiring the use of a suitable linear recurrence sequence (Binet's method). Since we couldn't find a proper reference for these two lemmas, we will include here their proofs.

\begin{lemma}\label{lemarez1}
Let $f(X)=\sum_{i=0}^{n}a_{i}X^{i}$ and $g(X)=aX^2+bX+c$ be complex polynomials, with $a\neq 0$, and let $D=\frac{b^{2}-4ac}{4a^{2}}$ and
$A_{i}=a_{i}+\sum\limits _{j>i}a_{j}\tbinom{j}{i}\bigl( \frac{-b}{2a}\bigr) ^{j-i}$ for $i=0,1,\dots ,n$. Then
\begin{equation*}
|Res(f,g)|=\left\lbrace
\begin{array}{ccc}
|a|^{n}\cdot \biggl| \biggl( \sum\limits _{i=0}^{\lfloor {n/2} \rfloor } A_{2i}D^{i}\biggr)^{2} -D\biggl( \sum\limits _{i=0}^{\lfloor {(n-1)/2} \rfloor }A_{2i+1}D^{i}\biggr)^{2} \biggr| , & \thinspace if & D\neq 0 \\ 
|a|^{n}\cdot \left| f\left( \frac{-b}{2a}\right) \right| ^{2},\hspace{5.8cm} & \thinspace if & D=0. 
\end{array}\right.
\end{equation*}
\end{lemma}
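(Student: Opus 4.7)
The overall plan is to reduce the computation to the standard root formula $Res(f,g)=a^n f(\xi_1)f(\xi_2)$, where $\xi_{1,2}=-\tfrac{b}{2a}\pm\sqrt{D}$ are the roots of $g$, and then to rewrite the product $f(\xi_1)f(\xi_2)$ in the compact form claimed. To keep the bookkeeping clean, I would first perform the linear change of indeterminate $X=Y-\tfrac{b}{2a}$, which is the content of the phrase ``invariance under a change of indeterminate'' in the statement: since simultaneously replacing $X$ by $Y+\tau$ in both polynomials preserves the leading coefficients and shifts the roots of $f$ and of $g$ by the same amount, it preserves every pairwise difference $\theta_i-\xi_j$, and hence preserves $Res(f,g)$.

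Under this substitution, $g$ becomes $\tilde g(Y)=aY^2+\tfrac{4ac-b^2}{4a}=a(Y^2-D)$, whose roots are $\pm\sqrt{D}$, while $f$ becomes $\tilde f(Y)=f\bigl(Y-\tfrac{b}{2a}\bigr)=\sum_{i=0}^{n}A_iY^i$; expanding $\bigl(Y-\tfrac{b}{2a}\bigr)^j$ by the binomial theorem and gathering the coefficients of $Y^i$ recovers precisely the formula for $A_i$ in the statement. Therefore $|Res(f,g)|=|a|^n\,|\tilde f(\sqrt{D})\,\tilde f(-\sqrt{D})|$.

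Next I would split $\tilde f$ into its even and odd parts by writing $\tilde f(Y)=E(Y^2)+Y\cdot O(Y^2)$, where $E(t)=\sum_{k=0}^{\lfloor n/2\rfloor}A_{2k}t^k$ and $O(t)=\sum_{k=0}^{\lfloor (n-1)/2\rfloor}A_{2k+1}t^k$. For $D\neq 0$, the identity $(u+v)(u-v)=u^2-v^2$ applied with $u=E(D)$ and $v=\sqrt{D}\,O(D)$ gives
\[
\tilde f(\sqrt{D})\,\tilde f(-\sqrt{D})=E(D)^2-D\,O(D)^2,
\]
which matches the claimed formula after multiplying by $|a|^n$ and taking absolute values. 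In the degenerate case $D=0$, the two roots of $g$ coincide at $-\tfrac{b}{2a}$, so the root formula yields $Res(f,g)=a^n\,f\bigl(-\tfrac{b}{2a}\bigr)^2$, giving the remaining branch of the formula.

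The proof is essentially mechanical once the substitution is in place; the only real hurdle is verifying that the indices of the even/odd split truly range over $0\leq k\leq\lfloor n/2\rfloor$ and $0\leq k\leq\lfloor (n-1)/2\rfloor$ respectively (checking both parities of $n$), and confirming that the binomial expansion produces the $A_i$ exactly as stated. Both are straightforward pieces of index bookkeeping, but they are the steps that require care.
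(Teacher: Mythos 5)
Your proposal is correct and follows essentially the same route as the paper: translate by $-\tfrac{b}{2a}$ using the translation-invariance of the resultant, factor the translated $g$ as $a(Y-\sqrt{D})(Y+\sqrt{D})$, and evaluate $|a|^n|\tilde f(\sqrt{D})\tilde f(-\sqrt{D})|$ via the even/odd split of $\tilde f=\sum A_iY^i$. The only cosmetic difference is that you justify the translation-invariance via preservation of the pairwise root differences and leading coefficients, whereas the paper simply invokes the fact; both are fine.
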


Alternatively, one may compute this resultant in the following way.

\begin{lemma}\label{lemarez2}
Let $f(X)=\sum_{i=0}^{n}a_{i}X^{i}$ and $g(X)=aX^2+bX+c$ be complex polynomials, with $a\neq 0$, and let us define the linear recurrence sequence
$(x_{k})_{k\geq 0}$ by $x_{0}=2, x_{1}=-\frac{b}{a}$ and $x_{k+2}=-\frac{b}{a}x_{k+1}-\frac{c}{a}x_{k}$ for $k\geq 0$. Then
\begin{equation*}
|Res(f,g)|=\left\lbrace
\begin{array}{ccc}
\biggl| \sum\limits _{i=0}^{n}c^{i}a^{n-i}a_{i}^{2}+\sum\limits _{0\leq i<j\leq n}c^{i}a^{n-i}x_{j-i}a_{i}a_{j}\biggr| , & \thinspace if & b^{2}\neq 4ac \\ 
|a|^{n}\cdot \left| f\left( \frac{-b}{2a}\right) \right| ^{2},\hspace{3.98cm} & \thinspace if & b^{2}=4ac. 
\end{array}\right.
\end{equation*}
\end{lemma}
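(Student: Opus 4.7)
The plan is to use the classical expression of the resultant in terms of the roots of one of the two polynomials. If we let $\alpha,\beta$ denote the roots of $g(X)=aX^{2}+bX+c$, then $Res(f,g)=(-1)^{2n}a^{n}f(\alpha)f(\beta)=a^{n}f(\alpha)f(\beta)$, so in absolute value we must show that $|a|^{n}|f(\alpha)f(\beta)|$ equals the sum in the statement. By Vieta, $\alpha+\beta=-b/a$ and $\alpha\beta=c/a$, which will be used to rewrite every symmetric expression in $\alpha,\beta$ in terms of $a,b,c$.

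The key step is to expand
\[
f(\alpha)f(\beta)=\sum_{i,j=0}^{n}a_{i}a_{j}\alpha^{i}\beta^{j}=\sum_{i=0}^{n}a_{i}^{2}(\alpha\beta)^{i}+\sum_{0\le i<j\le n}a_{i}a_{j}(\alpha\beta)^{i}(\alpha^{j-i}+\beta^{j-i}),
\]
substitute $\alpha\beta=c/a$, and introduce the power sums $x_{k}:=\alpha^{k}+\beta^{k}$. Multiplying through by $a^{n}$ then converts each $(c/a)^{i}$ into $c^{i}a^{n-i}$, giving exactly the formula claimed in the first branch of the lemma.

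It then remains to identify $x_{k}$ with the recurrence sequence defined in the statement. Since $\alpha$ and $\beta$ are roots of $X^{2}+(b/a)X+c/a$, one has $\alpha^{k+2}=-(b/a)\alpha^{k+1}-(c/a)\alpha^{k}$ and similarly for $\beta$; adding yields $x_{k+2}=-(b/a)x_{k+1}-(c/a)x_{k}$, with initial values $x_{0}=2$ and $x_{1}=\alpha+\beta=-b/a$, matching the recurrence in the statement. The degenerate case $b^{2}=4ac$ is handled separately: there $\alpha=\beta=-b/(2a)$, so $Res(f,g)=a^{n}f(-b/(2a))^{2}$, which immediately gives the second branch of the formula.

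The computation is essentially routine symmetric-function bookkeeping; the only mild subtlety is the symmetrization step where the double sum over $i,j$ is split into diagonal and strictly-upper-triangular parts so that each pair $\{i,j\}$ with $i<j$ contributes a single term involving $\alpha^{j-i}+\beta^{j-i}$. Keeping the factor $a^{n}$ consistent with the redistribution of powers of $a$ into $a^{n-i}$ is the place where a sign or exponent slip would most easily occur, but since we only need the absolute value, no sign considerations arise.
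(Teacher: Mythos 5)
Your proof is correct, and it is genuinely shorter and more direct than the one in the paper. The paper follows what it calls Binet's method: it introduces an auxiliary ``Fibonacci-type'' sequence $(y_k)$ with $y_0=0$, $y_1=1$ satisfying the same recurrence, proves $\theta^k = y_k\theta - \tfrac{c}{a}y_{k-1}$ for each root $\theta$, writes $f(\theta_i)=\theta_iS_1-\tfrac{c}{a}S_2$ with $S_1=\sum a_iy_i$, $S_2=\sum a_iy_{i-1}$, expands $f(\theta_1)f(\theta_2)=|ca^{n-2}|\cdot|aS_1^2+bS_1S_2+cS_2^2|$, and only then simplifies the resulting $\beta_i,\gamma_{ij}$ coefficients back to $a(c/a)^{i-1}$ and $a(c/a)^{i-1}x_{j-i}$, the last step requiring the closed form $y_k=a\frac{\theta_1^k-\theta_2^k}{\sqrt{b^2-4ac}}$. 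You skip all of this machinery: you expand $f(\alpha)f(\beta)$ as a double sum, split into diagonal and off-diagonal pairs, substitute $\alpha\beta=c/a$, and recognize the power sums $x_k=\alpha^k+\beta^k$ directly. This is cleaner, and it has a small side benefit: because you never divide by $\sqrt{b^2-4ac}$, your derivation of the first branch is actually valid in the degenerate case $b^2=4ac$ as well, so the two branches of the lemma's formula are seen to be equivalent rather than logically separate; the paper, by contrast, really needs the case split because its closed form for $y_k$ fails when the discriminant vanishes. One tiny presentational remark: the factor $(-1)^{2n}$ you write is of course $1$, so it does nothing; you might as well state $Res(f,g)=a^nf(\alpha)f(\beta)$ directly, since the key point is just that $\deg g=2$ is even.
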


The following results are obtained by considering several quadratic polynomials $g$, namely $X^{2}+1$, $X^{2}-m$ with $m$ a non-square positive integer,
$X^{2}+X+1$ and $X^{2}-X-1$.

\begin{theorem}
\label{thm3} Let $a_{0},a_{1}\dots ,a_{n}\in\mathbb{Z}$, with $a_{0}a_{n}\neq 0$, and suppose that
\[
(a_{0}-a_{2}+a_{4}-\dots
)^{2}+(a_{1}-a_{3}+a_{5}-\dots )^{2}=p\cdot q,
\]
where $p$ is a prime number and $q$ is a positive integer. If $|a_{0}|>\sum_{i=1}^{n}|a_{i}|\sqrt{(1+q)^i}$, then the polynomial $\sum_{i=0}^{n}a_{i}X^{i}$
is irreducible over $\mathbb{Q}$.
\end{theorem}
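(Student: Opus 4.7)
The plan is to recognize that the quadratic form $(a_{0}-a_{2}+a_{4}-\dots)^{2}+(a_{1}-a_{3}+a_{5}-\dots)^{2}$ is exactly the squared modulus $|f(i)|^{2}$. Indeed, grouping the terms of $f(i)=\sum a_{k}i^{k}$ by the parity of $k$ and using $i^{2}=-1$, one gets
\[
f(i)=(a_{0}-a_{2}+a_{4}-\dots)+i(a_{1}-a_{3}+a_{5}-\dots),
\]
so if I set $g(X)=X^{2}+1$ with roots $\pm i$, then
\[
|Res(f,g)|=|f(i)f(-i)|=|f(i)|^{2}=pq,
\]
matching the hypothesis. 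This also matches Lemma \ref{lemarez1} for $g(X)=X^{2}+1$ (where $D=-1$ and $A_{i}=a_{i}$), which I could cite as an alternative.

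Next, I would read the coefficient hypothesis $|a_{0}|>\sum_{i=1}^{n}|a_{i}|\sqrt{(1+q)^{i}}$ as $|a_{0}|>\sum_{i=1}^{n}|a_{i}|\delta^{i}$ with $\delta=\sqrt{1+q}$. Lemma \ref{lemamica}(i) then gives that $f$ has no roots in the closed disk $\{z:|z|\leq \sqrt{1+q}\}$. So every root $\theta$ of $f$ satisfies $|\theta|>\sqrt{1+q}$, hence
\[
|g(\theta)|=|\theta^{2}+1|\geq |\theta|^{2}-1>q.
\]

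Finally, since $|f(i)|^{2}=pq>0$, the polynomials $f$ and $g$ are coprime, and by assumption $f(0)=a_{0}\neq 0$. Corollary \ref{corothm-1}(i) applied with this prime factorization of $|Res(f,g)|$ and the bound $|g(\theta)|>q$ for every root $\theta$ of $f$ yields that $f$ is irreducible over $\mathbb{Q}$. The whole argument is a direct assembly of results already established, so there is no serious obstacle; the only nontrivial step is the identification of the given quadratic form with $|Res(f,X^{2}+1)|$, after which everything reduces to choosing the sharp radius $\delta=\sqrt{1+q}$ so that the elementary inequality $|\theta^{2}+1|\geq|\theta|^{2}-1$ gives exactly the bound $|g(\theta)|>q$ required by Corollary \ref{corothm-1}(i).
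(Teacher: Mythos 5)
Your proof is correct and follows essentially the same route as the paper: take $g(X)=X^2+1$, identify the given quadratic form with $|Res(f,g)|$ (the paper cites Lemma \ref{lemarez1} where you compute $|f(i)|^2$ directly, but these are the same observation), apply Lemma \ref{lemamica}(i) with $\delta=\sqrt{1+q}$ to push all roots of $f$ outside the closed disk of radius $\sqrt{1+q}$, deduce $|g(\theta)|>q$, and invoke Corollary \ref{corothm-1}. The only cosmetic difference is that you explicitly note coprimality of $f$ and $g$, which the paper leaves implicit.
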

For $q=1$ one obtains the following corollary, stated as Theorem B in the Introduction.
\begin{corollary}
\label{corolthm3} If we write a prime number as $(a_{0}-a_{2}+a_{4}-\dots
)^{2}+(a_{1}-a_{3}+a_{5}-\dots )^{2}$ with $a_{i}\in \mathbb{Z}$ and $%
|a_{0}|>\sum_{i> 0}|a_{i}|\sqrt{2^i}$, then the polynomial $\sum_{i}a_{i}X^{i}$
is irreducible over $\mathbb{Q}$.
\end{corollary}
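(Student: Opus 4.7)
The plan is to apply Corollary \ref{corothm-1} i) with the auxiliary polynomial $g(X) = X^2 + 1$, whose roots are $\pm i$. First I would compute the resultant: since $g$ is monic of degree $2$, we have
\[
|Res(f,g)| = |f(i)\cdot f(-i)|.
\]
Using $i^{4k}=1$, $i^{4k+1}=i$, $i^{4k+2}=-1$, $i^{4k+3}=-i$, one has
\[
f(i) = (a_0-a_2+a_4-\cdots) + i(a_1-a_3+a_5-\cdots),
\]
and $f(-i)$ is its complex conjugate. Therefore
\[
|Res(f,g)| = (a_0-a_2+a_4-\cdots)^2 + (a_1-a_3+a_5-\cdots)^2 = pq.
\]
In particular $Res(f,g)\neq 0$, so $f$ and $g$ are relatively prime, and the hypothesis $a_0\neq 0$ gives $f(0)\neq 0$.

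Next I would localise the roots of $f$. The hypothesis $|a_0|>\sum_{i=1}^{n}|a_i|\sqrt{(1+q)^i}$ is exactly the condition of Lemma \ref{lemamica} i) applied with $\delta = \sqrt{1+q}$, since $\delta^i = (1+q)^{i/2} = \sqrt{(1+q)^i}$. Consequently, every root $\theta$ of $f$ satisfies $|\theta|>\sqrt{1+q}$, and hence
\[
|g(\theta)| = |\theta^2+1| \geq |\theta|^2 - 1 > (1+q)-1 = q.
\]

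Finally, with $|Res(f,g)|=pq$, $p$ prime, $q$ a positive integer, $f(0)\neq 0$, and $|g(\theta)|>q$ for every root $\theta$ of $f$, Corollary \ref{corothm-1} i) applies directly and yields the irreducibility of $f$ over $\mathbb{Q}$. There is no real obstacle here: the proof is a clean combination of the resultant identity $|f(i)f(-i)|=pq$ with the coefficient estimate of Lemma \ref{lemamica} applied at the sharp radius $\sqrt{1+q}$, which is precisely tuned so that $|\theta^2+1|>q$ follows.
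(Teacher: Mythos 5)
Your proof is correct and takes essentially the same route as the paper: identify $g(X)=X^2+1$, compute $|Res(f,g)|=|f(i)f(-i)|=(a_0-a_2+\cdots)^2+(a_1-a_3+\cdots)^2$, use Lemma \ref{lemamica}~i) with $\delta=\sqrt{1+q}$ (here $q=1$, so $\delta=\sqrt 2$) to push every root of $f$ outside the disk of radius $\delta$, deduce $|g(\theta)|>q$, and invoke Corollary \ref{corothm-1}~i). The only cosmetic difference is that the paper obtains the resultant formula as the $D=-1$, $A_i=a_i$ instance of its general Lemma \ref{lemarez1}, whereas you compute $|f(i)f(-i)|$ directly; these are the same computation.
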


\begin{theorem}
\label{thm4}  Let $a_{0},a_{1}\dots ,a_{n}\in\mathbb{Z}$, with $a_{0}a_{n}\neq 0$, and suppose that
\[
(a_{0}+a_{2}m+a_{4}m^{2}+a_{6}m^{3}+%
\dots)^2-m(a_{1}+a_{3}m+a_{5}m^{2}+a_{7}m^{3}+\dots)^2=p\cdot q
\]
where $p$ is a prime number, $q$ is a positive integer and $m$ is a non-square positive integer. If 
$|a_{0}|>\sum_{i=1}^{n}|a_{i}|\sqrt{m+q}\thinspace ^{i}$, or if $q<m$ and $|a_{n}|>\sum_{i=0}^{n-1}|a_{i}|\sqrt{m-q}\thinspace ^{i-n}$,
then the polynomial $\sum_{i=0}^{n}a_{i}X^{i}$ is irreducible over $\mathbb{Q}$.
\end{theorem}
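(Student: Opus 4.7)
The plan is to apply Corollary \ref{corothm-1}(i) to the pair $(f, g)$ with the natural choice $g(X) = X^2 - m$. Since $m$ is a non-square positive integer, $g$ is irreducible over $\mathbb{Q}$, and its roots are $\pm\sqrt{m}$. First I would compute the resultant. Writing $A = a_0 + a_2 m + a_4 m^2 + \cdots$ and $B = a_1 + a_3 m + a_5 m^2 + \cdots$, one has $f(\sqrt{m}) = A + \sqrt{m}\, B$ and $f(-\sqrt{m}) = A - \sqrt{m}\, B$. Since $g$ is monic of degree $2$, the resultant formula gives
\[
Res(f, g) = f(\sqrt{m})\, f(-\sqrt{m}) = A^2 - m B^2,
\]
which by hypothesis equals $pq$. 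In particular $Res(f, g) \neq 0$, so $f$ and $g$ are relatively prime, and also $f(0) = a_0 \neq 0$, so the standing assumptions of Corollary \ref{corothm-1} are satisfied.

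Next, the two alternative hypotheses of the theorem correspond to two complementary ways of pushing the roots of $f$ away from the circle $|z| = \sqrt{m}$, which is precisely where $g$ vanishes. In the first case, the assumption $|a_0| > \sum_{i=1}^{n}|a_i|\sqrt{m+q}^{\,i}$ allows me to apply Lemma \ref{lemamica}(i) with $\delta = \sqrt{m+q}$ and conclude that $f$ has no roots in the closed disk $\{z : |z| \leq \sqrt{m+q}\}$. Thus for every root $\theta$ of $f$,
\[
|g(\theta)| = |\theta^2 - m| \geq |\theta|^2 - m > q.
\]
In the second case, the condition $q < m$ ensures $\sqrt{m-q}$ is real and positive, and the inequality $|a_n| > \sum_{i=0}^{n-1}|a_i|\sqrt{m-q}^{\,i-n}$ allows me to apply Lemma \ref{lemamica}(ii) with $\delta = \sqrt{m-q}$ to conclude that every root $\theta$ of $f$ lies in the open disk $|z| < \sqrt{m-q}$. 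Hence
\[
|g(\theta)| = |m - \theta^2| \geq m - |\theta|^2 > q.
\]

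In either case the hypothesis $|g(\theta)| > q$ of Corollary \ref{corothm-1}(i) holds for every root $\theta$ of $f$, and $|Res(f,g)| = pq$ with $p$ prime, so the corollary yields irreducibility of $f$ over $\mathbb{Q}$. There is no substantial obstacle here: the argument is essentially a matching of the two ``large coefficient'' hypotheses with the two geometric options (all roots outside the disk of radius $\sqrt{m+q}$, or all roots inside the disk of radius $\sqrt{m-q}$) for separating the roots of $f$ from $\pm\sqrt{m}$ by more than $q$ in the $g$-value. The only minor point to verify is the direct computation of $Res(f, g)$ in terms of $A$ and $B$, which is precisely the quantity given in the statement.
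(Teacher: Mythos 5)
Your proof is correct and follows essentially the same route as the paper's: take $g(X)=X^2-m$, identify $Res(f,g)$ with $A^2-mB^2=pq$, use Lemma \ref{lemamica} (parts i and ii respectively) to locate the roots of $f$ outside $|z|\leq\sqrt{m+q}$ or inside $|z|<\sqrt{m-q}$, and conclude via Corollary \ref{corothm-1}(i). The only cosmetic difference is that you compute $Res(f,g)=f(\sqrt{m})f(-\sqrt{m})$ directly rather than invoking Lemma \ref{lemarez1}, which is harmless since the computations agree.
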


\begin{corollary}
\label{corolthm4} If $(a_{0}+a_{2}m+a_{4}m^{2}+a_{6}m^{3}+%
\dots)^2-m(a_{1}+a_{3}m+a_{5}m^{2}+a_{7}m^{3}+\dots)^2$ is a prime number
for a non-square positive integer $m$ and integers $a_{0},a_{1},\dots,a_{n}$
satisfying 
\[
|a_{0}|>\sum_{i=1}^{n}|a_{i}|\sqrt{m+1}\thinspace ^{i} \quad {\rm or} \quad |a_{n}|>\sum_{i=0}^{n-1}|a_{i}|\sqrt{m-1}\thinspace ^{i-n},
\]
then the polynomial $\sum_{i=0}^{n}a_{i}X^{i}$ is irreducible over $\mathbb{Q}$.
\end{corollary}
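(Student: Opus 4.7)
The plan is to obtain Corollary \ref{corolthm4} as an immediate specialization of Theorem \ref{thm4} with $q=1$. If the expression
\[
N:=(a_{0}+a_{2}m+a_{4}m^{2}+\cdots)^{2}-m(a_{1}+a_{3}m+a_{5}m^{2}+\cdots)^{2}
\]
is itself a prime number, then we may trivially write $|N|=p\cdot q$ with $p=|N|$ prime and $q=1$, so the factorization hypothesis required by Theorem \ref{thm4} is satisfied.

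Next I would verify that the side condition $q<m$, which is attached only to the second alternative in Theorem \ref{thm4}, is automatic under our assumption that $m$ is a non-square positive integer. Indeed, the only positive integer not exceeding $1$ is $m=1$, which is a perfect square; hence the hypothesis forces $m\geq 2>1=q$, and the side condition holds for free.

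Finally, substituting $q=1$ into the two sufficient inequalities in Theorem \ref{thm4} produces precisely the two inequalities
\[
|a_{0}|>\sum_{i=1}^{n}|a_{i}|\sqrt{m+1}\thinspace^{i}\qquad\text{and}\qquad |a_{n}|>\sum_{i=0}^{n-1}|a_{i}|\sqrt{m-1}\thinspace^{i-n}
\]
stated in Corollary \ref{corolthm4}. Invoking Theorem \ref{thm4} then yields the irreducibility of $\sum_{i=0}^{n}a_{i}X^{i}$ over $\mathbb{Q}$. There is no real obstacle here; the only point worth flagging is the role of the non-square hypothesis on $m$, which is exactly what unlocks the second alternative (had $m=1$ been allowed, only the first inequality could have been used, and the statement would need to be weakened accordingly).
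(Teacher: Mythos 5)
Your proof is correct and follows exactly the route the paper intends: the corollary is obtained from Theorem \ref{thm4} by specializing $q=1$, and your observation that a non-square positive integer $m$ necessarily satisfies $m\geq 2>1=q$ (so the side condition $q<m$ in the second alternative holds automatically) is precisely what makes the substitution seamless.
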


\begin{theorem}
\label{thm4,5} Let $a_{0},a_{1},\dots,a_{n}$ be integers with $a_0a_n\neq 0$, let $S_{j}=\sum _{i\equiv j\thinspace ({\rm mod}\thinspace 3)}a_{i}$ for $j\in\{0,1,2\}$, and suppose that $S_0^2+S_1^2+S_2^2-S_0S_1-S_0S_2-S_1S_2=p\cdot q$, where $p$ is a prime number and $q$ is a positive integer.
If $|a_0|>\sum_{i=1}^{n}|a_{i}|\bigl( \frac{1+\sqrt{4q+5}}{2}\bigr) ^i$, then the polynomial $\sum_{i=0}^{n}a_{i}X^{i}$ is irreducible over $\mathbb{Q}$.
\end{theorem}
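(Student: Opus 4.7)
The strategy is to identify the given quadratic form as $|f(\omega)|^2$, where $\omega$ is a primitive cube root of unity, and then to apply Corollary \ref{corothm-1} i) with $g(X) = X^2 + X + 1$. Since $\omega^3 = 1$, grouping the monomials of $f$ by the residue of the exponent modulo $3$ gives $f(\omega) = S_0 + S_1 \omega + S_2 \omega^2$, and the identity $\omega + \omega^2 = -1$ expands $|f(\omega)|^2 = f(\omega) f(\omega^2)$ into $S_0^2 + S_1^2 + S_2^2 - S_0 S_1 - S_0 S_2 - S_1 S_2$. Since $g$ is monic, this also equals $|Res(f,g)|$, so $|Res(f,g)| = pq$.

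Next I introduce the auxiliary quantity $\alpha = \frac{1+\sqrt{4q+5}}{2}$. A direct calculation shows that $\alpha$ is the positive real root of $x^2 - x - 1 - q$, so that $\alpha^2 - \alpha - 1 = q$; this is precisely why the hypothesis of the theorem uses this specific base. Applying Lemma \ref{lemamica} i) to the assumption $|a_0| > \sum_{i=1}^{n}|a_i|\alpha^i$, I conclude that every root $\theta$ of $f$ satisfies $|\theta| > \alpha$. Since $\alpha \geq 2 > 1 = |\omega|$ for all $q \geq 1$, the roots $\omega, \omega^2$ of $g$ are not roots of $f$; hence $f$ and $g$ are relatively prime, and $f(0) \neq 0$ follows from $a_0 \neq 0$.

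The heart of the proof is then a short estimate on $|g(\theta)|$. Two uses of the triangle inequality give
\[
|g(\theta)| = |\theta^2 + \theta + 1| \;\geq\; |\theta|^2 - |\theta + 1| \;\geq\; |\theta|^2 - |\theta| - 1.
\]
Since the map $t \mapsto t^2 - t - 1$ is strictly increasing on $[1/2, \infty)$ and $|\theta| > \alpha > 1/2$, this yields $|g(\theta)| > \alpha^2 - \alpha - 1 = q$ for every root $\theta$ of $f$. With $|Res(f,g)| = pq$ and $p$ prime, Corollary \ref{corothm-1} i) now delivers the irreducibility of $f$ over $\mathbb{Q}$.

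There is no substantial obstacle in the argument; the only nonobvious point is calibrating $\alpha$ so that the bound $|\theta|^2 - |\theta| - 1 > q$ produced by the triangle inequality is as tight as possible, which is precisely the role of the otherwise mysterious constant $(1+\sqrt{4q+5})/2$ appearing in the hypothesis.
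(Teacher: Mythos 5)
Your proof is correct and follows essentially the same route as the paper's: identify $g(X)=X^2+X+1$, compute $|Res(f,g)|=|f(\omega)f(\omega^2)|$ via the cube roots of unity to obtain the given quadratic form, invoke Lemma~\ref{lemamica} to push all roots $\theta$ of $f$ outside the disk of radius $\alpha=\tfrac{1+\sqrt{4q+5}}{2}$, estimate $|g(\theta)|\ge|\theta|^2-|\theta|-1>q$, and conclude by Corollary~\ref{corothm-1}. The only additions you make — the explicit observation that $\alpha$ is the positive root of $x^2-x-1-q$, the remark that $\alpha\ge 2$ so $f$ and $g$ share no roots (which is anyway implied by $|Res(f,g)|=pq\neq 0$), and the monotonicity check for $t\mapsto t^2-t-1$ — are correct minor elaborations of the same argument.
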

For $q=1$ we obtain the following result, stated as Theorem C in the Introduction.
\begin{corollary}
\label{corolthm4,5} Let $a_{0},a_{1},\dots,a_{n}$ be integers with $a_0a_n\neq 0$ and $|a_0|>\sum_{i=1}^{n}|a_{i}|2^i$, and let
$S_{j}=\sum _{i\equiv j\thinspace ({\rm mod}\thinspace 3)}a_{i}$ for $j\in\{0,1,2\}$.
If $S_0^2+S_1^2+S_2^2-S_0S_1-S_0S_2-S_1S_2$ is a prime number, then the polynomial $\sum_{i=0}^{n}a_{i}X^{i}$ is irreducible over $\mathbb{Q}$.
\end{corollary}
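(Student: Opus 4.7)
The plan is to deduce this as the specialization $q=1$ of Theorem \ref{thm4,5}. First I would check the arithmetic: $\frac{1+\sqrt{4q+5}}{2}$ evaluated at $q=1$ equals $\frac{1+\sqrt{9}}{2}=2$, so the coefficient hypothesis $|a_0|>\sum_{i=1}^{n}|a_i|2^i$ of the corollary matches the one in the theorem. Since any prime $p$ can trivially be written as $p\cdot 1$, the hypothesis of Theorem \ref{thm4,5} is fulfilled with $q=1$, and the conclusion transfers verbatim.

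For orientation, the strategy behind Theorem \ref{thm4,5} itself runs as follows. Take $g(X)=X^2+X+1$, whose roots are the primitive cube roots of unity $\omega,\omega^2$, both lying on the unit circle. Because $\omega^3=1$, evaluating $f(\omega)$ collapses the exponents mod $3$ and yields $f(\omega)=S_0+S_1\omega+S_2\omega^2$; a short direct computation using $1+\omega+\omega^2=0$ (or an appeal to Lemma \ref{lemarez1} or Lemma \ref{lemarez2}) gives
\[
|Res(f,g)|=|f(\omega)|^2=S_0^2+S_1^2+S_2^2-S_0S_1-S_0S_2-S_1S_2,
\]
which by hypothesis equals $pq$ with $p$ prime (and in particular $f$ and $g$ are relatively prime).

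Next, Lemma \ref{lemamica}(i) applied with $\delta=\frac{1+\sqrt{4q+5}}{2}$ shows that every root $\theta$ of $f$ satisfies $|\theta|>\delta$. The crucial numerical fact is that $\delta$ is the positive root of $X^2-X-1=q$, so the reverse triangle inequality produces
\[
|g(\theta)|=|\theta^2+\theta+1|\geq |\theta|^2-|\theta|-1>\delta^2-\delta-1=q
\]
for every root $\theta$ of $f$. An application of Corollary \ref{corothm-1}(i) then yields the irreducibility of $f$. The only step that really requires foresight is the choice of $\delta$, which is pinned down by the requirement $\delta^2-\delta-1=q$ in order to balance the distance estimate against the factor $q$ of the resultant; specialization to $q=1$ recovers the clean bound $\delta=2$ appearing in the corollary, and no further work is needed.
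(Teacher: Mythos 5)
Your proposal is correct and matches the paper's approach: the paper proves Corollary \ref{corolthm4,5} precisely as the $q=1$ specialization of Theorem \ref{thm4,5}, and your account of that theorem's proof (take $g=X^2+X+1$, compute $|Res(f,g)|=|f(\omega)f(\omega^2)|=S_0^2+S_1^2+S_2^2-S_0S_1-S_0S_2-S_1S_2$, locate the roots of $f$ outside the disk of radius $\frac{1+\sqrt{4q+5}}{2}$ via Lemma \ref{lemamica}, and invoke Corollary \ref{corothm-1}) reproduces the paper's argument faithfully, including the key observation $\delta^2-\delta-1=q$.
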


\begin{theorem}
\label{thm5} Let $a_{0},a_{1},\dots,a_{n}\in \mathbb{Z}$, $a_n\neq 0$, and let $(L_k)_{k\geq 0}$ denote the sequence of Lucas numbers.
If
\[
\biggl| \sum\limits _{i=0}^{n}(-1)^{i}a_{i}^{2}+\sum\limits _{0\leq i<j\leq n}(-1)^{i}L_{j-i}a_{i}a_{j}\biggr| =p\cdot q,
\]
where $p$ is a prime number and $q$ is a positive integer, and
$|a_{0}|>\sum_{i=1}^{n}|a_{i}|\bigl( \frac{1+\sqrt{4q+5}}{2}\bigr) ^{i}$, then the polynomial $\sum_{i=0}^{n}a_{i}X^{i}$ is irreducible over $\mathbb{Q}$.
\end{theorem}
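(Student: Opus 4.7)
The plan is to follow the template of Theorem \ref{thm4,5}, but with the companion polynomial $g(X)=X^{2}-X-1$ in place of $X^{2}+X+1$. The first step is to verify that the expression inside the absolute value in the hypothesis is exactly $|Res(f,g)|$ for this choice of $g$. Applying Lemma \ref{lemarez2} with $a=1$, $b=-1$, $c=-1$, the auxiliary recurrence becomes $x_{0}=2$, $x_{1}=1$, $x_{k+2}=x_{k+1}+x_{k}$, which is exactly the Lucas sequence, so $x_{k}=L_{k}$. Substituting into the formula of Lemma \ref{lemarez2} yields
\[
|Res(f,g)|=\biggl|\sum_{i=0}^{n}(-1)^{i}a_{i}^{2}+\sum_{0\leq i<j\leq n}(-1)^{i}L_{j-i}a_{i}a_{j}\biggr|=p\cdot q.
\]

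Next, I would set $\delta=\frac{1+\sqrt{4q+5}}{2}$ and record the key algebraic identity $\delta^{2}-\delta-1=q$, which is just a rewrite of $\delta=\frac{1+\sqrt{4q+5}}{2}$. The hypothesis $|a_{0}|>\sum_{i=1}^{n}|a_{i}|\delta^{i}$ together with Lemma \ref{lemamica} i) then forces every root $\theta$ of $f$ to satisfy $|\theta|>\delta$. In particular, since $\delta>\frac{1+\sqrt{5}}{2}$ exceeds both $|\phi|$ and $|\phi'|$ where $\phi,\phi'$ are the roots of $g$, no root of $f$ coincides with a root of $g$, so $f$ and $g$ are relatively prime; moreover $f(0)=a_{0}\neq 0$.

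The final step is to bound $|g(\theta)|$ from below. By the triangle inequality,
\[
|g(\theta)|=|\theta^{2}-\theta-1|\geq |\theta|^{2}-|\theta|-1.
\]
The function $h(t)=t^{2}-t-1$ is strictly increasing for $t>\tfrac{1}{2}$, and $\delta>\tfrac{1}{2}$, so for $|\theta|>\delta$ we obtain
\[
|g(\theta)|\geq|\theta|^{2}-|\theta|-1>\delta^{2}-\delta-1=q.
\]
Having $|g(\theta)|>q$ for every root $\theta$ of $f$ and $|Res(f,g)|=pq$ with $p$ prime, Corollary \ref{corothm-1} i) applies and concludes that $f$ is irreducible over $\mathbb{Q}$.

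There is essentially no substantial obstacle here; the argument is a transparent parallel of the proof of Theorem \ref{thm4,5}. The only thing to check carefully is the bookkeeping in Lemma \ref{lemarez2} so that the signs $(-1)^{i}$ and the Lucas weights $L_{j-i}$ appear in precisely the pattern stated in the theorem, and the verification that $\delta=\frac{1+\sqrt{4q+5}}{2}$ is indeed the positive root of $t^{2}-t-(1+q)=0$ so that $h(\delta)=q$ exactly.
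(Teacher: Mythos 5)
Your proof is correct and follows essentially the same approach as the paper's: apply Lemma \ref{lemarez2} with $g(X)=X^{2}-X-1$ to identify the hypothesis expression as $|Res(f,g)|$, then use Lemma \ref{lemamica} to push all roots of $f$ outside the disk of radius $\delta=\frac{1+\sqrt{4q+5}}{2}$, deduce $|g(\theta)|>\delta^{2}-\delta-1=q$, and invoke Corollary \ref{corothm-1}. The only differences are cosmetic bookkeeping (explicitly verifying $\delta^{2}-\delta-1=q$ and the relative primeness of $f$ and $g$), which the paper leaves implicit.
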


\begin{corollary}
\label{corolthm5} Let $a_{0},a_{1},\dots,a_{n}\in \mathbb{Z}$ with $a_n\neq 0$ and $|a_0|>\sum_{i=1}^{n}|a_{i}|2^i$, and let
$(L_k)_{k\geq 0}$ denote the sequence of Lucas numbers.
If up to a sign,
\[
\sum\limits _{i=0}^{n}(-1)^{i}a_{i}^{2}+\sum\limits _{0\leq i<j\leq n}(-1)^{i}L_{j-i}a_{i}a_{j}
\]
is a prime number, then the polynomial $\sum_{i=0}^{n}a_{i}X^{i}$ is irreducible over $\mathbb{Q}$.
\end{corollary}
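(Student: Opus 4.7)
The approach is to deduce Corollary \ref{corolthm5} as an immediate specialization of Theorem \ref{thm5} to the case $q=1$. The hypothesis that, up to a sign, the expression $\sum_{i=0}^{n}(-1)^{i}a_{i}^{2}+\sum_{0\leq i<j\leq n}(-1)^{i}L_{j-i}a_{i}a_{j}$ equals a prime $p$ means that its absolute value is $p$, so we may write it as $p\cdot q$ with $q=1$. By Lemma \ref{lemarez2} applied to $g(X)=X^{2}-X-1$ (whose associated linear recurrence $x_{0}=2$, $x_{1}=1$, $x_{k+2}=x_{k+1}+x_{k}$ is precisely the Lucas sequence, and for which $c^{i}a^{n-i}=(-1)^{i}$), the given expression coincides up to sign with $Res(f,g)$, so this hypothesis is exactly the statement that $|Res(f,g)|=p\cdot 1$, matching the first hypothesis of Theorem \ref{thm5}.

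It remains to verify that the coefficient bound also matches. Substituting $q=1$ into $\bigl(\frac{1+\sqrt{4q+5}}{2}\bigr)^{i}$ from Theorem \ref{thm5} gives
\[
\left(\frac{1+\sqrt{4\cdot 1+5}}{2}\right)^{i}=\left(\frac{1+\sqrt{9}}{2}\right)^{i}=2^{i},
\]
so the hypothesis $|a_{0}|>\sum_{i=1}^{n}|a_{i}|2^{i}$ assumed in the corollary is exactly the hypothesis $|a_{0}|>\sum_{i=1}^{n}|a_{i}|\bigl(\frac{1+\sqrt{4q+5}}{2}\bigr)^{i}$ of Theorem \ref{thm5} for $q=1$. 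Invoking Theorem \ref{thm5} then yields the irreducibility of $\sum_{i=0}^{n}a_{i}X^{i}$ over $\mathbb{Q}$.

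Since the corollary is a pure specialization, there is essentially no obstacle. The one small technicality is the ``up to a sign'' clause, which is handled by passing to absolute values; this is legitimate because only $|Res(f,g)|$ enters the hypothesis of Theorem \ref{thm5}. The numerical coincidence that $(1+\sqrt{9})/2$ is an integer is what makes the statement of the corollary particularly clean and allows the bound to be written as $\sum|a_{i}|2^{i}$.
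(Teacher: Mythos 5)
Your proof is correct and follows exactly the route the paper intends: the corollary is the $q=1$ specialization of Theorem \ref{thm5}, with the computation $\bigl(\tfrac{1+\sqrt{4\cdot 1+5}}{2}\bigr)^i = 2^i$ turning the general bound into $\sum_{i=1}^n|a_i|2^i$, and the ``up to a sign'' clause absorbed into $|Res(f,g)|$. The paper gives no separate argument for this corollary precisely because it is an immediate specialization, so your write-out matches its reasoning.
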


{\it Proof of Lemma \ref{lemarez1}.} We will use here the invariance of the resultant under a change of indeterminate, that is the fact that
\[
Res(f(X),g(X))=Res(f(X+\delta ),g(X+\delta ))
\]
for arbitrary $\delta \in\mathbb{C}$.
In particular, if we choose $\delta =-\frac{b}{2a}$, and express $f(X-\frac{b}{2a})$ as $A_{0}+A_{1}X+\cdots +A_{n}X^{n}$ with
\[
A_{i}=a_{i}+\sum\limits _{j>i}a_{j}\tbinom{j}{i}\biggl( \frac{-b}{2a}\biggr) ^{j-i}\qquad  i=0,1,\dots ,n,
\]
we successively obtain
\begin{eqnarray*}
|Res(f,g)|& =& \biggl| Res\biggl( f\biggl( X-\frac{b}{2a}\biggr) ,g\biggl( X-\frac{b}{2a}\biggr) \biggr) \biggr| \\
& =& \biggl| Res\biggl( f\biggl( X-\frac{b}{2a}\biggr) ,a(X-\sqrt{D})(X+\sqrt{D})\biggr) \biggr| \\
& =& |a|^{n}\cdot \biggl| \sum_{i=0}^{n}A_{i}\sqrt{D}\thinspace ^{i}\cdot \sum_{i=0}^{n}A_{i}(-\sqrt{D})^{i}\biggr| \\
& =& |a|^{n}\cdot \biggl| \biggl( \sum\limits _{i=0}^{\lfloor {n/2} \rfloor } A_{2i}D^{i}\biggr)^{2} -D\biggl( \sum\limits _{i=0}^{\lfloor {(n-1)/2} \rfloor }A_{2i+1}D^{i}\biggr)^{2} \biggr| ,
\end{eqnarray*}
and this completes the proof. \hfill $\square $
\medskip

{\it Proof of Lemma \ref{lemarez2}.} Let $\theta _{1}=\frac{-b+\sqrt{b^{2}-4ac}}{2a}$ and $\theta _{2}=\frac{-b-\sqrt{b^{2}-4ac}}{2a}$ be the roots of $g$, and assume that $b^{2}\neq 4ac$.
Using Binet's method, one way to compute the powers of $\theta _{1}$ and $\theta _{2}$ is to use the linear recurrence sequence $(y_{k})_{k\geq 0}$
given by $y_{k+2}=-\frac{b}{a}y_{k+1}-\frac{c}{a}y_{k}$, $k\geq 0$, with $y_{0}=0$ and $y_{1}=1$. One easily checks that $\theta _{1}$ and $\theta _{2}$ satisfy the relation
\begin{equation}\label{thetapower}
\theta ^{k}=y_{k}\theta -\frac{c}{a}y_{k-1}\qquad \text{for}\ k\geq 1,
\end{equation}
and that $y_{k}$ may be expressed in the closed form
\begin{equation}\label{yk}
y_{k}=a\frac{\theta _{1}^{k}-\theta _{2}^{k}}{\sqrt{b^{2}-4ac}}\qquad \text{for}\ k\geq 0.
\end{equation}

Letting now $y_{-1}:=0$ and using (\ref{thetapower}), one further deduces that $f(\theta _{1})=\theta _1S_{1}-\frac{c}{a}S_{2}$ and $f(\theta _{2})=\theta _2S_{1}-\frac{c}{a}S_{2}$,
with $S_{1}=\sum_{i=0}^{n}a_{i}y_{i}$ and $S_{2}=\sum_{i=0}^{n}a_{i}y_{i-1}$, which yields
\begin{eqnarray*}
|Res(f,g)|& =& |a^n|\cdot |f(\theta _{1} )\cdot f(\theta _{2})|\\
& =& |a^n|\cdot \bigl| \bigl(\theta _1S_{1}-\frac{c}{a}S_{2}\bigr) \bigl( \theta _2S_{1}-\frac{c}{a}S_{2}\bigr) \bigr| \\
& =& |ca^{n-2}|\cdot |aS_{1}^{2}+bS_{1}S_{2}+cS_{2}^{2}|\\
& =& |ca^{n-2}|\cdot \biggl| \sum\limits _{i=0}^{n}\beta _{i}a_{i}^2+\sum\limits _{0\leq i<j\leq n}\gamma _{ij}a_{i}a_{j}\biggr| ,
\end{eqnarray*}
with 
\begin{eqnarray*}
\beta _{i}& =& ay_{i}^{2}+by_{i}y_{i-1}+cy_{i-1}^{2}\quad \text{and}\\
\gamma _{ij}& =& 2ay_{i}y_{j}+b(y_{i}y_{j-1}+y_{i-1}y_{j})+2cy_{i-1}y_{j-1}.
\end{eqnarray*}

Using now (\ref{yk}), the recurrence relation for $y_{i}$, and the relations $\theta _{1}+\theta _{2}=-b/a$ and $\theta _{1}\theta _{2}=c/a$, we deduce that
$\beta _{i}=a\bigl( \frac{c}{a}\bigr) ^{i-1}$ and $\gamma _{ij}=a\bigl( \frac{c}{a}\bigr) ^{i-1}(\theta _{1}^{j-i}+\theta _{2}^{j-i})$.
We observe now that the sequences $(x_{k})_{k\geq 0}$ and $(\theta_{1}^{k}+\theta _{2}^{k})_{k\geq 0}$ satisfy the same recurrence relation and have the same two initial terms,
so they must coincide, hence $\gamma _{ij}=a\bigl( \frac{c}{a}\bigr) ^{i-1}x_{j-i}$.

Finally, using these expressions for $\beta _{i}$ and $\gamma _{ij}$ we conclude that
\[
|Res(f,g)|=\biggl| \sum\limits _{i=0}^{n}c^{i}a^{n-i}a_{i}^{2}+\sum\limits _{0\leq i<j\leq n}c^{i}a^{n-i}x_{j-i}a_{i}a_{j}\biggr| ,
\]
which completes the proof. \hfill $\square $
\medskip

{\it Proof of Theorem \ref{thm3}.} Let us take $g(X)=x^{2}+1$. By Lemma \ref{lemarez1} with $D=-1$ and $A_{i}=a_{i}$ for each $i$ we obtain
\[
|Res(f,g)|=(a_{0}-a_{2}+a_{4}-\dots
)^{2}+(a_{1}-a_{3}+a_{5}-\dots )^{2}=p\cdot q,
\]
where $p$ is a prime number and $q$ is a positive integer. By Lemma \ref{lemamica}, all the roots of $f$ are situated outside the
disk $\{ z:|z|\leq \sqrt{1+q}\} $, so if $\theta $ is a root of $f$, we must have
\[
|g(\theta )|=|\theta ^2+1|\geq |\theta |^2-1>(\sqrt{1+q})^2-1=q,
\]
and the conclusion follows now by Corollary \ref{corothm-1}. \hfill $\square $

\medskip 

{\it Proof of Theorem \ref{thm4}.} In this case we consider the polynomial $g(X)=X^{2}-m$,
so by Lemma \ref{lemarez1} with $D=m$ and $A_{i}=a_{i}$ for each $i$ we see that $|Res(f,g)|$ is equal to 
\[
(a_{0}+a_{2}m+a_{4}m^{2}+a_{6}m^{3}+\dots
)^{2}-m(a_{1}+a_{3}m+a_{5}m^{2}+a_{7}m^{3}+\dots )^{2}.
\]

Let us suppose first that $|a_{0}|>\sum_{i=1}^{n}|a_{i}|\sqrt{m+q}\thinspace ^{i}$. Then, according to Lemma \ref{lemamica}, $f$ will have no roots in the disk $\{ z:|z|\leq \sqrt{m+q}\} $, so if $\theta $ is a root of $f$, we must have
\[
|g(\theta )|=|\theta ^2-m|\geq |\theta |^2-m>(\sqrt{m+q})^2-m=q,
\]
and the conclusion follows by Corollary \ref{corothm-1}.

Assume now that $q<m$ and $|a_{n}|>\sum_{i=0}^{n-1}|a_{i}|\sqrt{m-q}\thinspace ^{i-n}$.
In this case, we deduce from Lemma \ref{lemamica} that all the roots of $f$ will belong to the
open disk $\{ z:|z|<\sqrt{m-q}\} $, so for a root $\theta $ of $f$, we must have
\[
|g(\theta )|=|\theta ^2-m|\geq m-|\theta |^2>m-(\sqrt{m-q})^2=q,
\]
and the conclusion follows again by Corollary \ref{corothm-1}. \hfill $\square $

\medskip 

{\it Proof of Theorem \ref{thm4,5}.} Here we consider the polynomial $g(X)=X^{2}+X+1$, with roots $\xi =\frac{-1+i\sqrt{3}}{2}$ and $\xi ^2=\frac{-1-i\sqrt{3}}{2}$,
so
\[
|Res(f,g)|=|f(\xi )\cdot f(\xi ^2)|=S_0^2+S_1^2+S_2^2-S_0S_1-S_0S_2-S_1S_2=p\cdot q.
\]
If we assume now that $|a_{0}|>\sum_{i=1}^{n}|a_{i}|\bigl( \frac{1+\sqrt{4q+5}}{2}\bigr) ^{i}$, then by Lemma \ref{lemamica}, $f$ will have no roots in the disk $\{ z:|z|\leq
\frac{1+\sqrt{4q+5}}{2}\} $, so for each root $\theta $ of $f$ we deduce that
\[
|g(\theta )|=|\theta ^2+\theta +1|\geq |\theta |^2-|\theta |-1>q,
\]
and one applies again Corollary \ref{corothm-1}. \hfill $\square $
\medskip 

{\it Proof of Theorem \ref{thm5}.} We consider now the polynomial $g(X)=X^{2}-X-1$, with roots $\varphi =\frac{1+\sqrt{5}}{2}$, the golden ratio, and $\psi=\frac{1-\sqrt{5}}{2}=1-\varphi $.
We recall that the Lucas numbers $L_{k}$ and the Fibonacci numbers $F_{k}$ satisfy the same recurrence relation $x_{k+2}=x_{k+1}+x_{k}$, with $L_{0}=2,\ L_{1}=1$, while $F_{0}=0,\ F_{1}=1$.
It is easy to check that one may express the Lucas numbers in the following closed form
\[
L_{k}=\varphi ^{k}+\psi ^{k},\quad k=0,1,2,\dots
\]
By Lemma \ref{lemarez2} with $a=1$ and $b=c=-1$ we find that
\[
|Res(f,g)|=\biggl| \sum\limits _{i=0}^{n}(-1)^{i}a_{i}^{2}+\sum\limits _{0\leq i<j\leq n}(-1)^{i}L_{j-i}a_{i}a_{j}\biggr| =p\cdot q.
\]
Again, since $|a_{0}|>\sum_{i=1}^{n}|a_{i}|\bigl( \frac{1+\sqrt{4q+5}}{2}\bigr) ^{i}$,
we deduce by Lemma \ref{lemamica} that $f$ will have no roots in the disk $\{ z:|z|\leq \frac{1+\sqrt{4q+5}}{2}\} $, so for each root $\theta $ of $f$ we obtain
\[
|g(\theta )|=|\theta ^2-\theta -1|\geq |\theta |^2-|\theta |-1>q.
\]
The desired conclusion follows again by Corollary \ref{corothm-1}. \hfill $\square $

\section{Irreducibility criteria for linear combinations of $f$ and $g$}\label{LinearCombinations}

Once we have found a pair $(f,g)$ of polynomials whose resultant is a prime number and such that the roots of $f$ are more than $1$ away from the roots of $g$, we may also obtain some irreducibility conditions for linear combinations of the form $Mf+Ng$, with $M,N\in\mathbb{Z}$ having absolute values bounded by certain numbers depending on the coefficients of $f$ and $g$. The following results give such irreducibility conditions that depend on two suitable parameters $A$ and $B$, and show that this kind of {\it irreducibility in pairs} extends locally to some linear combinations $(Mf+Ng,g)$ and $(f,Mf+Ng)$. We will analyse separately the cases $\deg f<\deg g$, $\deg f=\deg g$ and $\deg f>\deg g$, as they lead to different outcomes.
\medskip

{\bf Case 1:}\ $\deg f<\deg g$\quad In this case we will prove the following result:
\begin{theorem}
\label{gradf<gradg} Let $f(X)=\sum_{i=0}^{n}a_{i}X^{i}$,$\
g(X)=\sum_{i=0}^{m}b_{i}X^{i}\in \mathbb{Z}[X]$, $a_{0}a_{n}b_{0}b_{m}\neq 0$%
, $n<m$, and suppose that $|Res(f,g)|=pq$ with $p$ a prime number, and $q$ a positive integer. Suppose also that for two positive real numbers $A,B$ with $B\geq A+q^{\frac{1}{n}}$ we have 
$|a_{n}|>\sum_{i=0}^{n-1}|a_{i}|A^{i-n}$ and $|b_{0}|\geq\sum_{i=1}^{m}|b_{i}|B^{i}$. Let $M$ and $N$ be integers, not both zero, satisfying the inequalities
\vspace{-0.6cm}

\begin{equation}\label{MNPrimaDeTot}
|N| \leq\frac{B-A}{q^{\frac{1}{n}}} \quad and \quad |M|\leq|N|\cdot \frac {|b_{0}|-\sum\limits _{i=1}^{m}|b_{i}|(A+q^{\frac{1}{n}}|N|)^{i}} {\sum\limits_{i=0}^{n}|a_{i}|(A+q^{\frac{1}{n}}|N|)^{i}}.
\end{equation}
Then the polynomial $Mf(X)+Ng(X)$ is irreducible over $\mathbb{Q}$. Moreover, the same conclusion holds if we ask $a_n$ and $b_0$ to satisfy $|a_{n}|\geq\sum_{i=0}^{n-1}|a_{i}|A^{i-n}$ and $|b_{0}|>\sum_{i=1}^{m}|b_{i}|B^{i}$, provided that the second inequality in (\ref{MNPrimaDeTot}) is a strict one.
\end{theorem}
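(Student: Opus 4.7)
The strategy is to apply Corollary~\ref{coro1thm0} to the pair $(Mf+Ng,\, f)$, after computing their resultant explicitly and exploiting the annular separation of their roots. Set $h := Mf + Ng$. Since $M,N$ are not both zero and the right-hand side of the second inequality in (\ref{MNPrimaDeTot}) vanishes when $N=0$, we must have $N\neq 0$; hence $\deg h = m$, with leading coefficient $Nb_m\neq 0$. As $h(\alpha)=Ng(\alpha)$ at every root $\alpha$ of $f$, the standard resultant formula gives
\[
|Res(h,f)| \;=\; |a_n|^m \prod_{j=1}^{n}|h(\alpha_j)| \;=\; |N|^n \cdot |Res(f,g)| \;=\; p\cdot q',
\]
with $q':=|N|^n q$ a positive integer and $p$ prime; in particular $h$ and $f$ are coprime.

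Next, Lemma~\ref{lemamica}(ii) applied to the strict hypothesis $|a_n|>\sum_{i=0}^{n-1}|a_i|A^{i-n}$ places all roots of $f$ in the open disk $\{|z|<A\}$. Set $\rho:=A+|N|q^{1/n}$; by the first inequality in (\ref{MNPrimaDeTot}) we have $\rho\leq B$. Writing $h(X)=\sum_{i=0}^{m}(Ma_i+Nb_i)X^i$ (with the convention $a_i=0$ for $i>n$) and using the triangle inequality together with $\rho\leq B$ and $|b_0|\geq\sum_{i=1}^{m}|b_i|B^i$, we obtain
\[
|Ma_0+Nb_0|-\sum_{i=1}^{m}|Ma_i+Nb_i|\rho^i \;\geq\; |N|\Bigl(|b_0|-\sum_{i=1}^{m}|b_i|\rho^i\Bigr) - |M|\sum_{i=0}^{n}|a_i|\rho^i \;\geq\; 0,
\]
the final inequality being precisely the second constraint in (\ref{MNPrimaDeTot}). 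By Lemma~\ref{lemamica}(i), $h$ has no roots in the open disk $\{|z|<\rho\}$; in particular $h(0)\neq 0$ and every root $\theta$ of $h$ satisfies $|\theta|\geq\rho$.

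For any root $\theta$ of $h$ and any root $\alpha$ of $f$ we then have $|\theta-\alpha|\geq|\theta|-|\alpha|>\rho-A=|N|q^{1/n}=(q')^{1/\min(n,m)}$, the strict inequality coming from $|\alpha|<A$. Combined with $|Res(h,f)|=p\cdot q'$ and $h(0)f(0)\neq 0$, this matches the hypothesis of Corollary~\ref{coro1thm0} for the pair $(h,f)$, forcing both $f$ and $h=Mf+Ng$ to be irreducible over $\mathbb{Q}$. The \emph{moreover} case proceeds identically: the swap to $|a_n|\geq\cdots$ places the roots of $f$ in the \emph{closed} disk $\{|z|\leq A\}$, but the strict $M$-bound together with the strict hypothesis $|b_0|>\sum|b_i|B^i$ now yields a \emph{strict} inequality in the displayed computation, so $|\theta|>\rho$ for roots of $h$ and the strict separation $|\theta-\alpha|>|N|q^{1/n}$ is preserved. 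The main technical point---and the only real obstacle---is tracking strict versus non-strict inequalities across the two cases so that Lemma~\ref{lemamica} and Corollary~\ref{coro1thm0} chain together correctly.
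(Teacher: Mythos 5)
Your proof is correct and follows essentially the same strategy as the paper's: compute $|Res(Mf+Ng,f)|=pq|N|^{n}$, locate the roots of $f$ strictly inside $\{|z|<A\}$ (resp. in the closed disk in the \emph{moreover} case) via Lemma~\ref{lemamica}(ii), locate the roots of $Mf+Ng$ outside $\{|z|<A+q^{1/n}|N|\}$ via Lemma~\ref{lemamica}(i) after a triangle-inequality estimate equivalent to the paper's inequality~(\ref{intermediary}), and finish by Corollary~\ref{coro1thm0}. The one cosmetic difference is that you observe up front that the second inequality in~(\ref{MNPrimaDeTot}) forces $N\neq 0$, so you can run a single unified argument (covering $M=0$ as a special case), whereas the paper first disposes of the case $M=0$ or $N=0$ by noting $f$ and $g$ are themselves irreducible and then assumes both are nonzero; this is a minor simplification, not a different route.
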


A simple instance of Theorem \ref{gradf<gradg} appears in the following result.

\begin{corollary}
\label{Coro1gradf<gradg} Let $g(X)=\sum_{i=0}^{m}b_{i}X^{i}\in \mathbb{Z}[X]$ with $m\geq 2$, $b_{0}b_{m}\neq 0$, and suppose that $|a^mg(\frac{1}{a})|$ is a prime number for an integer $a$ with $|a|>1$. If for a real number $B\geq 2$ we have 
$|b_{0}|\geq 1+|a|B+\sum_{i=1}^{m}|b_{i}|B^{i}$, then for all integers $M,N$ not both zero and such that
\[
|M|\leq |N|\leq B-1,
\]
the polynomial $M(aX-1)+Ng(X)$ is irreducible over $\mathbb{Q}$. The same conclusion also holds for $a=\pm 1$ and integers $M,N$ satisfying $|M|<|N|\leq B-1$.
\end{corollary}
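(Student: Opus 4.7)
The plan is to apply Theorem~\ref{gradf<gradg} with $f(X):=aX-1$ (so $n=1$, $a_0=-1$, $a_1=a$) and the given $g$, choosing the parameter $A:=1$. Since $1/a$ is the unique root of $f$, one has $|\mathrm{Res}(f,g)|=|a^m g(1/a)|$, which is prime by hypothesis, so we set $p:=|a^m g(1/a)|$ and $q:=1$. The degree condition $\deg f=1<m=\deg g$ and the nonvanishing $a_0a_nb_0b_m\neq 0$ are immediate, and the compatibility requirement $B\geq A+q^{1/n}=2$ is exactly the assumption $B\geq 2$.

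First I would dispose of the generic case $|a|>1$. Here the hypothesis $|a_n|>\sum_{i=0}^{n-1}|a_i|A^{i-n}$ reads $|a|>1$, which holds, while $|b_0|\geq\sum_{i=1}^{m}|b_i|B^i$ is implied by the stronger assumption $|b_0|\geq 1+|a|B+\sum_{i=1}^{m}|b_i|B^i$. For (\ref{MNPrimaDeTot}), the first bound $|N|\leq(B-A)/q^{1/n}=B-1$ is precisely what is given. For the second, using $A+q^{1/n}|N|=1+|N|\leq B$ together with the hypothesis on $|b_0|$ yields
\[
|b_0|-\sum_{i=1}^{m}|b_i|(1+|N|)^i\;\geq\;1+|a|B\;\geq\;1+|a|(1+|N|)\;=\;\sum_{i=0}^{n}|a_i|(1+|N|)^i,
\]
so the fraction appearing in the second bound of (\ref{MNPrimaDeTot}) is at least $1$, and the hypothesis $|M|\leq |N|$ suffices to verify that bound. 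Theorem~\ref{gradf<gradg} then gives the irreducibility of $Mf+Ng=M(aX-1)+Ng(X)$ over $\mathbb{Q}$.

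For $a=\pm 1$ the strict inequality $|a|>1$ fails, so I would invoke the \emph{moreover} part of Theorem~\ref{gradf<gradg}: the equality $|a_n|=1=A^{-1}$ replaces the first condition, while $|b_0|>\sum_{i=1}^{m}|b_i|B^i$ is strictly satisfied because $|b_0|\geq 1+B+\sum_{i=1}^{m}|b_i|B^i$. The same estimates as in the previous paragraph still force the relevant fraction to be $\geq 1$, so the strengthened assumption $|M|<|N|$ produces the required \emph{strict} inequality $|M|<|N|\cdot(\text{fraction})$, and Theorem~\ref{gradf<gradg} again yields irreducibility.

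No single step is genuinely hard; the only point worth flagging is the bookkeeping observation that the extra summand $1+|a|B$ built into the hypothesis on $|b_0|$ is precisely what absorbs the denominator $\sum_{i=0}^{n}|a_i|(A+q^{1/n}|N|)^i=1+|a|(1+|N|)$ occurring in the second bound of (\ref{MNPrimaDeTot}), uniformly over the permissible range $|N|\leq B-1$. Once this alignment is noticed, Theorem~\ref{gradf<gradg} delivers the corollary mechanically.
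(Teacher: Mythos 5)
Your proposal is correct and follows essentially the same route as the paper's own proof: apply Theorem~\ref{gradf<gradg} with $f(X)=aX-1$, $A=q=1$, observe $B\geq 2$ gives the compatibility constraint, and show that the hypothesis on $|b_0|$ forces the fraction in the second inequality of (\ref{MNPrimaDeTot}) to be at least $1$, so that $|M|\leq |N|$ (resp.\ $|M|<|N|$ via the ``moreover'' clause when $|a|=1$) suffices. Your version is slightly more explicit than the paper's in flagging the use of the ``moreover'' variant for $a=\pm 1$, but the argument is otherwise identical.
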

In particular, for $a=1$ we have:
\begin{corollary}
\label{Coro2gradf<gradg} If we write a prime number as a sum of integers $b_0,\dots,b_m$ such that $|b_{0}|\geq 1+B+\sum_{i=1}^{m}|b_{i}|B^{i}$ for some real number $B\geq 2$, then for all integers $M,N$ with $|M|<|N|\leq B-1$,
the polynomial  $M(X-1)+N\sum_{i=0}^mb_iX^i$ is irreducible over $\mathbb{Q}$.
\end{corollary}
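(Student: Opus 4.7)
The plan is to deduce Corollary \ref{Coro2gradf<gradg} as the immediate specialization of Corollary \ref{Coro1gradf<gradg} to the case $a=1$. Once I set $a=1$, the polynomial $f(X)=aX-1$ in the statement of Corollary \ref{Coro1gradf<gradg} becomes $X-1$, which is precisely the linear polynomial appearing in Corollary \ref{Coro2gradf<gradg}, and the quantity $|a^{m}g(1/a)|$ simplifies to $|g(1)|=|\sum_{i=0}^{m}b_{i}|$, which by hypothesis is a prime number (the fact that a prime is "written as a sum of the $b_i$" just says $\sum b_i$ equals that prime).

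Next I would check that the coefficient condition matches: for $a=1$ the hypothesis of Corollary \ref{Coro1gradf<gradg} reads $|b_{0}|\geq 1+|a|B+\sum_{i=1}^{m}|b_{i}|B^{i}=1+B+\sum_{i=1}^{m}|b_{i}|B^{i}$, which is exactly what is assumed here, for some $B\geq 2$. Finally, since $a=1$ falls under the "$a=\pm 1$" clause of Corollary \ref{Coro1gradf<gradg}, the admissible pairs of integer scalars are precisely those satisfying $|M|<|N|\leq B-1$, and for each such pair the conclusion yields that $M(X-1)+N\sum_{i=0}^{m}b_{i}X^{i}$ is irreducible over $\mathbb{Q}$. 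This is exactly the statement to be proved.

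No obstacle is expected, since Corollary \ref{Coro2gradf<gradg} is stated explicitly as the case $a=1$ of the preceding corollary, and every hypothesis translates word for word. The only mild care needed is to recognize that the strict inequality $|M|<|N|$ (as opposed to $|M|\leq |N|$) is forced here precisely because $a=\pm 1$, as already distinguished in the last sentence of Corollary \ref{Coro1gradf<gradg}.
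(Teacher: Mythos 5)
Your proof is correct and takes exactly the route the paper intends: the text introduces Corollary \ref{Coro2gradf<gradg} with the words ``In particular, for $a=1$ we have,'' so the intended argument is precisely the word-for-word specialization of Corollary \ref{Coro1gradf<gradg} that you carry out, including the observation that $a=1$ triggers the strict inequality $|M|<|N|$.
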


{\it Proof of Theorem \ref{gradf<gradg}.} Assume that $M$ and $N$ satisfy inequalities (\ref{MNPrimaDeTot}). First of all we notice that for $|N|\leq \frac{B-A}{q^{\frac{1}{n}}}$ and $|b_0|\geq\sum_{i=1}^{m}|b_{i}|B^{i}$, the numerator in the second inequality in (\ref{MNPrimaDeTot}) is nonnegative, so this inequality makes sense.
Assume now that $f$ and $g$
factorize as $f(X)=a_{n}(X-\theta _{1})\cdots (X-\theta _{n})$ and $g(X)=b_{m}(X-\xi _{1})\cdots (X-\xi _{m})$ with $\theta _{1},\ldots ,\theta_{n}$, $\xi _{1},\ldots ,\xi _{m}\in \mathbb{C}$. By Lemma \ref{lemamica} we see that $|\theta _{i}|<A$ for $i=1,\ldots ,n$ and $|\xi _{j}|\geq B$ for
$j=1,\ldots ,m$, so $\min\limits_{i,j}|\theta _i-\xi _j|>q^{\frac{1}{n}}$ and by Corollary \ref{coro1thm0} we conclude that both $f$
and $g$ must be irreducible over $\mathbb{Q}$. We may therefore assume that none of $M$ and $N$ is zero.
Suppose now that $Mf+Ng$ decomposes as $Nb_m(X-\lambda _1)\cdots (X-\lambda _m)$ with $\lambda_1,\dots ,\lambda _m\in\mathbb{C}$, and let us notice first that the second inequality in (\ref{MNPrimaDeTot}) implies that $|Nb_0|>|Ma_0|$, as $a_nb_m\neq 0$, so the constant term $Ma_0+Nb_0$ of the polynomial $Mf+Ng$ can not vanish for our choice of $M$ and $N$. Next, since $m>n$, we have $\deg (Mf+Ng)=\deg (g)=m$, so we deduce that
\begin{eqnarray*}
\bigl|Res\bigl(f,Mf+Ng\bigr)\bigr| & = & |a_{n}|^{m}\prod%
\limits_{i=1}^{n}\bigl|\bigl(Mf+Ng\bigr)(\theta _{i})\bigr|=|a_{n}|^{m}|N|^n\prod%
\limits_{i=1}^{n}\bigl|g(\theta _{i})\bigr|\\
& = & |N|^n|Res(f,g)|=pq|N|^n.
\end{eqnarray*}
To apply Corollary \ref{coro1thm0} to the pair of polynomials $(f,Mf+Ng)$, it suffices to check that our assumptions on $|M|$ and $|N|$ will force $\lambda _i$ and $\theta _j$ to satisfy 
\[
\min\limits _{i,j}|\lambda _i-\theta_j|>(q|N|^n)^{\frac{1}{\min \{m,n\}}}=q^{\frac{1}{n}}|N|.
\]
As $|\theta _j|<A$ for each $j$, it suffices to ask $\lambda _i$ to satisfy $|\lambda _i|\geq A+q^{\frac{1}{n}}|N|$ for each $i$. Since 
\[
Mf(X)+Ng(X)=\sum_{i=0}^{n}\bigl(Ma_{i}+Nb_{i}\bigr)X^{i}+\sum_{i=n+1}^{m}Nb_{i}X^{i},
\]
by Lemma \ref{lemamica} it will be sufficient to prove that 
\[
|Ma_0+Nb_0|\geq\sum_{i=1}^{n}|Ma_i+Nb_i|(A+q^{\frac{1}{n}}|N|)^i+\sum_{i=n+1}^{m}|Nb_i|(A+q^{\frac{1}{n}}|N|)^i.
\]
By the triangle inequality, it will be sufficient to prove that
\begin{equation}\label{intermediary}
|Nb_0|-|Ma_0|\geq\sum_{i=1}^{m}|Nb_i|(A+q^{\frac{1}{n}}|N|)^i+\sum_{i=1}^{n}|Ma_i|(A+q^{\frac{1}{n}}|N| )^i,
\end{equation}
which obviously holds, in view of (\ref{MNPrimaDeTot}). 
In our second case, by Lemma \ref{lemamica} we have $|\theta _j|\leq A$ for each $j$, so it will be sufficient to ask $\lambda _i$ to satisfy $|\lambda _i|>A+q^{\frac{1}{n}}|N|$ for each $i$, which will hold if we ask inequality (\ref{intermediary}) to be a strict one.
This completes the proof. \hfill $\square$
\medskip

{\it Proof of Corollary \ref{Coro1gradf<gradg}.} We will apply Theorem \ref{gradf<gradg} for the pair $(f,g)$ with $f(X)=aX-1$ and $A=q=1$. Thus condition $B\geq A+q^{\frac{1}{n}}$ reduces to $B\geq 2$, condition $|a_{n}|>\sum_{i=0}^{n-1}|a_{i}|A^{i-n}$ reduces to $|a|>1$, and condition $|b_{0}|\geq 1+|a|B+\sum_{i=1}^{m}|b_{i}|B^{i}$ implies that
\[
\frac {|b_{0}|-\sum\limits _{i=1}^{m}|b_{i}|(A+q^{\frac{1}{n}}|N|)^{i}} {\sum\limits_{i=0}^{n}|a_{i}|(A+q^{\frac{1}{n}}|N|)^{i}}=\frac {|b_{0}|-\sum\limits _{i=1}^{m}|b_{i}|(1+|N|)^{i}} {1+|a|(1+|N|)}\geq\frac {|b_{0}|-\sum\limits _{i=1}^{m}|b_{i}|B^{i}} {1+|a|B}\geq 1
\]
for $|N|\leq B-1$, so instead of the second inequality in (\ref{MNPrimaDeTot}) it suffices to ask here $|M|\leq |N|$, while for $|a|=1$ it suffices to ask $|M|<|N|$. This completes the proof.
\hfill $\square$

\medskip

{\bf Case 2:}\ $\deg f=\deg g$\quad 
%For pairs $(f,g)$ of polynomials of equal degree 
Here we will first prove the following result.
\begin{theorem}
\label{thm7EqualDegreesGeneralaDeTot} Let $f(X)=\sum_{i=0}^{n}a_{i}X^{i}$,$\
g(X)=\sum_{i=0}^{n}b_{i}X^{i}\in \mathbb{Z}[X]$, $a_{0}a_{n}b_{0}b_{n}\neq 0$%
, and suppose that $|Res(f,g)|=pq$ with $p$ a prime number, and $q$ a positive integer. Suppose also that for two positive real numbers $A,B$ with $B\geq A+q^{\frac{1}{n}}$ we have 
$|a_{n}|>\sum_{i=0}^{n-1}|a_{i}|A^{i-n}$ and $|b_{0}|>\sum_{i=1}^{n}|b_{i}|B^{i}$. Let $M$ and $N$ be integers with  $Ma_n+Nb_n\neq 0$ and $Ma_0+Nb_0\neq 0$, and satisfying one of the following two pairs of inequalities:
\vspace{-0.6cm}

\begin{eqnarray}
|M| & \leq & \frac{B-A}{q^{\frac{1}{n}}}\quad and \quad |N|\leq|M|\cdot \frac {|a_{n}|-\sum\limits _{i=0}^{n-1}|a_{i}|(B-q^{\frac{1}{n}}|M|)^{i-n}} {\sum\limits_{i=0}^{n}|b_{i}|(B-q^{\frac{1}{n}}|M|)^{i-n}}\label{MNLungaEqualDegree} \\
|N| & \leq & \frac{B-A}{q^{\frac{1}{n}}} \quad and \quad |M|\leq|N|\cdot \frac {|b_{0}|-\sum\limits _{i=1}^{n}|b_{i}|(A+q^{\frac{1}{n}}|N|)^{i}} {\sum\limits_{i=0}^{n}|a_{i}|(A+q^{\frac{1}{n}}|N|)^{i}}\label{MNLunga2EqualDegree}
\end{eqnarray}
Then the polynomial $Mf(X)+Ng(X)$ is irreducible over $\mathbb{Q}$.
\end{theorem}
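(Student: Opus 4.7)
The plan is to mimic the proof of Theorem \ref{gradf<gradg} by applying Corollary \ref{coro1thm0} in each case to an appropriate pair involving $Mf+Ng$. The strict hypotheses on $a_n$ and $b_0$ combined with Lemma \ref{lemamica} force $|\theta_i|<A$ for every root $\theta_i$ of $f$ and $|\xi_j|>B$ for every root $\xi_j$ of $g$, so since $B-A\geq q^{1/n}$, Corollary \ref{coro1thm0} already yields the irreducibility of both $f$ and $g$. This also disposes of the degenerate cases in which one of $M,N$ vanishes, reducing the work to the case $MN\neq 0$.

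Suppose first that (\ref{MNLungaEqualDegree}) holds. I will apply Corollary \ref{coro1thm0} to the pair $(g,Mf+Ng)$, which has degree $n$ since $Ma_n+Nb_n\neq 0$, and for which
\[
|\mathrm{Res}(g,Mf+Ng)|=|b_n|^n\prod_{j=1}^n|Mf(\xi_j)|=|M|^n|\mathrm{Res}(f,g)|=p\cdot(|M|^n q).
\]
Set $\delta=B-q^{1/n}|M|$. The first inequality of (\ref{MNLungaEqualDegree}) ensures $\delta\geq A>0$, and a glance at the right-hand side of the second inequality shows that $|N||b_n|\leq|M||a_n|$ (because $\sum_{i=0}^n|b_i|\delta^{i-n}\geq|b_n|$ while $|a_n|-\sum_{i<n}|a_i|\delta^{i-n}\leq|a_n|$). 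The reverse triangle inequality then gives $|Ma_n+Nb_n|\geq|M||a_n|-|N||b_n|$, and together with $|Ma_i+Nb_i|\leq|M||a_i|+|N||b_i|$ for $i<n$, the second inequality of (\ref{MNLungaEqualDegree}) rearranges to precisely the estimate needed to apply Lemma \ref{lemamica} ii) (in its $\geq$ version) and conclude that every root $\lambda$ of $Mf+Ng$ satisfies $|\lambda|\leq\delta$. Combined with $|\xi_j|>B$, this gives $|\xi_j-\lambda|>B-\delta=q^{1/n}|M|=(|M|^n q)^{1/n}$, so Corollary \ref{coro1thm0} yields the irreducibility of $Mf+Ng$; coprimality of $g$ and $Mf+Ng$ follows from $M\neq 0$ together with $\mathrm{Res}(f,g)\neq 0$, and the constant terms are nonzero by hypothesis.

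The argument under (\ref{MNLunga2EqualDegree}) is dual: apply Corollary \ref{coro1thm0} to $(f,Mf+Ng)$, for which $|\mathrm{Res}(f,Mf+Ng)|=|N|^n|\mathrm{Res}(f,g)|=p\cdot(|N|^n q)$. With $\delta'=A+q^{1/n}|N|\leq B$, the analogous triangle-inequality estimates (which give $|M||a_0|\leq|N||b_0|$ and then $|Ma_0+Nb_0|\geq\sum_{i=1}^n|Ma_i+Nb_i|\delta'^{\,i}$) combined with Lemma \ref{lemamica} i) (in its $\geq$ version) place every root $\lambda$ of $Mf+Ng$ in $\{|z|\geq\delta'\}$. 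Since $|\theta_i|<A$, we obtain $|\theta_i-\lambda|>\delta'-A=q^{1/n}|N|$, and Corollary \ref{coro1thm0} finishes the proof.

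The main delicate point, and the only genuine obstacle, is that (\ref{MNLungaEqualDegree}) and (\ref{MNLunga2EqualDegree}) are non-strict inequalities, while Corollary \ref{coro1thm0} demands a strict separation between the two sets of roots. The required strictness is supplied for free by the strict hypotheses $|a_n|>\sum|a_i|A^{i-n}$ and $|b_0|>\sum|b_i|B^i$, which make $|\theta_i|<A$ and $|\xi_j|>B$ strict; the weak inequalities on $M,N$ then only need to confine the roots of $Mf+Ng$ to the appropriate closed disk, which is exactly what the $\geq$ versions of Lemma \ref{lemamica} deliver.
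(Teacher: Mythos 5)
Your proof is correct and follows essentially the same approach as the paper's own proof: both cases use Lemma \ref{lemamica} (in its $\geq$ variant) and the triangle inequality to confine the roots of $Mf+Ng$ to the appropriate closed region, then apply Corollary \ref{coro1thm0} to the pair $(g,Mf+Ng)$ or $(f,Mf+Ng)$ after computing the resultant as $|M|^n pq$ or $|N|^n pq$. The only superficial difference is that the paper observes that under (\ref{MNLungaEqualDegree}) the nonvanishing of $Ma_n+Nb_n$ is automatic, whereas you rely on it directly as a hypothesis, which is equally valid.
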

\begin{remark}\label{remrem1}
In the statement of Theorem \ref{thm7EqualDegreesGeneralaDeTot} we may allow equality in only one of the inequalities $|a_{n}|>\sum_{i=0}^{n-1}|a_{i}|A^{i-n}$ and $|b_{0}|>\sum_{i=1}^{n}|b_{i}|B^{i}$. Thus, if we allow $|a_{n}|$ to be equal to $\sum_{i=0}^{n-1}|a_{i}|A^{i-n}$, we must ask the second inequality in (\ref{MNLunga2EqualDegree}) to be a strict one, while if we allow $|b_{0}|$ to be equal to $\sum_{i=1}^{n}|b_{i}|B^{i}$, we must ask the second inequality in (\ref{MNLungaEqualDegree}) to be a strict one.
\end{remark}
In particular, one obtains simpler irreducibility conditions for $q=1$ and $B=A+1$:

\begin{corollary}
\label{thm7EqualDegrees} Let $f(X)=\sum_{i=0}^{n}a_{i}X^{i}$,$\
g(X)=\sum_{i=0}^{n}b_{i}X^{i}\in \mathbb{Z}[X]$, $a_{0}a_{n}b_{0}b_{n}\neq 0$, and suppose that for a positive real number $A$ we have 
$|a_{n}|>\sum_{i=0}^{n-1}|a_{i}|A^{i-n}$ and $|b_{0}|>\sum_{i=1}^{n}|b_{i}|(A+1)^{i}$. 
If $|Res(f,g)|$ is a prime number, then for all integers $M,N$ with $Ma_n+b_n\neq 0$ and $Ma_0+b_0\neq 0$, and such that
\[
|M|\leq\frac {|b_{0}|-\sum\limits _{i=1}^{n}|b_{i}|(A+1)^{i}} {\sum\limits_{i=0}^{n}|a_{i}|(A+1)^{i}}\quad and\quad |N|\leq\frac {|a_{n}|-\sum\limits _{i=0}^{n-1}|a_{i}|A^{i-n}} {\sum\limits_{i=0}^{n}|b_{i}|A^{i-n}},
\]
the polynomials $Mf(X)\pm g(X)$ and $\pm f(X)+Ng(X)$ are irreducible over $\mathbb{Q}$.
\end{corollary}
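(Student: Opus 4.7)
The plan is to derive Corollary \ref{thm7EqualDegrees} directly from Theorem \ref{thm7EqualDegreesGeneralaDeTot} by specializing the parameters to $q=1$ and $B=A+1$. Since $|Res(f,g)|$ is prime, writing $|Res(f,g)|=p\cdot 1$ gives $q=1$. The required relation $B\geq A+q^{1/n}$ then becomes $A+1\geq A+1$, which holds, and the coefficient hypotheses $|a_n|>\sum_{i=0}^{n-1}|a_i|A^{i-n}$ and $|b_0|>\sum_{i=1}^n|b_i|(A+1)^i$ match those of the theorem verbatim.

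With $q=1$ and $B=A+1$, the boundary conditions $|M|\leq(B-A)/q^{1/n}$ in (\ref{MNLungaEqualDegree}) and $|N|\leq(B-A)/q^{1/n}$ in (\ref{MNLunga2EqualDegree}) collapse to $|M|\leq 1$ and $|N|\leq 1$, respectively. For polynomials of the form $\pm f(X)+Ng(X)$ I apply Theorem \ref{thm7EqualDegreesGeneralaDeTot} with $M=\pm 1$: substituting $|M|=1$ into the second bound of (\ref{MNLungaEqualDegree}) yields $B-q^{1/n}|M|=A$, so the inequality reduces to
\[
|N|\leq\frac{|a_n|-\sum_{i=0}^{n-1}|a_i|A^{i-n}}{\sum_{i=0}^n|b_i|A^{i-n}},
\]
which is exactly the second bound in the corollary. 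Symmetrically, for polynomials of the form $Mf(X)\pm g(X)$ I take $N=\pm 1$ and use (\ref{MNLunga2EqualDegree}); with $|N|=1$ the quantity $A+q^{1/n}|N|$ becomes $A+1$, and the second inequality reduces to the first bound of the corollary on $|M|$.

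The nondegeneracy hypotheses $Ma_n+b_n\neq 0$ and $Ma_0+b_0\neq 0$ in the corollary (together with their sign analogues obtained by flipping $f\to -f$ or $g\to -g$) guarantee that the leading coefficient $Ma_n+Nb_n$ and the constant term $Ma_0+Nb_0$ of $Mf+Ng$ do not vanish, so all hypotheses of Theorem \ref{thm7EqualDegreesGeneralaDeTot} are fully met. No serious obstacle arises; the argument is essentially a bookkeeping verification that the four target polynomials correspond to the four parameter combinations $(\pm 1,N)$ and $(M,\pm 1)$ falling under (\ref{MNLungaEqualDegree}) and (\ref{MNLunga2EqualDegree}), respectively. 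The only point requiring a brief check is that the boundary values $|M|=1$ and $|N|=1$ are admissible in the theorem, which they are, since both inequalities on $|a_n|$ and $|b_0|$ are kept strict in our hypotheses, so Remark \ref{remrem1} need not be invoked.
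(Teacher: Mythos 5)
Your proof is correct and follows exactly the route the paper intends: the paper introduces Corollary \ref{thm7EqualDegrees} simply as the specialization of Theorem \ref{thm7EqualDegreesGeneralaDeTot} to $q=1$ and $B=A+1$, and your verification via \eqref{MNLungaEqualDegree} with $M=\pm1$ and \eqref{MNLunga2EqualDegree} with $N=\pm1$ is precisely the intended bookkeeping.
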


{\it Proof of Theorem \ref{thm7EqualDegreesGeneralaDeTot}.} Here we first mention that the left inequalities in (\ref{MNLungaEqualDegree}) and (\ref{MNLunga2EqualDegree}) combined with the inequalities $|a_{n}|>\sum_{i=0}^{n-1}|a_{i}|A^{i-n}$ and $|b_0|>\sum_{i=1}^{m}|b_{i}|B^{i}$, respectively, show that the numerators in the corresponding right inequalities are positive, so these inequalities make sense too.
Assume first that $M$ and $N$ satisfy inequalities (\ref{MNLungaEqualDegree}), and that $f$ and $g$ factorize as $f(X)=a_{n}(X-\theta _{1})\cdots (X-\theta _{n})$ and $%
g(X)=b_{n}(X-\xi _{1})\cdots (X-\xi _{n})$ with $\theta _{1},\ldots ,\theta
_{n}$, $\xi _{1},\ldots ,\xi _{n}\in \mathbb{C}$.
We will apply Corollary \ref{coro1thm0} to the pair $(Mf+Ng,g)$. In this case we will prove that our assumption on the magnitudes of $M$ and $N$ actually prevents $Ma_{n}+Nb_{n}$ from vanishing, which will ensure us that $\deg(Mf+Ng)=n$ (this will no longer be the case if $M$ and $N$ satisfy (\ref{MNLunga2EqualDegree}), when we will have to assume that $Ma_{n}+Nb_{n}\neq 0$). Indeed, (\ref{MNLungaEqualDegree}) implies that
\[
\left| Ma_{n}\right| \geq|M|\sum_{i=0}^{n-1}\left| a_{i}\right| (B-q^{\frac{1}{n}}|M|)^{i-n}+|N|\sum_{i=0}^{n} \left| b_{i}\right| (B-q^{\frac{1}{n}}|M|)^{i-n}>|Nb_n|,
\]
so $Ma_{n}+Nb_{n}$ can not be zero. This further implies that
\[
\bigl|Res\bigl(Mf+Ng,g\bigr)\bigr|=|b_{n}|^{n}\prod%
\limits_{i=1}^{n}\bigl|\bigl(Mf+Ng\bigr)(\xi _{i})\bigr|%
=|M|^n|Res(f,g)|=pq|M|^n.
\]

In order to check that the roots $\lambda _i$
of $Mf+Ng$ lie in the disk $\{z:|z|\leq B-q^{\frac{1}{n}}|M|\}$, it suffices to prove that
\[
\left| Ma_{n}+Nb_{n}\right| \geq\sum_{i=0}^{n-1}\bigl|Ma_{i}+Nb_{i}\bigr|(B-q^{\frac{1}{n}}|M|) ^{i-n}.
\]
By the triangle inequality it will be sufficient to prove that
\[
\left| Ma_{n} \right| - \left| Nb_{n}\right| \geq\sum_{i=0}^{n-1}\left| Ma_{i}\right|(B-q^{\frac{1}{n}}|M|) ^{i-n} +|N|\sum_{i=0}^{n-1}\left| b_{i}\right| (B-q^{\frac{1}{n}}|M|) ^{i-n},
\]
and this obviously holds, according to (\ref{MNLungaEqualDegree}).

Assume now that $M$ and $N$ satisfy the pair of inequalities (\ref{MNLunga2EqualDegree}).  Here our assumption that $Ma_{n}+Nb_{n}\neq 0$ implies that we have $\deg(Mf+Ng)=n$, and we deduce that
\[
\bigl|Res\bigl(f,Mf+Ng\bigr)\bigr|=|a_{n}|^{n}\prod%
\limits_{i=1}^{n}\bigl|\bigl(Mf+Ng\bigr)(\theta _{i})\bigr|%
=|N|^n|Res(f,g)|=pq|N|^n.
\]
To apply Corollary \ref{coro1thm0} to the pair of polynomials $(f,Mf+Ng)$, it suffices to check that all the roots $\lambda _i$ of $Mf+Ng$ satisfy $|\lambda _i|\geq A+q^{\frac{1}{n}}|N|$. By Lemma \ref{lemamica} it suffices to prove that 
\[
|Ma_0+Nb_0|\geq\sum_{i=1}^{n}|Ma_i+Nb_i|(A+q^{\frac{1}{n}}|N|)^i.
\]
Using again the triangle inequality, it will be sufficient to prove that
\[
|Nb_0|-|Ma_0|\geq\sum_{i=1}^{n}(|Ma_i|+|Nb_i|)(A+q^{\frac{1}{n}}|N|)^i,
\]
which obviously holds, in view of (\ref{MNLunga2EqualDegree}).
This completes the proof. \hfill $\square $
\medskip

We will also prove here another result for the case when $\deg f=\deg g$, which provides simpler conditions on $M$ and $N$, at the cost of enlarging the required distance between the roots of $f$ and those of $g$:
\medskip

\begin{theorem}
\label{thmCuMarden} Let $f(X)=\sum_{i=0}^{n}a_{i}X^{i}$,$\
g(X)=\sum_{i=0}^{n}b_{i}X^{i}\in \mathbb{Z}[X]$, $a_{0}a_{n}b_{0}b_{n}\neq 0$%
, and suppose that $|Res(f,g)|=pq$ with $p$ a prime number, and $q$ a positive integer. Suppose also that for two positive real numbers $A,B$ with $B\geq 3A+2q^{\frac{1}{n}}$ we have 
$|a_{n}|>\sum_{i=0}^{n-1}|a_{i}|A^{i-n}$ and $|b_{0}|>\sum_{i=1}^{n}|b_{i}|B^{i}$. Then the polynomial $Mf(X)+Ng(X)$ is irreducible over $\mathbb{Q}$ for each integers $M$ and $N$ with   $|M|\cdot\frac{|a_n|}{|b_n|}<|N|\leq\frac{B-3A}{2q^{\frac{1}{n}}}$.
\end{theorem}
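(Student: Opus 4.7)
The plan is to apply Corollary \ref{coro1thm0} to the pair $(f,\,Mf+Ng)$. The hypothesis $|M|\cdot|a_n|/|b_n|<|N|$ forces $|Ma_n|<|Nb_n|$, so the leading coefficient of $Mf+Ng$ is nonzero and $\deg(Mf+Ng)=n=\deg f$; since $(Mf+Ng)(\theta_i)=Ng(\theta_i)$ at each root $\theta_i$ of $f$, the standard formula gives $|Res(f,Mf+Ng)|=|a_n|^n|N|^n\prod_i|g(\theta_i)|=|N|^n|Res(f,g)|=p\cdot(q|N|^n)$, which exhibits the prime factor that the corollary requires. From Lemma \ref{lemamica}(ii) the roots of $f$ lie in the open disk $\{|z|<A\}$, and from Lemma \ref{lemamica}(i) the roots of $g$ lie outside $\{|z|\leq B\}$.

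The heart of the argument is locating the roots of $Mf+Ng$. I would set $\delta=A+q^{1/n}|N|$ and apply Rouch\'e's theorem on the circle $|z|=\delta$ to show that $Mf+Ng$ has no zeros in $\{|z|<\delta\}$. The upper bound $|N|\leq(B-3A)/(2q^{1/n})$ rearranges to $2q^{1/n}|N|\leq B-3A$, equivalently $B-\delta\geq 2A+q^{1/n}|N|=\delta+A$, and this is precisely the role of the strengthened hypothesis $B\geq 3A+2q^{1/n}$. On $|z|=\delta$ the estimates $|z-\theta_i|\leq\delta+A$ and $|z-\xi_j|\geq B-\delta$ yield $|f(z)|\leq|a_n|(\delta+A)^n$ and $|g(z)|\geq|b_n|(B-\delta)^n\geq|b_n|(\delta+A)^n$, so combined with $|N||b_n|>|M||a_n|$ one obtains $|Ng(z)|>|Mf(z)|$ everywhere on $|z|=\delta$. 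Rouch\'e then forces $Mf+Ng$ and $Ng$ to have the same zero count in $\{|z|<\delta\}$, namely zero, since all roots of $g$ lie outside $\{|z|\leq B\}\supset\{|z|\leq\delta\}$.

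Hence every root $\lambda_j$ of $Mf+Ng$ satisfies $|\lambda_j|\geq\delta$, and for each root $\theta_i$ of $f$ one has $|\lambda_j-\theta_i|\geq|\lambda_j|-|\theta_i|>\delta-A=q^{1/n}|N|=(q|N|^n)^{1/\min(n,n)}$. Since $a_0\neq 0$ gives $\theta_i\neq 0$ and $|\lambda_j|\geq\delta>0$ gives $\lambda_j\neq 0$, all side conditions of Corollary \ref{coro1thm0} are met for the pair $(f,Mf+Ng)$ with prime $p$ and positive integer $q|N|^n$, yielding the irreducibility of $Mf+Ng$ over $\mathbb{Q}$. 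The main obstacle I anticipate is selecting the right $\delta$ and confirming that $B-\delta\geq\delta+A$ is exactly what allows a clean Rouch\'e estimate to proceed from the single coefficient-level condition $|N||b_n|>|M||a_n|$, sidestepping the more delicate coefficient-by-coefficient inequalities that were required in the analogous Theorem \ref{thm7EqualDegreesGeneralaDeTot}.
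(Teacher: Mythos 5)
Your proof is correct and reaches the same threshold $B\geq 3A+2q^{1/n}$ as the paper, but by a genuinely different route at the key step of locating the roots of $Mf+Ng$. The paper invokes Marden's theorem (on zeros of $f_1-\lambda f_2$ when the zeros of $f_1,f_2$ are annularly separated), writes $Mf+Ng=Nb_n(f_1-\lambda f_2)$ with $\lambda=-Ma_n/(Nb_n)$, checks $|\lambda|<1$, extracts the lower bound
\[
|\lambda_i|\geq \frac{B-(|Ma_n|/|Nb_n|)^{1/n}A}{1+(|Ma_n|/|Nb_n|)^{1/n}},
\]
and then verifies this exceeds $A+q^{1/n}|N|$ under $B\geq 3A+2q^{1/n}|N|$. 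You instead apply Rouch\'e directly on the circle $|z|=\delta$ with $\delta=A+q^{1/n}|N|$: the strict root separations $|\theta_i|<A$ and $|\xi_j|>B$ combined with $B-\delta\geq\delta+A$ give $|Mf(z)|<|M||a_n|(\delta+A)^n\leq|Ng(z)|$ on the circle once $|M||a_n|<|N||b_n|$, so $Mf+Ng$ inherits the zero count of $Ng$ in $\{|z|<\delta\}$, namely zero. This is more elementary (no appeal to Marden's specialized statement), yields the cleaner conclusion $|\lambda_j|\geq\delta$ directly without passing through and then simplifying a harder-to-read intermediate bound, and makes the role of the hypothesis $|M|\,|a_n|/|b_n|<|N|$ transparent. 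The remaining structure — the resultant identity $|Res(f,Mf+Ng)|=|N|^n|Res(f,g)|=p\cdot(q|N|^n)$ and the final appeal to Corollary \ref{coro1thm0} with distance exceeding $(q|N|^n)^{1/n}=q^{1/n}|N|$ — matches the paper. One tiny point worth adding for completeness: the degenerate case $M=0$ (where Rouch\'e is vacuous because $|Mf|\equiv 0$) reduces to the irreducibility of $g$ itself, which already follows from Corollary \ref{coro1thm0} applied to $(f,g)$ since $B-A>q^{1/n}$; the paper handles this by observing it may assume $M\neq 0$.
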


In particular, for $q=1$ and $|a_n|=|b_n|$ one obtains the following simpler result:
\begin{corollary}
\label{corothmCuMarden} Let $f(X)=\sum_{i=0}^{n}a_{i}X^{i}$,$\
g(X)=\sum_{i=0}^{n}b_{i}X^{i}\in \mathbb{Z}[X]$, $a_{0}a_{n}b_{0}b_{n}\neq 0$, with $|a_n|=|b_n|$, and suppose that $|Res(f,g)|$ is a prime number. Suppose also that for two positive real numbers $A,B$ with $B\geq 3A+2$ we have 
$|a_{n}|>\sum_{i=0}^{n-1}|a_{i}|A^{i-n}$ and $|b_{0}|>\sum_{i=1}^{n}|b_{i}|B^{i}$. Then the polynomial $Mf(X)+Ng(X)$ is irreducible over $\mathbb{Q}$ for all integers $M$ and $N$ with $|M|<|N|\leq\frac{B-3A}{2}$.
\end{corollary}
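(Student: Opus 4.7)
The plan is to obtain this corollary as a direct specialization of Theorem \ref{thmCuMarden}, whose hypotheses match term-by-term under the substitutions $q = 1$ and $|a_n| = |b_n|$. Since $|Res(f,g)|$ is assumed to be a prime number, we may write $|Res(f,g)| = p \cdot q$ with $p$ prime and $q = 1$, in particular $q^{1/n} = 1$. Under this choice, the hypothesis $B \geq 3A + 2q^{1/n}$ of Theorem \ref{thmCuMarden} reduces to $B \geq 3A + 2$, while the two coefficient-size inequalities $|a_n| > \sum_{i=0}^{n-1}|a_i| A^{i-n}$ and $|b_0| > \sum_{i=1}^{n}|b_i| B^i$ are carried over verbatim.

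Next I would translate the allowed range for the scalars. Theorem \ref{thmCuMarden} permits all integers $M,N$ with $|M|\cdot\frac{|a_n|}{|b_n|} < |N| \leq \frac{B-3A}{2q^{1/n}}$. Since $|a_n| = |b_n|$, the ratio $|a_n|/|b_n|$ equals $1$, so the lower bound becomes $|M| < |N|$, and since $q = 1$ the upper bound is $\frac{B-3A}{2}$. This is exactly the scalar condition $|M| < |N| \leq \frac{B-3A}{2}$ stated in the corollary. Applying Theorem \ref{thmCuMarden} for each such pair $(M,N)$ yields the irreducibility of $Mf(X) + Ng(X)$ over $\mathbb{Q}$.

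There is no genuine obstacle: the argument is a clean two-parameter substitution, and the verification is purely symbolic. As a minor bookkeeping point one can note that the strict inequality $|M| < |N|$ together with $|a_n| = |b_n| \neq 0$ forces $|Nb_n| > |Ma_n|$, so $Ma_n + Nb_n \neq 0$ automatically and $\deg(Mf+Ng) = n$; this is the only non-degeneracy needed in order for the internal resultant identities used inside Theorem \ref{thmCuMarden} to apply, and it is built into our hypotheses.
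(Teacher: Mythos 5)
Your proof is correct and coincides with the paper's own derivation: the paper obtains Corollary \ref{corothmCuMarden} precisely as the specialization of Theorem \ref{thmCuMarden} to $q=1$ and $|a_n|=|b_n|$, exactly as you describe. Your extra remark that $|M|<|N|$ and $|a_n|=|b_n|$ force $Ma_n+Nb_n\neq 0$ is a harmless restatement of a fact already established inside the proof of Theorem \ref{thmCuMarden} via $|\lambda|<1$.
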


{\it Proof of Theorem \ref{thmCuMarden}.} First of all, the irreducibility of $f$ and $g$ follows by Corollary \ref{coro1thm0} with $q=1$, so we may assume that $M\neq 0$. Next, we observe that the hypotheses in the statement imply the following sequence of inequalities:
\[
|a_0|<|a_n|A^n;\ \ |Ma_n|<|Nb_n|;\ \  A<B;\ \ |b_n|B^n<|b_0|,
\]
from which we successively deduce that
\[
|Ma_0|<|Ma_n|A^n<|Nb_n|A^n<|Nb_n|B^n<|Nb_0|,
\]
so the constant term of the polynomial $Mf+Ng$ cannot vanish when  $M$ and $N$ are not both zero. In our proof we will need the following result of Marden on the location of the roots of a linear combination of two polynomials:
\medskip

{\bf Theorem} (Marden \cite[Th. 17,2b]{Marden}) {\em If each zero of $f_1(z)=z^n+a_1z^{n-1}+\cdots +a_n$ lies on or outside circle $C_1$ of center $c_1$ and radius $r_1$, if each zero of $f_2(z)=z^n+b_1z^{n-1}+\cdots +b_n$ lies in or on circle $C_2$ of center $c_2$ and radius $r_2$, and if $r_1>r_2|\lambda |^{\frac{1}{n}}$, then each zero of $h(z)=f_1(z)-\lambda f_2(z)$, $\lambda \neq 1$ lies on or outside at least one of the circles $\Gamma_k$ with center $\gamma _k=\frac{c_1-\omega _kc_2}{1-\omega _k}$ and radius $\gamma _k=\frac{r_1-|\omega _k|r_2}{|1-\omega _k|}$, where $\omega _k$ ($k=1,\dots ,n$) are the $n$th roots of $\lambda $.
}
\medskip

Here $f_1$ and $f_2$ have complex coefficients, and $c_1,c_2,\gamma_k$ and $\lambda$ are complex numbers. To use this theorem in our case, we write $Mf+Ng=Nb_n(f_1-\lambda f_2)$ with 
\[
f_1(X)=\prod_{i=1}^n(X-\xi_i),\ f_2(X)=\prod_{i=1}^n(X-\theta_i)\ \ \text{and} \ \ \lambda=-\frac{Ma_n}{Nb_n},
\]
with $\theta _i$ and $\xi _i$ the roots of $f$ and $g$, respectively.
Since $|Ma_n|<|Nb_n|$ we have $|\lambda|<1$, so in particular $\lambda\neq 1$ and $Ma_n+Nb_n\neq 0$. By Lemma \ref{lemamica} we deduce that $|\theta _i|< A$ and $|\xi_i|> B$ for $i=1,\dots,n$, so by Marden's Theorem with $c_1=c_2=0$ we conclude that each root $\lambda _i$ of $Mf+Ng$ lies on or outside at least one of the circles $\Gamma_k$ with center $0$ and radius $\gamma _k=\frac{r_1-|\omega _k|r_2}{|1-\omega _k|}$, where $\omega _k$ ($k=1,\dots ,n$) are the $n$th roots of $\lambda $. In particular, all the $\lambda _i$'s lie on or outside the circle centered at $0$ with radius $\min\limits_{k}\gamma _k$. To avoid the use of trigonometric functions in the denominators of $\gamma_k$, we will content ourselves with the conclusion that all the $\lambda _i$'s satisfy
\begin{equation}\label{ModulLambdai}
|\lambda_i|\geq \frac{B-(\frac{|Ma_n|}{|Nb_n|})^{\frac{1}{n}}A}{1+(\frac{|Ma_n|}{|Nb_n|})^{\frac{1}{n}}}.
\end{equation}
Now, since $\deg(Mf+Ng)=n$, we deduce that
\[
\bigl|Res\bigl(f,Mf+Ng\bigr)\bigr|=|a_{n}|^{n}\prod%
\limits_{i=1}^{n}\bigl|\bigl(Mf+Ng\bigr)(\theta _{i})\bigr|%
=|N|^n|Res(f,g)|=pq|N|^n,
\]
so to apply Corollary \ref{coro1thm0} to the pair of polynomials $(f,Mf+Ng)$, it suffices to check that all the roots $\lambda _i$ of $Mf+Ng$ have absolute values exceeding $A+q^{\frac{1}{n}}|N|$. It is therefore sufficient to ask the quotient in (\ref{ModulLambdai}) to exceed $A+q^{\frac{1}{n}}|N|$, or equivalently, to have
\[
B>\left(1+2\left(\frac{|Ma_n|}{|Nb_n|}\right)^{\frac{1}{n}}\right)A+q^{\frac{1}{n}}|N|\left(1+\left(\frac{|Ma_n|}{|Nb_n|}\right)^{\frac{1}{n}}\right).
\]
Since $|Ma_n|<|Nb_n|$, it suffices to ask $B\geq 3A+2q^{\frac{1}{n}}|N|$, which completes the proof. \hfill $\square $
\medskip

{\bf Case 3:}\ $\deg f>\deg g$\quad 
For this remaining case we will prove the following result:

\begin{theorem}
\label{thm7GeneralaDeTot} Let $f(X)=\sum_{i=0}^{n}a_{i}X^{i}$,$\
g(X)=\sum_{i=0}^{m}b_{i}X^{i}\in \mathbb{Z}[X]$, $a_{0}a_{n}b_{0}b_{m}\neq 0$%
, $n>m$, and suppose that $|Res(f,g)|=pq$ with $p$ a prime number, and $q$ a positive integer. Suppose also that for two positive real numbers $A,B$ with $B\geq A+q^{\frac{1}{m}}$ we have 
$|a_{n}|>\sum_{i=0}^{n-1}|a_{i}|A^{i-n}$ and $|b_{0}|>\sum_{i=1}^{m}|b_{i}|B^{i}$. Let $M$ and $N$ be integers with $Ma_0+Nb_0\neq 0$ and satisfying one of the following two pairs of inequalities:
\vspace{-0.6cm}

\begin{eqnarray}
|M| & \leq & \frac{B-A}{q^{\frac{1}{m}}}\quad \quad \ \thinspace and \quad |N|\leq|M|\cdot \frac {|a_{n}|-\sum\limits _{i=0}^{n-1}|a_{i}|(B-q^{\frac{1}{m}}|M|)^{i-n}} {\sum\limits_{i=0}^{m}|b_{i}|(B-q^{\frac{1}{m}}|M|)^{i-n}}\label{MNLunga} \\
|N|^{\frac{n}{m}} & \leq & \frac{B-A}{q^{\frac{1}{m}}|a_n|^{\frac{n-m}{m}}} \quad and \quad |M|\leq|N|\cdot \frac {|b_{0}|-\sum\limits _{i=1}^{m}|b_{i}|(A+q^{\frac{1}{m}}|a_n|^{\frac{n-m}{m}}|N|^{\frac{n}{m}})^{i}} {\sum\limits_{i=0}^{n}|a_{i}|(A+q^{\frac{1}{m}}|a_n|^{\frac{n-m}{m}}|N|^{\frac{n}{m}})^{i}}.\label{MNLunga2}
\end{eqnarray}
\vspace{-0.4cm}

\noindent
Then the polynomial $Mf(X)+Ng(X)$ is irreducible over $\mathbb{Q}$.
\end{theorem}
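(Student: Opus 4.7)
The plan is to follow the strategy from the proofs of Theorems \ref{gradf<gradg} and \ref{thm7EqualDegreesGeneralaDeTot}, adapting it to the situation $\deg f>\deg g$. The hypotheses on $A$ and $B$, combined with Lemma \ref{lemamica}, immediately force all roots $\theta_i$ of $f$ to lie in $\{z:|z|<A\}$ and all roots $\xi_j$ of $g$ to lie outside $\{z:|z|\leq B\}$. Since $B\geq A+q^{1/m}$ and $\min(n,m)=m$, Corollary \ref{coro1thm0} applied to $(f,g)$ already yields the irreducibility of $f$ and $g$, and in particular their coprimality. All linear combinations will then be analyzed by applying the same corollary to a suitable pair.

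Under hypothesis (\ref{MNLunga}), I would apply Corollary \ref{coro1thm0} to $(Mf+Ng,g)$. The first inequality in (\ref{MNLunga}) forces $M\neq 0$ whenever $(M,N)\neq(0,0)$ (otherwise the second inequality collapses to $|N|\leq 0$), so $\deg(Mf+Ng)=n$ with leading coefficient $Ma_n$. Using $g(\xi_j)=0$, one computes
\[
|Res(Mf+Ng,g)|=|b_m|^n\prod_{j=1}^m|Mf(\xi_j)|=|M|^m|Res(f,g)|=pq|M|^m,
\]
and since $\min(n,m)=m$ the corollary requires the roots $\lambda_i$ of $Mf+Ng$ to satisfy $|\xi_j-\lambda_i|>q^{1/m}|M|$. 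As $|\xi_j|>B$, it is enough to place every $\lambda_i$ in the closed disk of radius $B-q^{1/m}|M|$; the first inequality in (\ref{MNLunga}) makes this radius at least $A$, so $\sum_{i=0}^{n-1}|a_i|(B-q^{1/m}|M|)^{i-n}<|a_n|$. Lemma \ref{lemamica}(ii) applied to $Mf+Ng$, together with the triangle inequality $|Ma_i+Nb_i|\leq|M||a_i|+|N||b_i|$ (with $b_i=0$ for $i>m$), then reduces the requirement precisely to the second inequality in (\ref{MNLunga}).

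Under hypothesis (\ref{MNLunga2}), I would apply Corollary \ref{coro1thm0} to $(f,Mf+Ng)$ instead. Here the second inequality forces $N\neq 0$ whenever $(M,N)\neq(0,0)$, and a brief computation using $f(\theta_i)=0$ gives the unified formula
\[
|Res(f,Mf+Ng)|=|a_n|^{\deg(Mf+Ng)-m}|N|^npq,
\]
which equals $|a_n|^{n-m}|N|^npq$ when $M\neq 0$ and $|N|^npq$ when $M=0$. In both subcases the bound
\[
\bigl(|a_n|^{n-m}|N|^nq\bigr)^{1/m}=q^{1/m}|a_n|^{(n-m)/m}|N|^{n/m}
\]
is admissible for Corollary \ref{coro1thm0}: it dominates the tight exponent-$1/n$ bound when $M\neq 0$ and the bound $(|N|^nq)^{1/m}$ when $M=0$, since $|a_n|\geq 1$ and $n\geq m$. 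Since $|\theta_i|<A$, it suffices that every root $\lambda_j$ of $Mf+Ng$ satisfy $|\lambda_j|\geq A+q^{1/m}|a_n|^{(n-m)/m}|N|^{n/m}$; the first inequality in (\ref{MNLunga2}) keeps this lower bound at most $B$, so the associated coefficient sum stays strictly below $|b_0|$. Applying Lemma \ref{lemamica}(i) to $Mf+Ng$ and bounding each $Ma_i+Nb_i$ by $|M||a_i|+|N||b_i|$ then reduces the requirement to the second inequality in (\ref{MNLunga2}).

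The main technical subtlety lies in the second case: when $M=0$, $\deg(Mf+Ng)$ drops from $n$ to $m$, which would naively change both the resultant computation and the exponent supplied by Corollary \ref{coro1thm0}. The bound $q^{1/m}|a_n|^{(n-m)/m}|N|^{n/m}$ is engineered precisely to be sufficient in both subcases at once, and accommodating this degree drop without splitting into cases is the only genuine complication beyond the arguments used in the earlier theorems.
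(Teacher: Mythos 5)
Your proof is correct and follows essentially the same strategy as the paper's: apply Lemma \ref{lemamica} to locate the roots of $f$, $g$, and $Mf+Ng$, compute the resultant of the pair $(Mf+Ng,g)$ or $(f,Mf+Ng)$ in terms of $|Res(f,g)|$, and invoke Corollary \ref{coro1thm0}. The one small deviation is cosmetic: the paper, after noting that Corollary \ref{coro1thm0} already forces $f$ and $g$ to be irreducible, simply dismisses the cases $M=0$ or $N=0$ as trivial (then a scalar multiple of an irreducible polynomial is irreducible) and works under the standing assumption $MN\neq 0$, so $\deg(Mf+Ng)=n$ uniformly. You instead keep $M=0$ in the running for case (\ref{MNLunga2}) and observe that the bound $\delta=q^{1/m}|a_n|^{(n-m)/m}|N|^{n/m}$ dominates the required separation in both subcases. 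That unified treatment is valid, but it is worth noting that the paper's $\delta$ is chosen for the same dominance reason even in the $M\neq0$ case (where the tight exponent would be $1/n$, not $1/m$), so the ``degree-drop'' you flag as the key subtlety is already absorbed by the estimate the paper uses, and is not actually a complication one must confront.
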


\begin{remark}\label{remrem2}
As in the case of Theorem \ref{thm7EqualDegreesGeneralaDeTot}, in  Theorem \ref{thm7GeneralaDeTot} too we may allow equality in only one of the inequalities $|a_{n}|>\sum_{i=0}^{n-1}|a_{i}|A^{i-n}$ and $|b_{0}|>\sum_{i=1}^{n}|b_{i}|B^{i}$. Thus, if we allow $|a_{n}|$ to be equal to $\sum_{i=0}^{n-1}|a_{i}|A^{i-n}$, we must ask the second inequality in (\ref{MNLunga2}) to be a strict one, while if we allow $|b_{0}|$ to be equal to $\sum_{i=1}^{n}|b_{i}|B^{i}$, we must ask the second inequality in (\ref{MNLunga}) to be a strict one.
\end{remark}
A simple and useful instance of Theorem \ref{thm7GeneralaDeTot} is the following result:
\begin{corollary}
\label{cevacuf(b)} Let $f(X)=\sum_{i=0}^{n}a_{i}X^{i}\in \mathbb{Z}[X]$, with $a_{0}a_{n}\neq 0$, and assume that $|a_{n}|\geq 1+|b|+\sum_{i=0}^{n-1}|a_{i}|$ for some integer $b$ with $|b|\geq 2$. If $|f(b)|$ is a prime number, then for each integers $M,N$ with $Ma_0-Nb\neq 0$ and
\[
|N|<|M|\leq |b|-1,
\]
the polynomial $Mf(X)+N(X-b)$ is irreducible over $\mathbb{Q}$. 
\end{corollary}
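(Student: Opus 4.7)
The plan is to invoke Theorem \ref{thm7GeneralaDeTot} for the pair $(f,g)$ with $g(X)=X-b$, so that $m=1$, $b_0=-b$, $b_1=1$, and $|Res(f,g)|=|f(b)|=p$ is prime; hence $q=1$ in the notation of that theorem. I choose the parameters $A=1$ and $B=|b|$, so that $B\geq A+q^{1/m}=2$ holds, using $|b|\geq 2$. With this choice, the required bound on $|a_n|$ reads $|a_n|>\sum_{i=0}^{n-1}|a_i|$, which follows directly from the hypothesis $|a_n|\geq 1+|b|+\sum_{i=0}^{n-1}|a_i|$. The bound on $|b_0|$ reads $|b|>|b|$, which fails with equality; Remark \ref{remrem2} accommodates this provided the second inequality in (\ref{MNLunga}) is made strict.

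Assuming now $n\geq 2$ (so that $n>m$), I verify (\ref{MNLunga}) under the hypothesis $|N|<|M|\leq|b|-1$. The first inequality $|M|\leq B-A=|b|-1$ is exactly the upper bound on $|M|$. For the second (strict) inequality, I set $r:=|b|-|M|\geq 1$. A direct computation gives
\[
\sum_{i=0}^{1}|b_i|r^{i-1}=\frac{2|b|-|M|}{r},
\]
and since $r^{i-n}\leq 1$ together with the hypothesis yield $|a_n|-\sum_{i=0}^{n-1}|a_i|r^{i-n}\geq 1+|b|$, it suffices to prove
\[
|N|(2|b|-|M|)<|M|(|b|-|M|)(1+|b|).
\]
Because $M$ and $N$ are integers with $|N|<|M|$, one has $|N|\leq|M|-1$; a short expansion with $u=|M|$ and $v=|b|$ then reduces this to $uv(v-u-1)+2v-u>0$, which is immediate from $1\leq u\leq v-1$ and $v\geq 2$.

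The non-vanishing condition $Ma_0+Nb_0=Ma_0-Nb\neq 0$ is given by hypothesis, so Theorem \ref{thm7GeneralaDeTot} together with Remark \ref{remrem2} yields the irreducibility of $Mf(X)+N(X-b)$ when $n\geq 2$. The boundary case $n=1$ does not require the full theorem: there the polynomial $(Ma_1+N)X+(Ma_0-Nb)$ has leading coefficient $Ma_1+N$, with $|Ma_1|\geq|M|(1+|b|+|a_0|)>|N|$, so the leading coefficient is nonzero, the polynomial is linear, and hence it is irreducible over $\mathbb{Q}$.

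The only delicate point is the bookkeeping in matching the parameters of Theorem \ref{thm7GeneralaDeTot} to the special pair $(f,X-b)$ and exploiting the boundary case $|b_0|=B$ through Remark \ref{remrem2}; the algebraic inequality at the heart of the verification is elementary once one uses the integrality of $M$ and $N$ (namely $|N|\leq|M|-1$) rather than the weaker $|N|<|M|$.
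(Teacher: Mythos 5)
Your proof is correct and follows essentially the same route as the paper: apply Theorem \ref{thm7GeneralaDeTot} with $g(X)=X-b$, $A=q=1$, $B=|b|$, and invoke Remark \ref{remrem2} to accommodate the equality $|b_0|=|b_1|B$ by making the second inequality in (\ref{MNLunga}) strict. The paper's verification is a touch slicker — it bounds the denominator $\sum_{i=0}^{1}|b_i|(|b|-|M|)^{i-n}$ directly by $1+|b|$ (using $(|b|-|M|)^{i-n}\leq 1$), so the whole ratio is immediately $\geq 1$ and $|N|<|M|$ finishes without any polynomial expansion; your bound $\frac{2|b|-|M|}{|b|-|M|}$ and the subsequent check of $uv(v-u-1)+2v-u>0$ arrive at the same place with more bookkeeping. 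One genuine improvement in your write-up: you explicitly handle $n=1$, where Theorem \ref{thm7GeneralaDeTot} does not apply because it needs $n>m$, by noting $|Ma_1|>|N|$ forces the linear combination to stay linear and nonconstant — a case the paper's proof silently skips.
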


Conditions (\ref{MNLunga}) and (\ref{MNLunga2}) drastically simplify for $B=A+1$, $q=1$ and $|a_n|=1$, as in the following two corrolaries.

\begin{corollary}
\label{thm7} Let $f(X)=\sum_{i=0}^{n}a_{i}X^{i}$,$\
g(X)=\sum_{i=0}^{m}b_{i}X^{i}\in \mathbb{Z}[X]$, $a_{0}a_{n}b_{0}b_{m}\neq 0$%
, $n>m$. Suppose that $|Res(f,g)|$ is a prime number, and that for a real $A>0$ we have 
$|a_{n}|>\sum_{i=0}^{n-1}|a_{i}|A^{i-n}$ and $|b_{0}|>\sum_{i=1}^{m}|b_{i}|(A+1)^{i}$. 
Then for each integer $N\neq -\frac{a_0}{b_0}$ with 
\[
|N|\leq\frac {|a_{n}|-\sum\limits _{i=0}^{n-1}|a_{i}|A^{i-n}} {\sum\limits_{i=0}^{m}|b_{i}|A^{i-n}},
\]
the polynomial $f(X)+Ng(X)$ is irreducible over $\mathbb{Q}$. 
\end{corollary}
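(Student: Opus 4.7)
The plan is to obtain Corollary \ref{thm7} as the specialization $M=1$, $q=1$, $B=A+1$ of Theorem \ref{thm7GeneralaDeTot}, using its first branch of hypotheses, namely condition (\ref{MNLunga}).

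First I would verify that the global hypotheses of Theorem \ref{thm7GeneralaDeTot} are met under these parameter choices. With $q=1$, the assumption that $|Res(f,g)|$ is prime coincides with $|Res(f,g)|=pq$ for $p$ prime; with $B=A+1$, the requirement $B\geq A+q^{1/m}$ becomes $A+1\geq A+1$, which holds; the inequalities on $|a_n|$ and $|b_0|$ coincide verbatim with those of Corollary \ref{thm7} once one writes $(A+1)^i$ in place of $B^i$; and the side condition $Ma_0+Nb_0\neq 0$ becomes $a_0+Nb_0\neq 0$, which is precisely the restriction $N\neq -a_0/b_0$ given in the statement.

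Next I would specialize condition (\ref{MNLunga}). The left inequality reads $|M|\leq (B-A)/q^{1/m}=1$, which is satisfied by $|M|=1$. For the right inequality, observe that $B-q^{1/m}|M|=A$ under our parameter choices, so
\[
|M|\cdot\frac{|a_n|-\sum_{i=0}^{n-1}|a_i|(B-q^{1/m}|M|)^{i-n}}{\sum_{i=0}^{m}|b_i|(B-q^{1/m}|M|)^{i-n}}
\]
collapses to
\[
\frac{|a_n|-\sum_{i=0}^{n-1}|a_i|A^{i-n}}{\sum_{i=0}^{m}|b_i|A^{i-n}},
\]
which is exactly the bound on $|N|$ appearing in the statement.

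Theorem \ref{thm7GeneralaDeTot} then delivers the irreducibility of $Mf(X)+Ng(X)=f(X)+Ng(X)$ over $\mathbb{Q}$. The only real obstacle here is bookkeeping: making sure that the substitutions $M=1$, $q=1$, $B=A+1$ really do convert (\ref{MNLunga}) into the single bound on $|N|$ stated in the corollary, and that none of the strict inequality hypotheses in Theorem \ref{thm7GeneralaDeTot} is weakened in the transition (both $|a_n|>\sum_{i<n}|a_i|A^{i-n}$ and $|b_0|>\sum_{i\geq 1}|b_i|(A+1)^i$ remain strict, so Remark \ref{remrem2} is not needed). No new analytic estimates are required.
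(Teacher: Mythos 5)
Your derivation is correct and is exactly the intended route: Corollary~\ref{thm7} is the $(M,q,B)=(1,1,A+1)$ specialization of condition~(\ref{MNLunga}) in Theorem~\ref{thm7GeneralaDeTot}, and you have verified that every hypothesis of the theorem (including the strictness of the two coefficient inequalities, so that Remark~\ref{remrem2} is unneeded, and the side condition $Ma_0+Nb_0\neq 0$ collapsing to $N\neq -a_0/b_0$) matches the corollary's assumptions under that substitution. No further argument is required.
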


\begin{corollary}
\label{thm7split} Let $f(X)=\sum_{i=0}^{n}a_{i}X^{i}$,$\
g(X)=\sum_{i=0}^{m}b_{i}X^{i}\in \mathbb{Z}[X]$, with $|a_n|=1$, $a_{0}b_{0}b_{m}\neq 0$%
, $n>m$. Suppose that $|Res(f,g)|$ is a prime number, and that for a real $A>0$ we have 
$1>\sum_{i=0}^{n-1}|a_{i}|A^{i-n}$ and $|b_{0}|>\sum_{i=1}^{m}|b_{i}|(A+1)^{i}$. 
Then for each integer $M$ with 
\[
|M|\leq\frac {|b_{0}|-\sum\limits _{i=1}^{m}|b_{i}|(A+1)^{i}} {\sum\limits_{i=0}^{n}|a_{i}|(A+1)^{i}},
\]
the polynomial $Mf(X)+g(X)$ is irreducible over $\mathbb{Q}$. 
\end{corollary}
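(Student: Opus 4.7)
The plan is to derive Corollary~\ref{thm7split} as a direct specialization of Theorem~\ref{thm7GeneralaDeTot}, applied to the pair $(f,g)$ with parameter choices $q=1$ (since $|Res(f,g)|$ is prime), $B=A+1$, $N=1$, and with $M$ as in the corollary. These choices are essentially forced by the hypotheses $|a_n|=1$ and by the linear combination $Mf+g$ under consideration. The hypothesis $B\ge A+q^{1/m}$ then reduces to the equality $A+1\ge A+1$, while the strict inequalities $1>\sum_{i=0}^{n-1}|a_i|A^{i-n}$ and $|b_0|>\sum_{i=1}^{m}|b_i|(A+1)^i$ from the corollary match, in strict form, the two inequalities required on $a_n$ and $b_0$ in Theorem~\ref{thm7GeneralaDeTot}, so we do not encounter the boundary situation addressed by Remark~\ref{remrem2}.

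Next, I would substitute these parameter values into the pair of conditions~(\ref{MNLunga2}). With $|N|=1$, $|a_n|=1$, $q=1$, and $B-A=1$, the first inequality of~(\ref{MNLunga2}) reads $1\le 1$, which holds, and the quantity $A+q^{1/m}|a_n|^{(n-m)/m}|N|^{n/m}$ appearing in the second inequality collapses to $A+1$. What remains of the second inequality is then exactly the bound
\[
|M|\le\frac{|b_0|-\sum_{i=1}^{m}|b_i|(A+1)^i}{\sum_{i=0}^{n}|a_i|(A+1)^i}
\]
stated in the corollary.

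The only remaining hypothesis of Theorem~\ref{thm7GeneralaDeTot} that requires attention is $Ma_0+Nb_0\ne 0$, and this is the one small point that takes a short estimate rather than a trivial substitution. Multiplying the bound on $|M|$ by $|a_0|$ and estimating gives
\[
|Ma_0|\le|M|\sum_{i=0}^{n}|a_i|(A+1)^i\le|b_0|-\sum_{i=1}^{m}|b_i|(A+1)^i<|b_0|,
\]
so $|Ma_0|<|b_0|$ and consequently $Ma_0+b_0\ne 0$. With every hypothesis of Theorem~\ref{thm7GeneralaDeTot} verified, its conclusion immediately yields that $Mf(X)+g(X)$ is irreducible over $\mathbb{Q}$. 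I do not anticipate any genuine obstacle here; the statement is really a repackaging of a convenient special case of the more general theorem, with the minor non-vanishing check of the constant term being the only point that is not pure substitution.
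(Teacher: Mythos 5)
Your proof is correct and matches what the paper intends: the paper gives no separate argument for Corollary \ref{thm7split}, presenting it explicitly as the specialization of Theorem \ref{thm7GeneralaDeTot} (via the pair (\ref{MNLunga2})) with $q=1$, $B=A+1$, $|a_n|=1$, $N=1$, exactly the substitutions you carry out. Your verification of the auxiliary hypothesis $Ma_0+b_0\ne 0$ from the stated bound on $|M|$ is the one genuine extra step, and it is handled correctly.
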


{\it Proof of Theorem \ref{thm7GeneralaDeTot}.} Assume that $M$ and $N$ satisfy inequalities (\ref{MNLunga}). First of all we notice that since $|a_{n}|>\sum_{i=0}^{n-1}|a_{i}|A^{i-n}$, the numerator in the second inequality in (\ref{MNLunga}) is positive for $|M|\leq \frac{B-A}{q^{\frac{1}{m}}}$, so the inequality makes sense.
Assume now that $f$ and $g$
factorize as $f(X)=a_{n}(X-\theta _{1})\cdots (X-\theta _{n})$ and $g(X)=b_{m}(X-\xi _{1})\cdots (X-\xi _{m})$ with $\theta _{1},\ldots ,\theta_{n}$, $\xi _{1},\ldots ,\xi _{m}\in \mathbb{C}$. By Lemma \ref{lemamica} we see that $|\theta _{i}|<A$ for $i=1,\ldots ,n$ and $|\xi _{j}|>B$ for
$j=1,\ldots ,m$, so by Corollary \ref{coro1thm0} we deduce that both $f$
and $g$ must be irreducible over $\mathbb{Q}$. 
We may thus assume that none of $M$ and $N$ is zero.
Now, since $m<n$, we have $\deg
(Mf+Ng)=\deg (f)=n$, so 
\begin{equation}\label{NoulRezultant}
\bigl|Res\bigl(Mf+Ng,g\bigr)\bigr|=|b_{m}|^{n}\prod%
\limits_{i=1}^{m}\bigl|\bigl(Mf+Ng\bigr)(\xi _{i})\bigr|%
=|M|^m|Res(f,g)|=pq|M|^m.
\end{equation}
Let us assume that $Mf+Ng$ decomposes as $Ma_n(X-\lambda _1)\cdots (X-\lambda _n)$ with $\lambda_1,\dots ,\lambda _n\in\mathbb{C}$. In view of (\ref{NoulRezultant}), to apply Corollary \ref{coro1thm0} to our pair of polynomials $(Mf+Ng,g)$, all that remains to prove
is that our assumptions on $|M|$ and $|N|$ will force $\lambda _i$ to satisfy 
\[
\min\limits _{i,j}|\lambda _i-\xi_j|>(q|M|^m)^{\frac{1}{\min \{m,n\}}}=q^{\frac{1}{m}}|M|.
\]
As $|\xi _j|>B$ for each $j$, it suffices to ask $\lambda _i$ to satisfy $|\lambda _i|\leq B-q^{\frac{1}{m}}|M|$ for each $i$. Now, since 
\[
Mf(X)+Ng(X)=\sum_{i=0}^{m}\bigl(Ma_{i}+Nb_{i}\bigr)X^{i}+\sum_{i=m+1}^{n}Ma_{i}X^{i},
\]
by Lemma \ref{lemamica} it will be sufficient to prove that 
\[
\left| Ma_{n}\right| \geq\sum_{i=0}^{m}\bigl|Ma_{i}+Nb_{i}\bigr|(B-q^{\frac{1}{m}}|M|)^{i-n}+\sum_{m<i<n}|Ma_{i}|(B-q^{\frac{1}{m}}|M|) ^{i-n},
\]
the right-most sum being zero if $m=n-1$.
In particular, by the triangle inequality this will be satisfied if 
\[
\left|M a_{n}\right| \geq|M|\sum_{i=0}^{n-1}\left| a_{i}\right|(B-q^{\frac{1}{m}}|M|)^{i-n} +|N|\sum_{i=0}^{m}\left| b_{i}\right|(B-q^{\frac{1}{m}}|M|)^{i-n},
\]
and this last inequality obviously holds, according to (\ref{MNLunga}).

Assume now that $M$ and $N$ satisfy inequalities (\ref{MNLunga2}). Here, since $|b_0|>\sum_{i=1}^{m}|b_{i}|B^{i}$, we observe that the numerator in the second inequality in (\ref{MNLunga2}) is positive for $|N|^{\frac{n}{m}}\leq \frac{B-A}{q^{\frac{1}{m}}|a_n|^{\frac{n-m}{m}}}$, so this inequality makes sense too.
We then deduce that
\begin{eqnarray*}
\bigl|Res\bigl(f,Mf+Ng\bigr)\bigr| & = & |a_{n}|^{n}\prod%
\limits_{i=1}^{n}\bigl|\bigl(Mf+Ng\bigr)(\theta _{i})\bigr|=|a_{n}|^{n}|N|^n\prod%
\limits_{i=1}^{n}\bigl|g(\theta _{i})\bigr|\\
& = & |a_{n}|^{n-m}|N|^n|Res(f,g)|=pq|a_{n}|^{n-m}|N|^n.
\end{eqnarray*}
To apply Corollary \ref{coro1thm0} to the pair of polynomials $(f,Mf+Ng)$, it suffices to check that all the roots $\lambda _i$ of $Mf+Ng$ satisfy $|\lambda_i|\geq A+\delta$ with $\delta=q^{\frac{1}{m}}|a_{n}|^{\frac{n-m}{m}}|N|^{\frac{n}{m}}$. By Lemma \ref{lemamica} it suffices to prove that 
\[
|Ma_0+Nb_0|\geq\sum_{i=1}^{m}|Ma_i+Nb_i|(A+\delta )^i+\sum_{i=m+1}^{n}|Ma_i|(A+\delta )^i.
\]
Using the triangle inequality, it will be sufficient to prove that
\[
|Nb_0|-|Ma_0|\geq\sum_{i=1}^{m}(|Ma_i|+|Nb_i|)(A+\delta )^i+\sum_{i=m+1}^{n}|Ma_i|(A+\delta )^i,
\]
which obviously holds, in view of (\ref{MNLunga2}).
This completes the proof. \hfill $\square$
\medskip 

{\it Proof of Corollary \ref{cevacuf(b)}.} We will apply Theorem \ref{thm7GeneralaDeTot} and Remark \ref{remrem2} for the pair $(f,g)$ with $g(X)=X-b$ and $A=q=1$, and $B=|b|$. Thus condition $B\geq A+q^{\frac{1}{n}}$ reduces to $|b|\geq 2$, condition $|b_{0}|\geq \sum_{i=1}^{m}|b_{i}|B^{i}$ is obviously satisfied, and condition $|a_{n}|\geq 1+|b|+\sum_{i=0}^{n-1}|a_{i}|$ implies that for $|M|\leq |b|-1$ we have
\[
\frac {|a_{n}|-\sum\limits _{i=0}^{n-1}|a_{i}|(B-q^{\frac{1}{m}}|M|)^{i-n}} {\sum\limits_{i=0}^{m}|b_{i}|(B-q^{\frac{1}{m}}|M|)^{i-n}}=\frac {|a_{n}|-\sum\limits _{i=0}^{n-1}|a_{i}|(|b|-|M|)^{i-n}} {|b|(|b|-|M|)^{-n}+(|b|-|M|)^{1-n}}\geq\frac {|a_{n}|-\sum\limits _{i=0}^{n-1}|a_{i}|} {1+|b|}\geq 1,
\]
so for the second inequality in (\ref{MNLunga}) to hold with $``<``$ instead of $``\leq``$, it suffices to have $|N|<|M|$. This completes the proof.
\hfill $\square$

\medskip

As we shall see in Section \ref{multivariate}, in the case of bivariate polynomials $(f,g)$ over a field $K$, under certain conditions on the coefficients of $f$ and $g$, the irreducibility in pairs extends to all linear combinations of the form $(f,\alpha f+\beta g)$ with $\alpha,\beta\in K$, not both zero.

\section{Irreducibility criteria in the multivariate case}\label{multivariate}

In this section we will establish several similar irreducibility conditions for pairs of multivariate polynomials $f,g$ over an arbitrary field $K$, and for some of their linear combinations, that will be deduced from more general results on the number of irreducible factors of $f$ and $g$. Here we will use the following definitions:
\begin{definition}\label{ADouaDefinitie}i) Let $K$ be a field. For a nonzero polynomial $f(X)\in K[X]$ we will denote by $\Omega(f)$ the number of its irreducible factors, counted with their multiplicities, up to multiplication by a nonzero constant in $K$.

ii) Let $f(X,Y)=\sum\nolimits_{i=0}^{n}a_{i}Y^{i}$
and $g(X,Y)=\sum\nolimits_{i=0}^{m}b_{i}Y^{i}\in K[X,Y]$, with coefficients $a_{0},\dots
,a_{n}$, $b_{0},\dots ,b_{m}\in K[X]$, $a_{0}a_{n}b_{0}b_{m}\neq 0$, and assume that $f$ and $g$ as polynomials in $Y$ are relatively prime, so $Res_{Y}(f,g)$ is a nonzero polynomial in $K[X]$. Let also $k$ be a positive integer, and let us define
\[
\delta_k=\max\bigg\lbrace \deg d:d\mid Res_{Y}(f,g)\ \ and \ \ \deg d\leq \frac{\deg Res_{Y}(f,g)}{k+1}\bigg\rbrace .
\]
Note that $\delta _k$ defines a decreasing, eventually constant sequence, that may be considered as the analogue for the bivariate case of the sequence $d_k$ in Definition \ref{PrimaDefinitie}.
\end{definition}
The first result that we will prove relies on the properties of $\delta _k$, and provides conditions on the degrees of the coefficients of $f$ and $g$ that guarantee us that each one of $f$ and $g$ is a product of at most $k$ irreducible factors in $K[X,Y]$. Taking also into account Remark \ref{RemarcaMultivar}, we may regard this result as a non-Archimedean version of Theorem \ref{thm0dk}.
\begin{theorem}
\label{thm11generala} Let $K$ be a field, $f(X,Y)=\sum\nolimits_{i=0}^{n}a_{i}Y^{i},%
\ g(X,Y)=\sum\nolimits_{i=0}^{m}b_{i}Y^{i}\in K[X,Y]$, with $a_{0},\dots
,a_{n},b_{0},\dots ,b_{m}\in K[X]$, $a_{0}a_{n}b_{0}b_{m}\neq 0$, and assume that $f$ and $g$ have no nonconstant factors in $K[X]$. Let $k$ be a positive integer, and let
\begin{eqnarray*}
A & = & \deg a_0-\max\{\deg a_1,\dots, \deg a_n\},\ \  and\\
B & = & \max\{\deg b_0,\dots, \deg b_{m-1}\}-\deg b_m.
\end{eqnarray*}
If $\min\{A,\frac{A}{n}\}>\max\{B,\frac{B}{m}\}$ and $\min\{\deg a_0, \deg b_m\}> \delta_k$, 
then each one of the polynomials $f$ and $g$ is a product of at most $k$ irreducible factors in $K[X,Y]$.
\end{theorem}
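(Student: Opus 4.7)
The plan is to transfer the proof of Theorem \ref{thm0dk} to the non-Archimedean setting, with the degree-at-infinity valuation $v(h)=-\deg h$ on $K(X)$, extended to an algebraic closure $\overline{K(X)}$, replacing the complex absolute value, and with $\delta_k$ replacing $d_k$. Throughout, I will view $f$ and $g$ as polynomials in $Y$ over $K(X)$, and denote their roots in $\overline{K(X)}$ by $\theta_1,\dots,\theta_n$ and $\xi_1,\dots,\xi_m$, respectively.

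The first step is to use the Newton polygon of $f$ (resp.\ of $g$) with respect to $v$ to convert the hypothesis on $A$ and $B$ into a separation of the $\theta_j$'s from the $\xi_i$'s. A short inspection of the lower convex hull of the points $(i,v(a_i))$ yields that every slope of the Newton polygon of $f$ is at least $\min\{A,A/n\}$, hence $-v(\theta_j)\geq \min\{A,A/n\}$ for each $j$; symmetrically, every slope of the Newton polygon of $g$ is at most $\max\{B,B/m\}$, hence $-v(\xi_i)\leq \max\{B,B/m\}$ for each $i$. The hypothesis $\min\{A,A/n\}>\max\{B,B/m\}$ will then force $v(\theta_j)<v(\xi_i)$ strictly, and the ultrametric inequality will yield $v(\theta_j-\xi_i)=v(\theta_j)$ for every pair $(i,j)$.

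The second step mimics the proof of Lemma \ref{thm-1dk}. Assume $f$ factors in $K[X,Y]$ as $f=f_1\cdots f_\ell$ with $\ell>k$ irreducible factors, and use $Res_Y(f,g)=\prod_s Res_Y(f_s,g)$ together with a pigeonhole argument to produce a factor, say $Res_Y(f_1,g)$, of degree at most $\delta_{\ell-1}\leq \delta_k$. On the other hand, writing $f_1=c_tY^t+\cdots+c_0$ and using the identity $g(\theta_j)=b_m\prod_i(\theta_j-\xi_i)$ together with the first step will give $v(g(\theta_j))=-\deg b_m+m\,v(\theta_j)$ for each root $\theta_j$ of $f_1$, and Vieta's formulas for $f_1$ will give $\sum_j v(\theta_j)=\deg c_t-\deg c_0$. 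Combining these through $Res_Y(f_1,g)=c_t^m\prod_jg(\theta_j)$ will collapse to the clean formula $\deg Res_Y(f_1,g)=t\deg b_m+m\deg c_0\geq t\deg b_m>t\delta_k\geq \delta_k$, contradicting the pigeonhole bound. The argument for $g$ will be entirely symmetric: the analogous computation $v(f(\xi_i))=v(a_n)+\sum_j v(\xi_i-\theta_j)=-\deg a_0$ will yield $\deg Res_Y(g_1,f)=n\deg d_s+s\deg a_0>\delta_k$ for any hypothetical irreducible factor $g_1$ of $g$, and conclude the proof.

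The main obstacle will be the Newton polygon step: proving each of the inequalities $-v(\theta_j)\geq\min\{A,A/n\}$ and $-v(\xi_i)\leq\max\{B,B/m\}$ uniformly requires a small case distinction on the signs of $A$ and $B$, so that the correct extremal vertex of the polygon is identified. The hypothesis that $f$ and $g$ share no nonconstant $K[X]$-factor is used to ensure $Res_Y(f,g)\neq 0$, so that $\delta_k$ is well-defined, and it also guarantees that the slope bounds for $f$ transfer to each irreducible factor $f_s$ without absorption into purely $X$-dependent pieces.
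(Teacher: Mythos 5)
Your proposal is correct and follows essentially the same path as the paper: the paper works with the multiplicative non-Archimedean absolute value $|h|_\rho=\rho^{\deg h}$ and derives the root-location bounds from its Lemma \ref{CauchyNonArchimedean} (non-Archimedean Cauchy bounds), which is exactly the Newton-polygon slope information you invoke; the pigeonhole on $\deg Res_Y(f_1,g)\leq\delta_{\ell-1}$, the ultrametric collapse $|\theta_i-\xi_j|_\rho=|\theta_i|_\rho$, and the Vieta computation yielding $\deg Res_Y(f_1,g)=t\deg b_m+m\deg c_0$ (resp. $n\deg d_s+s\deg a_0$ for the $g$-side) are all identical in content to the paper's manipulations with $|\cdot|_\rho$.

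One small correction to your closing remark: the hypothesis that $f$ and $g$ have no nonconstant factors in $K[X]$ is not what guarantees $Res_Y(f,g)\neq 0$ (e.g.\ $f=g=Y$ satisfies that hypothesis with vanishing resultant). As the paper notes in Remark \ref{RemarcaMultivar}, the strict slope separation $\min\{A,A/n\}>\max\{B,B/m\}$ already forbids common roots in $\overline{K(X)}$ and hence makes $f,g$ coprime in $Y$, which is what makes $\delta_k$ meaningful. The role of the ``no nonconstant $K[X]$-factor'' hypothesis is the second thing you allude to: it ensures every irreducible factor $f_s$ (resp.\ $g_s$) has $\deg_Y\geq 1$, so that $t\geq 1$ (resp.\ $s\geq 1$) in the degree computation, without which the lower bound $t\deg b_m>\delta_{\ell-1}$ would fail.
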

\begin{remark}\label{RemarcaMultivar} As we shall see in the proof, the condition $\min\{A,A/n\}>\max\{B,B/m\}$ in  Theorem \ref{thm11generala} forces $f$ and $g$ to share no common root in $\overline{K(X)}$, and hence to be relatively prime as polynomials in $Y$, as required in the definition of $\delta_k$. 
Let $\overline{f}=Y^nf(X,\frac{1}{Y})=\sum\nolimits_{i=0}^{n}a_{n-i}Y^{i}$ and $\overline{g}=Y^ng(X,\frac{1}{Y})=\sum\nolimits_{i=0}^{m}b_{m-i}Y^{i}$, the reciprocals of $f$ and $g$ with respect to $Y$. Note that the irreducible factors of $\overline{f}$ and $\overline{g}$ are precisely the reciprocals with respect to $Y$ of the irreducible factors of $f$ and $g$, respectively, and we also have $Res_Y(\overline{f},\overline{g})=(-1)^{mn}Res_Y(f,g)$, which implies that $\delta _k$ will not change if we replace $(f,g)$ by $(\overline{f},\overline{g})$. Thus the same conclusion in Theorem \ref{thm11generala} will hold if we replace the conditions $\min\{A,A/n\}>\max\{B,B/m\}$ and $\min\{\deg a_0, \deg b_m\}> \delta_k$ with
$\min\{\overline{A},\overline{A}/n\}>\max\{\overline{B},\overline{B}/m\}$ and $\min\{\deg a_n, \deg b_0\}> \delta_k$, respectively, where
\begin{eqnarray*}
\overline{A} & = & \deg a_n-\max\{\deg a_0,\dots, \deg a_{n-1}\},\ \  and\\
\overline{B} & = & \max\{\deg b_1,\dots, \deg b_{m}\}-\deg b_0.
\end{eqnarray*}
\end{remark}

In the proofs of the results in this section we will use, as in \cite{CVZ2}, a non-Archimedean absolute value $|\cdot |_\rho$ on $K(X)$, introduced as follows. We first fix an arbitrarily chosen real number $\rho >1$, and for any polynomial $f(X)\in K[X]$ we define $|f(X)|_\rho$ as
\[
|f(X)|_\rho=\rho ^{\deg f(X)},
\]
which in particular shows that for any nonzero $f\in K[X]$ we have $|f(X)|_\rho\geq 1$.
We then extend our absolute value $|\cdot |_\rho$ to $K(X)$ by multiplicativity.
Thus for any $h(X)\in K(X)$, $h(X)=\frac{f(X)}{g(X)}$, with $f(X),g(X)\in
K[X]$, $g(X)\neq 0$, we let $|h(X)|_\rho=\frac{|f(X)|_\rho}{|g(X)|_\rho}$. Let then $\overline{K(X)}$ be a fixed algebraic closure of $K(X)$, and let us fix an extension of our absolute value $|\cdot |_\rho$ to $\overline{K(X)}$ (see \cite{EnglerPrestel}, for instance), which will be also denoted by $|\cdot |_\rho$.

To study the location of the roots $\theta \in \overline{K(X)}$ of a bivariate polynomial $f(X,Y)\in K[X,Y]$, regarded as a polynomial in $Y$ with coefficients in $K[X]$, we will use this absolute value $|\cdot |_\rho$, and we will prove the following lemma, that provides non-Archimedean versions of Cauchy's bounds on the roots of univariate complex polynomials. 
\begin{lemma}\label{CauchyNonArchimedean}
{\em ({\em Cauchy - the non-Archimedean case})}\ Let $f(X,Y)=\sum\nolimits_{i=0}^{n}a_{i}Y^{i}\in K[X,Y]$, with $a_{0},\dots
,a_{n}\in K[X]$, $a_{0}a_{n}\neq 0$, and let $\theta \in \overline{K(X)}$ be a root of $f$. Then
\begin{equation}\label{Cauchy}
|\theta|_\rho\leq \max\bigg\lbrace \frac{\max\{|a_0|_\rho,\dots,|a_{n-1}|_\rho\}}{|a_n|_\rho},\left(\frac{\max\{|a_0|_\rho,\dots,|a_{n-1}|_\rho\}}{|a_n|_\rho}\right)^{\frac{1}{n}}\bigg\rbrace 
\end{equation}
and 
\begin{equation}\label{CauchyReversed}
|\theta|_\rho\geq \min\bigg\lbrace \frac{|a_0|_\rho}{\max\{|a_1|_\rho,\dots,|a_{n}|_\rho\}},\left(\frac{|a_0|_\rho}{\max\{|a_1|_\rho,\dots,|a_{n}|_\rho\}}\right)^{\frac{1}{n}}\bigg\rbrace .
\end{equation}
\end{lemma}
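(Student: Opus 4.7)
The plan is to exploit the defining equation $\sum_{i=0}^n a_i\theta^i = 0$ together with the ultrametric inequality that the non-Archimedean absolute value $|\cdot|_\rho$ satisfies on $\overline{K(X)}$. First I would recall that the map $f\mapsto\rho^{\deg f}$ is non-Archimedean on $K[X]$ since $\deg(f+g)\le\max\{\deg f,\deg g\}$, and hence induces a non-Archimedean absolute value on $K(X)$ by multiplicativity, which then lifts (preserving the strong triangle inequality) to the chosen extension on $\overline{K(X)}$, as recalled in \cite{EnglerPrestel}. This is the only non-elementary ingredient; everything else is a direct computation.

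For the upper bound (\ref{Cauchy}), I would isolate the leading term and write $a_n\theta^n=-\sum_{i=0}^{n-1}a_i\theta^i$. Applying the ultrametric inequality yields
\[
|a_n|_\rho\,|\theta|_\rho^n \;\le\; \max_{0\le i\le n-1} |a_i|_\rho\,|\theta|_\rho^i,
\]
so there exists an index $i\in\{0,\dots,n-1\}$ with $|a_n|_\rho|\theta|_\rho^{n-i}\le|a_i|_\rho\le M$, where $M=\max\{|a_0|_\rho,\dots,|a_{n-1}|_\rho\}$; equivalently, $|\theta|_\rho\le (M/|a_n|_\rho)^{1/(n-i)}$. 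Since the attainable exponents $1/(n-i)$ range over $\{1/n,1/(n-1),\dots,1\}$ and the right-hand side is monotone in this exponent depending on whether $M/|a_n|_\rho\ge 1$ or $<1$, a one-line case split shows that the worst case is either $(M/|a_n|_\rho)^{1}$ (the largest exponent, chosen when the base is $\ge 1$) or $(M/|a_n|_\rho)^{1/n}$ (the smallest exponent, chosen when the base is $<1$), and in both regimes the bound is absorbed in the maximum of these two quantities.

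The lower bound (\ref{CauchyReversed}) is derived by the symmetric strategy: isolate the constant term via $a_0=-\sum_{i=1}^n a_i\theta^i$, apply the ultrametric inequality to get $|a_0|_\rho\le M'|\theta|_\rho^i$ for some $i\in\{1,\dots,n\}$ with $M'=\max\{|a_1|_\rho,\dots,|a_n|_\rho\}$, and rearrange to $|\theta|_\rho\ge(|a_0|_\rho/M')^{1/i}$. The same dichotomy on whether $|a_0|_\rho/M'$ is $\ge 1$ or $<1$, with extremal exponents now $1/n$ (attained at $i=n$, giving the worst lower bound when the base is $\ge 1$) and $1$ (at $i=1$, giving the worst when the base is $<1$), yields the uniform lower bound claimed.

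I do not anticipate any genuine obstacle: the main point is simply that the classical Cauchy argument goes through essentially verbatim, with the ordinary triangle inequality replaced by the ultrametric one, which actually strengthens the estimates by collapsing sums to maxima. The only care required is the double case distinction according to the sign of $\log(M/|a_n|_\rho)$ (respectively $\log(|a_0|_\rho/M')$), which governs which of the two candidate exponents governs the extremal value of the monotone function $x\mapsto c^{1/x}$ over the discrete set of admissible $x$.
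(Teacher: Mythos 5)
Your proof is correct and uses the same core idea as the paper, namely applying the ultrametric inequality to the defining relation $\sum a_i\theta^i=0$ with the leading or constant term isolated. The paper's upper-bound argument case-splits on $|\theta|_\rho\lessgtr 1$ (collapsing the max on the right immediately), whereas you locate the index attaining the max and then case-split on $M/|a_n|_\rho\lessgtr 1$; these are interchangeable bookkeeping variants of the same estimate. For the lower bound the paper simply invokes the upper bound for the reciprocal polynomial $Y^nf(X,1/Y)$, while you redo the symmetric computation from scratch; again equivalent, with the paper's route being slightly shorter and yours slightly more self-contained.
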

\begin{proof}
Since $\theta $ is a root of $f$, we deduce that
\begin{eqnarray*}
|a_n|_\rho|\theta |_\rho^{n} & = & |a_0+a_1\theta +\cdots +a_{n-1}\theta ^{n-1}|_\rho \\
 & \leq & \max\{ |a_0|_\rho, |a_1|_\rho |\theta|_\rho,\dots ,|a_{n-1}|_\rho|\theta|_\rho^{n-1}\}\\
  & \leq & \max\{ |a_0|_\rho, |a_1|_\rho,\dots ,|a_{n-1}|_\rho\}
\end{eqnarray*}
if $|\theta|_\rho \leq 1$, which yields $|\theta|_\rho\leq \left(\frac{\max\{|a_0|_\rho,\dots,|a_{n-1}|_\rho\}}{|a_n|_\rho}\right)^{\frac{1}{n}}$, while for $|\theta|_\rho >1$ we obtain 
\[
|a_n|_\rho|\theta |_\rho^{n}\leq \max\{ |a_0|_\rho, |a_1|_\rho,\dots ,|a_{n-1}|_\rho\}\cdot |\theta |_\rho^{n-1},
\]
which leads us to $|\theta|_\rho \leq \frac{\max\{|a_0|_\rho,\dots,|a_{n-1}|_\rho\}}{|a_n|_\rho}$. 

If we apply now inequality (\ref{Cauchy}) to $Y^nf(X,\frac{1}{Y})$, the reciprocal of $f$ with respect to $Y$, whose roots in $\overline{K(X)}$ are precisely the inverses of the roots of $f$, we deduce that $\theta $ also satisfies the inequality (\ref{CauchyReversed}). This completes the proof of the lemma.
\end{proof}

{\it Proof of Theorem \ref{thm11generala}.} \ Recall that for $i<j$ we have $\delta_i\geq \delta_j$, so $\delta_i$ is a decreasing sequence (not necessarily a strictly decreasing one). 
Let us assume to the contrary that $f$ decomposes as a product of more than $k$ irreducible factors over $K[X]$, say $f(X,Y)=f_{1}(X,Y)\cdots f_{\ell}(X,Y)$, with $\ell>k$, $f_{1},\dots,f_{\ell}\in K[X,Y]$, $f_{1},\dots,f_{\ell}$ irreducible over $K[X]$. Consider the factorizations of $f$ and $g$, say 
\begin{align*}
f(X,Y)& =a_{n}(Y-\theta _{1})\cdots (Y-\theta _{n}), \\
g(X,Y)& =b_{m}(Y-\xi _{1})\cdots (Y-\xi _{m}),
\end{align*}
with $\theta _{1},\ldots ,\theta _{n},\xi _{1},\ldots ,\xi _{m}\in \overline{%
K(X)}$. Since 
\[
Res_{Y}(f,g)=Res_{Y}(f_{1},g)\cdots Res_{Y}(f_{\ell},g),
\]
it follows that at least one of the polynomials $Res_{Y}(f_{1},g),\dots, Res_{Y}(f_{\ell},g)\in
K[X]$ must have degree at most $\frac{\deg Res_{Y}(f,g)}{\ell}$, say 
\[
\deg Res_{Y}(f_{1},g)\leq \frac{\deg Res_{Y}(f,g)}{\ell}.
\]
In particular this shows that $\deg Res_{Y}(f_{1},g)\leq \delta_{\ell -1}$, so 
\begin{equation}\label{PrincipiulCutiei}
|Res_{Y}(f_{1},g)|_\rho \leq \rho^{\delta_{\ell -1}}.
\end{equation}
Assume that $f_1=C_0+C_1Y+\cdots +C_tY^t$ with $t\geq 1$ and $C_0,\dots ,C_t\in K[X]$, $C_{t}\neq 0$. Since $f_{1}$ is a factor of $f$, it will
factorize over $\overline{K(X)}$ as 
$f_{1}(X,Y)=C_{t}(Y-\theta _{1})\cdots
(Y-\theta _{t})$, say, so we have 
\vspace{-1mm} 
\begin{equation}\label{formagenerala}
|Res_Y(f_{1},g)|_\rho=|C_{t}|_\rho^{m}|b_{m}|_\rho^{t}\prod\limits_{i=1}^{t}\prod%
\limits_{j=1}^{m}|\theta _{i}-\xi _{j}|_\rho.
\end{equation}
If we use now (\ref{CauchyReversed}) for the roots $\theta _i$ of $f$ together with (\ref{Cauchy}) for the roots $\xi _j$ of $g$, we observe that our assumption that $\min\{A,A/n\}>\max\{B,B/m\}$ simply implies that $|\theta_i|_\rho>|\xi_j|_\rho$ for each $i=1,\dots ,n$ and each $j=1,\dots ,m$. Since our absolute value $|\cdot|_\rho$ is non-Archimedean, we conclude that $|\theta _{i}-\xi _{j}|_\rho=|\theta _{i}|_\rho$ for each $i=1,\dots ,n$ and each $j=1,\dots ,m$. In view of (\ref{formagenerala}), and also using Vieta's formula for the product $\theta _1\cdots \theta _t$ we then obtain
\begin{eqnarray*}
|Res_Y(f_{1},g)|_\rho & = & |C_{t}|_\rho^{m}|b_{m}|_\rho^{t}\prod\limits_{i=1}^{t}\prod%
\limits_{j=1}^{m}|\theta _{i}|_\rho=|C_{t}|_\rho^{m}|b_{m}|_\rho^{t}\prod\limits_{i=1}^{t}|\theta _{i}|_\rho^{m}\\
 & = & |C_{t}|_\rho^{m}|b_{m}|_\rho^{t}\frac{|C_0|_\rho^m}{|C_t|_\rho^m}=|b_{m}|_\rho^{t}|C_0|_\rho^m.
\end{eqnarray*}
Since $C_0$ is a factor of $a_0$, we have $C_0\neq 0$, so $|C_0|_\rho\geq 1$, which implies that 
\begin{equation}\label{cevageneral}
|Res_Y(f_{1},g)|_\rho \geq |b_{m}|_\rho^{t}\geq |b_{m}|_\rho,
\end{equation}
as $t\geq 1$.
Let us recall now that one of our hypotheses was that $\min\{\deg a_0, \deg b_m\}> \delta_k$, so $\deg b_m> \delta_k$, which also reads $|b_{m}|_\rho>\rho^{\delta_k}$, and yields $|b_{m}|_\rho>\rho^{\delta_{\ell-1}}$, as $\ell -1\geq k$ and $\delta_i$ is a decreasing sequence. We have thus reached the conclusion that $|Res_Y(f_{1},g)|_\rho > \rho^{\delta_{\ell-1}}$, which contradicts (\ref{PrincipiulCutiei}), showing that $f$ is a product of at most $k$ irreducible factors over $K[X]$. 

Assuming to the contrary that $g$ decomposes as a product of more than $k$ irreducible factors over $K[X]$, say $g(X,Y)=g_{1}(X,Y)\cdots g_{\ell}(X,Y)$, with $\ell>k$, $g_{1},\dots,g_{\ell}\in K[X,Y]$, $g_{1},\dots,g_{\ell}$ irreducible over $K[X]$, we deduce as before that one of the factors, say $g_1$ satisfies
\begin{equation}\label{PrincipiulCutieiPentrug}
|Res_{Y}(f,g_1)|_\rho \leq \rho^{\delta_{\ell -1}},
\end{equation}
and if $g_{1}$ decomposes over $\overline{K(X)}$ as $g_{1}(X,Y)=D_{s}(Y-\xi _{1})\cdots (Y-\xi _{s})$, say, with $s\geq 1$, $D_{s}\in K[X]$, $D_{s}\neq 0$%
, we have \vspace{-1mm} 
\begin{eqnarray*}
|Res_Y(f,g_1)|_\rho & = & |D_{s}|_\rho^{n}|a_{n}|_\rho^{s}\prod\limits_{i=1}^{n}\prod%
\limits_{j=1}^{s}|\theta _{i}-\xi _{j}|_\rho=|D_{s}|_\rho^{n}|a_{n}|_\rho^{s}\prod\limits_{i=1}^{n}\prod%
\limits_{j=1}^{s}|\theta _{i}|_\rho\\
 & = & |D_{s}|_\rho^{n}|a_{n}|_\rho^{s}\prod\limits_{i=1}^{n}|\theta _{i}|_\rho^s=|D_{s}|_\rho^{n}|a_{n}|_\rho^{s}\frac{|a_0|_\rho^s}{|a_n|_\rho^s}=|D_{s}|_\rho^{n}|a_0|_\rho^s,
\end{eqnarray*}
again by Vieta's formula, so 
\begin{equation}\label{cevageneral2}
|Res_Y(f,g_1)|_\rho \geq |a_0|_\rho^s\geq |a_0|_\rho,
\end{equation}
since $|D_{s}|_\rho\geq 1$ and $s\geq 1$. Recalling that one of our hypotheses was that $\deg a_0>\delta_k$, which also reads $|a_{0}|_\rho>\rho^{\delta_k}$, we conclude that $|Res_Y(f,g_1)|_\rho > \rho^{\delta_k}\geq \rho^{\delta_{\ell-1}}$, which contradicts (\ref{PrincipiulCutieiPentrug}) and completes the proof.  \hfill $\square$

\begin{remark}\label{remarcamultivarAdditional}
i) Note that if we know, as an additional information, that $|C_0|_\rho>1$, then instead of (\ref{cevageneral}) we obtain $|Res_Y(f_{1},g)|_\rho >|b_{m}|_\rho$, and we will still reach the same contradiction $|Res_Y(f_{1},g)|_\rho > \rho^{\delta_{\ell-1}}$ if we allow $\deg b_m$ to be equal to $\delta_k$. So instead of condition $\min\{\deg a_0,\deg b_m\}>\delta_k$, we may ask  $\deg a_0>\delta_k$ and  $\deg b_m\geq \delta_k$. As we shall see, this situation will appear in the proof of Corollary \ref{corothm11generala}.

ii) Similarly, if we know (as an additional information) that $|D_{s}|_\rho> 1$, then instead of (\ref{cevageneral2}) we obtain $|Res_Y(f,g_1)|_\rho >|a_0|_\rho$, and we will still reach the same contradiction $|Res_Y(f,g_1)|_\rho > \rho^{\delta_{\ell-1}}$ if we allow $\deg a_0$ to be equal to $\delta_k$, so in this case, instead of the inequality $\min\{\deg a_0,\deg b_m\}>\delta_k$, we may ask $\deg a_0\geq \delta_k$ and $\deg b_m>\delta_k$. This situation will appear in the proof of Corollary \ref{corothm11generala}, when we
replace the two inequalities in its statement by $\deg a_{0}\geq\max \{\deg a_{1},\ldots ,\deg a_{n}\}$ and $\deg
b_{m}> \max \{\deg b_{0},\ldots ,\deg b_{m-1}\}$.
\end{remark}
As a corollary of the proof of Theorem \ref{thm11generala}, we obtain the following irreducibility criterion, that was stated as Theorem D in the first section of the paper.
\begin{corollary}
\label{corothm11generala} Let $K$ be a field, $f(X,Y)=\sum\nolimits_{i=0}^{n}a_{i}Y^{i},%
\ g(X,Y)=\sum\nolimits_{i=0}^{m}b_{i}Y^{i}\in K[X,Y]$, with $a_{0},\dots
,a_{n},b_{0},\dots ,b_{m}\in K[X]$, $a_{0}a_{n}b_{0}b_{m}\neq 0$, and assume that $f$ and $g$ have no nonconstant factors in $K[X]$, and that $Res_{Y}(f,g)$ is irreducible over $K$. If 
\[
\deg a_{0}>\max \{\deg a_{1},\ldots ,\deg a_{n}\}\quad and\quad \deg
b_{m}\geq \max \{\deg b_{0},\ldots ,\deg b_{m-1}\},
\]
then both $f$ and $g$ are irreducible in $K[X,Y]$. The same conclusion will also hold if we interchange the signs $>$ and $\geq$ in these two inequalities.
\end{corollary}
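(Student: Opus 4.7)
The plan is to deduce the corollary from Theorem \ref{thm11generala} applied with $k=1$, combined with the two refinements in Remark \ref{remarcamultivarAdditional}. Since $Res_Y(f,g)$ is irreducible in $K[X]$, its only divisors are units and associates, hence $\delta_1=0$. Writing $A=\deg a_0-\max\{\deg a_1,\ldots,\deg a_n\}$ and $B=\max\{\deg b_0,\ldots,\deg b_{m-1}\}-\deg b_m$, one verifies the separation hypothesis $\min\{A,A/n\}>\max\{B,B/m\}$ in both cases: the original case has $A\ge 1$ and $B\le 0$, while the interchanged case has $A\ge 0$ and $B\le -1$.

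In the original case ($\deg a_0>\max$, $\deg b_m\ge\max$), the irreducibility of $g$ is direct. If $g=g_1g_2$ were a nontrivial factorization with $Res_Y(f,g_1)\in K^{*}$, the key computation from the proof of Theorem \ref{thm11generala} gives $|Res_Y(f,g_1)|_\rho=|D_{s_1}|_\rho^{n}|a_0|_\rho^{s_1}$, which cannot equal $1$ because $|a_0|_\rho\ge\rho>1$ and $s_1\ge 1$ (the latter since $g$ has no nonconstant factors in $K[X]$). For the irreducibility of $f$, the obstruction is that $\deg b_m$ may equal $\delta_1=0$, so the plan is to invoke Remark \ref{remarcamultivarAdditional}(i). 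The key observation is that the strict hypothesis $\deg a_0>\max_i\deg a_i$, via the Cauchy lower bound (\ref{CauchyReversed}) of Lemma \ref{CauchyNonArchimedean}, forces every root $\theta$ of $f$ in $\overline{K(X)}$ to satisfy $|\theta|_\rho>1$. Since the roots of any factor $f_1$ of $f$ with $t_1:=\deg_Y f_1\ge 1$ lie among those of $f$, Vieta's formula yields
\[
|C_0^{(1)}|_\rho=|C_{t_1}^{(1)}|_\rho\prod_{i=1}^{t_1}|\theta_i|_\rho>|C_{t_1}^{(1)}|_\rho\ge 1,
\]
which is precisely the strengthened hypothesis $|C_0|_\rho>1$ required by Remark \ref{remarcamultivarAdditional}(i).

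The interchanged case ($\deg a_0\ge\max$, $\deg b_m>\max$) is handled symmetrically. Now $\deg b_m\ge 1$ makes the irreducibility of $f$ immediate: a nontrivial factorization $f=f_1f_2$ with $Res_Y(f_1,g)\in K^{*}$ would force $|b_m|_\rho^{t_1}|C_0^{(1)}|_\rho^{m}=1$, contradicting $|b_m|_\rho\ge\rho>1$. For the irreducibility of $g$, the strict inequality $\deg b_m>\max_{i<m}\deg b_i$ forces $|\xi|_\rho<1$ for every root of $g$ via the Cauchy upper bound (\ref{Cauchy}); Vieta's formula then gives $|D_{s_1}^{(1)}|_\rho>|D_0^{(1)}|_\rho\ge 1$ for every factor $g_1$ of $g$ with $\deg_Y g_1\ge 1$, supplying the hypothesis of Remark \ref{remarcamultivarAdditional}(ii).

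The main obstacle is the mixed strict/non-strict nature of the hypotheses, which prevents a direct application of Theorem \ref{thm11generala}: one of $\deg a_0$ or $\deg b_m$ is allowed to coincide with $\delta_1=0$. The resolution is the observation that the strict inequality on one of the two polynomials automatically produces, through non-Archimedean Cauchy bounds and Vieta's formulas, the companion strict condition $|C_0|_\rho>1$ (respectively $|D_{s_1}|_\rho>1$) uniformly on every irreducible factor of $f$ (respectively $g$), which is exactly the extra input needed to invoke Remark \ref{remarcamultivarAdditional}.
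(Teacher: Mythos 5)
Your proof is correct and follows essentially the same route as the paper: compute that $\delta_1=0$ from the irreducibility of $Res_Y(f,g)$, verify the separation condition $\min\{A,A/n\}>\max\{B,B/m\}$, use the strict inequality together with the non-Archimedean Cauchy bounds and Vieta's formula to establish $|C_0|_\rho>1$ (resp.\ $|D_s|_\rho>1$), and then invoke Remark \ref{remarcamultivarAdditional} to permit the non-strict inequality on the other side. The only cosmetic difference is that you re-derive the $g$-side (resp.\ $f$-side) directly instead of just citing Theorem \ref{thm11generala} with the weakened hypothesis; the underlying computation $|Res_Y(f,g_1)|_\rho=|D_s|_\rho^n|a_0|_\rho^s$ is the same one the paper relies on.
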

\begin{proof}
\ Observe that our assumption that $\deg a_{0}>\max \{\deg a_{1},\ldots ,\deg a_{n}\}$ and $\deg b_{m}\geq \max \{\deg b_{0},\ldots ,\deg b_{m-1}\}$  implies that $A>0$ and $B\leq 0$, so 
\begin{equation}\label{inegalitateaAB}
\min\{A,A/n\}=A/n>B/m=\max\{B,B/m\},
\end{equation}
and since $Res_Y(f,g)$ is irreducible over $K$, we must have $\delta_1=0$. In view of (\ref{CauchyReversed}), our assumption that $\deg a_{0}>\max \{\deg a_{1},\ldots ,\deg a_{n}\}$ implies that all the roots $\theta $ of $f$ satisfy $|\theta |_\rho >1$, so in the proof of Theorem (\ref{thm11generala}), all the roots $\theta_1,\dots ,\theta _t$ of $f_1(X,Y)$ satisfy $|\theta _i|_\rho >1$, and hence by Vieta's formula, we must have $|C_0|_\rho>|C_t|_\rho\geq 1$. Thus $|C_0|_\rho>1$, and by Remark \ref{remarcamultivarAdditional} i), to reach the conclusion in Theorem \ref{thm11generala} it suffices to have $\deg a_0>\delta_1$ and $\deg b_m\geq \delta_1$. Since $\delta_1=0$, these last two inequalities are obviously satisfied.

Consider now the case that 
\begin{eqnarray*}
\deg a_{0} & \geq & \max \{\deg a_{1},\ldots ,\deg a_{n}\}\quad {\rm and}\\
\deg b_{m} & > & \max \{\deg b_{0},\ldots ,\deg b_{m-1}\}.
\end{eqnarray*}
These inequalities imply that $A\geq 0$ and $B< 0$, and in this case (\ref{inegalitateaAB}) still holds. If we use (\ref{Cauchy}) for $g$ instead of $f$, we deduce that inequality $\deg b_{m}>\max \{\deg b_{0},\ldots ,\deg b_{m-1}\}$ forces all the roots $\xi_j$ of $g$ to satisfy $|\xi_j|_\rho<1$, so in the proof of Theorem (\ref{thm11generala}), all the roots $\xi_1,\dots ,\xi _s$ of $g_1(X,Y)$ satisfy $|\theta _i|_\rho <1$, and hence by Vieta's formula, we must have $1\leq |D_0|_\rho<|D_s|_\rho$. Thus $|D_s|_\rho>1$, and by Remark \ref{remarcamultivarAdditional} ii), to apply Theorem \ref{thm11generala} it suffices to check that $\deg a_0\geq \delta_1$ and $\deg b_m>\delta_1$, which obviously hold, since $\delta_1=0$. 
\end{proof}

We mention here that by taking $g(X,Y)=Y-1$ in Corollary \ref{corothm11generala} we obtain as a corollary the following result proved in \cite{BZ}:

\begin{corollary}
\label{coro1thm11} {\rm (}\cite[Theorem 2]{BZ}{\rm )} If we write an irreducible polynomial $f\in K[X]$ as a
sum of polynomials $f_{0},\dots,f_{n}\in K[X]$ with $f_{n}\neq 0$ and $%
\deg f_{0}>\max \{\deg f_{1},\ldots ,\deg f_{n}\}$, then the polynomial $%
F(X,Y)=\sum\nolimits_{i=0}^{n}f_{i}(X)Y^{i}\in K[X,Y]$ is irreducible in $%
K[X,Y]$.
\end{corollary}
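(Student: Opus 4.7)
The plan is to specialize Corollary \ref{corothm11generala} to the pair $(F, g)$ with $g(X, Y) = Y - 1$. With this choice, $b_0 = -1$, $b_1 = 1$, $m = 1$, and hence $b_0 b_1 \neq 0$ while $\deg b_1 = 0 = \deg b_0$, so the condition $\deg b_m \geq \max\{\deg b_0, \ldots, \deg b_{m-1}\}$ in Corollary \ref{corothm11generala} holds trivially. The condition $\deg a_0 > \max\{\deg a_1, \ldots, \deg a_n\}$ translates directly to the hypothesis $\deg f_0 > \max\{\deg f_1, \ldots, \deg f_n\}$. Also, $F$ has $a_n = f_n \neq 0$, and $a_0 = f_0$ is nonzero since $\deg f_0 > \deg f_1 \geq -\infty$ forces $f_0 \neq 0$ (with the usual convention that $\deg 0 = -\infty$).

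Next, I would compute $\operatorname{Res}_Y(F, Y-1)$. Since $g = Y - 1$ is linear and monic in $Y$ with root $Y = 1$, the resultant equals
\[
\operatorname{Res}_Y(F, Y - 1) = F(X, 1) = \sum_{i=0}^{n} f_i(X) = f(X),
\]
which is irreducible in $K[X]$ by hypothesis. So the irreducibility-of-resultant assumption of Corollary \ref{corothm11generala} is satisfied.

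The only remaining hypothesis to verify is that $F$ and $g$ share no nonconstant common factor in $K[X]$. For $g = Y - 1$, this is automatic because both of its $Y$-coefficients are constants in $K^{*}$. For $F$, any nonconstant $h \in K[X]$ dividing $F$ in $K[X,Y]$ would divide each coefficient $f_0, f_1, \ldots, f_n$, hence divide the sum $f = \sum_i f_i$. Since $f$ is irreducible, we would need $h$ to be an associate of $f$ in $K[X]$, so $\deg h = \deg f = \deg f_0$ (the last equality using $\deg f_0 > \deg f_i$ for $i \geq 1$). But then $h \mid f_n$ with $f_n \neq 0$ would force $\deg h \leq \deg f_n < \deg f_0 = \deg h$, a contradiction. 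Thus $F$ and $g$ share no nonconstant factor in $K[X]$.

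All hypotheses of Corollary \ref{corothm11generala} (first version, with the strict inequality on the $a_i$ and the non-strict on the $b_i$) being met, it follows that both $F$ and $g$ are irreducible in $K[X, Y]$, which gives the claim for $F$. There is no genuine obstacle in this argument; the only point requiring a touch of care is the verification that $F$ has no nonconstant content in $K[X]$, where one must use the irreducibility of $f$ together with the strict degree inequality $\deg f_0 > \deg f_n$.
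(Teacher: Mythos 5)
Your proposal is correct and follows exactly the route the paper indicates: specialize Corollary~\ref{corothm11generala} with $g(X,Y)=Y-1$, so that $\operatorname{Res}_Y(F,Y-1)=\pm F(X,1)=\pm f(X)$ is irreducible and the degree inequalities hold with $A>0$, $B=0$. The paper leaves these verifications implicit, and you fill them in properly, including the necessary check that $F$ has no nonconstant content in $K[X]$ (using irreducibility of $f$ together with $\deg f_0>\deg f_n$); the only cosmetic slip is that $\operatorname{Res}_Y(F,Y-1)$ is $(-1)^nF(X,1)$ rather than $F(X,1)$, which of course does not affect irreducibility.
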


\begin{remark}\label{remarca2}
The condition $\deg f_{0}>\max \{\deg f_{1},\ldots ,\deg f_{n}\}$
in the statement of Corollary \ref{coro1thm11} is best possible, in the sense that
there exist polynomials $f,f_{0},f_{1},\dots ,f_{n}\in K[X]$ with $f_{n}\neq 0$, $f$
irreducible over $K$, $f=f_{0}+\cdots +f_{n}$, $\deg f_{0}=\max \{\deg
f_{1},\ldots ,\deg f_{n}\}$, and such that the polynomial $%
F(X,Y)=\sum\nolimits_{i=0}^{n}f_{i}(X)Y^{i}\in K[X,Y]$ is reducible over $%
K(X)$. Indeed, let $K=\mathbb{Q}$, $m\geq 2$, and write the Eisensteinian polynomial $%
X^{m}+5X+5$ as $f_{0}+f_{1}+f_{2}+f_{3}$ with $%
f_{0}(X)=(X^{m}+1)/2$, $f_{1}(X)=5X/2+1$, $f_{2}(X)=5X/2+2$ and $%
f_{3}(X)=(X^{m}+3)/2$. Here $\deg f_{0}=\max \{\deg f_{1},\deg f_{2},\deg
f_{3}\}$, while the polynomial $F(X,Y)=\sum\nolimits_{i=0}^{3}f_{i}(X)Y^{i}$
is reducible over $\mathbb{Q}(X)$, being divisible by $Y+1$.
\end{remark}

The following result is a non-Archimedean version of Theorem \ref{thm0}.

\begin{theorem}
\label{thmGeneralaNoua} Let $K$ be a field, $f(X,Y)=\sum\nolimits_{i=0}^{n}a_{i}Y^{i},%
\ g(X,Y)=\sum\nolimits_{i=0}^{m}b_{i}Y^{i}\in K[X,Y]$, with $a_{0},\dots
,a_{n},b_{0},\dots ,b_{m}\in K[X]$, $a_{0}a_{n}b_{0}b_{m}\neq 0$, and assume that $f$ and $g$ have no nonconstant factors in $K[X]$. Let
\begin{eqnarray*}
A & = & \deg a_0-\max\{\deg a_1,\dots, \deg a_n\},\ \  and\\
B & = & \max\{\deg b_0,\dots, \deg b_{m-1}\}-\deg b_m.
\end{eqnarray*}
If $\min\{A,\frac{A}{n}\}>\max\{B,\frac{B}{m}\}$ and $\min\{\deg a_0, \deg b_m\}> \deg d$ for some divisor $d$ of $Res_Y(f,g)$, then each one of $f$ and $g$ is a product of at most $\Omega(\frac{Res_Y(f,g)}{d})$ irreducible factors in $K[X,Y]$.
\end{theorem}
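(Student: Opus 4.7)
The plan is to mirror the proof of Theorem \ref{thm11generala}, replacing the degree-pigeonhole argument on factorizations of $Res_Y(f,g)$ by the prime-factor pigeonhole argument used in Lemma \ref{thm-1}, now transferred from $\mathbb{Z}$ to the UFD $K[X]$ and measured through $|\cdot|_\rho$. The root-location part (using Lemma \ref{CauchyNonArchimedean} and Vieta in the non-Archimedean setting) carries over verbatim from the proof of Theorem \ref{thm11generala}.

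First I would assume for contradiction that $f$ splits as $f=f_1\cdots f_\ell$ in $K[X,Y]$, with $f_1,\dots,f_\ell$ irreducible and $\ell>\Omega(Res_Y(f,g)/d)$. Multiplicativity of the resultant gives
\[
d\cdot\frac{Res_Y(f,g)}{d}=Res_Y(f_1,g)\cdots Res_Y(f_\ell,g),
\]
with each $Res_Y(f_i,g)\in K[X]$ a divisor of $Res_Y(f,g)$. Since $Res_Y(f,g)/d$ has only $\Omega(Res_Y(f,g)/d)<\ell$ irreducible factors counted with multiplicity in $K[X]$, by pigeonhole at least one $Res_Y(f_i,g)$, say $Res_Y(f_1,g)$, must itself divide $d$ in $K[X]$ (this is exactly the step from Lemma \ref{thm-1}, transcribed to the UFD $K[X]$). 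Hence
\[
|Res_Y(f_1,g)|_\rho\leq |d|_\rho=\rho^{\deg d}.
\]

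Next, the hypothesis $\min\{A,A/n\}>\max\{B,B/m\}$ combined with Lemma \ref{CauchyNonArchimedean} yields $|\theta_i|_\rho>|\xi_j|_\rho$ for every root $\theta_i$ of $f$ and every root $\xi_j$ of $g$ in $\overline{K(X)}$, so the non-Archimedean property forces $|\theta_i-\xi_j|_\rho=|\theta_i|_\rho$. Writing $f_1(X,Y)=C_t(Y-\theta_1)\cdots(Y-\theta_t)$ with $t\geq 1$ and applying Vieta to the product $\theta_1\cdots\theta_t=(-1)^t C_0/C_t$, exactly as in the proof of Theorem \ref{thm11generala}, we obtain
\[
|Res_Y(f_1,g)|_\rho=|C_t|_\rho^{m}|b_m|_\rho^{t}\prod_{i=1}^{t}|\theta_i|_\rho^{m}=|b_m|_\rho^{t}|C_0|_\rho^{m}\geq |b_m|_\rho,
\]
since $C_0\neq 0$ forces $|C_0|_\rho\geq 1$. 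The hypothesis $\deg b_m>\deg d$ then gives $|Res_Y(f_1,g)|_\rho>\rho^{\deg d}$, which contradicts the bound of the previous paragraph and shows that $f$ is a product of at most $\Omega(Res_Y(f,g)/d)$ irreducible factors in $K[X,Y]$.

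Finally, the conclusion for $g$ follows by the symmetric argument: assume $g=g_1\cdots g_\ell$ with $\ell>\Omega(Res_Y(f,g)/d)$, pick an index for which $Res_Y(f,g_i)\mid d$, and write $g_1=D_s(Y-\xi_1)\cdots(Y-\xi_s)$. The same Vieta computation yields $|Res_Y(f,g_1)|_\rho=|D_s|_\rho^n|a_0|_\rho^s\geq |a_0|_\rho>\rho^{\deg d}$, using $\deg a_0>\deg d$, a contradiction. The main obstacle, already resolved in Lemma \ref{thm-1}, is the pigeonhole step that forces one of the resultant factors to divide $d$; once that is in place, the non-Archimedean Vieta computation from Theorem \ref{thm11generala} finishes the proof without modification.
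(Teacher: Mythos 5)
Your proposal follows the paper's own proof step for step: the same pigeonhole argument on the factorization $d\cdot\frac{Res_Y(f,g)}{d}=\prod_i Res_Y(f_i,g)$ forces some $Res_Y(f_1,g)$ to divide $d$ (hence $|Res_Y(f_1,g)|_\rho\leq\rho^{\deg d}$), and the same non-Archimedean Cauchy/Vieta computation from Theorem \ref{thm11generala} gives $|Res_Y(f_1,g)|_\rho\geq|b_m|_\rho>\rho^{\deg d}$, with the symmetric argument for $g$ using $|a_0|_\rho$. This matches the paper's proof essentially verbatim, so it is correct.
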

\begin{proof} \ Let $f$ and $g$ be as in the statement of our theorem, and let us assume to the contrary that $f$ decomposes as a product of more than $\Omega(\frac{Res_Y(f,g)}{d})$ irreducible factors over $K[X]$, say $f(X,Y)=f_{1}(X,Y)\cdots f_{\ell}(X,Y)$, with $\ell>\Omega(\frac{Res_Y(f,g)}{d})$, $f_{1},\dots,f_{\ell}\in K[X,Y]$, $f_{1},\dots,f_{\ell}$ irreducible over $K[X]$. Consider again the factorizations of $f$ and $g$, say 
\begin{align*}
f(X,Y)& =a_{n}(Y-\theta _{1})\cdots (Y-\theta _{n}), \\
g(X,Y)& =b_{m}(Y-\xi _{1})\cdots (Y-\xi _{m}),
\end{align*}
with $\theta _{1},\ldots ,\theta _{n},\xi _{1},\ldots ,\xi _{m}\in \overline{%
K(X)}$. Since $\ell>\Omega(\frac{Res_Y(f,g)}{d})$ and
\begin{equation}\label{dfractie}
d\cdot \frac{Res_{Y}(f,g)}{d}=Res_{Y}(f_{1},g)\cdots Res_{Y}(f_{\ell},g),
\end{equation}
we deduce that at least one of the polynomials $Res_{Y}(f_{1},g),\dots, Res_{Y}(f_{\ell},g)\in
K[X]$, say $Res_{Y}(f_{1},g)$, will not be affected after division by  $\frac{Res_Y(f,g)}{d}$ in both sides of (\ref{dfractie}). In particular, this will imply that $Res_{Y}(f_{1},g)$ must be a divisor of $d(X)$, and hence 
$\deg Res_{Y}(f_{1},g)\leq \deg d$, which may be also written as
\begin{equation}\label{gradulRezmaimic}
|Res_{Y}(f_{1},g)|_\rho \leq \rho^{\deg d}.
\end{equation}
The proof continues now as in the case of Theorem \ref{thm11generala}, and we obtain again the inequality $|Res_Y(f_{1},g)|_\rho \geq |b_{m}|_\rho$. One of our hypotheses was that $\min\{\deg a_0, \deg b_m\}> \deg d$, so $\deg b_m> \deg d$, which also reads $|b_{m}|_\rho>\rho^{\deg d}$. We have thus reached the conclusion that $|Res_Y(f_{1},g)|_\rho > \rho^{\deg d}$, which contradicts (\ref{gradulRezmaimic}), showing that our polynomial $f$ is a product of at most $\Omega(\frac{Res_Y(f,g)}{d})$ irreducible factors over $K[X]$.

Assume now that $g$ decomposes as a product of more than $\Omega(\frac{Res_Y(f,g)}{d})$ irreducible factors over $K[X]$, say $g(X,Y)=g_{1}(X,Y)\cdots g_{\ell}(X,Y)$, with $\ell>\Omega(\frac{Res_Y(f,g)}{d})$, $g_{1},\dots,g_{\ell}\in K[X,Y]$, $g_{1},\dots,g_{\ell}$ irreducible over $K[X]$. We deduce as before that one of the factors, say $g_1$, satisfies
\begin{equation}\label{PrincipiulCutieiPentrugNou}
|Res_{Y}(f,g_1)|_\rho \leq \rho^{\deg d}.
\end{equation}
As in the proof of Theorem \ref{thm11generala}, we deduce that $|Res_Y(f,g_1)|_\rho \geq |a_0|_\rho$, and since one of our hypotheses was that $\deg a_0>\deg d$, which also reads $|a_{0}|_\rho>\rho^{\deg d}$, we conclude that $|Res_Y(f,g_1)|_\rho > \rho^{\deg d}$, which contradicts (\ref{PrincipiulCutieiPentrugNou}) and completes the proof.  
\end{proof}

\begin{remark}\label{RemarcaMultivar2} We mention that Remark \ref{RemarcaMultivar} also applies in the case of Theorem \ref{thmGeneralaNoua} with $\deg d$ instead of $\delta_k$.
\end{remark}

The following two results show that under some additional assumptions on the coefficients, the conclusion on the irreducibility of $f$ and $g$ in Corollary \ref{corothm11generala} extends to linear combinations of $f$ and $g$ with scalars in $K$.
\begin{theorem}
\label{Globala1Multivar} Let $K$ be a field, $f(X,Y)=\sum\nolimits_{i=0}^{n}a_{i}Y^{i}$, $g(X,Y)=\sum\nolimits_{i=0}^{m}b_{i}Y^{i}\in K[X,Y]$, with $a_{0},\dots
,a_{n},b_{0},\dots ,b_{m}\in K[X]$, $a_{0}a_{n}b_{0}b_{m}\neq 0$, $n<m$. Assume
that $f$ has no nonconstant factors in $K[X]$, that $\gcd (b_{n+1},\dots ,b_{m})\in K$, and that $Res_{Y}(f,g)$ is irreducible over $K$. If 
\begin{eqnarray*}
\deg a_{0} & > & \max \{\deg a_{1},\ldots ,\deg a_{n}\},\\
\deg b_{m} & \geq & \max \{\deg b_{0},\ldots ,\deg b_{m-1}\},\\
\deg a_i & \leq & \deg b_i \quad for\ \ i=0,\dots ,n,
\end{eqnarray*}
then $\alpha f+\beta g$ is irreducible in $K[X,Y]$ for any $\alpha ,\beta \in K$, not both zero. 
\end{theorem}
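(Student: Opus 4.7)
The plan is to bootstrap from Corollary~\ref{corothm11generala}. The boundary cases $\alpha=0$ or $\beta=0$ reduce to the irreducibility of $g$ or of $f$, which is the content of Corollary~\ref{corothm11generala} applied to $(f,g)$. So I focus on $\alpha,\beta\in K^{*}$, set $h:=\alpha f+\beta g=\sum_{i=0}^{m}c_{i}Y^{i}$ with $c_{i}=\alpha a_{i}+\beta b_{i}$ for $i\le n$ and $c_{i}=\beta b_{i}$ for $n<i\le m$, and aim to verify the hypotheses of Corollary~\ref{corothm11generala} for the pair $(f,h)$, whose $Y$-degree is $m$ with leading coefficient $\beta b_{m}$.

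Three of those verifications are routine. Letting $\theta_{1},\dots,\theta_{n}\in\overline{K(X)}$ be the $Y$-roots of $f$, the identity
\[
Res_{Y}(f,h)=a_{n}^{m}\prod_{i=1}^{n}h(X,\theta_{i})=\beta^{n}\,a_{n}^{m}\prod_{i=1}^{n}g(X,\theta_{i})=\beta^{n}Res_{Y}(f,g)
\]
shows $Res_{Y}(f,h)$ is a $K^{*}$-multiple of an irreducible, hence irreducible. Next, $\gcd(c_{n+1},\dots,c_{m})=\beta\gcd(b_{n+1},\dots,b_{m})\in K$, so $h$ has no nonconstant factor in $K[X]$. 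Finally, $\deg a_{0}>\max\{\deg a_{1},\dots,\deg a_{n}\}$ is inherited, while $\deg c_{m}=\deg b_{m}\ge\deg c_{i}$ for every $i<m$: for $i\le n$ one has $\deg c_{i}\le\max\{\deg a_{i},\deg b_{i}\}\le\deg b_{i}\le\deg b_{m}$ using $\deg a_{i}\le\deg b_{i}$ and the hypothesis on $g$, while for $n<i<m$, $\deg c_{i}=\deg b_{i}\le\deg b_{m}$.

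The subtle and essential step is verifying $c_{0}c_{m}\ne 0$, as required by Corollary~\ref{corothm11generala}; $c_{m}=\beta b_{m}\ne 0$ is automatic, but $c_{0}\ne 0$ amounts to ruling out a $K$-proportionality $a_{0}=\lambda b_{0}$ with $\lambda\in K^{*}$. If such a $\lambda$ existed, $f-\lambda g$ would have vanishing constant term in $Y$, giving $f-\lambda g=Y\tilde{q}$ for some nonzero $\tilde{q}\in K[X,Y]$ of $Y$-degree $m-1$. Using $Res_{Y}(f,\lambda g)=\lambda^{n}Res_{Y}(f,g)$, the multiplicativity of the resultant in its second argument, and the identity $Res_{Y}(f,Y)=(-1)^{n}a_{0}$, one obtains $\lambda^{n}Res_{Y}(f,g)=a_{0}\cdot Res_{Y}(f,\tilde{q})$, so $a_{0}$ divides the irreducible polynomial $Res_{Y}(f,g)$ in $K[X]$, forcing $a_{0}\in K^{*}$ or $a_{0}$ associate to $Res_{Y}(f,g)$. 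The non-Archimedean computation used in the proof of Theorem~\ref{thm11generala}, whose input $\min\{A,A/n\}>\max\{B,B/m\}$ follows here from our hypotheses (which force $A>0$ and $B\le 0$), yields the degree formula $\deg Res_{Y}(f,g)=m\deg a_{0}+n\deg b_{m}$; combined with $n\ge 1$ and $m\ge 2$ (both forced by the irreducibility of $Res_{Y}(f,g)$), either alternative collapses to $f\in K^{*}$, making $Res_{Y}(f,g)=a_{0}^{m}$ a unit and contradicting irreducibility.

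Once $c_{0}\ne 0$ is established, Corollary~\ref{corothm11generala} applied to $(f,h)$ delivers the irreducibility of $h$, completing the proof. The main obstacle, as sketched, is precisely this verification of $c_{0}\ne 0$; every other step amounts to routine bookkeeping with the degree inequalities and the resultant identity for the pencil.
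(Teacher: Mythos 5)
Your proof follows the same overall route as the paper: reduce to the case $\alpha,\beta\in K^{*}$, verify the hypotheses of Corollary~\ref{corothm11generala} for the pair $(f,\alpha f+\beta g)$ using the resultant identity $Res_Y(f,\alpha f+\beta g)=\beta^n Res_Y(f,g)$, the $\gcd$ condition, and the degree inequalities. However, you correctly notice something the paper's argument passes over in silence: Corollary~\ref{corothm11generala} also requires the constant $Y$-coefficient of the second polynomial to be nonzero, i.e.\ $\alpha a_0+\beta b_0\neq 0$, and the paper's proof never checks this. Your treatment of that point is the genuine contribution here. The mechanism is sound: if $\alpha a_0+\beta b_0=0$ then $Y\mid\alpha f+\beta g$, the multiplicativity of the resultant and $Res_Y(f,Y)=(-1)^n a_0$ give $\beta^n Res_Y(f,g)=(-1)^n a_0\,Res_Y(f,\tilde q)$, so $a_0$ divides the irreducible $Res_Y(f,g)$; since $\deg a_0>\deg a_n\geq 0$ forces $\deg a_0\geq 1$ (using $n\geq 1$), $a_0$ is not a unit, so it would have to be an associate of $Res_Y(f,g)$; but the non-Archimedean computation of Theorem~\ref{thm11generala} gives $\deg Res_Y(f,g)=m\deg a_0+n\deg b_m>\deg a_0$ once $m\geq 2$, a contradiction. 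The only blemish is the closing phrasing ``either alternative collapses to $f\in K^{*}$, making $Res_Y(f,g)=a_0^m$'' — that description applies only to the degenerate case $n=0$ (itself excluded because $f$ having no nonconstant factor in $K[X]$ together with irreducibility of $Res_Y(f,g)$ forces $n\geq 1$), not to the two alternatives you obtained; the unit case is killed by $\deg a_0\geq 1$ and the associate case by the degree count, neither passing through $f\in K^*$. The substance of the argument is correct; only that last sentence misdescribes the dichotomy.
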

\begin{proof}
\ First of all we note that our condition that $\gcd (b_{n+1},\dots ,b_{m})\in K$ prevents $g$ and $\alpha f+\beta g$ to have nonconstant factors in $K[X]$. Next, we see by Corollary \ref{corothm11generala} that both $f$ and $g$ are irreducible in $K[X,Y]$, so the conclusion follows trivially in the case when one of $\alpha$ and $\beta$ is zero. We may therefore assume that $\alpha\neq 0$ and $\beta\neq 0$. It is easy to check that since $\deg _Y(\alpha f+\beta g)=m$, we have
\[
Res_Y(f,\alpha f+\beta g)=\beta^nRes_Y(f,g),
\]
which is irreducible over $K$, as $\beta$ is a nonzero constant. Now, since
\[
\alpha f+\beta g=(\alpha a_0+\beta b_0)+(\alpha a_1+\beta b_1)Y+\cdots +(\alpha a_n+\beta b_n)Y^n+\sum_{n+1\leq i\leq m}\beta b_iY^i,
\]
to apply Corollary \ref{corothm11generala} to our pair $(f,\alpha f+\beta g)$,  it remains to check that
\[
\deg \beta b_m\geq \max\thinspace  \{ \deg\thinspace (\alpha a_0+\beta b_0), \dots ,\deg\thinspace (\alpha a_n+\beta b_n), \max_{n+1\leq i\leq m-1}\deg \beta b_{i}\},
\]
if $m\geq n+2$, or to check that
\[
\deg \beta b_m\geq \max\thinspace  \{ \deg\thinspace  (\alpha a_0+\beta b_0), \dots ,\deg\thinspace  (\alpha a_n+\beta b_n)\},
\]
if $m=n+1$. In both cases the conclusion is an immediate consequence of our assumptions that $\deg b_{m} \geq \max\thinspace  \{\deg b_{0},\ldots ,\deg b_{m-1}\}$ and $\deg a_i \leq \deg b_i$ for $i=0,\dots ,n$. 
\end{proof}

\begin{theorem}
\label{Globala2Multivar} Let $K$ be a field, $f(X,Y)=\sum\nolimits_{i=0}^{n}a_{i}Y^{i}$, $g(X,Y)=\sum\nolimits_{i=0}^{n}b_{i}Y^{i}\in K[X,Y]$, with $a_{0},\dots
,a_{n},b_{0},\dots ,b_{n}\in K[X]$, $a_{0}a_{n}b_{0}b_{n}\neq 0$, and such that $a_j\in K\setminus\{0\}$ and $b_j\in K\setminus\{0\}$ for some index $j\in\{1,\dots,n-1\}$. If $Res_{Y}(f,g)$ is irreducible over $K$ and
\begin{eqnarray*}
\deg a_{0} & > & \max \{\deg a_{1},\ldots ,\deg a_{n}\},\\
\deg b_{n} & \geq & \max \{\deg b_{0},\ldots ,\deg b_{n-1}\},\\
\deg a_i & \leq & \deg b_i \quad for \ \ i=0,\dots ,n-1,
\end{eqnarray*}
then $\alpha f+\beta g$ is irreducible in $K[X,Y]$ for any $\alpha ,\beta \in K$ with $\alpha a_j+\beta b_j\neq 0$. 
\end{theorem}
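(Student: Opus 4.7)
The plan is to apply Corollary \ref{corothm11generala} to the pair $(f,h)$ where $h:=\alpha f+\beta g$, following the strategy used in the proof of Theorem \ref{Globala1Multivar}. Since both $f$ and $g$ are irreducible in $K[X,Y]$ by Corollary \ref{corothm11generala}, the claim is trivial when $\alpha\beta=0$, so I assume $\alpha,\beta\in K\setminus\{0\}$. Concatenating the hypotheses gives $\deg a_n<\deg a_0\leq\deg b_0\leq\deg b_n$, so $a_n$ and $b_n$ are $K$-linearly independent; hence $\alpha a_n+\beta b_n\neq 0$, $\deg_Y h=n$, $\deg(\alpha a_n+\beta b_n)=\deg b_n$, and a direct root-product computation yields $Res_Y(f,h)=\beta^n\, Res_Y(f,g)$, which is irreducible over $K$. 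Moreover, since the coefficient of $Y^j$ in $h$ is the nonzero constant $\alpha a_j+\beta b_j$, the polynomial $h$ has no nonconstant factor in $K[X]$.

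Granting for the moment that the constant $Y$-term $\alpha a_0+\beta b_0$ is also nonzero, I check the remaining hypotheses of Corollary \ref{corothm11generala} for $(f,h)$: the strict inequality on the $f$ side is given by assumption, while for each $0\leq i<n$,
\[
\deg(\alpha a_i+\beta b_i)\leq\max\{\deg a_i,\deg b_i\}=\deg b_i\leq\deg b_n=\deg(\alpha a_n+\beta b_n),
\]
using $\deg a_i\leq\deg b_i$ and $\deg b_n\geq\max_{i<n}\deg b_i$. Applying the first form of Corollary \ref{corothm11generala} then delivers the irreducibility of $h$ in $K[X,Y]$.

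The main obstacle is thus to show $\alpha a_0+\beta b_0\neq 0$. If it vanished, then $a_0=cb_0$ for some $c\in K\setminus\{0\}$, and I aim to contradict the irreducibility of $Res_Y(f,g)$. Setting $\tilde h:=f-cg$, the constant $Y$-term of $\tilde h$ vanishes, so $\tilde h=Y\cdot q$ with $q=\sum_{i=1}^{n}(a_i-cb_i)Y^{i-1}$ of $Y$-degree $n-1$. Using $Res_Y(f,Y)=(-1)^n a_0$ together with the multiplicativity of the resultant gives $Res_Y(f,\tilde h)=(-1)^n a_0\cdot Res_Y(f,q)$, while evaluating at the roots of $f$ produces $Res_Y(f,\tilde h)=(-c)^n Res_Y(f,g)$. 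Combining yields
\[
a_0\cdot Res_Y(f,q)=c^n\, Res_Y(f,g).
\]
Since $Res_Y(f,g)$ is irreducible and $\deg a_0\geq 1$, this forces $a_0=\mu\, Res_Y(f,g)$ for some $\mu\in K^{\times}$, and therefore $Res_Y(f,q)\in K^{\times}$.

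To derive the contradiction, I use the non-Archimedean absolute value $|\cdot|_\rho$ on $\overline{K(X)}$ introduced in Section \ref{multivariate} to show $|Res_Y(f,q)|_\rho>1$. Note first that $q_{n-1}=a_n-cb_n$ has degree $\deg b_n$, whereas $\deg q_k\leq\deg b_{k+1}\leq\deg b_n=\deg q_{n-1}$ for $0\leq k\leq n-2$; applying Lemma \ref{CauchyNonArchimedean} to $q$ then shows that every root $\eta$ of $q$ in $\overline{K(X)}$ satisfies $|\eta|_\rho\leq 1$. Combining this with the strict inequalities $\deg a_i<\deg a_0$ for $i\geq 1$, the strict ultrametric equality gives $|f(\eta)|_\rho=|a_0|_\rho$, so the formula $Res_Y(f,q)=q_{n-1}^n\prod_{j}f(\eta_j)$ yields
\[
|Res_Y(f,q)|_\rho=|b_n|_\rho^n|a_0|_\rho^{n-1}>1,
\]
contradicting $Res_Y(f,q)\in K^{\times}$. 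This rules out $\alpha a_0+\beta b_0=0$ and completes the proof.
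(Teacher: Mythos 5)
Your proof is correct, and its first half is essentially identical to the paper's argument: reduce to $\alpha,\beta\in K^\times$, deduce $\deg a_n<\deg b_n$ so that $\deg_Y(\alpha f+\beta g)=n$ and $Res_Y(f,\alpha f+\beta g)=\beta^n Res_Y(f,g)$, observe that the constant $Y^j$-coefficient blocks nonconstant $K[X]$-factors of $\alpha f+\beta g$, check the degree inequality $\deg(\alpha a_n+\beta b_n)\geq\max_{i<n}\deg(\alpha a_i+\beta b_i)$, and invoke Corollary \ref{corothm11generala}.

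What you add is a genuine improvement: Corollary \ref{corothm11generala} requires the constant $Y$-coefficient $\alpha a_0+\beta b_0$ of $\alpha f+\beta g$ to be nonzero (it is part of the hypothesis $a_0a_nb_0b_m\neq 0$), and the paper's proof applies the corollary without checking this, listing only the degree inequality as ``what remains.'' Since $\alpha a_0+\beta b_0=0$ would make $Y$ divide $\alpha f+\beta g$ and directly falsify the conclusion, this is not an omission one can wave away. Your argument closes it cleanly: from $\alpha a_0+\beta b_0=0$ one gets $a_0\cdot Res_Y(f,q)$ equal, up to a unit, to the irreducible $Res_Y(f,g)$; since $\deg a_0\geq 1$, $a_0$ would have to be an associate of $Res_Y(f,g)$ and $Res_Y(f,q)$ a unit, which you refute by computing $|Res_Y(f,q)|_\rho=\rho^{n\deg b_n+(n-1)\deg a_0}>1$ via Lemma \ref{CauchyNonArchimedean} and the ultrametric equality $|f(\eta)|_\rho=|a_0|_\rho$ for roots $\eta$ of $q$. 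All the degree estimates you use ($\deg q_{n-1}=\deg b_n$, $\deg q_k\leq\deg b_n$ for $k<n-1$, $\deg a_0,\deg b_n\geq 1$, $n\geq 2$) follow from the stated hypotheses, so the contradiction is sound. In short, your proof is correct and, unlike the paper's, complete on this point.
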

\begin{proof}
\ We first notice that the fact that $a_j\in K\setminus\{0\}$ and $b_j\in K\setminus\{0\}$ for some index $j\in\{1,\dots,n-1\}$ prevents $f$ and $g$ to have nonconstant factors in $K[X]$, and the same will also hold for any linear combination $\alpha f+\beta g$ for which $\alpha a_j+\beta b_j\neq 0$. Again, by Corollary \ref{corothm11generala}, both $f$ and $g$ must be irreducible in $K[X,Y]$, so the conclusion follows trivially in the case when one of $\alpha$ and $\beta$ is zero. We will therefore assume that $\alpha\neq 0$ and $\beta\neq 0$. It is worth-mentioning here that $\alpha f+\beta g\neq 0$ for any nonzero $\alpha$ and $\beta$, since $Res_Y(f,g)$ was assumed to be irreducible over $K$. We notice now that by our hypotheses one may successively deduce that
\[
\deg a_n\leq\max \thinspace \{\deg a_1,\dots ,\deg a_n\}<\deg a_0\leq \deg b_0\leq \deg b_n,
\]
so $\deg a_n<\deg b_n$, which shows that $\deg\thinspace (\alpha a_n+\beta b_n)=\deg b_n$. In particular, this also shows that $\deg _Y(\alpha f+\beta g)=n$, so $Res_Y(f,\alpha f+\beta g)=\beta^nRes_Y(f,g)$, which is irreducible over $K$ for any nonzero constant $\beta $. Now, since
$\alpha f+\beta g=(\alpha a_0+\beta b_0)+(\alpha a_1+\beta b_1)Y+\cdots +(\alpha a_n+\beta b_n)Y^n$,
to apply Corollary \ref{corothm11generala} to our pair $(f,\alpha f+\beta g)$,  it remains to check that
\[
\deg\thinspace (\alpha a_n+\beta b_n)\geq \max\thinspace  \{ \deg\thinspace  (\alpha a_0+\beta b_0), \dots ,\deg\thinspace  (\alpha a_{n-1}+\beta b_{n-1})\}.
\]
Taking into account the fact that $\deg\thinspace (\alpha a_n+\beta b_n)=\deg b_n$, this inequality will obviously hold, since $\deg b_{n} \geq \max\thinspace  \{\deg b_{0},\ldots ,\deg b_{n-1}\}$ and $\deg a_i \leq \deg b_i$ for $i=0,\dots ,n-1$. \end{proof}

A bivariate analogue of Corollary \ref{thm10} is the following irreducibility criterion.
\begin{theorem}
\label{thmCuAj} Let $K$ be a field, $f(X,Y)=\sum\nolimits_{i=0}^{n}a_{i}Y^{i},%
\ g(X,Y)=\sum\nolimits_{i=0}^{m}b_{i}Y^{i}\in K[X,Y]$, with $a_{0},\dots
,a_{n},b_{0},\dots ,b_{m}\in K[X]$, $a_{0}a_{n}b_{0}b_{m}\neq 0$, and assume that $f$ and $g$ have no nonconstant factors in $K[X]$, and that $Res_{Y}(f,g)$ is irreducible over $K$. Assume also that
\[
\Delta:=\deg b_0-\max\{\deg b_1,\dots ,\deg b_m\}>0.
\]
If for an index $j\in \{1,\dots ,n\}$ we have
\begin{eqnarray}
\deg a_{j} & > & \max _{k<j}\{\deg a_k+(k-j)\Delta/m\}\quad {\rm and}\label{aj1}\\
\deg a_{j} & > & \max _{k>j}\{\deg a_k+(k-j)(\deg b_0-\deg b_m)\},\label{aj2}
\end{eqnarray}
then both $f$ and $g$ are irreducible in $K[X,Y]$. 
\end{theorem}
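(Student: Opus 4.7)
The plan is to follow the scheme of Theorem~\ref{thm11generala}, working with the non-Archimedean absolute value $|\cdot|_\rho$ on $\overline{K(X)}$ introduced earlier. Setting $\beta:=\deg b_0-\deg b_m$, the hypothesis $\Delta>0$ makes $b_0$ dominate in both Cauchy bounds of Lemma~\ref{CauchyNonArchimedean}, which yields
\[
\rho^{\Delta/m}\le|\xi|_\rho\le\rho^{\beta}\qquad\text{for every root }\xi\in\overline{K(X)}\text{ of }g.
\]
At the same time, the hypotheses (\ref{aj1}) and (\ref{aj2}) translate into the Newton-polygon inequalities
\[
\sigma^{*}:=\max_{k<j}\frac{\deg a_k-\deg a_j}{j-k}<\frac{\Delta}{m}\quad\text{and}\quad\sigma_{**}:=\min_{k>j}\frac{\deg a_j-\deg a_k}{k-j}>\beta,
\]
so that $(j,-\deg a_j)$ is a vertex of the Newton polygon of $f$ with a wide gap $(\sigma^{*},\sigma_{**})\supset[\Delta/m,\beta]$ between its one-sided slopes.

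Two consequences of this gap will drive the rest of the argument. First, for any root $\theta\in\overline{K(X)}$ of $f$ the identity $\sum_k a_k\theta^k=0$ forces $|a_j\theta^j|_\rho\le\max_{k\neq j}|a_k\theta^k|_\rho$, so $\log_\rho|\theta|_\rho$ cannot lie in $(\sigma^{*},\sigma_{**})$; this gives the strict dichotomy that every root of $f$ is either \emph{small}, with $|\theta|_\rho<\rho^{\Delta/m}$, or \emph{large}, with $|\theta|_\rho>\rho^\beta$. Second, whenever $\log_\rho|\xi|_\rho\in[\Delta/m,\beta]$ the term $a_j\xi^j$ has strictly larger absolute value than any other $a_k\xi^k$, so ultrametricity yields the exact identity $|f(X,\xi)|_\rho=|a_j|_\rho\cdot|\xi|_\rho^j$. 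In particular, $f$ and $g$ share no common root in $\overline{K(X)}$, so $Res_Y(f,g)\neq 0$ and irreducibility of this resultant is meaningful.

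For the irreducibility of $f$, I will argue by contradiction. Suppose $f=f_1\cdots f_\ell$ with $\ell\ge 2$ and each $f_i$ irreducible in $K[X,Y]$; since $f$ has no nonconstant factors in $K[X]$, each $f_i$ has positive $Y$-degree. Because $Res_Y(f,g)=\prod_i Res_Y(f_i,g)$ is irreducible in $K[X]$, at least one $Res_Y(f_i,g)$ must lie in $K^{*}$. However, for any irreducible factor $f_1=C_t\prod_{i=1}^t(Y-\theta_i)$, the dichotomy combined with Vieta's formula on $g$ gives $|g(X,\theta_i)|_\rho=|b_m|_\rho\prod_l|\xi_l|_\rho=|b_0|_\rho$ when $\theta_i$ is small, and $|g(X,\theta_i)|_\rho=|b_m|_\rho|\theta_i|_\rho^m>\rho^{\deg b_0}$ when $\theta_i$ is large; since $\deg b_0\ge 1$ this yields $\deg Res_Y(f_1,g)\ge t\deg b_0\ge 1$, a contradiction. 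For the irreducibility of $g$, suppose similarly $g=g_1\cdots g_\ell$ with $\ell\ge 2$, each $g_i$ of positive $Y$-degree; for any irreducible factor $g_1=D_s\prod_{i=1}^s(Y-\xi_i)$, the identity $|f(X,\xi_i)|_\rho=|a_j|_\rho|\xi_i|_\rho^j$ together with Vieta's formula on $g_1$ gives
\[
\deg Res_Y(f,g_1)=(n-j)\deg D_s+s\deg a_j+j\deg D_0.
\]
The inequality $|D_0|_\rho/|D_s|_\rho=\prod_i|\xi_i|_\rho\ge\rho^{s\Delta/m}>1$ forces the integer $\deg D_0-\deg D_s$ to be at least $1$, whence $\deg Res_Y(f,g_1)\ge j\ge 1$, again contradicting that some $Res_Y(f,g_i)$ is a unit. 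The main obstacle is the Newton-polygon analysis that turns the coefficient inequalities (\ref{aj1}) and (\ref{aj2}) into the sharp dichotomy for roots of $f$ and the unique-dominance identity for $f(X,\xi)$ with $\xi$ a root of $g$; once those two geometric facts are in hand, the resultant estimates follow from routine ultrametric and Vieta computations.
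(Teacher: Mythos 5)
Your proof is correct, and it follows the same overall strategy as the paper: bound the absolute values of the roots of $g$ via the non-Archimedean Cauchy bounds, establish a dichotomy for the roots of $f$ from the Newton-polygon gap at $j$ induced by (\ref{aj1})--(\ref{aj2}), factor the resultant, and extract a contradiction from the irreducibility hypothesis. The notable difference is in the intermediate step. The paper estimates the differences $|\theta_i-\xi_l|_\rho$ from below using the Archimedean-style inequality $|\theta-\xi|_\rho\ge |\theta|_\rho-|\xi|_\rho$ and then chooses $\rho$ ``sufficiently large'' so that each such difference exceeds $1$. You instead exploit ultrametricity to get the exact identity $|\theta_i-\xi_l|_\rho=\max\{|\theta_i|_\rho,|\xi_l|_\rho\}$, and feed this together with Vieta's formulas into exact evaluations $|g(X,\theta_i)|_\rho$ and $|f(X,\xi_i)|_\rho$. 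This is cleaner: it works for any fixed $\rho>1$ and produces the stronger quantitative bounds $\deg Res_Y(f_1,g)\ge t\deg b_0$ and $\deg Res_Y(f,g_1)\ge j$, rather than just ``$>0$''. You also spell out the argument for $g$ via the unique-dominance identity $|f(X,\xi)|_\rho=|a_j|_\rho|\xi|_\rho^j$ and the integrality of $\deg D_0-\deg D_s$, whereas the paper simply remarks that $g$ ``follows in a similar way'' (which it does, by the symmetric version of the same distance estimate). One small remark: the observation that $\deg b_0\ge 1$, which your $f$-branch needs, is available because $\Delta>0$ together with $\deg b_m\ge 0$ forces $\deg b_0>\deg b_m\ge 0$; you use it but do not state it, and it is worth making explicit.
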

\begin{proof}
\ Let $f$ and $g$ be as in the statement of our theorem, and let us assume to the contrary that $f$ decomposes as a product of two nonconstant factors in $K[X,Y]$, say $f(X,Y)=f_{1}(X,Y)f_{2}(X,Y)$, with $f_{1},f_{2}\in K[X,Y]$, $\deg _Yf_{1}\geq 1$ and $\deg _Yf_{2}\geq 1$. Consider as before the factorizations of $f$ and $g$, say 
\begin{align*}
f(X,Y)& =a_{n}(Y-\theta _{1})\cdots (Y-\theta _{n}), \\
g(X,Y)& =b_{m}(Y-\xi _{1})\cdots (Y-\xi _{m}),
\end{align*}
with $\theta _{1},\ldots ,\theta _{n},\xi _{1},\ldots ,\xi _{m}\in \overline{%
K(X)}$. Since $Res_{Y}(f,g)$ is irreducible over $K$, and
\begin{equation}\label{dfractieCuAj}
Res_{Y}(f,g)=Res_{Y}(f_{1},g)\cdot Res_{Y}(f_{2},g),
\end{equation}
we deduce that one of the polynomials $Res_{Y}(f_{1},g)$ and $Res_{Y}(f_{2},g)$ must be a constant in $K$, say $Res_{Y}(f_{1},g)\in K$. In particular, this will imply that 
$\deg Res_{Y}(f_{1},g)=0$, which may be also written as
$|Res_{Y}(f_{1},g)|_\rho =1$.
Since $f_{1}$ is a factor of $f$, it will
factorize over $\overline{K(X)}$ as 
$f_{1}(X,Y)=C_{t}(Y-\theta _{1})\cdots
(Y-\theta _{t})$, say, with $t\geq 1$, $C_{t}\in K[X]$, $C_{t}\neq 0$, so 
\begin{equation}\label{formageneralaNouaCuAj}
1=|Res_Y(f_{1},g)|_\rho=|C_{t}|_\rho^{m}|b_{m}|_\rho^{t}\prod\limits_{i=1}^{t}\prod%
\limits_{j=1}^{m}|\theta _{i}-\xi _{j}|_\rho.
\end{equation}
Let us denote now
\[
\mathcal{A}=\max\limits_{k<j}\frac{\deg a_k-\deg a_j}{j-k}\quad {\rm and}\quad \mathcal{B}=\min\limits_{k>j}\frac{\deg a_k-\deg a_j}{j-k}.
\]
With this notation, we deduce by (\ref{aj1}) and (\ref{aj2}) that
\begin{equation}\label{AsiB}
\mathcal{A}<\frac{\Delta}{m}\leq \Delta\leq \deg b_0-\deg b_m<\mathcal{B},
\end{equation}
which in particular shows that $\mathcal{A}<\mathcal{B}$. 
We will prove now that for each root $\theta_i$ of $f$ we must either have $|\theta_i|_\rho\leq \rho^\mathcal{A}$, or $|\theta_i|_\rho\geq \rho^\mathcal{B}$. To see this, assume to the contrary that $\rho ^{\mathcal{A}}<|\theta_i|_\rho<\rho ^{\mathcal{B}}$ for some index $i\in\{1,\dots ,n\}$. Since $a_j\neq 0$ we see from $\rho ^{\mathcal{A}}<|\theta_i|_\rho$ that $|a_j|_\rho|\theta _i|_\rho ^j>|a_k|_\rho|\theta _i|_\rho ^k$ for each $k<j$, while from $|\theta_i|_\rho<\rho ^{\mathcal{B}}$ we deduce that we also have $|a_j|_\rho|\theta _i|_\rho ^j>|a_k|_\rho|\theta _i|_\rho ^k$ for each $k>j$. If we take the maximum with respect to $k$ in all these inequalities, we obtain:
\begin{equation}\label{ajmaimare}
|a_j|_\rho|\theta _i|_\rho ^j>\max\limits _{k\neq j}|a_k|_\rho|\theta _i|_\rho ^k.
\end{equation}
On the other hand, since $f(X,\theta _i)=0$ and our absolute value also satisfies the triangle inequality, we must have
\[
0\geq |a_j|_\rho|\theta _i|_\rho ^j-|\sum_{k\neq j}a_k\theta_i^k|_\rho\geq |a_j|_\rho|\theta _i|_\rho ^j-\max\limits _{k\neq j}|a_k|_\rho|\theta _i|_\rho ^k,
\]
which contradicts (\ref{ajmaimare}).
Therefore, for an index $i\in\{1,\dots ,n\}$ and an index $j\in\{1,\dots ,m\}$ we either have 
\begin{eqnarray*}
|\theta _{i}-\xi _{j}|_\rho & \geq & |\theta _{i}|_\rho-|\xi _{j}|_\rho\geq \rho^\mathcal{B}-\max\limits_{j}|\xi _{j}|_\rho,\quad {\rm if}\quad |\theta_i|_\rho\geq \rho^\mathcal{B},\quad {\rm or}\\
|\theta _{i}-\xi _{j}|_\rho & \geq & |\xi _{j}|_\rho-|\theta _{i}|_\rho\geq \min\limits _{j}|\xi _{j}|_\rho-\rho^\mathcal{A},\quad \ {\rm if}\quad |\theta_i|_\rho\leq \rho^\mathcal{A}.
\end{eqnarray*}
To estimate the terms $\max\limits_{j}|\xi _{j}|_\rho$ and $\min\limits _{j}|\xi _{j}|_\rho$ in these inequalities, we will use Lemma \ref{CauchyNonArchimedean}. Our assumption that $\Delta>0$ implies that $\max\{|b_1|_\rho,\dots ,|b_m|_\rho \}<|b_0|_\rho$, so by (\ref{Cauchy}) and (\ref{CauchyReversed}) applied to $g$ we see that all the roots $\xi_j$ of $g$ satisfy the inequalities
\[
\left(\frac{|b_0|_\rho}{\max\{|b_1|_\rho,\dots,|b_{m}|_\rho\}}\right)^{\frac{1}{m}}\leq |\xi_j|_\rho\leq \frac{|b_0|_\rho}{|b_m|_\rho},
\]
which further leads us to
\begin{eqnarray*}
|\theta _{i}-\xi _{j}|_\rho & \geq & \rho^\mathcal{B}-\rho^{\deg b_0-\deg b_m},\quad {\rm if}\quad |\theta_i|_\rho\geq \rho^\mathcal{B},\quad {\rm and}\\
|\theta _{i}-\xi _{j}|_\rho & \geq & \rho ^{\Delta/m}-\rho^\mathcal{A},\quad \quad \quad \ \ \thinspace \thinspace {\rm if}\quad |\theta_i|_\rho\leq \rho^\mathcal{A}.
\end{eqnarray*}
In view of (\ref{AsiB}), all we need now is to observe that for a sufficiently large $\rho $ both $\rho^\mathcal{B}-\rho^{\deg b_0-\deg b_m}$ and $\rho ^{\Delta/m}-\rho^\mathcal{A}$ will be larger than $1$, implying that $|\theta _{i}-\xi _{j}|_\rho >1$ for a sufficiently large $\rho$.
This will contradict (\ref{formageneralaNouaCuAj}), as $|C_{t}|_\rho\geq 1$ and $|b_{m}|_\rho\geq 1$. Thus $f$ must be irreducible in $K[X,Y]$. The fact that $g$ too is irreducible follows in a similar way.
\end{proof}

Some immediate consequences of our results in the bivariate case are similar results for polynomials in $r\geq 2$ variables $X_{1},X_{2},\ldots ,X_{r}$
over $K$.
 For any polynomial $f\in K[X_{1},\ldots ,X_{r}]$ and any $j\in
\{1,\ldots ,r\}$ we denote by $\deg _{j}f$ the degree of $f$ as a polynomial
in $X_{j}$ with coefficients in $K[X_{1},\dots ,\widehat{X}_{j},\dots
,X_{r}] $, where the hat stands for the fact that the corresponding variable
is missing. For instance, using this notation, one has the following extension of Corollary \ref{corothm11generala} to polynomials in $r\geq 2$ variables $X_{1},X_{2},\ldots ,X_{r}$
over $K$, obtained by
writing $Y$ for $X_{r}$ and $X$ for a suitable $X_{j}$, and by replacing the
field $K$ with $K(X_{1},\dots ,\widehat{X}_{j},\dots ,X_{r-1})$.

\begin{corollary}
\label{coro2thm11} Let $K$ be a field, fix integers $m,n,r\geq 2$, and let $%
f(X_{1},\ldots ,X_{r})=\sum\nolimits_{i=0}^{n}a_{i}X_{r}^{i}$ and $%
g(X_{1},\ldots ,X_{r})=\sum\nolimits_{i=0}^{m}b_{i}X_{r}^{i}$ with $%
a_{0},\ldots ,a_{n},b_{0},\ldots ,b_{m}\in K[X_{1},\ldots ,X_{r-1}]$, $%
a_{0}a_{n}b_{0}b_{m}\neq 0$, and assume that $f$ and $g$ have no nonconstant factors in $K[X_1,\dots ,X_{r-1}]$, and that for an index $j\in \{1,\ldots
,r-1\}$, $Res_{X_{r}}(f,g)$ as a polynomial in $X_{j}$ is irreducible over $%
K(X_{1},\dots ,\widehat{X}_{j},\dots ,X_{r-1})$. If 
$\deg _{j}a_{0}>\max \{\deg _{j}a_{1},\ldots ,\deg _{j}a_{n}\}$ and $\deg _{j}b_{m}\geq \max \{\deg _{j}b_{0},\ldots ,\deg _{j}b_{m-1}\}$, then both $f$ and $g$ are irreducible in $K[X_{1},\dots ,X_{r}]$. The same conclusion will also hold if we interchange the signs $>$ and $\geq$ in these two inequalities.
\end{corollary}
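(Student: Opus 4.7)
The plan is to reduce directly to the bivariate Corollary \ref{corothm11generala} by an extension of scalars, following the hint in the statement itself. Fix the index $j\in\{1,\ldots,r-1\}$ singled out by the hypothesis, set $L=K(X_{1},\dots,\widehat{X}_{j},\dots,X_{r-1})$, and view the coefficients $a_{0},\ldots,a_{n},b_{0},\ldots,b_{m}\in K[X_{1},\dots,X_{r-1}]$ as elements of $L[X_{j}]$. Then $f$ and $g$ become polynomials in $L[X_{j}][X_{r}]$ in the single indeterminate $X_{r}$ with coefficients in $L[X_{j}]$. Under the dictionary $X\leftrightarrow X_{j}$, $Y\leftrightarrow X_{r}$, $K\leftrightarrow L$, I want to verify the hypotheses of Corollary \ref{corothm11generala} and conclude irreducibility in $L[X_{j},X_{r}]$, then descend to irreducibility in $K[X_{1},\ldots,X_{r}]$.

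First I would check the hypotheses of Corollary \ref{corothm11generala} one by one. The nondegeneracy $a_{0}a_{n}b_{0}b_{m}\neq 0$ is inherited. The degree inequalities $\deg_{j}a_{0}>\max\{\deg_{j}a_{1},\ldots,\deg_{j}a_{n}\}$ and $\deg_{j}b_{m}\geq\max\{\deg_{j}b_{0},\ldots,\deg_{j}b_{m-1}\}$ are exactly what is assumed, once we identify $\deg_{j}$ with $\deg_{X}$ in the bivariate statement. The irreducibility of $\operatorname{Res}_{X_{r}}(f,g)$ as a polynomial in $X_{j}$ over $L$ is exactly the remaining assumption. The only nontrivial verification is that $f$ and $g$ have no nonconstant factors in $L[X_{j}]$: the content of $f$ as an element of $L[X_{j}][X_{r}]$ is, up to units of $L$, the gcd of $a_{0},\ldots,a_{n}$ in $L[X_{j}]$, and a nontrivial such gcd would, after clearing denominators from $K[X_{1},\dots,\widehat{X}_{j},\dots,X_{r-1}]$, yield a nonconstant common divisor in $K[X_{1},\ldots,X_{r-1}]$, contradicting the given hypothesis. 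The same applies to $g$. Applying Corollary \ref{corothm11generala} thus gives that $f$ and $g$ are irreducible in $L[X_{j},X_{r}]$.

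The last step is a Gauss-type descent from $L[X_{j},X_{r}]$ down to $K[X_{1},\ldots,X_{r}]$. Suppose $f=uv$ is a factorization in $K[X_{1},\ldots,X_{r}]$. Since $f$ is irreducible in the larger ring $L[X_{j},X_{r}]$, one factor, say $u$, must be a unit of $L[X_{j},X_{r}]$, i.e.\ an element of $L^{*}$; combined with $u\in K[X_{1},\ldots,X_{r}]$, this forces $u\in K[X_{1},\ldots,X_{r}]\cap L=K[X_{1},\dots,\widehat{X}_{j},\dots,X_{r-1}]$. If $u$ were nonconstant it would be a nonconstant factor of $f$ in $K[X_{1},\ldots,X_{r-1}]$, contradicting the hypothesis; hence $u\in K^{*}$ and the factorization is trivial. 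The same argument applies to $g$, and the variant with $>$ and $\geq$ interchanged follows by the corresponding case of Corollary \ref{corothm11generala}.

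The only delicate point is the verification that the ``no nonconstant factor in $K[X_{1},\ldots,X_{r-1}]$'' hypothesis passes correctly through the localization into $L[X_{j}]$ and then back; everything else is a purely formal relabelling of variables. I expect no further obstacle, and the proof fits naturally into a few lines once this content/Gauss'-lemma bookkeeping is spelled out.
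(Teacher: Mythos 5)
Your proposal is correct and follows essentially the same route the paper indicates: replace $K$ by $L=K(X_{1},\dots,\widehat{X}_{j},\dots,X_{r-1})$, rename $X_{j}$ as $X$ and $X_{r}$ as $Y$, and apply Corollary \ref{corothm11generala}. The paper merely states this reduction without detail, while you carefully spell out the two points that actually need checking — that absence of nonconstant factors in $K[X_{1},\dots,X_{r-1}]$ passes to $L[X_{j}]$ via a Gauss's-lemma/clearing-denominators argument, and that irreducibility over $L[X_{j},X_{r}]$ descends to $K[X_{1},\dots,X_{r}]$ because the only units are in $L^{*}$ and $K[X_{1},\dots,X_{r}]\cap L=K[X_{1},\dots,\widehat{X}_{j},\dots,X_{r-1}]$.
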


\section{Some numerical examples.}

1) Let $f(X)=-1+X-2X^{2}+3X^{3}+9X^{4}$
and $g(X)=35+3X-X^{2}-X^{3}+X^{4}$. Since $|Res(f,g)|=9\,794\,181\,403$, a prime
number, and for $A=1$ we have $|a_{4}|>\sum_{i=0}^{3}|a_{i}|\cdot A^{i-4}$ and $|b_{0}|>\sum_{i=1}^{4}|b_{i}|(A+1)^{i} $, we conclude by Corollary \ref{thm6} that both $f$ and $g$ are irreducible.

2) Let us take $f(X)=X^{5}+X^{4}-X^{3}-2X^{2}-3X+15$ and apply Theorem \ref
{thmUnita} with $b=11$ and $c=2$. We have $11^{5}f(\frac{2}{11})=2\,316\,511$,
which is a prime number, and $15=|a_{0}|>\sum_{i=1}^{5}|a_{i}|(|1/b|+|c/b|)^{i}=%
\frac{160\,128}{161\,051}$, so $f$ must be irreducible over $\mathbb{Q}$.

3) Consider the Littlewood polynomial
$f(X)=X^{6}-X^{5}-X^{4}+X^{3}+X^{2}-X+1 $ and apply Corollary \ref{corothm1}
with $b=4$ and $c=15$. Since $4^{6}f(\frac{15}{4})=7\,805\,461$, a prime
number, $f$ must be irreducible over $\mathbb{Q}$.

4) Consider now the polynomials 
\begin{align*}
f_{1}(X) & =92+X-3X^{2}+3X^{3}-X^{4}+X^{5}, \\
f_{2}(X) & =93+2X-2X^{2}+2X^{3}-X^{4}+X^{5}, \\
f_{3}(X) & =-100+X-5X^{2}+X^{3}+X^{4}+X^{5}, \\
f_{4}(X) & =-104-X-9X^{2}+X^{3}+X^{4}+X^{5},
\end{align*}
and apply Theorem \ref{thm3}. In all these four cases we have $%
|a_{0}|>\sum_{i=1}^{5}|a_{i}|\sqrt{2^i}$ and 
\[
(a_{0}-a_{2}+a_{4})^{2}+(a_{1}-a_{3}+a_{5})^{2}=94^{2}+1=8\,837,
\]
which is a prime number, so all these polynomials are irreducible.

5) Let $f(X)=361+8X+X^{2}-X^{3}+X^{4}-X^{5}+X^{6}$ and apply Corollary \ref
{corolthm4} with $m=2$. Simple computations show that 
\[
(a_{0}+2a_{2}+4a_{4}+8a_{6})^{2}-2(a_{1}+2a_{3}+4a_{5})^{2}=375^{2}-2\cdot
2^{2}=140\,617, 
\]
which is a prime number, and $361=|a_{0}|>\sum_{i=1}^{6}|a_{i}|\cdot (\sqrt{3})^i=39+20\sqrt{3}$, so $f$ must be an irreducible polynomial.

6) Let us take now the Littlewood polynomial $%
f(X)=-1+X-X^{2}+X^{3}-X^{4}+X^{5}+X^{6}$ and apply again Corollary \ref{corolthm4}, this time with $m=13$. We have 
\[
(a_{0}+a_{2}m+a_{4}m^{2}+a_{6}m^{3})^{2}-m(a_{1}+a_{3}m+a_{5}m^{2})^{2}=3%
\,620\,839,
\]
a prime number, and the condition $|a_{6}|>\sum_{i=0}^{5}|a_{i}|\cdot (\sqrt{12})^{i-6}$ is obviously satisfied, so $f$ must be irreducible.

7) For any positive integer $k$ and any integer $a> 2^{3k+2}-2$ such that $a^2-a+1$ is a prime number, the polynomial $f_{k,a}(X)=a+X+X^2+\cdots +X^{3k+1}$ is irreducible. The conclusion follows by Corollary \ref{corolthm4,5} with $n=3k+1$, $S_0=a+k$, $S_1=k+1$ and $S_2=k$, since $S_0^2+S_1^2+S_2^2-S_0S_1-S_0S_2-S_1S_2=a^2-a+1$ and the condition $a_0>\sum_{i=1}^{n}|a_{i}|2^i$ reduces to $a>2^{3k+2}-2$.

8) Let $n\geq 3$ and write the Eisensteinian polynomial $f(X)=X^{n}+5X+5$ as 
\[
f(X)=(X^{n}+2)+(X^{n-1}-X^{n-2})+(X^{n-2}-X^{n-3})+(X^{n-3}-X^{n-1})+(5X+3). 
\]
We may use then Corollary \ref{coro1thm11} to conclude that for each $n\geq 3$ the polynomial
\[
(X^{n}+2)+(X^{n-1}-X^{n-2})Y+(X^{n-2}-X^{n-3})Y^{2}+(X^{n-3}-X^{n-1})Y^{3}+(5X+3)Y^{4} 
\]
is irreducible in $\mathbb{Q}[X,Y]$. \medskip

{\bf Acknowledgements} This work was done in the frame of the GDRI ECO-Math.

\end{document}